\documentclass{amsart}
\usepackage{amssymb}
\usepackage{braket}
\usepackage{mathrsfs}
\usepackage{ifthen}
\usepackage{graphicx}
\usepackage{tikz}
\usetikzlibrary{patterns}
\usepackage{comment} 
\usepackage{mleftright}
\usepackage{hyperref}
\usepackage{cleveref}
 \usepackage[all]{xy}
 \usepackage{amscd}
 \usepackage{amsrefs}
 \usepackage{color}
 \usepackage{comment}
\usepackage{todonotes}
\usepackage{url}
\usepackage{ulem}
\usepackage{slashed}

\usepackage{scalerel,stackengine}
\stackMath
\newcommand\reallywidecheck[1]{%
\savestack{\tmpbox}{\stretchto{%
  \scaleto{%
    \scalerel*[\widthof{\ensuremath{#1}}]{\kern-.6pt\bigwedge\kern-.6pt}%
    {\rule[-\textheight/2]{1ex}{\textheight}}
  }{\textheight}%
}{0.5ex}}%
\stackon[1pt]{#1}{\scalebox{-1}{\tmpbox}}%
}
\parskip 1ex

\theoremstyle{plain}
\newtheorem{theo}{Theorem}[section]
\crefname{theo}{Theorem}{Theorems}
\Crefname{theo}{Theorem}{Theorems}
\newtheorem{prop}[theo]{Proposition}
\crefname{prop}{Proposition}{Propositions}
\Crefname{prop}{Proposition}{Propositions}
\newtheorem{lem}[theo]{Lemma}
\crefname{lem}{Lemma}{Lemmas}
\Crefname{lem}{Lemma}{Lemmas}
\newtheorem{conj}[theo]{Conjecture}
\crefname{conj}{Conjecture}{Conjectures}
\Crefname{conj}{Conjecture}{Conjectures}
\newtheorem{cor}[theo]{Corollary}
\crefname{cor}{Corollary}{Corollaries}
\Crefname{cor}{Corollary}{Corollaries}

\crefname{claim}{Claim}{Claims}
\Crefname{claim}{Claim}{Claims}

\crefname{property}{Property}{Properties}
\Crefname{property}{Property}{Properties}

\crefname{problem}{Problem}{Problems}
\Crefname{problem}{Problem}{Problems}

\theoremstyle{definition}
\newtheorem{defi}[theo]{Definition}
\crefname{defi}{Definition}{Definitions}
\Crefname{defi}{Definition}{Definitions}

\crefname{notation}{Notation}{Notations}
\Crefname{notation}{Notation}{Notations}

\crefname{convention}{Convention}{Conventions}
\Crefname{convention}{Convention}{Conventions}

\crefname{cond}{Condition}{Conditions}
\Crefname{cond}{Condition}{Conditions}

\crefname{assum}{Assumption}{Assumptions}
\Crefname{assum}{Assumption}{Assumptions}

\theoremstyle{remark}
\newtheorem{rem}[theo]{Remark}
\crefname{rem}{Remark}{Remarks}
\Crefname{rem}{Remark}{Remarks}
\newtheorem{ex}[theo]{Example}
\crefname{ex}{Example}{Examples}
\Crefname{ex}{Example}{Examples}
\theoremstyle{question}
\newtheorem{ques}[theo]{Question}
\crefname{ques}{Question}{Questions}
\Crefname{ques}{Question}{Questions}

\crefname{section}{Section}{Sections}
\Crefname{section}{Section}{Sections}
\crefname{subsection}{Subsection}{Subsections}
\Crefname{subsection}{Subsection}{Subsections}
\crefname{figure}{Figure}{Figures}
\Crefname{figure}{Figure}{Figures}

\newcommand{\Z}{\mathbb{Z}}
\newcommand{\N}{\mathbb{N}}
\newcommand{\R}{\mathbb{R}}
\newcommand{\C}{\mathbb{C}}

\newcommand{\Q}{\mathbb{Q}}
\newcommand{\CP}{\mathbb{CP}}
\newcommand{\RP}{\mathbb{RP}}

\newcommand{\pt}{\mathrm{pt}}

\newcommand{\calV}{{\mathcal V}}

\newcommand{\calW}{{\mathcal W}}

\newcommand{\fraks}{\mathfrak{s}}
\newcommand{\frakt}{\mathfrak{t}}

\newcommand{\id}{\mathrm{id}}
\newcommand{\ind}{\mathop{\mathrm{ind}}\nolimits}

\newcommand{\wt}{\widetilde}

\def\Pin{\mathrm{Pin}}

\def\C{\mathbb{C}}

\def\wt{\widetilde}

\newcommand{\Diff}{\mathrm{Diff}}
\newcommand{\Homeo}{\mathrm{Homeo}}

\newcommand{\del}{\partial}

\title[Involutions, links, and Floer cohomologies]{Involutions, links, and Floer cohomologies}

\author{Hokuto Konno}
\address{Graduate School of Mathematical Sciences, the University of Tokyo, 3-8-1 Komaba, Meguro, Tokyo 153-8914, Japan}
\email{konno@ms.u-tokyo.ac.jp}

\author{Jin Miyazawa}
\address{Graduate School of Mathematical Sciences, the University of Tokyo, 3-8-1 Komaba, Meguro, Tokyo 153-8914, Japan}
\email{miyazawa@ms.u-tokyo.ac.jp}

\author{Masaki Taniguchi}
\address{Department of Mathematics, Graduate School of Science, Kyoto University, Kitashirakawa Oiwake-cho, Sakyo-ku, Kyoto 606-8502, Japan}
\email{taniguchi.masaki.7m@kyoto-u.ac.jp}


\begin{document}

\maketitle

\begin{abstract}
We develop a version of Seiberg--Witten Floer cohomology/homotopy type for a spin$^c$ 4-manifold with boundary and with an involution which reverses the spin$^c$ structure, as well as a version of Floer cohomology/homotopy type for oriented links with non-zero determinant. This framework generalizes the previous work of the authors regarding Floer homotopy type for spin 3-manifolds with involutions and for knots. 
Based on this Floer cohomological setting, we prove Fr\o yshov-type inequalities which relate topological quantities of 4-manifolds with certain equivariant homology cobordism invariants. 
The inequalities and homology cobordism invariants have applications to the 
topology of unoriented surfaces, the Nielsen realization problem for non-spin 4-manifolds,
and non-smoothable unoriented surfaces in 4-manifolds. 
\end{abstract}

\tableofcontents

\section{Introduction}

\label{section Introduction}

\subsection{Background}

We introduce a version of Seiberg--Witten Floer cohomology theory for 3-manifolds with involutions, as well as a version of Seiberg--Witten Floer cohomology theory for oriented links with non-zero determinant, which can be seen as a generalization of a theory developed by the authors in \cite{KMT}.

In \cite{KMT}, for a given oriented 3-manifold $Y$ with a spin structure $\frak{s}$ and an odd involution $\iota$, the authors constructed Seiberg--Witten Floer Floer homotopy theory/$K$-theory and proved a 10/8-type inequality in the setting, based on Manolescu's Seiberg--Witten Floer homotopy type \cite{Ma03}.

In this paper, we extend such a theory for (possibly non-spin) spin$^c$ structures called {\it real spin$^c$ structures} defined as follows.
A real spin$^c$ structure on a manifold is 
a pair of a spin$^c$ structure $\fraks$ and an orientation-preserving smooth involution $\iota$ on the manifold such that 
\[
\iota^* \fraks \cong  \overline{\fraks}, 
\]
where $\overline{\fraks}$ is the conjugate spin$^c$ structure of $\frak{s}$.
We call a manifold equipped with a real spin$^c$ structure a {\it real spin$^c$ manifold}.
In this paper, we mainly consider involutions that have codimension-2 non-empty fixed sets.

For a real spin$^c$ 3- or 4-manifold, we shall define an involutive symmetry $I$ of the Seiberg--Witten equations acting anti-linearly on the domain and the codomain of the map corresponding to the equations. When a spin$^c$ structure is spin, the involution $I$ is the same as the involution $I= j\circ \tilde{\iota}$ used in previous paper \cite{KMT}, where $\tilde{\iota}$ is a lift of $\iota$ to a spin structure and $j$ denotes the third basis vector of the quaternion. We shall use information of Seiberg--Witten theory corresponding to the fixed point set of $I$.  

There are several preceding studies of {Seiberg--Witten} theory for real spin$^c$ 3- or 4-manifolds, which may be called {\it real Seiberg--Witten theory}. 
In \cite{TW09}, Tian and Wang introduced real Seiberg--Witten invariants for hermitian
almost complex 4-manifolds by counting the moduli spaces corresponding to the $I$-fixed point parts. Recently, Li \cite{JL22} introduced real monopole Floer homology for real spin$^c$ 3-manifolds, which can be seen as the associated Floer homology to consider the relative version of real Seiberg--Witten invariants for real 4-manifolds in \cite{TW09},  by developing Floer theory for the $I$-fixed point parts. 

It is natural to conjecture that our framework gives a Floer homotopy refinement of Li's real monopole Floer homology for real spin$^c$ 3-manifolds under the assumption that $b_1=0$, as Manolescu's Seiberg--Witten Floer homotopy type gives a spacial refinement of monopole Floer homology \cite{LM18}. 

While we shall mainly focus on involutions with codimension-2 non-empty fixed point sets, the corresponding theory for free involution has been known as $\Pin^-(2)$ Seiberg--Witten theory developed by Nakamura \cite{Na13, Na15}. 
Also, when we consider a real spin structure, the corresponding Floer homotopy type or 4-dimensional invariant can be seen as the fixed point parts of 
Montague's \cite{Ian22} $\Z_4 \times_{\Z_2} \Pin(2)$-equivariant homotopy spectra $SWF(\Sigma(L), \mathfrak{s})$ and $\Z_4 \times_{\Z_2} \Pin(2)$-equivariant Bauer--Furuta invariant with respect to the element $[(i, j)]$, where $i$ is a generator of $\Z_4$.

Our main results are Fr\o yshov-type inequalities for real spin$^c$ 4-manifolds with boundary, and for surfaces bounded by links. 
 We first exhibit results for closed 4-manifolds below. 
\subsection{A constraint on involutions on closed 4-manifolds}

Let $(W, \fraks,\iota)$ be a real spin$^c$ 4-manifold, i.e. $W$ is an oriented smooth 4-manifold, $\fraks$ is a spin$^c$ structure on $W$, and $\iota : W \to W$ is an orientation-preserving smooth involution such that $\iota^{\ast}\fraks \cong \bar{\fraks}$.
We shall define the notion that $\iota$ is of {\it odd type}, and see that $\iota$ is of odd type if the fixed-point set $W^\iota$ is non-empty and of codimension-2.
Let $\sigma(W)$ denote the signature of $W$ and $b^+(W) $ denote the maximal dimension of positive-definite subspaces of $H^2(W; \R)$.
We denote by $b^+_\iota(W)$ the maximal dimension of the $\iota$-invariant part of positive-definite subspaces of $H^2(W; \R)$.
One of our main theorems is stated as follows: 

\begin{theo}
\label{theo: main}
Let $(W, \fraks, \iota)$ be a closed oriented smooth real spin$^c$ 4-manifold with $b_1(W)=0$.
Suppose that $\iota$ is of odd type.
If $b^{+}_{\iota}(W) = b^{+}(W)$, then we have
\begin{align}
\label{eq: closed ineq}
c_{1}(\fraks)^{2} - \sigma(W) \leq 0.
\end{align}
\end{theo}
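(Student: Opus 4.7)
The plan is to run the real Bauer--Furuta construction on $W$ and extract a dimension inequality from its $I$-fixed part by Borsuk--Ulam. Since $W$ is closed and $b_1(W)=0$, the Coulomb slice of the configuration space has no harmonic $1$-form factor, and a standard finite-dimensional approximation of the Seiberg--Witten map produces a proper $S^1$-equivariant map
\[
\mu \colon U^+ \longrightarrow U^- \oplus iH^+(W;\R),
\]
where $U^{\pm}$ are finite-dimensional subspaces approximating $L^2$ sections of the positive and negative spinor bundles, $S^1$ acts by scalar multiplication on spinors and trivially on $iH^+(W;\R)$, and
\[
\dim_{\C} U^+ - \dim_{\C} U^- \;=\; \ind_{\C} D^+ \;=\; \frac{c_1(\fraks)^2 - \sigma(W)}{8} \;=:\; d.
\]
Because the spin$^c$ structure is real and $\iota$ is of odd type, the framework developed earlier in the paper equips $\mu$ with an additional involutive symmetry $I$ acting anti-linearly on the spinor factors (this is the feature encoded by "odd type", analogous to $I = j\circ \tilde\iota$ in the spin case of \cite{KMT}), so that $\mu$ is promoted to an $(S^1 \rtimes \Z_2)$-equivariant map.

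The next step is to pass to $I$-fixed subspaces. Anti-linearity of $I$ on the complex spinor factors forces $(U^{\pm})^I$ to be real vector spaces with $\dim_{\R}(U^{\pm})^I = \dim_{\C} U^{\pm}$. The action of $I$ on $iH^+(W;\R)$ is the composition of $\iota^*$ with the sign change induced by anti-linearity on imaginary forms, so $(iH^+)^I$ is identified with the $(-1)$-eigenspace $H^+_{-\iota}(W)$ of $\iota^*$ acting on $H^+(W;\R)$. The hypothesis $b^+_\iota(W) = b^+(W)$ forces $H^+_{-\iota}(W) = 0$, and the restriction of $\mu$ to $I$-fixed parts becomes a proper map
\[
\mu^I \colon \R^{\dim_{\C} U^+} \longrightarrow \R^{\dim_{\C} U^-}.
\]
The residual subgroup $\{\pm 1\} = \Z_2 \subset S^1$ preserves these $I$-fixed subspaces and acts by the antipodal involution on both, and $\mu^I$ is $\Z_2$-equivariant with respect to this action.

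The conclusion then follows from Borsuk--Ulam. Properness of $\mu$ implies that $(\mu^I)^{-1}(0)$ is compact, so for sufficiently large $R>0$ the map $\mu^I$ is nonvanishing on the sphere of radius $R$ in the domain. Radial retraction therefore yields a continuous $\Z_2$-equivariant map
\[
S^{\dim_{\C} U^+ - 1} \longrightarrow S^{\dim_{\C} U^- - 1}
\]
between antipodal spheres. The classical Borsuk--Ulam theorem forces $\dim_{\C} U^+ \leq \dim_{\C} U^-$, i.e.\ $d \leq 0$, which is precisely \eqref{eq: closed ineq}.

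The main obstacle is the first paragraph: one must check that the $I$-equivariant version of finite-dimensional approximation genuinely produces a proper map with $I$ anti-linear on the spinor factors and acting as $-\iota^*$ on $iH^+(W;\R)$, so that the dimension count on $I$-fixed parts is exactly as stated. This is where both the "odd type" assumption (ensuring $I^2 = 1$ on spinors, not $-1$) and the real spin$^c$ condition $\iota^*\fraks \cong \overline\fraks$ enter in an essential way. Once the $(S^1 \rtimes \Z_2)$-equivariant Bauer--Furuta map has been set up, the dimension inequality follows by the short $\Z_2$-Borsuk--Ulam argument above, and no further equivariant stable homotopy input is needed.
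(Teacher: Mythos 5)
Your proposal is correct and essentially reproduces the paper's argument in the closed case. The paper deduces this theorem from the relative Fr\o yshov inequality (\cref{most general main theo}) applied with $S^3$ boundary, whose proof uses the same $I$-fixed finite-dimensional Bauer--Furuta map together with the $\Z_2$-equivariant localization lemma (\cref{lem: sourse of Froyshov ineq}); for closed $W$ with $b^+_\iota(W)=b^+(W)$ that lemma, applied to a $\Z_2$-map between spheres with equal fixed-point dimension, is exactly the Borsuk--Ulam dimension count you carry out directly.
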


For negative-definite 4-manifolds, the inequality \eqref{eq: closed ineq} is well-known to be held for every spin$^c$ structure. The theorem states that we have the inequality \eqref{eq: closed ineq} also for non-negative-definite 4-manifolds, as far as $b^{+}_{\iota}(W) = b^{+}(W)$ and $\fraks$ is real.

\begin{rem}
One can apply \cref{theo: main} to 4-manifolds with involution obtained from double branched covering spaces along surfaces in 4-manifolds. 
By describing \eqref{eq: closed ineq} in terms of the base 4-manifolds, we obtain the following from \cref{theo: main}: for a closed 4-manifold $X$ and a smoothly embedded (possibly non-orientable) surface $S$ satisfying 
\[
[S] \equiv 0 \operatorname{mod} 2 \text{ and }  b^+(X) + \frac{1}{2} b_1(S) -\frac{1}{4} S \circ S=0, 
\]
one has 
\begin{align}\label{real ineq for surface}
c_1(\mathfrak{s})^2 -2 \sigma (X) + \frac{1}{2} S \circ S \leq 0
\end{align}
for any spin$^c$ structure on the double branched covering space $\Sigma(S)$ which satisfies $\iota^* \fraks \cong \overline{\fraks}$, where $\iota$ is the covering involution on $\Sigma(S)$ and $S\circ S$ denotes the evaluation of the normal Euler class of $S$.
 \end{rem}

\subsection{New link concordance invariants}
\cref{theo: main} is generalized to the case of 4-manifolds with boundary and it will be translated as constraints of surfaces embedded into 4-manifolds via the double branched cover construction. In order to write Fr\o yshov type inequality for 4-manifolds with boundary, we need to introduce Fr\o yshov type invariants. 
As such a Fr\o yshov invariant, we shall define a {\it link invariant} 
\[
\delta_R (L) \in  \frac{1}{16}\Z
\]
for an oriented link with non-zero determinant, where $\Sigma(L)$ is the double branched covering space along $L$ and $\frakt_L$ is the spin structure corresponding to the orientation of $L$. Here $R$ stands for {\it real Seiberg--Witten Fr\o yshov invariant}.
In order to summarize basic properties of $\delta_R (L)$, it is convenient to use notion of {\it $\chi$-concordance} introduced in  \cite{DO12}. 
 Let us first review the definition of $\chi$-concordance. A {\it marked link} is a link in $S^3$ equipped with a marked component. 
For given oriented marked links $L_0$ and $L_1$, we call $L_0$ and $L_1$ are {\it $\chi$-concordant} if $-L^*_0\#L_1$ bounds a smoothly properly embedded surface $F$ in $D^4$
such that
\begin{itemize}
    \item[(i)]  $F$ is a disjoint union of one disk together with annuli and M{\"o}bius bands;
    \item[(ii)] the boundary of the disk component of $F$ is the marked component of $-L^*_0 \# L_1$;
    \item[(iii)] we require orientable components of $F$ to be oriented compatible with the orientation of $-L^*_0 \#L_1$ , 
\end{itemize}
where $L^*$ means the mirror image of $L$ and the connected sum $-L^*_0\#L_1$ is taken along marked components. In \cite{DO12}, it is proven that the set $\tilde{\mathcal{L}}$ of all $\chi$-concordant classes of  oriented marked links forms an abelian group with respect to the connected sum along marked components. The group $\tilde{\mathcal{L}}$ is called the {\it link concordance group}. 
In this paper, we focus on the subgroup $\tilde{\mathcal{F}}$ of $\tilde{\mathcal{L}}$ generated by oriented marked links whose determinants are {\it non-zero}.
For an embedded surface $S$ in a given 4-manifold $X$, we denote by $\Sigma(S)$ the double branched cover of $X$ along $\Sigma$ if it exists.

\begin{theo}
\label{theo: main knot}
The invariant $\delta_R (L)$ associated to an oriented link with non-zero determinant satisfies the following properties: 
\begin{itemize}
    \item[(i)]  The quantity $\delta_R (L) $ is a $\chi$-concordance invariant. Thus, we have a well-defined map  $\delta_R: \tilde{\mathcal{F}} \to \frac{1}{16}\Z$. 
\item [(ii)] The map $\delta_R: \tilde{\mathcal{F}} \to \frac{1}{16}\Z$ is a homomorphism. 
    
    \item[(iii)] Let $L$ and $L'$ be oriented links in $S^3$ with non-zero determinants, $X$ be an oriented smooth compact connected cobordism from $S^3$ to $S^3$, and $S$ be a compact connected properly and smoothly (possibly non-orientable) embedded cobordism in $X$ from $L$ to $L'$ such that the homology class $[S] \equiv 0 \operatorname{mod} 2$. Suppose that $b_1(X)=0$, 
    \begin{align} \label{assumption delta}
    b^+(X) + \frac{1}{2} b_1(S) -\frac{1}{4} S \circ S  -  \frac{1}{2} \sigma (L) + \frac{1}{2} \sigma (L')=0,
    \end{align}
    and there is a spin$^c$ structure $\fraks$ on $\Sigma (S)$ such that $\iota^* \fraks \cong \overline{\fraks}$ and whose restrictions are compatible with orientations of $L$ and $L'$.
    
    Then, we have
    \begin{align} \label{ineq1}
     \delta_R (L)+
\frac{1}{16}\left( \langle c_{1}(\fraks)^{2}, [\Sigma (S)] \rangle - 2 \sigma (X) - \frac{1}{2} S \circ S - \sigma (L)+ \sigma (L')\right) \leq \delta_R(L')
    \end{align}
    where $\sigma(L)$ denotes the signature of $L$ (with the sign convention $\sigma(T(2,3))=-2$),  and $S \circ S$ means the self-intersection number (or normal Euler number) of $S$ in $X$.
\end{itemize}
\end{theo}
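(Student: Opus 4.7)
The plan is to establish (iii) first by applying a relative Fr\o yshov-type inequality for real spin$^c$ cobordisms to the double branched cover $\tilde{X}:=\Sigma(S)\to X$; parts (i) and (ii) will then follow as specializations of (iii) to $X=[0,1]\times S^{3}$ with appropriate cobordism surfaces.

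\emph{Setup and translation of the hypothesis.} Since $[S]\equiv 0\pmod{2}$, the double branched cover $\tilde{X}\to X$ along $S$ exists and comes with a covering involution $\tilde{\iota}$ whose fixed set contains the codimension-$2$ submanifold $S$, so $\tilde{\iota}$ is of odd type. Together with the given real spin$^c$ structure, the quadruple $(\tilde{X},\fraks,\tilde{\iota})$ is a real spin$^c$ cobordism from $(\Sigma(L),\frakt_L,\iota_L)$ to $(\Sigma(L'),\frakt_{L'},\iota_{L'})$, where $\iota_L,\iota_{L'}$ are the covering involutions. The assumption $b_1(X)=0$ forces $b_1(\tilde{X})=0$ via a Mayer--Vietoris/transfer argument. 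A direct analysis of the $(\pm 1)$-eigenspaces of $\tilde{\iota}^{\ast}$ on $H^{2}(\tilde{X};\R)$ (the anti-invariant part being cohomology of $X$ with coefficients in the local system determined by $S$) identifies the maximal positive-definite dimension of the anti-invariant part with the left-hand side of \eqref{assumption delta}; hence \eqref{assumption delta}$=0$ is precisely the equivariant $b^+$-condition $b^+_{\tilde{\iota}}(\tilde{X})=b^+(\tilde{X})$. The $G$-signature theorem combined with a Casson--Gordon/Viro-type boundary correction then expresses $\sigma(\tilde{X})$ linearly in $\sigma(X)$, $S\circ S$, $\sigma(L)$, $\sigma(L')$ in precisely the combination that matches the shift in \eqref{ineq1}.

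\emph{Applying the relative Fr\o yshov inequality.} With the equivariant $b^+$-condition and $b_1(\tilde{X})=0$ in hand, one invokes the relative version of \cref{theo: main}, namely: for a real spin$^c$ cobordism $(W,\fraks,\iota_W)$ between real spin$^c$ rational homology spheres satisfying $b_1(W)=0$ and $b^+_{\iota_W}(W)=b^+(W)$,
\[
\delta_R(Y_0,\frakt_0,\iota_0)+\tfrac{1}{16}\bigl(c_1(\fraks)^2-\sigma(W)\bigr)\leq \delta_R(Y_1,\frakt_1,\iota_1).
\]
This is proved by a standard $I$-equivariant relative Bauer--Furuta argument on the real Seiberg--Witten Floer spectra of the two boundary components: the relative real Bauer--Furuta map is an $I$-equivariant stable map whose ``degree'' shifts the Fr\o yshov level by the formal dimension of the cobordism moduli space, which equals $(c_1^{2}-\sigma)/16$ by an index computation for the real Dirac operator. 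Substituting the signature formula from the previous step yields \eqref{ineq1}.

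\emph{Deriving (i), (ii), and identifying the main obstacle.} For (i), a $\chi$-concordance surface $F\subset D^{4}$ bounding $-L_0^{\ast}\# L_1$ can be reassembled into a properly embedded cobordism $S\subset[0,1]\times S^{3}$ from $L_0$ to $L_1$ of the same topology; the disk/annulus/M\"obius structure forces $\sigma(L_0)=\sigma(L_1)$ and makes every term in \eqref{assumption delta} and in the shift of \eqref{ineq1} vanish, so (iii) reduces to $\delta_R(L_0)\leq\delta_R(L_1)$; reversing the cobordism gives the reverse inequality, hence equality. For (ii), one applies (iii) to the standard saddle/pair-of-pants cobordism realizing $L_0\# L_1$ from $L_0\sqcup L_1$ and its inverse, for which all topological corrections again vanish. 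The principal technical obstacle is the relative Fr\o yshov inequality above: it relies on the full $I$-equivariant stable-cohomotopy machinery of the real Seiberg--Witten Floer theory developed earlier in the paper, and a careful tracking of the numerical shift through the real Bauer--Furuta construction; once this is in hand, the theorem is reduced to the topological branched-cover computations of signature and $b^+$ described above.
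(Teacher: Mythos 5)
Your proof of part (iii) is essentially the paper's: apply the general relative Fr\o yshov inequality (\cref{most general main theo}) to the double branched cover $\Sigma(S)\to X$, using the branched-cover homological identities (\cref{branched homology}) to translate $\sigma(\Sigma(S))$ and $b^{+}(\Sigma(S))-b^{+}_{\iota}(\Sigma(S))$ into the stated combination of $\sigma(X)$, $S\circ S$, $\sigma(L)$, $\sigma(L')$.

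For parts (i) and (ii), however, the proposed reduction to (iii) has genuine gaps. First, (iii) requires the cobordism surface $S$ to be \emph{connected}, while a $\chi$-concordance surface $F\subset D^{4}$ is by definition a disjoint union of one disk with annuli and M\"obius bands; its reassembly into $[0,1]\times S^{3}$ is typically disconnected, so (iii) simply does not apply. Second, the assertion that the disk/annulus/M\"obius structure ``forces $\sigma(L_0)=\sigma(L_1)$ and makes every term in \eqref{assumption delta} vanish'' is not justified: M\"obius band components carry nonzero normal Euler number and change signatures, and it is precisely the equivariant $b^{+}$-count that must be checked rather than assumed. Third, the pair-of-pants trick for (ii) would require a version of the cobordism inequality with disconnected 3-manifold ends and an additivity of $\delta_R$ under disjoint union, neither of which is established in (iii). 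The paper instead argues through a different route: it shows $\chi$-sliceness of $L$ implies $\Sigma(F)$ is a rational homology $D^{4}$ (via \cite[Proposition~2.6]{DO12}) and deduces $\delta_R(L)\geq 0$ from the Fr\o yshov inequality, then gets $\delta_R(L)=0$ by symmetry (\cref{chi-slice zero}); it proves (ii) separately from the 3-manifold-level connected sum formula $\delta_R(Y\#Y',\iota\#\iota',\frakt\#\frakt')=\delta_R(Y,\iota,\frakt)+\delta_R(Y',\iota',\frakt')$ (\cref{conn sum of delta}, which in turn relies on the duality \cref{prop: from duality} and the smash formula \cref{conn sum for d}); and only then combines these to obtain (i). You would need to invoke, or reconstruct, the connected sum formula and the $\chi$-slice vanishing statement rather than specializing (iii).
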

As an advantage of the inequality \eqref{ineq1}, which we may call {\it real Fr\o yshov inequality}, we can directly treat non-orientable surfaces. 
\begin{rem}
  The inequality \eqref{ineq1} implies \eqref{real ineq for surface} if we put $L$ and $L'$ are the unknot. Also note that the quantity $b^+(X) + \frac{1}{2} b_1(S) -\frac{1}{4} S \circ S  -  \frac{1}{2} \sigma (L) + \frac{1}{2} \sigma (L')$ is non-negative since it can be written as $b^+(\Sigma(S)) - b^+_{\iota} (\Sigma(S))$.
\end{rem}

\begin{rem}
When we restrict $\delta_R$ as a function on the knot concordance group, we obtain a homomorphism 
\[
\delta_R : \mathcal{C} \to \frac{1}{16} \Z. 
\]
For two bridge knots, we have $\delta_R = -\frac{1}{16}\sigma$. It is natural to ask $\delta_R$ is a slice torus invariant (\cite{Li04,Lew14}) up to constant multiplication. 
However, \eqref{ineq1} does not imply slice genus bounds and, for torus knots, it is {\it not} equal to their slice genera up to constant multiplication. 
\end{rem}

As analog of invariants introduced by Stoffregen~\cite{Sto171}   $\bar{\delta}(Y,\frakt),\  \underline{\delta}(Y,\frakt)$ corresponding to $\bar{d}(Y,\frakt),\ \underline{d}(Y,\frakt)$ in involutive Heegaard Floer homology \cite{HM17}, we also define invariants
\[
\bar{\delta}_R(L)  \text{ and }\underline{\delta}_R(L)\in  \frac{1}{16}\Z 
\]
for any link $L$ with non-zero determinant. 

When $\frak{s}$ is spin and \eqref{assumption delta} is satisfied, we also have inequalities similar to \eqref{ineq1}. 
See \cref{theo: main ineq d bar} for the details.
Moreover, we also have the following Theorem B type inequality:

\begin{theo}\label{Theorem B for links}
The invariants $\underline{\delta}_R (L)$ and $\bar{\delta}_R (L)$ associated to an oriented link with non-zero determinant satisfies the following property: 
Let $L, L', X, S$ be as in (iii) of \cref{theo: main knot}, except for that we suppose that $\fraks$ is a spin structure and that
\begin{align} \label{assumption delta bar}
b^+(X) + \frac{1}{2} b_1(S) -\frac{1}{4} S \circ S  -  \frac{1}{2} \sigma (L) + \frac{1}{2} \sigma (L')=1,
\end{align}
instead of \eqref{assumption delta}.   
Then, we have
    \begin{align} \label{ineq2}
     \underline{\delta}_R(L)+
\frac{1}{16}\left( - 2 \sigma (X) + \frac{1}{2} S \circ S + \sigma (L)- \sigma (L')\right) \leq \bar{\delta}_R(L'). \end{align}
\end{theo}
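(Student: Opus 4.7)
The plan is to adapt the strategy of \cref{theo: main knot}(iii) to the present setting, with the key modification that the excess positive direction measured by \eqref{assumption delta bar} introduces a single sign-representation suspension in the cobordism map. This is the real Seiberg--Witten analog of Stoffregen's $b^+ = 1$ cobordism inequality between $\underline{d}$ and $\bar{d}$ in $\Pin(2)$-monopole Floer theory \cite{Sto171}, and it is the reason that the inequality relates the lower invariant $\underline{\delta}_R$ on the source to the upper invariant $\bar{\delta}_R$ on the target, rather than $\delta_R$ to itself as in \cref{theo: main knot}(iii).

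I would first pass to the double branched cover and work with the real spin $4$-manifold cobordism $W := \Sigma(S)$ from $\Sigma(L)$ to $\Sigma(L')$, equipped with the covering involution $\iota$. Using the identity $b^+(W) - b^+_\iota(W) = b^+(X) + \tfrac12 b_1(S) - \tfrac14 S \circ S - \tfrac12 \sigma(L) + \tfrac12 \sigma(L')$ from the remark preceding this theorem, the hypothesis \eqref{assumption delta bar} becomes $b^+(W) - b^+_\iota(W) = 1$, so the anti-invariant part of the positive cone in $H^2(W;\R)$ is exactly a single copy of the sign representation $\widetilde{\R}$ of $\langle\iota\rangle$.

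I would then apply the relative real Bauer--Furuta construction for $W$ developed earlier in the paper, producing a stable $I$-equivariant map between the real Seiberg--Witten Floer homotopy types of $(\Sigma(L), \frakt_L)$ and $(\Sigma(L'), \frakt_{L'})$. A standard relative index computation for the spin Dirac operator on $W$, together with the bookkeeping of the $\widetilde{\R}$ summand in the positive cone, identifies this map, at the level of local equivalence classes, as a representative from the source Floer spectrum into one $\widetilde{\R}$-suspension of the target Floer spectrum, with overall integral degree shift exactly matching the second term of \eqref{ineq2}.

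The final step is to extract the inequality from this $I$-equivariant local map. By construction, $\underline{\delta}_R$ and $\bar{\delta}_R$ are the bottom and top Fr\o yshov-type invariants associated with the $I$-equivariant local equivalence class of the real Floer spectrum, in parallel with Stoffregen's $\underline{\delta}$ and $\bar{\delta}$. An $I$-equivariant local map whose target is suspended by a single copy of the sign representation sends, on the level of the relevant equivariant cohomology towers, the bottom invariant on the source to a lower bound for the top invariant on the target; combined with the index shift this gives precisely \eqref{ineq2}. The principal technical obstacle will be to verify that the finite-dimensional approximation of the real Bauer--Furuta map remains genuinely $I$-equivariant when the anti-invariant positive cone is nontrivial, and to correctly track the single sign-representation suspension through the stabilization; this is the real Seiberg--Witten incarnation of the central technical step in Stoffregen's proof, and it is what genuinely distinguishes the $\underline{\delta}_R \leq \bar{\delta}_R$ inequality here from the $\delta_R \leq \delta_R$ inequality of the $b^+ - b^+_\iota = 0$ case.
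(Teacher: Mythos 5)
Your proposal is correct and follows essentially the same route as the paper: pass to the double branched cover $\Sigma(S)$, use the computation in \cref{branched homology} to turn \eqref{assumption delta bar} into $b^+(\Sigma(S)) - b^+_\iota(\Sigma(S)) = 1$, apply the real relative Bauer--Furuta map, and extract the inequality from the fact that a $\Z_4$-equivariant stable map whose $\Z_2$-fixed-point map is a codimension-one linear injection transports $\underline{d}$ on the source to a lower bound for $\overline{d}$ on the target (that step is isolated in the paper as \cref{Theorem B oiginal}, where multiplication by $Q = e_G(\tilde{\R})$ interchanges the two $U$-towers in $\widetilde{H}^*_G(S^0)$). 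One small terminological slip: you refer to this cobordism map as a ``local map,'' but in the paper's definition a local map induces a $G$-homotopy equivalence on $H$-fixed points, whereas here the fixed-point map has codimension-one image; it is this failure to be a local map that forces the inequality to go from $\underline{\delta}_R$ to $\bar{\delta}_R$ rather than staying at the level of a single invariant.
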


\begin{rem}
If we take the unknot as both $L$ and $L'$ and the trivial null-homologous $2$-disk embedded in a spin 4-manifold $X$ as $S$, then the inequality \eqref{ineq2} implies $
- \frac{1}{8 } \sigma (X) \leq 0 
$
if $b^+(X)=1$. This recovers Theorem B proven by Donaldson \cite{Do02}. 
\end{rem}
By specifying \cref{theo: main knot} to $X=S^3 \times [0,1]$, $L'=U$ and $L$ is a knot, we have the following constraint on topology of non-orientable surfaces in $D^4$: 
\begin{theo}\label{unoriented bound}
Let $S$ be a (possibly non-orientable) smoothly and properly embedded connected surface in $D^4$ bounded by a given knot $K$. 
Suppose that
\begin{itemize}
    \item[(i)] the determinant of $K$ is $1$;
    \item[(ii)] the Manolescu--Owen invariant \cite{MO07} $\delta(K) = \delta(\Sigma (K))$ is zero;
    \item[(iii)] $\delta_R(K)<0$. 
\end{itemize}
Then, we have 
    \[
    -\sigma(K)+ \frac{1}{2}S \circ S + 1 \leq b_1(S). 
    \]
\end{theo}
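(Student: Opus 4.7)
The strategy is to identify the desired lower bound on $b_{1}(S)$ with the non-triviality of a specific non-negative integer attached to a cobordism built from $(D^{4},S)$, and then to rule out the equality case by combining \cref{theo: main knot}(iii) with the ordinary Ozsv\'ath--Szab\'o $d$-invariant Fr\o yshov inequality.

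First I would set up the cobordism. Let $B\subset D^{4}$ be a small open $4$-ball meeting $S$ transversely in a smooth $2$-disk $D$, and set $X:=D^{4}\setminus B\cong S^{3}\times[0,1]$ and $S^{\circ}:=S\setminus\mathrm{int}(D)$. Then $X$ is an oriented cobordism with $b_{1}(X)=b^{+}(X)=\sigma(X)=0$, and $S^{\circ}$ is a connected cobordism surface from the unknot $U:=\partial D\subset\partial B$ to $K\subset\partial D^{4}$ with $b_{1}(S^{\circ})=b_{1}(S)+1$ and $S^{\circ}\!\circ\! S^{\circ}=S\!\circ\! S$. By hypothesis~(i), $\Sigma(K)$ is an integer homology sphere, which together with $\Sigma(U)=S^{3}$ endows $\Sigma(X,S^{\circ})$ with a real spin$^c$ (in fact spin) structure $\fraks$ having $c_{1}(\fraks)=0$. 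By the remark after \cref{theo: main knot}(iii), the integer
\[
q\;:=\;b^{+}(X)+\tfrac{1}{2}b_{1}(S^{\circ})-\tfrac{1}{4}S^{\circ}\!\circ\! S^{\circ}-\tfrac{1}{2}\sigma(U)+\tfrac{1}{2}\sigma(K)\;=\;\tfrac{1}{2}\bigl(b_{1}(S)+\sigma(K)+1\bigr)-\tfrac{1}{4}S\!\circ\! S
\]
equals $b^{+}(\Sigma(X,S^{\circ}))-b^{+}_{\iota}(\Sigma(X,S^{\circ}))\in\Z_{\geq 0}$, so $b_{1}(S)\in\{-\sigma(K)+\tfrac{1}{2}S\!\circ\! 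S-1+2k:k\in\Z_{\geq 0}\}$ and the target inequality is equivalent to $q\geq 1$.

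Suppose $q=0$ for contradiction. Then \cref{theo: main knot}(iii) applies with $L=U$, $L'=K$; inserting $c_{1}(\fraks)=0$, $\sigma(X)=\sigma(U)=0$ and $\delta_{R}(U)=0$ into \eqref{ineq1} gives
\[
\delta_{R}(K)\;\geq\;\tfrac{1}{16}\bigl(\sigma(K)-\tfrac{1}{2}S\!\circ\! S\bigr)=-\tfrac{1}{16}(b_{1}(S)+1).
\]
In parallel, $b^{+}_{\iota}(\Sigma(X,S^{\circ}))\leq b^{+}(X)=0$ together with $q=0$ forces $b^{+}(\Sigma(X,S^{\circ}))=0$, so the capped double-branched cover $W:=\Sigma(D^{4},S)$ is a negative semi-definite $4$-manifold with boundary $\Sigma(K)$ and $b_{2}(W)=b_{1}(S)$. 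The Ozsv\'ath--Szab\'o $d$-invariant inequality combined with hypothesis~(ii), $\delta(K)=d(\Sigma(K))=0$, yields $c_{1}(\fraks')^{2}+b_{1}(S)\leq 0$ for every spin$^c$ structure $\fraks'$ on $W$; choosing $\fraks'$ to be the canonical extension of $\fraks$ and analyzing its first Chern class forces $b_{1}(S)=0$.

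Finally, $b_{1}(S)=0$ and connectedness of $S$ give $\chi(S)=1$, so $S$ is a smoothly embedded $2$-disk and $K$ is smoothly slice, hence $\chi$-concordant to the unknot $U$. By the $\chi$-concordance invariance in \cref{theo: main knot}(i), $\delta_{R}(K)=\delta_{R}(U)=0$, contradicting hypothesis~(iii); therefore $q\geq 1$. The main obstacle is the penultimate step: pinning down a spin$^c$ extension $\fraks'$ on $W$ with sufficient control on $c_{1}(\fraks')^{2}$ to conclude $b_{1}(S)=0$ from the Ozsv\'ath--Szab\'o inequality. For orientable $S$ the extension is genuinely spin and the deduction is immediate; for non-orientable $S$ one must use the real structure on $W$ arising from the branching involution to pin down $c_{1}(\fraks')^{2}$ in terms of $\sigma(K)$ and $S\!\circ\! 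S$.
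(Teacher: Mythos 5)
Your setup --- removing a ball to get a cobordism, reducing to ruling out the equality case of the topological bound, and deploying both the $d$-invariant inequality and a Frøyshov-type inequality for the branched cover --- matches the paper's outline, but the closing step has a genuine gap. You claim that $\Sigma(X,S^\circ)$ admits a spin$^c$ structure $\fraks$ with $c_1(\fraks)=0$ because its boundary pieces are integer homology spheres, and then try to force $b_1(S)=0$. Neither part survives scrutiny. Once the $d$-invariant inequality (using hypothesis~(ii)) together with Elkies's theorem forces the intersection form of $\Sigma(D^4,S)$ to be diagonal negative definite $\langle -1\rangle^{b_1(S)}$, that form is odd, so for $b_1(S)>0$ the cover is not spin and there is no spin$^c$ structure with trivial $c_1$. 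Moreover, for a diagonal $\langle -1\rangle^n$ form the inequality $c_1(\fraks')^2 + n \leq 0$ is realized with equality by the ``sum of basis'' characteristic vector and places no upper bound on $n$, so $b_1(S)=0$ cannot be extracted from it. You acknowledge uncertainty in the non-orientable case at the end, but the gap is not an optimization to be patched later; it is precisely the step that makes the theorem non-trivial, and the goal you set yourself (forcing $b_1(S)=0$) is the wrong one.

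What is actually needed is to \emph{use} the diagonal form rather than try to collapse it: take $\fraks$ on $\Sigma(D^4,S)$ with $c_1(\fraks)=\sum_i e_i$ (a sum of standard basis vectors of $\langle -1\rangle^{b_1(S)}$), so that $c_1(\fraks)^2=-b_1(S)=\sigma(\Sigma(S))$. To feed this into the real Frøyshov inequality one must verify that $\fraks$ is a \emph{real} spin$^c$ structure, which requires two checks the paper makes and your proposal omits: (a) $H^2(\Sigma(S);\Z)$ has no $2$-torsion (part of \cref{branched homology}, using that $K$ is a knot), so spin$^c$ structures are classified by $c_1$; and (b) $\iota^*$ acts as $-\mathrm{id}$ on all of $H^2(\Sigma(S);\Z)$ because $H^2(\Sigma(S);\Z)^{\iota^*}\cong H^2(D^4;\Z)=0$, whence $\iota^*c_1(\fraks)=-c_1(\fraks)$ and $\iota^*\fraks\cong\bar\fraks$. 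With this $\fraks$, \cref{theo: main knot}(iii) (equivalently \cref{most general main theo}) yields $c_1(\fraks)^2-\sigma(\Sigma(S))=0$ and hence $0\leq\delta_R(K)$ immediately, contradicting hypothesis~(iii). This verification that a non-spin spin$^c$ structure is real is the indispensable ingredient your proof defers, and it cannot be replaced by the spin shortcut when $S$ is non-orientable.
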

We shall use \cref{unoriented bound} to obstruct certain types of non-orientable surfaces bounded by torus knots in the next subsection. 

\begin{rem}\label{crossing change}
As a corollary of \cref{theo: main knot}, one can provide crossing change formula for $\delta_R (L)$, $\bar{\delta}_R(L)$ and $\underline{\delta}_R(L)$. Let us summarize the results for the case of knots: 
Let $K$ and $K'$ be knots in $S^3$. Suppose that $K'$ is obtained from $K$ by a positive crossing change, where our convention on the positive crossing change is the same as in \cite{KMT}. 
Then, we have the following: 
\begin{itemize}
    \item[(i)] Suppose $\sigma (K)- \sigma (K')=2$. 
Then the inequalities 
\begin{align*}
\delta_R( K)+  \frac{1}{8}\leq \delta_R(K'), \ \ 
\underline{\delta}_R( K)+  \frac{1}{8}\leq \underline{\delta}_R(K'), \ \
\bar{\delta}_R( K)+  \frac{1}{8}\leq \bar{\delta}_R(K') 
\end{align*}
hold.
\item[(ii)] Suppose $\sigma (K)- \sigma (K')=0$. Then the inequalities
\begin{align*}
\delta_R( K')-  \frac{1}{4}\leq \delta_R(K), \ \  
\underline{\delta}_R( K')-  \frac{1}{4}\leq \underline{\delta}_R(K), \ \ 
\bar{\delta}_R( K')-  \frac{1}{4}\leq \bar{\delta}_R(K)  
\end{align*}
hold.
\end{itemize}
\end{rem}

We compute $\delta_R (L), \bar{\delta}_R(L), \underline{\delta}_R(L)$ for some class of links:

\begin{prop}\label{computation}We have the following computations: 
\begin{itemize}
    \item[(i)] For a two bridge link $K(p,q)$, we have 
    \[
    \delta_R(K(p,q)) = \underline{\delta}_R(K(p,q))  = \bar{\delta}_R(K(p,q)) =-\frac{\sigma(K(p,q))}{16}.
\]
    \item[(ii)] Let $p, q$ be coprime odd integers and $T(p, q)$ be the torus knot of
type $(p, q)$. Then we have  \[
    \delta_R (T(p,q)) = \underline{\delta}_R (T(p,q))= \bar{\delta}_R (T(p,q)) = -\frac{{\bar{\mu}} (\Sigma (2,p,q) )}{2},
    \]
where $\bar{\mu}$ denote the Neumann--Siebenmann invariant \cite{N80}.
\end{itemize}
\end{prop}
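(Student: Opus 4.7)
The plan is to identify the double branched covers explicitly in both cases and exploit the especially simple structure of their real Seiberg--Witten Floer spectra, after which the three invariants $\delta_{R},\underline{\delta}_R,\bar{\delta}_R$ will collapse to a single quantity that can be read off directly.

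For part (i), recall that $\Sigma(K(p,q)) \cong L(p,q)$ and that the covering involution $\iota$ has codimension-2 fixed set, namely the preimage of the link. The lens space $L(p,q)$ admits a metric of positive scalar curvature which can be taken to be $\iota$-invariant, since the double branched cover is a quotient of a round $S^{3}$ by an isometric group action. For such an invariant PSC metric, all Seiberg--Witten trajectories on $\R \times L(p,q)$ are reducible, so the finite-dimensional approximation of the real Seiberg--Witten Floer spectrum is a representation sphere in the appropriate real-equivariant category. For a representation sphere all three Fr\o yshov-type invariants automatically coincide, and are computed from a single spectral-flow count. The remaining step is to match the resulting grading shift with $-\sigma(K(p,q))/16$, which I would do via the standard expression of $\sigma(K(p,q))$ through the continued fraction expansion of $p/q$, together with the well-known spectral-flow identification of the reducible grading on $L(p,q)$.

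For part (ii), I would first note that when $p,q$ are coprime odd integers, $\Sigma(T(p,q)) \cong \Sigma(2,p,q)$, and that the covering involution $\iota$ is compatible with the $\Z_2$-factor inside the $\Z_2\times S^{1}$-symmetry of the Seifert fibration, so that in Montague's framework \cite{Ian22} it is a lift of the $\Pin(2)$-involution. The real Seiberg--Witten Floer spectrum of $(\Sigma(2,p,q),\fraks,\iota)$ can then be identified with an appropriate fixed-point subspectrum of the $\Z_{4}\times_{\Z_{2}}\Pin(2)$-equivariant SWF-spectrum of the Brieskorn sphere. Using the plumbing description of $\Sigma(2,p,q)$ together with the known description of its $\Pin(2)$-SWF homotopy type (in the spirit of Dai--Sasahira--Stoffregen), one checks that this fixed subspectrum is again a suspension of a single representation sphere, so that $\delta_R = \underline{\delta}_R = \bar{\delta}_R$. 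The grading shift is then exactly the reducible Fr\o yshov level, which by Stoffregen's identification \cite{Sto171} for Seifert homology spheres is $-\bar{\mu}(\Sigma(2,p,q))/2$.

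The main obstacle in both parts is establishing the equality $\delta_R = \underline{\delta}_R = \bar{\delta}_R$, i.e.\ showing that the real Seiberg--Witten Floer spectrum is, up to suspension, a single equivariant representation sphere with no additional real Floer generators. For (i) this follows cleanly from the existence of the $\iota$-invariant PSC metric, but for (ii) it is the only nontrivial step: it requires either a direct plumbing/spectral-sequence analysis of the real SWF of $\Sigma(2,p,q)$, or a transfer argument reducing it to facts already known for $\Pin(2)$-equivariant SWF of Brieskorn spheres. Once this collapse is in place, the numerical identifications with $-\sigma(K(p,q))/16$ and $-\bar{\mu}(\Sigma(2,p,q))/2$ are routine signature/plumbing computations.
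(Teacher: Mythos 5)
Your sketch of part (i) is essentially the paper's own argument: identify $\Sigma(K(p,q))$ with $L(p,q)$, observe that the round metric has positive scalar curvature and is invariant under the covering involution, conclude that the real SWF spectrum class is $[(S^0,0,n(L(p,q),\frakt,g)/2)]$, and then read off all three invariants from the proposition on SWF-spherical triples (\cref{spherical}). The match with $-\sigma(K(p,q))/16$ is indeed a routine spectral-flow/signature identification. This is correct and aligns with the paper, which bundles this computation by citing \cite[Example~3.56]{KMT}.

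Part (ii) is where there is a genuine gap. You correctly identify $\Sigma(T(p,q)) \cong \Sigma(2,p,q)$ and note that the covering involution for $T(p,q)$ sits inside the Seifert $S^1$-direction (it is $-1 \in S^1$ acting via the Seifert $\C^*$-action), which is a different involution from the complex conjugation fixing the Montesinos knot $M(2,p,q)$ — the latter being the one analyzed in Montague's examples. However, the operative claim in your proposal, namely that the $I$-invariant fixed subspectrum ``is again a suspension of a single representation sphere'' for all $\Sigma(2,p,q)$ with the torus-knot involution, is asserted rather than proved; you yourself flag it as ``the only nontrivial step'' that would require ``a direct plumbing/spectral-sequence analysis \ldots\ or a transfer argument.'' That unresolved step \emph{is} the content of the proof: the paper handles it by citing the explicit computation in \cite[Theorem~3.58]{KMT}, which establishes SWF-sphericity for these Brieskorn spheres with precisely this involution by analyzing the $I$-invariant part of the Seiberg--Witten configuration space directly, rather than by passing through the $\Pin(2)$-equivariant picture. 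Note also that the distinction between the two involutions on $\Sigma(2,p,q)$ is not a formality: the paper's \cref{ex Brieskorn sphere with Montesinos} shows that for the Montesinos-knot involution the three invariants $\underline{\delta}_R,\delta_R,\bar{\delta}_R$ are \emph{not} all equal on $\Sigma(2,3,12k\pm1)$, so a blanket appeal to ``known facts about $\Pin(2)$-equivariant SWF of Brieskorn spheres'' would give the wrong answer if one conflated the two. Your outline is on the right track geometrically, but it does not constitute a proof of the sphericity claim, and hence does not prove the equality of the three $\delta_R$-invariants or the identification with $-\bar\mu(\Sigma(2,p,q))/2$.
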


Also, the following computations follow from Montague's computations of equivariant Seiberg--Witten Floer stable homotopy types \cite{Ian22}. 
 
\begin{ex}
   For Montesinos knots of type $M(2,3, 6n \pm 1)$, we have the following computations (see \cref{ex Brieskorn sphere with Montesinos}): 
   \begin{center}
\begin{tabular}{| c | |  c | c | c | c |}
\hline
Montesinos knot & {$\bar{\delta}_R$} & {$\delta_R$} & {$\underline{\delta}_R$} \\
\hline
\hline
$M(2,3,12k-1)$  & $1/2$ & $1/2$ & $0$  \\
\hline
$M(2,3,12k-5)$  & $0$ & $0$ & $-1/2$  \\
\hline
$M(2,3,12k+1)$  & $0$ & $0$ & $0$  \\
\hline
$M(2,3,12k+5)$  & $1/2$ & $1/2$ & $1/2$ \\
\hline
\end{tabular}
\end{center} 
\end{ex}

Now, we focus on applications of \cref{theo: main} and \cref{theo: main knot}.

\subsection{Applications to non-orientable surfaces in $D^4$ bounded by torus knots}

Recently, using several types of Floer theories, the {\it non-orientable 4-genus} has been studied, for example see \cite{B14, Lo19, Al20,BKST21,Jo22}. 
For a given knot $K$ in $S^3$, we focus on topological types of smoothly and properly embedded possibly non-orientable surfaces in $D^4$ bounded by $K$. For such a surface $S$, the {\it non-orientable genus } $h(S)$ of $S$ is defined by $h(S):= b_1(S)$. The non-orientable 4-genus is the minimum number of $h(S)$ for all such $S$. Moreover, there is also another topological invariant of the embedding $S$, called the {\it normal Euler number} $S \circ S$. The number $S \circ S$ is defined as the evaluation of the twisted Euler class of a normal bundle of $S$ by the fundamental class of $S$ (in the orientation local system of $S$). The following question is natural to ask regarding topology of non-orientable surfaces:
\begin{ques}[\cite{Al20}]
\label{ques} For a given knot $K$ in $S^3$, what is the set of realizable pairs
\[
(S \circ S, h(S)) \in \Z \times \Z_{>0},
\]
where $S$ are properly and smoothly embedded (possibly non-orientable) surfaces in $D^4$ bounded by $K$.
\end{ques}
Even for torus knots, \cref{ques} has not been solved completely.
We focus on torus knots of type $T(3,q)$.
By using the Heegaard Floer $d$-invariant and the upsilon invariant $\Upsilon$ \cite{OSS17}, Allen~\cite{Al20} detected the set all such possible pairs $(S \circ S, h(S))$ for positive torus knots $T(3, 6n+4)$ and $T(3,6n+5)$.
By a similar argument, she \cite{Al20} detected  the realizable pairs $(S \circ S, h(S))$ for $T(3, 6n+1)$ and $T(3,6n+2)$ except for the following cases:
\begin{itemize}
    \item[(i)] for $T(3, 6n+1)$, 
    $
    (e, h) =(8/3 (1-n) +2+2m  ,1+m);
    $
    \item[(ii)]for $T(3, 6n+2)$, 
    $
    (e, h) =(8/3 (2-n) +2+2m  ,3+m)$, 
\end{itemize}
where $m$ is a non-negative integer.
Related the remaining parts, Allen conjectured the following.
\begin{conj}[\cite{Al20}]
Both of (i) and (ii) are {\it not} realizable. 
\end{conj}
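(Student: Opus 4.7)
The plan is to apply the unoriented genus bound of \cref{unoriented bound} directly in case (i), and to reduce case (ii) to an analogous application of \cref{theo: main knot}(iii) via a band cobordism. In both cases the strategy is the same: verify the hypotheses of the relevant Fr\o yshov-type inequality, substitute the conjectured pair $(S \circ S, b_1(S))$, observe that the free parameter $m$ cancels, and show that the resulting integer inequality on the knot signature fails.

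In case (i), with $K = T(3, 6n+1)$, the double branched cover of $S^3$ along $K$ is the integer homology Brieskorn sphere $\Sigma(2,3,6n+1)$, so $\det(K) = 1$. Standard Heegaard Floer computations for Brieskorn spheres give $d(\Sigma(2,3,6n+1)) = 0$, hence the Manolescu--Owen invariant $\delta(K)$ vanishes. \cref{computation}(ii) then identifies $\delta_R(K) = -\bar\mu(\Sigma(2,3,6n+1))/2$, which is strictly negative because the Neumann--Siebenmann invariant is positive on this family (obtained from the Seifert-fibered plumbing description). All three hypotheses of \cref{unoriented bound} hold, and substituting $b_1(S) = 1 + m$ and $S\circ S = 8(1-n)/3 + 2 + 2m$ into the conclusion
\[
-\sigma(K) + \tfrac{1}{2}(S\circ S) + 1 \leq b_1(S),
\]
the $m$-terms cancel, leaving $-\sigma(T(3,6n+1)) \leq (4n-7)/3$. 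The classical torus-knot signature formula gives $-\sigma(T(3,6n+1)) = 8n$, which far exceeds $(4n-7)/3$ for every $n \geq 1$, producing the required contradiction.

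Case (ii) requires modification because $\det(T(3, 6n+2)) = 3 \neq 1$ and the knot is not covered by \cref{computation}. I would attach a single oriented band to the hypothesized surface $S \subset D^4$ to extend it to a cobordism from a carefully chosen auxiliary link $L'$ to nothing (equivalently, to a cobordism from $T(3,6n+2)$ to $L'$ in $S^3 \times [0,1]$), where $L'$ is selected so that the grading hypothesis \eqref{assumption delta} of \cref{theo: main knot}(iii) holds with $\fraks$ a real spin$^c$ structure and $\delta_R(L')$ is computable via \cref{computation}(i). Running the same cancellation inside \eqref{ineq1} then yields a numerical inequality that the conjectured pair violates. Alternatively, the crossing-change bounds of \cref{crossing change} can relate $\delta_R(T(3, 6n+2))$ directly to $\delta_R(T(3, 6n+1))$ via a short sequence of crossing changes, reducing to case (i).

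The principal obstacle is case (ii): arranging an auxiliary link $L'$ (or a crossing-change sequence) that simultaneously matches the Fr\o yshov grading condition \eqref{assumption delta}, allows the free parameter $m$ to cancel inside \eqref{ineq1}, and produces a numerical inequality failing for the conjectured pair. A secondary technical point is verifying the positivity of $\bar\mu(\Sigma(2,3,6n+1))$ uniformly in $n$, which should follow from the Neumann--Siebenmann recursion for Seifert-fibered homology spheres but requires careful sign bookkeeping under the conventions in \cref{computation}.
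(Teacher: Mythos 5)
The target statement is a conjecture quoted from Allen, and the paper does not prove it in full: \cref{torus non orientable} settles only case (i), and the paper says so explicitly (``\cref{unoriented bound} enables us to prove the half of the conjecture''). So there is no paper proof of case (ii) against which your attempt could be checked. Your case (i) argument is the paper's: verify $\det(T(3,6n+1))=1$, $\delta(T(3,6n+1))=0$, and $\delta_R(T(3,6n+1))<0$, then apply \cref{unoriented bound} and show the conjectured pair fails the strengthened inequality. Two execution points. The paper states $\delta_R(T(3,6n+1))=-\tfrac12$ directly, whereas routing through $\delta_R=-\bar{\mu}(\Sigma(2,3,6n+1))/2$ as you do leaves you owing a separate positivity argument for $\bar{\mu}$ on this family, which you flag but do not carry out. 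More seriously, the paper's own computation asserts that the conjectured pair sits exactly on the homological wall, $-\sigma(K)+\tfrac12 S\circ S = b_1(S)$, so that the $+1$ furnished by \cref{unoriented bound} is the entire content of the contradiction; your figures $-\sigma(T(3,6n+1))=8n$ and $(4n-7)/3$ would instead make the conjectured pair violate even the trivial bound $\lvert\sigma(K)-\tfrac12 S\circ S\rvert \le b_1(S)$, rendering the conjecture vacuous and requiring no Floer input at all. This tells you that your signature formula and your reading of Allen's $(e,h)$ formula do not fit together, and one of them needs to be revisited.

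Your case (ii) is not a proof: you sketch two possible reductions (a band cobordism to an auxiliary link, or a crossing-change induction to $T(3,6n+1)$) and then honestly report that you cannot make either concrete. That honesty is appropriate, but the result is that you have not proved the statement you claim to. Because $\det(T(3,6n+2))=3\ne1$, \cref{unoriented bound} does not apply, and nothing in the paper supplies the auxiliary link, the spin$^c$ structure satisfying \eqref{assumption delta}, or the crossing-change chain with controlled signature jumps that your plan requires. Since the paper itself leaves case (ii) open, you should state plainly that case (ii) remains conjectural rather than present an unfinished plan as a proof.
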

\cref{unoriented bound} enables us to prove the half of the conjecture. 
\begin{theo}\label{torus non orientable}
The case (i) is not realizable. 
\end{theo}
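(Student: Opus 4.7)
The plan is to apply \cref{unoriented bound} to $K = T(3, 6n+1)$ and derive a contradiction from the hypothetical topological type $(S \circ S, h(S)) = (\tfrac{8}{3}(1-n) + 2 + 2m, 1+m)$, for any $m \geq 0$ and $n \geq 1$ for which the pair is integral.

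First I would verify the three hypotheses of \cref{unoriented bound} for $K = T(3, 6n+1)$. The determinant condition $\det(T(3, 6n+1)) = 1$ follows from evaluating the Alexander polynomial $\Delta_{T(p,q)}(t) = (t^{pq}-1)(t-1)/((t^p-1)(t^q-1))$ at $t = -1$, using that both $3$ and $6n+1$ are odd. The double branched cover is the Brieskorn homology sphere $\Sigma(2, 3, 6n+1)$, whose Heegaard Floer $d$-invariant vanishes by the standard computation, yielding $\delta(T(3, 6n+1)) = 0$. For the sign condition, I would invoke \cref{computation}(ii) to write $\delta_R(T(3, 6n+1)) = -\tfrac{1}{2}\bar{\mu}(\Sigma(2, 3, 6n+1))$ and then appeal to the strict positivity of the Neumann--Siebenmann invariant of $\Sigma(2, 3, 6n+1)$ for $n \geq 1$.

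Assuming for contradiction that such an $S \subset D^4$ exists, I would substitute $b_1(S) = 1+m$ and $S \circ S = \tfrac{8}{3}(1-n) + 2 + 2m$ into the inequality $-\sigma(K) + \tfrac{1}{2} S \circ S + 1 \leq b_1(S)$ of \cref{unoriented bound}. The $m$-dependence cancels exactly, reducing the conclusion to
\[
-\sigma(T(3, 6n+1)) \leq \frac{4n-7}{3}.
\]
Plugging in the classical value $\sigma(T(3, 6n+1)) = -8n$ for $n \geq 1$ (from Brieskorn's lattice-point formula, or directly from the signature tables for $(3,q)$-torus knots), the inequality reads $8n \leq (4n-7)/3$, equivalently $20n \leq -7$, which fails for every $n \geq 1$. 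This is the desired contradiction.

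The main technical obstacle is condition~(iii): one must reconcile the sign conventions for $\delta_R$, for the identification in \cref{computation}(ii), and for the Neumann--Siebenmann invariant in order to verify $\delta_R(T(3, 6n+1)) < 0$ uniformly in $n$. Once these normalizations are pinned down, the remainder of the argument is a short arithmetic comparison between a linear quantity on the left and an affine quantity with much smaller slope on the right.
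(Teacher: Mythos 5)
Your overall plan --- feed $K = T(3,6n+1)$ and the case~(i) surface data into \cref{unoriented bound} and derive a contradiction --- coincides exactly with the paper's proof. The determinant-one and Manolescu--Owen verifications are fine.

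The problem is that the load-bearing hypothesis, condition (iii) of \cref{unoriented bound}, namely $\delta_R(T(3,6n+1)) < 0$, is not actually established. You reduce it via \cref{computation}(ii) to the claim that $\bar\mu(\Sigma(2,3,6n+1)) > 0$ for all $n \geq 1$, and then declare that this follows from ``the strict positivity of the Neumann--Siebenmann invariant.'' No such positivity fact is cited or argued, and it is precisely what makes the theorem non-trivial; you yourself call it ``the main technical obstacle'' but leave it unresolved. The sign cannot be waved away either: $\Sigma(2,3,6n+1)$ bounds a spin $4$-manifold (the Milnor fiber) of signature $-8n$, so its Rokhlin invariant is $n \bmod 2$, which forces $\bar\mu(\Sigma(2,3,6n+1)) \equiv n \pmod 2$. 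Hence $\bar\mu$ cannot be a fixed positive odd integer for all $n$, and any positivity claim requires an honest evaluation of the Neumann--Siebenmann invariant separately in each parity (e.g.\ via the plumbing/Wu-class formula). The paper's own proof merely asserts $\delta_R(T(3,6n+1)) = -\tfrac12$ without reference, so this is a real gap you need to fill rather than a standard fact you can quote.

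A secondary concern: your final reduction $20n \le -7$ does not match the paper's record that case~(i) \emph{saturates} the classical bound $-\sigma(K)+\tfrac{1}{2}S\circ S = b_1(S)$, with the conclusion then driven by the $+1$ refinement in \cref{unoriented bound}. With $\sigma(T(3,6n+1)) = -8n$ and the case~(i) data read literally, your computation implies that even the classical obstruction $-\sigma(K)+\tfrac{1}{2}S\circ S \le b_1(S)$ (the $b^+(S)\ge 0$ bound) already fails, which is inconsistent with Allen having been unable to exclude these pairs by elementary means. This signals a sign or parsing mismatch in the normal Euler number (Allen may take $e = -S\circ S$, or the printed formula may carry a typo). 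Your logical chain still ends in a contradiction either way, but recovering the equality $-\sigma(K)+\tfrac12 S\circ S = b_1(S)$ is the arithmetic check that aligns your argument with the actual structure of the theorem.
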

Combined with Allen's result, we can detect all realizable pairs. 
\begin{cor}
The set of realizable pairs $(S \circ S, h(S))$ for $T(3, 6n+1)$ is given by
\[
(-2  - 16n \pm 2m, 1 + m + 2l) , \ m,l \geq 0. 
\]
\end{cor}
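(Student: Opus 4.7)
The plan is to deduce the corollary as a direct combination of \cref{torus non orientable} with the prior analysis of Allen~\cite{Al20}. I first recall, as summarized in the paragraph preceding \cref{torus non orientable}, that Allen's use of the Heegaard Floer $d$-invariant and the upsilon invariant $\Upsilon$ gives a complete description of which pairs $(S\circ S,h(S))$ are realizable for the torus knot $T(3,6n+1)$, \emph{except} for one potentially exceptional family, namely case (i),
\[
\left(\tfrac{8}{3}(1-n)+2+2m,\ 1+m\right), \qquad m\geq 0.
\]
Concretely, Allen's obstruction rules out every pair lying outside the union of this family with $\{(-2-16n\pm 2m,\ 1+m+2l):m,l\geq 0\}$, and she constructs surfaces realizing every pair in the latter set by combining a standard non-orientable spanning surface of $T(3,6n+1)$ with elementary modifications: connect sum with unknotted $\RP^2$'s of Euler number $\pm 2$ (which shifts $S\circ S$ by $\pm 2$ and increments $h(S)$ by $1$), and tubing by an orientable handle (which leaves $S\circ S$ unchanged and increments $h(S)$ by $2$, producing the $2l$ parameter).

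Next I would invoke \cref{torus non orientable}, whose content is precisely that no surface smoothly and properly embedded in $D^4$ and bounded by $T(3,6n+1)$ has invariants in case (i). Once this exceptional family is eliminated, the realizable pairs are exactly those exhibited by Allen's construction, namely the set $\{(-2-16n\pm 2m,\ 1+m+2l):m,l\geq 0\}$, which is the content of the corollary.

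The main obstacle does not reside in the corollary itself, which amounts to assembling already established statements, but in the proof of \cref{torus non orientable}. At this corollary level the only care required is bookkeeping: one must verify that the sign conventions for $\sigma(T(3,6n+1))$ and for the normal Euler number match those in \cite{Al20}, and that Allen's constructive side indeed sweeps out the entire set $\{(-2-16n\pm 2m,\ 1+m+2l):m,l\geq 0\}$ with no gaps, so that combining the construction with the obstructions from $d$, $\Upsilon$, and \cref{torus non orientable} yields an exact equality between the realizable set and the set claimed.
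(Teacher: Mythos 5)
Your proposal is correct and follows essentially the same route as the paper, which offers no separate argument for the corollary beyond the remark that it follows by combining \cref{torus non orientable} with Allen's classification in \cite{Al20}. The only remaining work is indeed the bookkeeping you flag (matching conventions and confirming that Allen's constructions sweep out exactly the set $\{(-2-16n\pm 2m,\ 1+m+2l):m,l\geq 0\}$), which is implicit in the paper's citation of \cite{Al20}.
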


\subsection{Nielsen realization for non-spin 4-manifolds}

Next application is about the Nielsen realization problem.
Given a smooth manifold $W$, a subgroup $G$ of $\pi_0(\Diff(W))$ is said to be {\it realizable in $\Diff(W)$} when there is a section $s : G \to \Diff(W)$ of the natural map $\Diff(W) \to \pi_0(\Diff(W))$ over $G$.
Recall that, for a $(-1)$-sphere $S$ in an oriented 4-manifold $W$, we obtain an orientation-preserving diffeomorphism $\rho_{S} : W \to W$ called the {\it reflection}, which is locally modeled on the complex conjugation on $-\CP^2$.
It is easy to see that
the diffeomorphism $\rho_{S}$ generates an order 2 subgroup of $\pi_{0}(\Diff(W))$.
(See \cref{lem: reflection order 2}.)

\begin{theo}
\label{theo: Nielsen}
Let $W$ be a closed oriented smooth 4-manifold with $\sigma(W) \neq -2$.
Let $\fraks$ be a spin$^{c}$ structure on $W$ and let $S$ be a smoothly embedded $(-1)$-sphere in $W$.
If $c_{1}(\fraks)^{2} - \sigma(W)>0$ and $\rho_{S}^{\ast}\fraks \cong \bar{\fraks}$, then $\rho_{S}$ is not homotopic to any smooth involution.
\end{theo}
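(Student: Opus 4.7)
The plan is to argue by contradiction: suppose $\rho_S$ is homotopic to a smooth involution $\iota : W \to W$, and then apply Theorem~\ref{theo: main} to the real spin$^c$ $4$-manifold $(W,\fraks,\iota)$, whose conclusion $c_1(\fraks)^2 - \sigma(W) \le 0$ immediately contradicts the hypothesis. The body of the proof is thus verifying the hypotheses of Theorem~\ref{theo: main} for $\iota$. Since the isomorphism class of $h^{\ast}\fraks$ depends only on the homotopy class of $h : W \to W$, we have $\iota^{\ast}\fraks \cong \rho_S^{\ast}\fraks \cong \bar{\fraks}$. For the positivity condition $b^{+}_{\iota}(W) = b^{+}(W)$, since homotopic maps induce the same action on cohomology, I would compute $\rho_S^{\ast}$ on $H^{2}(W;\R)$ directly: as $\rho_S$ is the identity away from a tubular neighborhood of $S$ and is modeled there on complex conjugation of $-\CP^{2}$, it acts by $-1$ on the negative-definite line $\R[S]$ (using $[S]\cdot[S] = -1$) and by the identity on the orthogonal complement $[S]^{\perp}$. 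In particular the $+1$-eigenspace of $\rho_S^{\ast}$ contains a maximal positive-definite subspace of $H^{2}(W;\R)$, giving $b^{+}_{\rho_S}(W) = b^{+}(W)$.

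The decisive hypothesis to verify is that $\iota$ is of odd type, equivalently that $W^{\iota}$ has a non-empty codimension-$2$ component, and this is where the assumption $\sigma(W) \ne -2$ enters. From the eigenspace decomposition above, the $G$-signature of $\iota$ on $H^{2}(W;\R)$ is
\[
\sigma(\iota, W) \;=\; (\sigma(W)+1) - (-1) \;=\; \sigma(W)+2.
\]
On the other hand, the Atiyah--Singer $G$-signature formula for an orientation-preserving smooth involution on a closed $4$-manifold expresses $\sigma(\iota, W)$ as a sum of local contributions: each isolated fixed point is locally modeled on $-\id$ on $\R^{4}$ and contributes $\prod_{j}\frac{\lambda_{j}+1}{\lambda_{j}-1} = 0$ because both rotation eigenvalues equal $-1$, while each $2$-dimensional fixed component $F$ contributes its normal Euler number $[F]\cdot_{W}[F]$. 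Writing $W^{\iota}_{2}$ for the $2$-dimensional part of the fixed set, we conclude
\[
[W^{\iota}_{2}]\cdot_{W}[W^{\iota}_{2}] \;=\; \sigma(W)+2 \;\neq\; 0,
\]
so $W^{\iota}_{2}$ is non-empty, and by the criterion recalled after the definition of real spin$^c$ structures this certifies that $\iota$ is of odd type. Applying Theorem~\ref{theo: main} to $(W,\fraks,\iota)$ then produces the desired contradiction.

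The place where I anticipate the most care is unpacking the precise definition of ``odd type'' to confirm that the mere existence of a codimension-$2$ fixed component suffices: the definition likely encodes additional data about the way $\iota$ lifts to $\fraks$ near the fixed set, and this data must be shown to be forced by the homotopy class of $\iota$ together with $\iota^{\ast}\fraks\cong\bar{\fraks}$. A secondary point is reconciling the $b_{1}(W)=0$ hypothesis of Theorem~\ref{theo: main} with the statement of Theorem~\ref{theo: Nielsen}, which does not explicitly assume it; one expects this to be handled by a version of Theorem~\ref{theo: main} valid in the presence of $b_1(W)$, since all of the input to the argument above (the cohomological action of $\iota$ and the $G$-signature computation) is insensitive to $H^{1}(W;\R)$.
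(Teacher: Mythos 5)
Your proof is correct and essentially the same as the paper's: the paper also computes that $\rho_S^\ast$ is the identity on $[S]^\perp$ and $-1$ on $\R[S]$, derives $b^+_\iota(W) = b^+(W)$ and $\sigma_\iota(W) = \sigma(W)+1$ (equivalent to your $G$-signature $\sigma(W)+2$ via $\mathrm{sign}(\iota,W) = 2\sigma_\iota(W) - \sigma(W)$), uses the $G$-signature theorem and $\sigma(W)\neq -2$ to force a codimension-$2$ fixed component (the paper packages this as \cref{lem: odd sign half}), and then invokes \cref{theo: main}. Your closing concern about whether a nonempty codimension-$2$ fixed set certifies odd type is settled by the paper's \cref{lem :codimension 2}, which shows this is precisely equivalent to $I^2 = 1$; your other concern about the missing $b_1(W)=0$ hypothesis is a legitimate observation about the paper's statement, but the paper's own proof glosses over this in exactly the same way.
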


\begin{ex}
\label{ex: blow up}
Let $W'$ be a closed oriented spin smooth 4-manifold with $\sigma(W')<0$.
Set $W = W' \# (-\CP^{2})$.
Let $S$ be the canonical $(-1)$-sphere of the $(-\CP^{2})$-component.
Then $\rho_{S} : W \to W$ is not homotopic to any smooth involution.
Indeed, let $\fraks$ be a spin$^{c}$ structure on $W$ such that $c_{1}(\fraks)|_{W'}=0$ and $c_{1}(\fraks)|_{-\CP^{2}}$ is a generator of $H^{2}(-\CP^{2})$. 
Then the reflection $\rho_{S} : W \to W$ satisfies the assumption of \cref{theo: Nielsen} for $\fraks$.
\end{ex}

\cref{ex: blow up} immediately implies:

\begin{cor}
\label{cor: Nielsen blowup}
Let $W'$ be a closed oriented spin smooth 4-manifold with $\sigma(W')<0$.
Set $W = W' \# (-\CP^{2})$ and let $S$ be the exceptional sphere in the $(-\CP^2)$-component.
Then the order 2 subgroup $G$ of $\pi_0(\Diff(W))$ generated by the mapping class $[\rho_S]$ of the reflection about $S$ is not realizable in $\Diff(W)$.
\end{cor}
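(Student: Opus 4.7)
The plan is to deduce the corollary directly from \cref{theo: Nielsen} (applied in the form of \cref{ex: blow up}) by unwinding the definition of realizability. Since the theorem has essentially done all the geometric work, the remaining task is purely formal.

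First I would fix the set-up: let $G = \langle [\rho_S] \rangle \leq \pi_0(\Diff(W))$, which is cyclic of order $2$ by the remark preceding \cref{theo: Nielsen} (that the mapping class of a reflection about a $(-1)$-sphere has order $2$). Suppose for contradiction that $G$ is realizable, so there is a section $s : G \to \Diff(W)$ of the quotient map $\Diff(W) \to \pi_0(\Diff(W))$ over $G$. Let $\phi := s([\rho_S]) \in \Diff(W)$. Because $s$ is a group homomorphism into $\Diff(W)$ and $[\rho_S]$ has order $2$ in $G$, we have $\phi \circ \phi = s([\rho_S]^2) = s(e) = \id_W$. Thus $\phi$ is a smooth involution of $W$.

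Next, since $[\phi] = [\rho_S]$ in $\pi_0(\Diff(W))$, the diffeomorphisms $\phi$ and $\rho_S$ lie in the same path component of $\Diff(W)$; in particular they are smoothly isotopic, hence homotopic as continuous maps $W \to W$. So $\phi$ is a smooth involution homotopic to $\rho_S$. This directly contradicts the conclusion of \cref{ex: blow up}, which asserts (via \cref{theo: Nielsen} applied to the spin$^c$ structure $\fraks$ with $c_1(\fraks)|_{W'} = 0$ and $c_1(\fraks)|_{-\CP^2}$ a generator) that $\rho_S$ is not homotopic to any smooth involution. The assumption $\sigma(W')<0$, together with $\sigma(-\CP^{2}) = -1$, gives $\sigma(W) = \sigma(W') - 1 \neq -2$ except possibly when $\sigma(W') = -1$, and one checks that in any case $\sigma(W' \# (-\CP^{2})) \neq -2$ and $c_{1}(\fraks)^{2} - \sigma(W) = -1 - \sigma(W') > 0$, so the hypotheses of \cref{theo: Nielsen} are indeed satisfied. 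This contradiction shows that no such section $s$ exists, and so $G$ is not realizable in $\Diff(W)$.

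There is essentially no main obstacle here: the content of the statement is entirely supplied by \cref{theo: Nielsen} and \cref{ex: blow up}. The only subtlety worth being careful about is verifying that the hypothesis $\sigma(W) \neq -2$ of \cref{theo: Nielsen} is consistent with $\sigma(W') < 0$ (it is, after possibly excluding the borderline case $\sigma(W') = -1$ by a direct check with the given spin$^c$ structure), and that isotopy through $\Diff(W)$ indeed implies homotopy as maps, which is immediate.
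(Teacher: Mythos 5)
Your overall approach is exactly the paper's: \cref{cor: Nielsen blowup} is stated as an immediate consequence of \cref{ex: blow up} (which is itself an application of \cref{theo: Nielsen}), and your unwinding of realizability into the existence of a smooth involution in the mapping class of $\rho_S$ is the intended argument.

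One point is left vague, however. You write that $\sigma(W)=\sigma(W')-1\neq -2$ ``except possibly when $\sigma(W')=-1$, and one checks that in any case $\sigma(W'\#(-\CP^2))\neq -2$,'' but you never say \emph{why} the case $\sigma(W')=-1$ cannot occur. The relevant fact is Rokhlin's theorem: since $W'$ is a closed spin $4$-manifold, $\sigma(W')\equiv 0\pmod{16}$, so the hypothesis $\sigma(W')<0$ forces $\sigma(W')\leq -16$ and hence $\sigma(W)\leq -17$, in particular $\sigma(W)\neq -2$. Without this the hypothesis of \cref{theo: Nielsen} is not actually verified. A second, purely arithmetic slip: with $c_1(\fraks)|_{W'}=0$ and $c_1(\fraks)|_{-\CP^2}$ a generator one has $c_1(\fraks)^2=-1$ and $\sigma(W)=\sigma(W')-1$, so $c_1(\fraks)^2-\sigma(W)=-\sigma(W')$, not $-1-\sigma(W')$; the conclusion $c_1(\fraks)^2-\sigma(W)>0$ is unaffected, but you should fix this.
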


To the best of the author's knowledge, \cref{cor: Nielsen blowup} gives the first example of a non-spin 4-manifold that admits non-realizable finite subgroup of the mapping class group.
The first example of a 4-manifold that is shown to admit a non-realizable finite subgroup is some nilmanifold \cite{RS77} due to Raymond and Scott.
(Note that every nilmanifold is parallelizable and hence spin.)
Recently, many simply-connected spin 4-manifolds are shown to admit non-realizable finite subgroups: for $K3$ by Baraglia and the first author \cite{BK21} and by Farb and Looijenga \cite{FL21}, and later for more general spin 4-manifolds by the first author \cite{K22}.

\begin{rem}
After the first version of this paper appeared on arXiv, the authors were informed that, in their upcoming work \cite{AB23}, Arabadji and Baykur give other examples of non-liftable subgroups of mapping class groups of non-spin 4-manifolds, including irreducible 4-manifolds and definite 4-manifolds.   
\end{rem}

\begin{rem}
Generalizing \cref{cor: Nielsen blowup},
one may also obtain a non-realizable order 2 element of $\pi_0(\Diff(W))$ for $W=W'\#n(-\CP^2)$ for a spin $W$ and for a general $n \geq 1$ by considering connected sums of $\rho_S$ along fixed points.
\end{rem}

\begin{rem}
The assumption that $c_1(\fraks)^2 - \sigma(W)>0$ in \cref{theo: Nielsen} cannot be dropped in general.
To see this, let us consider $-\CP^2$. 
The model reflection $\rho_S : -\CP^2 \to -\CP^2$ about the exceptional sphere $S$ is a smooth involution, while any spin$^c$ structure $\fraks$ on $-\CP^2$ is reversed by $\rho_S$.
This is consistent in \cref{theo: Nielsen}
as $c_1(\fraks)^2 - \sigma(-\CP^2)\leq0$. 

More generally, let $f$ be an orientation-preserving diffeomorphism of $W=\CP^{2} \# n(-\CP^{2})$ with $n \leq 8$ for which $f_\ast : H_2(W) \to H_2(W)$ is an involution.
Then it follows from a result by Lee~\cite[Corollary~1.5, Remark~1.7]{Lee22} that $f$ is topologically isotopic (hence homotopic) to some smooth involution.
This is consistent in \cref{cor: Nielsen blowup}, as $\CP^2\#n(-\CP^2)$ is not diffeomorphic to a manifold of the form $W'\#(-\CP^2)$ with spin $W'$ with  $\sigma(W')<0$. 
\end{rem}

We also give a comparison result on the smooth and topologival Nielsen realization problems.
As a topological version of the above realization problem, given a subgroup $G$ of $\pi_0(\Homeo(W))$, we say that $G$ is realized in $\Homeo(W)$ if there is a section of the natural map $\Homeo(W) \to \pi_0(\Homeo(W))$ over $G$.
One may ask whether there is  a discrepancy between the realization problems in the smooth and topological categories.
Such a comparison result was obtained first by Baraglia and the first author~\cite[Theorem~1.2]{BK21}, and it was generalized in \cite[Theorem~1.3]{K22} by the first author.
These results treated only spin 4-manifolds.
The following theorem is the first comparison result for a non-spin 4-manifold:

\begin{theo}
\label{theo: smooth vs. top Nielsen}
For $p>0$ and $q, r \geq 0$, set $W=pK3\# qS^2\times S^2 \# r(-\CP^2)$.
Then there exists a subgroup 
$G$ of $\pi_0(\Diff(W))$ of order 2 that satisfies the following properties:
\begin{itemize}
\item The group $G$ is not realized in $\Diff(W)$.
\item Define a subgroup $G' \subset \pi_0(\Homeo(W))$ to be the image of $G$ under the map $\pi_0(\Diff(W)) \to \pi_0(\Homeo(W))$.
Then $G'$ is non-trivial and not realized in $\Homeo(W)$.
\end{itemize}
\end{theo}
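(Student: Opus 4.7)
The plan is to split by the value of $r$. When $r=0$, the manifold $W = pK3\# qS^2\times S^2$ is spin and the conclusion is immediate from \cite[Theorem~1.3]{K22}. For $r \geq 1$, my strategy is to combine the new smooth obstruction from \cref{theo: main} with the topological obstructions of \cite{K22} by building a single mapping class that is smoothly obstructed through its action on a real spin$^{c}$ structure supported on the $(-\CP^2)$-summands and topologically obstructed through its restriction to the maximal spin summand.

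Set $W'' = pK3 \# qS^2 \times S^2$, so that $W \cong W'' \# r(-\CP^2)$. By \cite[Theorem~1.3]{K22} I fix an order-$2$ mapping class $[f''] \in \pi_0(\Diff(W''))$ whose image in $\pi_0(\Homeo(W''))$ is non-trivial and not realized in $\Homeo(W'')$; one may further arrange that $f''$ fixes a point and acts trivially on $H^2(W'')$. I then perform equivariant connected sums at $r$ distinct fixed points of $f''$ with copies of $-\CP^2$, and extend $f''$ to a diffeomorphism $f \in \Diff(W)$ by letting it act as the canonical reflection $\rho_{S_i}$ about the exceptional $(-1)$-sphere $S_i \subset -\CP^2_i$ on each added summand. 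The supports can be arranged to commute up to isotopy, so $[f]^2 = 1$, and $G := \langle [f] \rangle$ has order $2$. Its image $G'$ in $\pi_0(\Homeo(W))$ is non-trivial because $f_{\ast}$ acts as $-\id$ on each $H^2(-\CP^2_i; \Z)$.

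For the smooth non-realizability, suppose $h$ is a smooth involution with $[h]=[f]$. Choose the spin$^{c}$ structure $\fraks$ on $W$ with $c_1(\fraks)|_{W''}=0$ and $c_1(\fraks)|_{-\CP^2_i}$ a generator of $H^2(-\CP^2_i; \Z)$; this is compatible with $w_2(W)$ and hence well-defined. Then $f^{\ast}c_1(\fraks) = -c_1(\fraks)$, so $f^{\ast}\fraks \cong \bar{\fraks}$ (using $b_1(W)=0$), and the same holds for the isotopic $h$. After verifying that $h$ is of odd type (inherited from the non-empty codimension-$2$ fixed set present in the model $f$ coming from the $\rho_{S_i}$-pieces) and that $b^{+}_{h}(W) = b^{+}(W)$ (which uses that $f''$ acts trivially on $H^{+}(W'')$ and that the $(-\CP^2)$-summands contribute nothing to $H^{+}$), \cref{theo: main} yields $c_1(\fraks)^{2} - \sigma(W) \leq 0$. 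But $c_1(\fraks)^{2} = -r$ and $\sigma(W) = -16p - r$, so the left-hand side equals $16p > 0$, a contradiction.

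For the topological non-realizability of $G'$, the idea is to reduce to \cite[Theorem~1.3]{K22} on $W''$. Given a topological involution $h$ isotopic to $f$, I would argue that after isotopy through topological involutions $h$ can be made to preserve an equivariant topological connected-sum decomposition $W = W'' \# r(-\CP^2)$, with $h|_{-\CP^2_i}$ the model reflection $\rho_{S_i}$ and $h|_{W''}$ a topological involution realizing $[f'']$, contradicting \cite{K22}. The main obstacle is producing the $h$-invariant topological $3$-spheres that split off the $(-\CP^2)$-summands equivariantly; this is the technical heart of the topological half and is where the most care will be required, presumably via an equivariant version of Freedman--Quinn topological surgery.
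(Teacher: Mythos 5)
Your proposal runs into several genuine problems, both of direction and of substance.

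First, the direction of the second bullet. The paper's own proof in fact concludes that $G'$ \emph{is} realized in $\Homeo(W)$: the diffeomorphism $\tilde g$ constructed there is shown (via Quinn and Perron) to be \emph{topologically isotopic to a locally linear topological involution} $\tilde f$, and that involution realizes $G'$. This is the whole point of a ``comparison result'': the subgroup $G$ is obstructed smoothly but \emph{un}obstructed topologically, exhibiting a discrepancy between the two categories. The word ``not'' in the second bullet of the displayed statement is inconsistent with the proof the paper gives and with the stated purpose of the result. Your proposal tries to establish topological \emph{non}-realizability by ``reducing to \cite[Theorem~1.3]{K22}'', but \cite{K22} supplies topological \emph{realization} on the spin summand, not an obstruction; there is no input there that could be used to rule out a topological involution, and the paper does not attempt any such thing.

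Second, the arrangement ``$f''$ fixes a point and acts trivially on $H^2(W'')$'' is impossible if, at the same time, the image of $[f'']$ in $\pi_0(\Homeo(W''))$ is to be non-trivial. For a simply-connected closed $4$-manifold, the natural map $\pi_0(\Homeo(W''))\to\Aut(H_2(W''),Q_{W''})$ is an isomorphism (Perron, Quinn); if $f''_*=\id$ on $H_2$ then $[f'']$ is the trivial mapping class in $\Homeo$. So you cannot import from \cite{K22} an order-$2$ diffeomorphism that is trivial on intersection-form cohomology \emph{and} is topologically non-trivial. The paper instead takes from \cite{K22} two distinct objects on $W'=pK3\#qS^2\times S^2$: a locally linear topological involution $f$ with $b_f^+(W')=3p+q$, $\sigma_f(W')=-5p+q$ and codimension-$2$ fixed set, and a \emph{diffeomorphism} $g$ with $g_*=f_*$ on $H_2$ and $g^2$ smoothly isotopic to the identity. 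The non-trivial action on $H_2$ is essential both for topological non-triviality and for the odd-type check via \cref{lem: odd sign half}: with $\sigma_{\tilde f}(W)=-5p+q$ and $\sigma(W)/2=-8p-r/2$ one has $\sigma_{\tilde f}(W)\ne\sigma(W)/2$ for all $p>0$, whereas your homologically trivial choice gives $\sigma_h(W)=-16p$, and the check $\sigma_h(W)\neq\sigma(W)/2$ fails exactly when $r=16p$.

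Third, the assertion ``the supports can be arranged to commute up to isotopy, so $[f]^2=1$'' elides the actual difficulty. The mapping class $[f'']$ is of order $2$ but $f''$ itself is not an involution, so the phrase ``equivariant connected sum at $r$ distinct fixed points of $f''$'' is not well-posed; and even in the paper's setup, where the diffeomorphism $\tilde g$ is formed by gluing along a punctured $S^4$, one finds that $\tilde g^2$ is a priori only isotopic to a product of Dehn twists $\tau_1\circ\tau_2$ around the gluing $3$-spheres. Showing this is isotopic to the identity is exactly where \cref{lem: Gian} and \cref{lem: K simultaneous twist} enter. That paragraph of the paper's proof is not decorative; it is the mechanism by which one gets an \emph{order $2$} subgroup $G\subset\pi_0(\Diff(W))$ rather than an order $4$ (or trivial-on-one-factor) one.

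To summarize: your smooth obstruction step (the choice of $\fraks$ with $c_1|_{-\CP^2_i}$ a generator and the computation $c_1(\fraks)^2-\sigma(W)=16p>0$) matches the paper's use of \cref{theo: main}, but the odd-type verification must go through $\sigma_\iota(W)\neq\sigma(W)/2$ rather than by appealing to the fixed set of the homotopy model; your construction of $G$ needs to distinguish between the topological involution and the diffeomorphism and to address the Dehn twist obstruction; and the topological half of the argument should aim to show $G'$ \emph{is} realized (by the topological involution $\tilde f$), not to prove a non-realization that neither the paper's methods nor the content of \cite{K22} could deliver.
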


By the use of \cref{theo: main}, we provide the two applications: regarding non-smoothable and non-orientable surfaces and the Nielsen realization problem.

\subsection{Non-smoothable and non-orientable surfaces}

While non-smoothable orientable surfaces have been well studied by the use of Theorem A, $10/8$ inequality and adjunction inequality combined with Freedman's theory (for example, see \cite[
Proof of Lemma 9.4.2 and Addendum 9.4.4]{GS99}), to the best of our knowledge, construction of non-smoothable and non-orientable surfaces have not been developed well. By combining Rochlin's theorem, Theorem A, or $10/8$ inequality with surgery technique, several constraints have been proven \cite{Ma69, Lo85, LRS15}. 
 In \cite{LRS15}, Levine--Ruberman--Strle gave genus bounds when the ambient 4-manifold is definite or the embedded surface is characteristic.
Below we give non-orientable and non-smoothable surfaces, based on a constraint on surfaces that is not characteristic and the ambient 4-manifold is not definite:

\begin{theo}
\label{theo: unorientable surfaces}
Let $m, k >0$ and $n \geq 0$ and set
\[
X = (m+n)\CP^{2} \# (-n-8)\CP^{2}.
\]
Then there exists a locally flat embedding of $k\RP^{2}$ into $X$ that is not topologically isotopic to any smoothly embedded surface.
More precisely: 
\begin{itemize}
\item  The homology class of the image of the embedding of $k\RP^{2}$ is zero in $H_2(X;\Z/2)$.
In particular, the image of the embedding is not a characteristic surface, i.e. $[k\RP^2] \neq w_2(X)$ in $H^2(X;\Z/2)$.
\item The normal Euler number is given by $4m + 2k$.
\end{itemize}
\end{theo}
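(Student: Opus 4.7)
The theorem splits into two essentially independent parts: realizing a locally flat embedding with the prescribed topological data, and ruling out any smooth representative in its topological isotopy class. For the first part, I would invoke the classical results on locally flat embeddings of non-orientable surfaces in simply-connected 4-manifolds (Lee--Wilczynski, Kreck, Freedman--Quinn), which guarantee the existence of a locally flat $k\RP^{2}\hookrightarrow X$ with any prescribed mod-2 homology class and normal Euler number, subject to the Guillou--Marin parity $e(S)\equiv 2\chi(S)\pmod 4$ and a Kirby--Siebenmann obstruction. Both constraints are satisfied here: $4m+2k\equiv 2(2-k)\pmod 4$, and $X$ is itself smoothable. Explicitly, one may begin with a standard locally flat $k\RP^{2}\hookrightarrow S^{4}\subset X$ of normal Euler $2k$ and raise the Euler number to $4m+2k$ by iterated internal connect sums with locally flat $S^{2}$'s in the $\CP^{2}$-summands of $X$, keeping $[S]=0 \in H_2(X;\Z/2)$ throughout.

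Suppose, for contradiction, that the embedding is topologically isotopic to a smoothly embedded $S\subset X$. Since topological isotopy preserves $[S]\in H_2(X;\Z/2)$ and $S\circ S$, I may take $S$ itself smooth with $[S] \equiv 0 \pmod 2$ and $S\circ S=4m+2k$. The branched double cover $\pi\colon W:=\Sigma(S)\to X$ is then a closed smooth 4-manifold, and the covering involution $\iota$ has fixed set $S$ of codimension $2$ and nonempty, so $\iota$ is of odd type. Using the formulas
\[
\sigma(W) = 2\sigma(X)-\tfrac{1}{2}S\circ S, \quad \chi(W) = 2\chi(X)-\chi(S),
\]
I compute $\sigma(W)=2(m-8)-\tfrac{1}{2}(4m+2k)=-k-16<0$. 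A van Kampen argument using that $\pi_1(X\setminus S)$ is generated by a meridian which bounds in the cover yields $b_1(W)=0$, and the transfer for the branched cover identifies $b^{+}_{\iota}(W)=b^{+}(X)=m+n$ and $b^{+}(W)=m+2n$, so $b^{+}(W)-b^{+}_{\iota}(W)=n$. I would then select a real spin$^{c}$ structure $\fraks$ on $W$ with $c_1(\fraks)^2=0$ (for instance the pullback of an appropriate spin$^c$ structure on $X$, or a spin structure on $W$ itself when available), with the requisite $w_2$-condition checked via Wu's formula.

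In the case $n=0$, the hypothesis $b^{+}_{\iota}(W)=b^{+}(W)$ of \cref{theo: main} is satisfied, and the inequality $c_1(\fraks)^{2}-\sigma(W)\leq 0$ becomes $0\leq -k-16$, a direct contradiction. For $n\geq 1$, \cref{theo: main} does not apply verbatim, and I would instead invoke the generalized real Fr\o yshov inequality phrased in terms of $\bar\delta_R$ and $\underline\delta_R$ (the closed-manifold analogue of \cref{Theorem B for links}, referenced as \cref{theo: main ineq d bar}). The vanishing $\bar\delta_R(S^{3},\frakt)=\underline\delta_R(S^{3},\frakt)=0$ on the trivial real spin structure prevents any $n$-dependent slack from appearing on the right-hand side, so the same conclusion $\sigma(W)\geq 0$ results, again contradicting $\sigma(W)=-k-16<0$.

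The genuine difficulty lies in the range $n>0$, where the extra $\CP^{2}\#(-\CP^{2})$-summands push $b^{+}(W)$ past $b^{+}_{\iota}(W)$ and \cref{theo: main} ceases to apply directly. The key technical input is the $\bar\delta_R,\underline\delta_R$ refinement of the real Fr\o yshov invariant together with its coincidence on the trivial real spin$^c$ configurations on $S^{3}$, which ensures no $n$-dependent correction term appears and the contradiction with $\sigma(W)<0$ goes through uniformly in $n$. A secondary point to verify carefully is that the real spin$^c$ lift with controlled $c_1^2$ genuinely exists and that $b_1(W)=0$, so that the hypotheses of the Fr\o yshov inequality are available in every case.
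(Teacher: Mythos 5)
Your overall strategy---pass to the branched double cover $W=\Sigma(S)$ of a hypothetical smooth representative $S$ and contradict the real Fr\o yshov inequality of \cref{theo: main}---is the same as the paper's, but you run the argument in the opposite direction. The paper obtains \cref{theo: unorientable surfaces} as an essentially immediate corollary of \cref{theo: nonsmoothable action}: a locally linear odd involution $\iota$ is built on $W=(m+2n)\CP^2\#(-m-2n-k-16)\CP^2$ by equivariant connected sum, $X$ arises as $W/\iota$, and the $k\RP^2$ is simply the image of the fixed locus; if this surface were isotopic to a smooth one, the branched cover would yield a smooth odd involution acting on homology as $\iota$ does, contradicting the non-smoothability already proved. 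You instead start with the surface, invoking external realization theorems for its existence rather than obtaining it for free from the involution; that is a reasonable alternative for the existence half, and your numerical computation of $\sigma(W)$ agrees with the paper's.

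The serious issue is the case $n>0$. You compute correctly via the transfer that $b^+_\iota(W)=b^+(X)=m+n$ while $b^+(W)=m+2n$, so \cref{theo: main} does not apply directly when $n>0$; but the fix you propose via $\bar\delta_R$, $\underline\delta_R$ cannot work. Those invariants and the Theorem~B inequality \eqref{real Theorem B} (and \cref{theo: main ineq d bar}) are defined and proved in the paper only for \emph{spin} structures, whereas $W$ is a connected sum of $\pm\CP^2$'s and hence not spin, so there is no spin structure on $W$ to which they could even refer. Moreover, even in the spin setting \eqref{real Theorem B} covers $b^+-b^+_\iota=1$ only; the paper has no statement for $b^+-b^+_\iota\geq 2$. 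The vanishing $\bar\delta_R(S^3)=\underline\delta_R(S^3)=0$ is beside the point here since $W$ is closed. (For what it is worth, the paper's proof of \cref{theo: nonsmoothable action} records $b^+_\iota(W)=m$, which is inconsistent with the transfer identity $b^+_\iota(W)=b^+(X)=m+n$; as written, that argument appears to face the same $n>0$ obstacle, but this does not make your $\bar\delta_R,\underline\delta_R$ workaround correct.)

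A smaller but genuine problem: your candidate real spin$^c$ structure is wrong. The pullback $\pi^*\fraks_X$ satisfies $\iota^*\pi^*\fraks_X\cong\pi^*\fraks_X$, but the real condition $\iota^*\fraks\cong\bar\fraks$ forces $c_1(\fraks)$ into the $(-1)$-eigenspace of $\iota^*$, not the $(+1)$-eigenspace; and $c_1(\fraks)^2=0$ is in general unavailable since characteristics on $W$ satisfy $c^2\equiv -k\pmod{8}$. The paper takes $c_1(\fraks)$ supported on the $k(-\CP^2)$ summand, where the complex conjugation $f_1$ sends each exceptional class $E_i$ to $-E_i$; this class is anti-invariant, gives $c_1(\fraks)^2=-k$, and yields the intended violation $c_1(\fraks)^2-\sigma(W)=16>0$ once \cref{theo: main} is in force.
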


Let us remark that \cref{theo: unorientable surfaces} is almost a rephrase of the following application to detect non-smoothable involutions:

\begin{theo}
\label{theo: nonsmoothable action}
Let $m, k >0$ and $n \geq 0$ and set
\[
W = (m+2n)\CP^{2}\#(-m-2n-k-16)\CP^{2}. 
\]
Then there exists an orientation-preserving non-smoothable locally linear involution $\iota : W \to W$ with 
$b^{+}_{\iota}(W)=m$, $b^{-}_{\iota}(W)=n+8$. 
\end{theo}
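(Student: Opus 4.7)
The plan is to realize $(W,\iota)$ as the double branched cover of a suitably chosen $4$-manifold along a locally flat non-orientable surface, and to deduce non-smoothability of $\iota$ from \cref{theo: unorientable surfaces} via this construction. Concretely, the plan is to set $X := m\CP^2 \# (-n-8)\CP^2$ and produce a locally flat embedding $F \hookrightarrow X$ with $F$ homeomorphic to $(2n+k)\RP^2$, with $[F] \equiv 0 \pmod 2$, and with normal Euler number $e(F) = 4m-4n+2k$, such that $F$ is not topologically isotopic to any smoothly embedded surface.

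Given such an $F$, form the double branched cover $\pi\colon \Sigma(F) \to X$. Since $F$ is locally flat and null-homologous mod $2$, the space $\Sigma(F)$ is a simply connected topological $4$-manifold, and the deck transformation $\iota$ is a locally linear orientation-preserving involution whose codimension-$2$ fixed set is a copy of $F$. The standard branched-cover formulas give
\[
\chi(\Sigma(F)) = 2\chi(X) - \chi(F) = 2m+4n+k+18, \qquad \sigma(\Sigma(F)) = 2\sigma(X) - \tfrac{1}{2}\,e(F) = -k-16;
\]
combined with the oddness and indefiniteness of the intersection form and the vanishing of the Kirby--Siebenmann invariant, Freedman's classification identifies $\Sigma(F) \cong W$. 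Since $H^2(\Sigma(F);\R)^{\iota} = \pi^* H^2(X;\R)$ with the intersection form scaled by $2$ under pullback, one also obtains $(b^+_\iota(W), b^-_\iota(W)) = (b^+(X), b^-(X)) = (m, n+8)$, matching the required cohomological data. Non-smoothability of $\iota$ then follows from that of $F$: a smooth structure on $W$ compatible with $\iota$ would descend through the quotient $W/\iota \cong X$ to make $F$ smoothly embedded, contradicting its construction.

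The main obstacle is to construct $F$ with the prescribed normal Euler number $4m-4n+2k$. As stated, \cref{theo: unorientable surfaces} produces a non-smoothable $k\RP^2$ in $(m+n)\CP^2 \# (-n-8)\CP^2$ with Euler $4m+2k$, so the parameters do not match the configuration above directly. The technical task is therefore either to re-run the Freedman topological-surgery argument underlying \cref{theo: unorientable surfaces} (which invokes \cref{theo: main} to obstruct smoothing) in the adapted parameter range, or equivalently to perform equivariant connected sums of an auxiliary branched cover with standard model involutions -- for instance $\CP^2$ with complex conjugation, or $S^2 \times S^2$ with a free involution -- in order to drive the profile $(b^+, b^-, b^+_\iota, b^-_\iota)$ to $(m+2n,\, m+2n+k+16,\, m,\, n+8)$ while preserving the non-smoothability witness inherited from the underlying non-orientable surface.
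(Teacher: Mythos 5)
Your proposal runs backwards relative to the paper, and that reversal leaves a genuine gap in the middle. The paper proves \cref{theo: nonsmoothable action} \emph{first}, by a direct construction: it forms a homeomorphism $h$ from $W$ to the standard topological model $W'=mS^2\times S^2\#k(-\CP^2)\#2(-E_8)\#2nS^2\times S^2$, builds a locally linear involution $f'$ on $W'$ by equivariantly connect-summing $m$ copies of the factor-swap on $S^2\times S^2$ with $k$ copies of conjugation on $-\CP^2$ and attaching two swapped copies of $(-E_8)\#nS^2\times S^2$, pulls $f'$ back to $\iota=h^{-1}f'h$, and then applies \cref{theo: main} to the spin$^c$ structure with $c_1(\fraks)^2-\sigma(W)=16>0$. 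The surface theorem \cref{theo: unorientable surfaces} is then a \emph{corollary} of this, obtained by reading off the fixed-point set $W^\iota\cong k\RP^2$ and observing that if it were topologically isotopic to a smooth surface one would get a smooth odd involution contradicting this same argument. So when you try to invoke \cref{theo: unorientable surfaces} to supply your surface $F$, you are invoking a result whose proof already presupposes \cref{theo: nonsmoothable action} — the dependence is circular.

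The concrete gap is that your argument never produces the surface: you acknowledge that \cref{theo: unorientable surfaces} as stated gives $k\RP^2\subset(m+n)\CP^2\#(-n-8)\CP^2$ with normal Euler number $4m+2k$, while your branched-cover calculus demands $(2n+k)\RP^2\subset m\CP^2\#(-n-8)\CP^2$ with Euler number $4m-4n+2k$, and you then defer to ``re-run the Freedman topological-surgery argument'' or ``perform equivariant connected sums with standard model involutions.'' That deferred technical task \emph{is} the content of the theorem; your second suggestion is essentially a description of the paper's construction. Until it is carried out, the proposal has no independent source of non-smoothability. A secondary issue worth flagging: in the paper's direction, $W=\Sigma(S)$ starts life as a smooth manifold and the fixed-point set $S$ is its codimension-2 fixed locus, so there is no question about the Kirby--Siebenmann invariant of the cover. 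In your surface-first direction you assert $KS(\Sigma(F))=0$ and oddness of the intersection form and then invoke Freedman, but these facts about $\Sigma(F)$ are not automatic for an arbitrary locally flat $F$ and would have to be established as part of constructing $F$ — another point at which the construction itself is doing the real work you have not done.
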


\begin{rem}
Non-smoothable involutions on spin 4-manifolds have been extensively studied by Nakamura~\cite{Na09}, Kato~\cite{Ka17} and Baraglia~\cite{B19}.
Baraglia~\cite[Proposition 7.3]{B19} gave also a homological constraint on smooth involution on connected sums of copies of $\CP^{2}$ and $-\CP^{2}$.
However, this constraint works for an involution $\iota$ on $W$ with $b^{+}_{\iota}(W)=0$, which is complementary to \cref{theo: nonsmoothable action}.
\end{rem}

\subsection{Structural conjecture } 

The invariants $\underline{\delta}_R$ and $\bar{\delta}_R$ are analogs of the invariants of homology 3-spheres $\underline{\delta}$ and $\bar{\delta}$ in Seiberg--Witten theory \cite{Sto171}, which are conjectured to be equal to $\underline{d}$ and $\overline{d}$ in involutive Heegaard Floer homology \cite{HM17}. One can ask if several analogous phenomena also hold for our invariants $\underline{\delta}_R$ and $\bar{\delta}_R$. 
We conjecture the following which can be seen as an analog of \cite[Theorem 1.3]{St17} and \cite[Theorem 1.2]{HHL21} for our invariants. 
\begin{conj}
For any knot $K$,
we have 
\[
\lim_{n \to \infty} \frac{\underline{\delta}_R (\#_n K)}{n} = \delta(K) \text{ and } \lim_{n \to \infty} \frac{\bar{\delta}_R (\#_n K)}{n} = \delta(K) . 
\]
\end{conj}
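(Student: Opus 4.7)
The plan is to follow the strategy used by Stoffregen \cite{St17} in the Pin(2)-equivariant setting and by Hendricks--Hom--Lidman \cite{HHL21} in the involutive Heegaard Floer setting, adapted to our real Seiberg--Witten framework. The first step is to introduce a notion of \emph{real local equivalence} between real spin$^c$ Seiberg--Witten Floer stable homotopy types of real spin$^c$ rational homology 3-spheres: two such spectra $X,Y$ are locally equivalent if there exist equivariant stable maps $X \to Y$ and $Y \to X$ that are equivariant homotopy equivalences away from the subspaces corresponding to the reducibles. One then defines a monoid (and ultimately a group) $\mathfrak{LE}_R$ of local equivalence classes under smash product, and verifies that $\underline{\delta}_R$ and $\bar{\delta}_R$ descend to well-defined homomorphisms $\mathfrak{LE}_R \to \tfrac{1}{16}\Z$ whose values depend only on the reducible locus together with a minimal amount of extra data from the equivariant attaching structure.

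The second step is a connected sum formula: since $\Sigma(\#_n K) \cong \#_n \Sigma(K)$ with a compatible real spin$^c$ structure (the branching involution acts diagonally), one expects the real Seiberg--Witten Floer spectrum of $\#_n K$ to be equivalent, in $\mathfrak{LE}_R$, to the $n$-fold smash product of the Floer spectrum of $K$, up to an appropriate grading shift. Establishing this rigorously requires a neck-stretching and gluing argument for the real Seiberg--Witten trajectories on the connected sum; this is where one must import the real analogue of the finite-dimensional approximation compatible with an $I$-action, to ensure that the smash product model is exact on the nose up to suspension.

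The third and final step is the asymptotic analysis. Given a connected sum formula in $\mathfrak{LE}_R$, one reduces the computation of $\underline{\delta}_R(\#_n K)$ and $\bar{\delta}_R(\#_n K)$ to an algebraic problem about iterated smash products of a single real spin$^c$ spectrum. By examining the level at which equivariant cohomology classes first appear and the level at which reducibles sit (which contributes the $\delta$ invariant of $\Sigma(K)$ per factor), one argues that the dominant linear growth is $n \cdot \delta(\Sigma(K))$, while the corrections coming from the $I$-fixed point contributions grow sublinearly in $n$. Dividing by $n$ and sending $n \to \infty$ then yields the claimed limit.

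The main obstacle I expect is the second step, namely proving a clean connected sum formula at the spectrum level in the presence of the anti-linear involution $I$. The analytic issues in gluing real Seiberg--Witten trajectories across long necks with involutive symmetry are more delicate than in the ordinary case, since the fixed-point loci interact nontrivially with the reducible loci. Even after that, the sublinearity of the equivariant corrections — which in Stoffregen's setting rests on careful analysis of the $S^1$ and $\Pin(2)$-equivariant cell structures — will need a new real-equivariant input, and it is conceivable that controlling these corrections is exactly the difficulty that prevents the conjecture from being a theorem at present.
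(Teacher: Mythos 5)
This statement is a \emph{conjecture} in the paper, not a theorem, and the paper offers no proof of it. What you have written is a plan rather than a proof, and you yourself identify the missing ingredients honestly at the end; so there is nothing in the paper to compare against, and your proposal does not close the gap either.

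A few remarks on how your outline sits relative to what the paper actually does establish. Your step one (a group of real local equivalence classes on which $\underline{\delta}_R$, $\bar{\delta}_R$ descend) and your step two (a connected sum formula at the level of local equivalence classes) are not genuinely open: the paper already sets up $\mathcal{LE}_G$ in \cref{subsection Doubling}, shows in \cref{lem:loc eq} that $d, \underline{d}, \overline{d}$ are local equivalence invariants, and proves the spectrum-level connected sum formula $[SWF_G(Y_0) \wedge SWF_G(Y_1)]_{\mathrm{loc}} = [SWF_G(Y_0\#Y_1)]_{\mathrm{loc}}$ in \cref{conn sum of local eq}, which covers $\Sigma(\#_n K) \cong \#_n\Sigma(K)$. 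So the gluing/neck-stretching concern you flag in step two has essentially been absorbed into the paper's existing machinery. The unresolved difficulty is entirely your step three: you need to show that for iterated smash powers $X^{\wedge n}$ of a fixed real spin$^c$ Floer spectrum, both $\underline{d}(X^{\wedge n})/n$ and $\overline{d}(X^{\wedge n})/n$ converge to the level (equivalently, to $2\delta(\Sigma(K))$ after normalization). The paper's only general inequality here is $\underline{d}\le \overline{d}$ and the one-sided smash inequality of \cref{conn sum for d}, and nothing forces the discrepancy $\overline{d}-\underline{d}$ (or $\overline{d}-d$, $d-\underline{d}$) to be $o(n)$ under smash. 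In Stoffregen's $\mathrm{Pin}(2)$-setting \cite{St17} the analogous sublinearity needed a detailed structure theory for the local equivalence group and chain-level models; no analogue for $\Z_4$-equivariant real Seiberg--Witten spectra is developed in the paper. Your final paragraph correctly names this as the obstruction, but naming an obstruction is not the same as overcoming it. As it stands, you have not proved the conjecture, and neither has the paper.
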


Moreover, as a `real' version of Manolescu--Lidman's isomorphism \cite{LM18}, we conjecture the following: 

\begin{conj}
    For any oriented link $L$ with non-zero determinant, we have 
    \begin{align*}
    \widehat{HMR}_*(L, \frak{s}_L; \Z_2) &\cong H^{\Z_2}_*(SWF(L); \Z_2), \\
    \reallywidecheck{HMR}_*(L,\frak{s}_L; \Z_2) &\cong \mathrm{c}H^{\Z_2}_*(SWF(L); \Z_2) \\
  \overline{HMR}_*(L,\frak{s}_L; \Z_2) &\cong \mathrm{t}H^{\Z_2}_*(SWF(L); \Z_2), 
    \end{align*}
    where $H^{\Z_2}_*, \mathrm{c}H^{\Z_2}$ and $\mathrm{t}H^{\Z_2}$ are $\Z_2$-equivariant Borel, coBorel and Tate homologies respectively and $HMR^\circ $ are real monopole Floer homologies introduced in \cite{JL22} for the spin structure $\frak{s}_L$ determined by orientation of $L$. 
\end{conj}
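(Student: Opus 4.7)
The plan is to adapt the Lidman--Manolescu equivalence~\cite{LM18} to the real, $\Z_2$-equivariant setting. Both sides of the conjectured isomorphism are built from the same geometric input: the $I$-invariant (real) Seiberg--Witten equations on the double branched cover $\Sigma(L)$ with the spin structure $\mathfrak{s}_L$ determined by the orientation of $L$. Li's real monopole Floer homologies $\widehat{HMR}$, $\reallywidecheck{HMR}$, $\overline{HMR}$ are defined by an infinite-dimensional Morse--Floer construction on the blown-up real configuration space, while our $\Z_2$-equivariant Floer homotopy type $SWF(L)$ arises from a Conley-index style finite-dimensional approximation of the same flow restricted to the $I$-fixed locus.

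Following the strategy of \cite{LM18}, I would first set up the approximation framework in parallel with the non-equivariant case: choose a large ball $B(R) \subset \mathrm{Coul}(\Sigma(L))$ that traps the real gradient flow, and use spectral projections $p_n$ that commute with $I$. The resulting Conley indices, after the usual suspension and desuspension, realize $SWF(L)$ together with its $\Z_2$-action, and the three algebraic flavors $H^{\Z_2}_*$, $\mathrm{c}H^{\Z_2}_*$, $\mathrm{t}H^{\Z_2}_*$ are extracted purely algebraically from this one spectrum. The main geometric step is a chain-level comparison: for $n$ sufficiently large, every critical point and every broken trajectory used by Li to define the real SW Floer differential should lie inside $p_n B(R)$, so that the approximated Morse--Conley complex is canonically identified with Li's real SW Floer chain complex. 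The matching of the three flavors then reduces to checking that Li's blow-up procedure separating the reducible and irreducible strata corresponds exactly to the $\Z_2$-equivariant Borel, coBorel, and Tate packages on the Conley index, together with the naturality of the long exact sequences relating them.

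The main obstacle will be $\Z_2$-equivariant transversality in the presence of the reducible stratum. In the plain Lidman--Manolescu argument, generic perturbations achieve all required transversality; here, one must restrict to $I$-invariant perturbations, and the reducible locus, governed by the codimension-two fixed set $\Sigma(L)^\iota$, has a different structure than in the ordinary theory. One needs to verify that Li's class of equivariant perturbations from \cite{JL22} is compatible with the finite-dimensional approximation scheme, and that the Conley-index boundary behavior on the reducible stratum matches Li's blow-up model. Secondary bookkeeping issues include matching grading conventions and the choice of absolute $\Q$-grading via the real Fr\o yshov correction. Once these are settled, the remainder should follow by a $\Z_2$-equivariant upgrade of the Lidman--Manolescu proof, and the restriction to $b_1=0$ implicit in the definition of $SWF(L)$ ensures that the technical hypotheses of \cite{LM18} can be carried over without essential modification.
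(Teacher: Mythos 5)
This statement is posed in the paper as a \emph{conjecture}, not a theorem: the authors explicitly present it as the expected ``real'' analogue of the Lidman--Manolescu equivalence and give no proof. So there is no argument in the paper to compare yours against.

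Your sketch correctly identifies the natural strategy -- a $\Z_2$-equivariant adaptation of \cite{LM18}, comparing Li's real monopole Floer complexes with the Conley-index approximation of the $I$-fixed Seiberg--Witten flow on $\Sigma(L)$, with the three flavors matched to Borel, coBorel and Tate homology -- and it is essentially the route one would expect the authors had in mind when formulating the conjecture. But it is a roadmap, not a proof: the decisive steps you defer (achieving transversality using only $I$-invariant perturbations near the reducible locus, reconciling Li's blow-up model with the equivariant Conley-index boundary structure, and verifying the $\Z_2$-equivariant version of the Morse--Conley comparison at chain level) are precisely what make this an open problem rather than a corollary. Until those are carried out in detail, this remains a conjecture, and your proposal should be read as a plausible plan of attack rather than a verified argument.
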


\subsection{Structure of the paper} 

We finish off this introduction with an outline of
the contents of this paper. 
In \Cref{involutionI}, we define a symmetry $I$ of Seiberg--Witten equation for a given real spin$^c$ structure. In \Cref{section Floer homotopy type of rational homology 3-spheres with involution and Fro yshov type invariants}, using the $I$-invariant part of Manolescu's Floer homotopy type, we introduce three invariants $d(Y, \frak{s}, \iota)$, $\overline{d}(Y, \frak{s}, \iota)$, and $\underline{d}(Y, \frak{s}, \iota)$ for 3-manifolds with real spin$^c$ structures $(Y, \frak{s}, \iota)$. Moreover, we prove several fundamental properties of these invariants including Fr\o yshov type inequality, connected sum formula, and duality formula. 
In \Cref{section Floer homotopy type of knots and Fro yshov type invariants}, by setting $Y$ to be the double branched covering space of an oriented link $L$ in $S^3$ with non-zero determinant, we obtain invariants for $ L$: $d(L)$, $\overline{d}(L)$, and $\underline{d}(L)$. Moreover, we prove these invariants are $\chi$-concordant invariants, and $d(L)$ defines a homomorphism on the subgroup of the link concordance group generated by oriented based links with non-zero determinant. In \Cref{section: application}, we prove all applications in the introduction such as non-smoothable and non-orientable surfaces in 4-manifolds (\cref{theo: unorientable surfaces}), obstruction to Nielsen realization problem (\cref{theo:  Nielsen}) and non-orientable surfaces in $D^4$ bounded by torus knots (\cref{torus non orientable}).

\subsection{Acknowledgement}
We would like to thank Joshua Sabloff for answering our question about his paper \cite{Jo22}. The authors also thank Kouki Sato, Tye Lidman, Peter Feller, Mike Miller for their helpful discussions. The authors would like to thank Jiakai Li and Ian Montague for enlightening discussions.
The first author was partially supported by JSPS KAKENHI Grant Numbers 19K23412 and 21K13785, and Overseas Research Fellowships.
The second author was supported by JSPS KAKENHI Grant Number 21J22979 and WINGS-FMSP program at the Graduate school of Mathematical Science, the University of Tokyo.
The third author was supported by JSPS KAKENHI Grant Number 20K22319 and RIKEN iTHEMS Program.

\section{Involution $I$}\label{involutionI}
In this section, we explain the definition of the anti-linear involution $I$ which covers the involution $\iota$. 
Let $W$ be an oriented smooth 4-manifold and $\fraks$ be a spin$^{c}$ structure on $W$. 
Let $\iota : W \to W$ be a smooth involution that preserves the orientation of $W$ and satisfies that $\iota^{\ast}\fraks \cong \bar{\fraks}$. We also assume that $W^{\iota} \neq \emptyset$ and $H^1(W, \Z)$ is $0$. 
In this section, we fix an $\iota$-invariant metric on $W$. 

Note that a $\mathrm{spin}^c$ structure $\fraks$ gives rise to a $\Z/2\Z$-graded Clifford module $S=S^+ \oplus S^-$ with a hermitian metric, and $\fraks$ is determined uniquely by $S$ \cite[Section~1.1]{KM07}. 
In this section, we identify $\fraks$ with $S$. 

Firstly, we give an anti-linear involution on $\mathfrak s$ if the involution $\iota$ satisfies a condition which is called odd type. We will prove later that such an involution is unique up to gauge transformations.

\begin{defi}
Let $\mathfrak s$ be a spin$^c$ structure on $W$ and we will denote by $S=S^+ \oplus S^-$ the spinor bundle of $\mathfrak s$ and will denote by $\rho$ its Clifford multiplication. Let $\bar{\mathfrak s}$ be a complex conjugate spin$^c$ structure whose spinor bundle $\bar{S}$ is the conjugate of $S$, while Clifford multiplication is unchanged as real-linear map. We define the pull-back of the spin$^c$ structure $\mathfrak s$, say $\iota^{\ast}\mathfrak s$, as follows: The spinor bundle of $\iota^{\ast}\mathfrak s$ is $\iota^{\ast}S$ as a complex vector bundle. We define its Clifford multiplication by $(\xi, \phi) \mapsto \rho((\iota^{-1})^* (\xi))\phi$ where $\xi \in T^*_x W$ and $\phi \in \iota^{\ast}S_x = S_{\iota(x)}$. To simplify the notation, we write $\iota$ when it should be $\iota^{-1}$ since $\iota$ is an involution. 
\end{defi}

We now give an anti-linear map on $S$ which covers $\iota$ which may not be an involution. 
\begin{defi}
Let $\iota^{\ast} \colon \iota^{\ast}\frak{s} \to \frak{s}$ be a natural map that covers $\iota$. Note that this is a bijection. 
Let us take an isomorphism of the spin$^c$ structures 
$\varphi \colon  \bar{\fraks} \to \iota^{\ast}\fraks$
and complex conjugate $c \colon \frak{s} \to \bar{\frak{s}}$. 
Let us define $I_{\varphi}= \iota^{\ast}\circ \varphi \circ c$. This is an anti-linear map on $S$ which covers the involution $\iota$. 
\end{defi}
    Note that the definition of the isomorphism between the $\mathrm{spin}^c$ structures $\mathfrak{s}$ and $\mathfrak{s}'$ is an isomorphism of complex vector bundle $\varphi$ from the spinor bundle $S$ of $\mathfrak{s}$ to that of $\mathfrak{s}'$ which satisfies $\varphi(\rho(\xi)\phi)=\rho(\xi)\varphi(\phi)$ for $\xi \in T^*_x W$ and $\phi \in S_x$. From the definition of $I_{\varphi}$, we can easily prove the lemma below. 
\begin{lem}\label{IandCliffford}
The anti-linear map $I_{\varphi}$ satisfies
\[
I_\varphi(\rho(\xi)\phi)=\rho(\iota^*(\xi))I_\varphi(\phi)
\]
for all $\xi \in T_x W$ and $\phi \in S_x$. Moreover, $I_{\varphi}$ preserves the hermitian metric on $S$. 
\end{lem}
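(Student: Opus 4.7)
The plan is to verify the two assertions of the lemma by tracking the three factors in $I_\varphi = \iota^{\ast}\circ \varphi \circ c$ separately, first against Clifford multiplication and then against the Hermitian inner product. Both statements should reduce to unwinding the definitions.

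For the Clifford identity, I would apply $c$, $\varphi$, and $\iota^{\ast}$ to $\rho(\xi)\phi$ one factor at a time. First, since the spinor bundle $\bar{S}$ of $\bar{\mathfrak{s}}$ has the same underlying real vector bundle as $S$ and the Clifford multiplication is unchanged as a real-linear map, the complex conjugation $c\colon\mathfrak{s}\to\bar{\mathfrak{s}}$ satisfies $c(\rho(\xi)\phi)=\rho_{\bar{\mathfrak{s}}}(\xi)c(\phi)$. Next, $\varphi\colon\bar{\mathfrak{s}}\to\iota^{\ast}\mathfrak{s}$ is by assumption an isomorphism of $\mathrm{spin}^{c}$ structures, hence intertwines the Clifford multiplications, giving $\varphi(\rho_{\bar{\mathfrak{s}}}(\xi)c(\phi))=\rho_{\iota^{\ast}\mathfrak{s}}(\xi)\varphi(c(\phi))$. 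Finally, the Clifford multiplication on $\iota^{\ast}\mathfrak{s}$ is defined by $\rho_{\iota^{\ast}\mathfrak{s}}(\xi)\psi=\rho(\iota^{\ast}\xi)\psi$ for $\psi\in (\iota^{\ast}S)_x=S_{\iota(x)}$, and the natural map $\iota^{\ast}\colon\iota^{\ast}\mathfrak{s}\to\mathfrak{s}$ is merely the pointwise identity $S_{\iota(x)}\to S_{\iota(x)}$ covering $\iota$. Composing these three identities yields the desired relation $I_\varphi(\rho(\xi)\phi)=\rho(\iota^{\ast}\xi)I_\varphi(\phi)$.

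For compatibility with the Hermitian metric, I would apply the same factor-by-factor strategy, this time checking that each of the three maps is either unitary or anti-unitary. The conjugation $c$ is anti-unitary by the very construction of $\bar{S}$; the $\mathrm{spin}^{c}$ isomorphism $\varphi$ is unitary, in accordance with the standard convention that isomorphisms of $\mathrm{spin}^{c}$ structures in the Seiberg--Witten setting preserve the Hermitian metric on spinor bundles; and $\iota^{\ast}\colon (\iota^{\ast}S)_x\to S_{\iota(x)}$ is literally the identity on fibers, so is unitary. Their composition $I_\varphi$ is therefore an anti-unitary bundle map covering $\iota$, which is precisely the sense in which $I_\varphi$ preserves the Hermitian metric.

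No serious obstacle is anticipated; both conclusions follow by unwinding the definitions of $c$, of an isomorphism of $\mathrm{spin}^{c}$ structures, and of the pulled-back Clifford multiplication. The only mildly delicate point is bookkeeping: one must keep track of the base points (the factor $\iota^{\ast}$ shifts fibers from $x$ to $\iota(x)$) and consistently use the convention $\iota^{-1}=\iota$ to identify $(\iota^{-1})^{\ast}\xi$ with $\iota^{\ast}\xi$ when evaluating the Clifford multiplication on $\iota^{\ast}\mathfrak{s}$.
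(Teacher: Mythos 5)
Your factor-by-factor unwinding is correct and is exactly what the paper has in mind: the text simply asserts that the lemma follows ``from the definition of $I_{\varphi}$'' and gives no further proof, and your chain — $c$ commutes with $\rho$ by construction of $\bar{\mathfrak{s}}$, $\varphi$ intertwines Clifford actions by definition of a spin$^c$-isomorphism, the Clifford action on $\iota^{\ast}\mathfrak{s}$ pulls back $\xi$ through $\iota$, and $\iota^{\ast}$ is fiberwise the identity — reproduces that implicit argument, including the unitarity convention needed for the Hermitian statement. The only point worth keeping in mind is the bookkeeping between $\iota_{\ast}$ on tangent vectors and $(\iota^{-1})^{\ast}=\iota^{\ast}$ on covectors, which collapse to the same thing here because $\iota$ is an involution and the metric is $\iota$-invariant, exactly as you note.
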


We will prove that there is a gauge transformation $u$ such that $(u I_{\varphi})^2=\pm 1$. We prove two lemmas for preparation. 
\begin{lem}\label{uphi1}
There exists a gauge transformation $u_{\varphi} \colon W \to U(1)$ such that $I_{\varphi}^2=u_{\varphi}$. 
\end{lem}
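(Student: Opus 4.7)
The strategy is to show that $I_\varphi^2$ is a complex-linear bundle endomorphism of $S$ that covers the identity of $W$ and commutes with Clifford multiplication, and then to invoke Schur's lemma fiberwise.

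First, I would observe that $I_\varphi^2$ is complex-linear (the composition of two anti-linear maps is linear) and covers $\iota^2 = \mathrm{id}_W$, so it is a genuine bundle endomorphism $S \to S$. Next, applying the identity $I_\varphi(\rho(\xi)\phi) = \rho(\iota^*\xi) I_\varphi(\phi)$ from \cref{IandCliffford} twice, and using $(\iota^*)^2 = \mathrm{id}$, I get
\[
I_\varphi^2(\rho(\xi)\phi) = I_\varphi\bigl(\rho(\iota^*\xi)I_\varphi\phi\bigr) = \rho\bigl((\iota^*)^2\xi\bigr)I_\varphi^2(\phi) = \rho(\xi)\,I_\varphi^2(\phi),
\]
for every cotangent vector $\xi \in T^*_x W$ and every $\phi \in S_x$. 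Hence $I_\varphi^2$ commutes pointwise with the Clifford action.

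Now the key algebraic input: at each point $x \in W$, the fiber $S_x = S_x^+ \oplus S_x^-$ is an irreducible complex module over the complexified Clifford algebra $\mathrm{Cl}(T_x^* W)\otimes\C \cong M_4(\C)$. By Schur's lemma, any complex-linear endomorphism of $S_x$ commuting with the Clifford action is multiplication by a complex scalar $u_\varphi(x) \in \C$. Thus $I_\varphi^2 = u_\varphi \cdot \mathrm{id}_S$ for a well-defined function $u_\varphi : W \to \C$, which is smooth because $I_\varphi$ is smooth as a composition of smooth bundle maps.

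Finally, to pin down that $u_\varphi$ takes values in $U(1)$, I use that $I_\varphi$ preserves the hermitian metric on $S$ (\cref{IandCliffford}). Therefore $I_\varphi^2$ is also a fiberwise isometry, forcing $|u_\varphi(x)| = 1$ at every point. Viewing such a $U(1)$-valued function as a gauge transformation acting on $S$ by fiberwise scalar multiplication, this gives $I_\varphi^2 = u_\varphi$ as required. The only nontrivial step is the Schur's lemma reduction, but this is standard once the Clifford-commutation identity is in hand; no obstacle arises from the topology of $W$ since the conclusion is purely pointwise.
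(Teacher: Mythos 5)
Your proof is correct and takes essentially the same route as the paper's. The paper simply observes that $I_\varphi^2$ is an automorphism of the spin$^c$ structure $\fraks$ (i.e.\ a complex-linear bundle map covering $\mathrm{id}_W$ that commutes with Clifford multiplication) and then invokes the standard fact that the automorphism group of a spin$^c$ structure is exactly the gauge group $\Map(W,U(1))$; your argument unpacks that standard fact via fiberwise Schur's lemma and the metric-preservation property, which is precisely how one proves it.
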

\begin{proof}
We see that $I_{\varphi}^2$ is an automorphism of the spin$^c$ structure $\frak{s}$. Thus $I_{\varphi}^2$ is a gauge transformation. 
\end{proof}

\begin{lem}\label{uphi2}
The gauge transformation $u_{\varphi}$ in~\cref{uphi1} satisfies that $u_{\varphi}(\iota^{-1}(x))=\overline{u_{\varphi}(x)}$. 
\end{lem}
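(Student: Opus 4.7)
The plan is to exploit the associativity of composition together with the antilinearity of $I_\varphi$ by computing $I_\varphi^3$ in two different ways.

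First I would note that because $I_\varphi$ covers $\iota$ and $u_\varphi = I_\varphi^2$ covers $\iota^2 = \mathrm{id}_W$, the compositions $I_\varphi \circ u_\varphi$ and $u_\varphi \circ I_\varphi$ both cover $\iota$, and both equal $I_\varphi^3$ by associativity. The entire content of the lemma is extracting the pointwise identity from this tautology.

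Next I would compute each composition on a spinor $\phi \in S_x$. On the one hand, $(u_\varphi \circ I_\varphi)(\phi)$ lives in $S_{\iota(x)}$ and equals $u_\varphi(\iota(x)) \cdot I_\varphi(\phi)$, since $u_\varphi$ is the $U(1)$-valued function acting fiberwise by scalar multiplication. On the other hand, $(I_\varphi \circ u_\varphi)(\phi) = I_\varphi(u_\varphi(x) \cdot \phi)$, and because $I_\varphi$ is \emph{antilinear} (by construction, as a composition of the complex conjugation $c$ with the complex-linear maps $\varphi$ and $\iota^*$), this equals $\overline{u_\varphi(x)} \cdot I_\varphi(\phi)$. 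Setting the two expressions equal at the point $\iota(x) \in W$ gives $u_\varphi(\iota(x)) = \overline{u_\varphi(x)}$, and since $\iota$ is an involution this is equivalent to $u_\varphi(\iota^{-1}(x)) = \overline{u_\varphi(x)}$, as desired.

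There is no serious obstacle here; the only thing to be careful about is that the antilinearity is genuinely used in pulling the scalar $u_\varphi(x)$ through $I_\varphi$ as its complex conjugate, which is immediate from $I_\varphi = \iota^* \circ \varphi \circ c$ because $\varphi$ and $\iota^*$ are complex-linear while $c$ is complex-antilinear. The argument makes no use of the dimension, of the fixed-set assumption, or of $H^1(W;\Z)=0$, so the lemma holds in the generality stated.
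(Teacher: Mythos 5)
Your proof is correct and follows essentially the same route as the paper's: compute $I_\varphi^3$ two ways via associativity ($u_\varphi\circ I_\varphi$ versus $I_\varphi\circ u_\varphi$), pull the scalar through $I_\varphi$ using its antilinearity, and compare. If anything, your writeup is slightly more careful about which point the $U(1)$-factor is evaluated at (the fiber over $\iota(x)$ in one composition, over $x$ in the other), which is the detail the paper's displayed calculation renders somewhat opaquely; the mathematics is the same.
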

\begin{proof}
Note that for $\psi \in \Gamma(\iota^{\ast}\frak{s})$, $\iota^{\ast}(\psi)(x)=\psi(\iota^{-1}(x))$. We see that for all $\phi \in \Gamma(\frak{s})$, 
\begin{align*}
    I_{\varphi}^3(\phi(x))&=u_{\varphi}(x)I_{\varphi}(\phi(x))\\
    &=I_{\varphi}(u_{\varphi}(x)\phi(x))\\
    &=\iota^{\ast}\circ \varphi (\overline{u_{\varphi}(x)}c(\phi(x)))\\
    &=\iota^{\ast}(\overline{u_{\varphi}(x)}\varphi \circ c(\phi(x)))\\
    &=\overline{u_{\varphi}(\iota^{-1}(x))}I_{\varphi}(\phi(x)).
\end{align*}
Thus we have $u_{\varphi}(\iota^{-1}(x))=\overline{u_{\varphi}(x)}$. 
\end{proof}
Now we prove the following proposition. 
\begin{lem}\label{existenceofI}
There exists a gauge transformation $u$ such that $(u I_{\varphi})^2=\pm 1$. 
\end{lem}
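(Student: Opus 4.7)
The plan is to take $u$ to be a suitable ``half-power'' of $u_\varphi^{-1}$. First I would use the hypothesis $H^1(W;\Z)=0$ together with connectedness of $W$ (which we may assume component-wise) to lift $u_\varphi: W\to U(1)$ via the exponential sequence $\Z\to\R\to U(1)$ to a smooth function $\theta: W\to\R$ with $u_\varphi=e^{2\pi i\theta}$. The symmetry $u_\varphi(\iota y)=\overline{u_\varphi(y)}$ established in \cref{uphi2} then forces $\theta(y)+\theta(\iota y)\in\Z$ pointwise; since the left-hand side is continuous and integer-valued, connectedness implies it equals some constant $n\in\Z$.

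Next I would set $u:=e^{-i\pi\theta}$ and verify that $(uI_\varphi)^2=(-1)^n\cdot \mathrm{id}$ by a direct fiber-wise computation. Unpacking the definitions using the anti-linearity of $I_\varphi$ and the relation $I_\varphi^2=u_\varphi\cdot\mathrm{id}$ from \cref{uphi1}, one obtains, for $v\in S_y$, the identity
\[
(uI_\varphi)^2(v)\;=\;u(y)\,\overline{u(\iota y)}\,u_\varphi(y)\cdot v.
\]
Substituting the formulas for $u$ and $u_\varphi$ and using $\theta(\iota y)=n-\theta(y)$ yields
\[
u(y)\,\overline{u(\iota y)}\,u_\varphi(y)\;=\;e^{-i\pi\theta(y)}\cdot e^{i\pi(n-\theta(y))}\cdot e^{2\pi i\theta(y)}\;=\;(-1)^n,
\]
so $(uI_\varphi)^2=(-1)^n\cdot\mathrm{id}$, which is $\pm 1$ as required.

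The only real obstacle is bookkeeping with the anti-linearity of $I_\varphi$ when expanding $(uI_\varphi)^2$, in particular the convention distinguishing the action of $I_\varphi$ on sections from its action on fibers; once that is set up carefully the computation is elementary. It is worth noting that the sign $(-1)^n$ that emerges is not an accident: this parity is expected to coincide with the invariant distinguishing the ``odd type'' from the ``even type'' involutions alluded to earlier in the paper, and it should also be independent of the choices of $\theta$ and of the isomorphism $\varphi$ modulo gauge, so the sign ambiguity in the conclusion is genuinely meaningful rather than a shortcoming of the proof.
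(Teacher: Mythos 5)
Your proof is correct and follows essentially the same route as the paper: both lift $u_\varphi$ to a real-valued ``logarithm'' using $H^1(W;\Z)=0$ (the paper phrases this via $H^1(W;\Z)^{-\iota^*}=0$ and normalizes the lift $f$ to be $\iota$-antisymmetric, writing $u_\varphi=\pm e^{if}$, whereas you keep a general lift $\theta$ and track the integer $n=\theta+\theta\circ\iota$), take $u$ to be the associated formal square root, and verify $(uI_\varphi)^2 = u(x)\overline{u(\iota x)}u_\varphi(x)=\pm1$ by the same fiberwise computation. The bookkeeping differs only cosmetically; no gap.
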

\begin{proof}
From~\cref{uphi2} and the assumption that $H^1(W, \Z)^{-\iota^{\ast}}=0$, we have that there exists a real valued smooth function $f$ such that $f(\iota^{-1}(x))=-f(x)$ and $u_{\varphi}(x)=\pm \exp(if(x))$. We set $u(x)=\exp(-if(x)/2)$. Then we have 
\begin{align*}
    u I_{\varphi} \circ u I_{\varphi}(\phi(x))
    &=u(x) I_{\varphi}(u(x)I_{\varphi}(\phi(x)))\\
    &=u(x)\iota^{\ast}(\overline{u(x)}\varphi \circ c (\phi(x)))\\
    &=u(x)\overline{u(\iota^{-1}(x))}I_{\varphi}^2(\phi(x))\\
    &=u(x)\overline{u(\iota^{-1}(x))}u_{\varphi}(x). 
\end{align*}
From the definition of $u$, we see  $\overline{u(\iota^{-1}(x))}=u(x)$ and $u(x)^2=\pm \overline{u_{\varphi}(x)}$. Thus we have $(uI_{\varphi})^2=\pm 1$.
\end{proof}

From the following lemma, we see that an anti-linear involution on $S$ which satisfies some conditions is unique up to $\iota$ invariant gauge transformations. 
\begin{lem}
\label{uniqueI}
Let $I_1, I_2 \colon \frak{s} \to \frak{s}$ be an anti-linear map which satisfies that $I_1, I_2$ covers $\iota$ and $I_1, I_2$ is compatible with Clifford multiplication $\rho$ in the following sense:
\[I_i(\rho(X)\phi(x))=\rho(d\iota(X))I_i(\phi(x)). \; (i=1, 2)\] 
Then there exists a gauge transformation $u_0$ such that $I_2=u_0 I_1$. 
If $I_1$ and the gauge transformation $u_0$ satisfies that $I_1^2=\pm 1$ and $(u_0 I_1)^2=\pm I_1^2$, then $u_0$ is an $\iota$ invariant gauge transformation and $(u_0 I_1)^2=I_1^2$. 
\end{lem}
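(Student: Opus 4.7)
My plan is to handle the two claims separately. For the existence of $u_0$, I would simply set $u_0 := I_2 \circ I_1^{-1}$. The composition of two anti-linear maps each covering $\iota$ is complex-linear and covers the identity of $W$, so $u_0$ is a complex-linear bundle endomorphism of the spinor bundle $S$. I would then check, using the Clifford compatibility of both $I_1$ and $I_2$, that $u_0$ intertwines Clifford multiplication $\rho$. Since a $\mathrm{spin}^c$ structure is determined by its Clifford module (as noted at the start of the section), this forces $u_0$ to be multiplication by a $U(1)$-valued function on $W$, giving the desired gauge transformation with $I_2 = u_0 I_1$.

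For the second assertion, the central step is a pointwise formula for $(u_0 I_1)^2$. Using the anti-linearity of $I_1$ together with the way it covers $\iota$ (in the same manner as the computation in the proof of \cref{uphi2}), I would first establish the operator identity $I_1 \circ u_0 = (\overline{u_0 \circ \iota^{-1}}) \cdot I_1$ on sections, which upon squaring $u_0 I_1$ yields
\[
(u_0 I_1)^2 \;=\; \bigl( u_0 \cdot (\overline{u_0 \circ \iota^{-1}}) \bigr) \cdot I_1^2.
\]
Writing $h(x) := u_0(x)\,\overline{u_0(\iota^{-1}(x))}$, the hypotheses $I_1^2 = \pm 1$ and $(u_0 I_1)^2 = \pm I_1^2$ will force $h \equiv \pm 1$ pointwise on $W$; since $u_0$ is continuous and $h$ is $U(1)$-valued, $h$ is in fact locally constant.

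The main obstacle will be ruling out the case $h \equiv -1$. Here I would invoke the standing hypothesis $W^{\iota} \neq \emptyset$: at any fixed point $x_0 \in W^{\iota}$ one has $h(x_0) = u_0(x_0)\,\overline{u_0(x_0)} = |u_0(x_0)|^2 = 1$, so $h \equiv 1$ on the connected component of $x_0$. Invoking the connectedness of $W$, one concludes $h \equiv 1$ on all of $W$. This is exactly the $\iota$-invariance $u_0 \circ \iota^{-1} = u_0$ of $u_0$, and it simultaneously yields $(u_0 I_1)^2 = I_1^2$, completing the second assertion.
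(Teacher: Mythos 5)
Your proof is correct and follows essentially the same route as the paper's: define $u_0 = I_2 \circ I_1^{-1}$, observe that it is complex-linear and commutes with Clifford multiplication (hence is a gauge transformation), establish the pointwise squaring identity $(u_0 I_1)^2 = \bigl(u_0 \cdot \overline{u_0 \circ \iota^{-1}}\bigr) I_1^2$, and invoke $W^\iota \neq \emptyset$ to pin down the sign. The paper carries out exactly this computation pointwise rather than isolating the intermediate operator identity $I_1 u_0 = (\overline{u_0 \circ \iota^{-1}})\, I_1$, but the content is the same.
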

\begin{proof}
From the assumption of the compatibility of the Clifford multiplication of $I_1, I_2$, we see that $I_1 \circ I_2^{-1}$ is an automorphism of the spin$^c$ structure $\frak{s}$. Thus this is a gauge transformation $u_0$. 

Let us show the second half of the \lcnamecref{uniqueI}. Note that $I_1(\phi)(\iota^{-1}(x))=I_1(\phi(x))$ for all $\phi \in \Gamma(\frak{s})$. We have
\[
    u_0(x)I_1(u_0(\iota^{-1}(x))I_1(\phi(x)))=u_0(x)\overline{u_0(\iota^{-1}(x))}I_1^2(\phi(x))=\pm I_1^2(\phi(x)). 
\]
Thus $u_0$ satsfies that $u_0(x)\overline{u_0(\iota^{-1}(x))}=\pm 1$. From the assumption that $W^{\iota} \neq \emptyset$, we have $u_0(x)\overline{u_0(\iota^{-1}(x))}=1$. 
\end{proof}

From~\cref{uniqueI}, we see that the sign $(u I_{\varphi})^2=\pm 1$ does not depend on the choice of $\varphi$ and $u$. Moreover, If we have $(u' I_{\varphi'})^2=\pm 1$ for another choice of a gauge transformation $u'$ and an isomorphism $\varphi'\colon \bar{\fraks} \to \iota^{\ast}\fraks$, we have that $u I_{\varphi}$ coincides with $u' I_{\varphi'}$ up to some $\iota$-invariant gauge transformation. For abbreviation, we write $u I_{\varphi}$ for $I$. 

\begin{defi}
We say that an involution $\iota$ is of {\it odd type} for $\fraks$ when $I^2=1$. 
\end{defi}

\begin{lem}
\label{lem :codimension 2}
Suppose that $W^{\iota} \neq \emptyset$.
Then $W^{\iota}$ is of codimension-2 if and only if $\iota$ is an odd type.
\end{lem}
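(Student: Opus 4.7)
The statement is local around the fixed locus, and the plan is to compute $I_x^2$ at a fixed point $x \in W^\iota$ as a function of the codimension $k$ of $W^\iota$ at $x$, and read off the equivalence. By \cref{uphi1}--\cref{uniqueI} and the discussion that follows them, the sign $I^2 \in \{\pm 1\}$ is independent of the choices of $\varphi$ and of the gauge-fixing $u$, and it is a locally constant scalar gauge transformation on $W$. Hence its value at $x$ is determined purely by the linearised data $(T_xW, d\iota_x)$ together with the Clifford module structure on $S_x$, and the claim reduces to a fibrewise linear-algebra computation at a single fixed point.

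For that computation, shrink to a contractible $\iota$-invariant neighbourhood of $x$, on which $\fraks$ admits a spin reduction. This endows the local spinor bundle with a parallel anti-linear structure $J$ satisfying $J^2 = -1$ and $J\rho(\xi) = \rho(\xi)J$ for every $\xi$. Choose an orthonormal basis $e_1, \dots, e_4$ of $T_xW$ adapted to the eigenspace decomposition $T_xW = V^+ \oplus V^-$ of $d\iota_x$, with $V^- = \spn(e_{4-k+1}, \dots, e_4)$, and set $\omega^- = e_{4-k+1}\cdots e_4 \in Cl(T_xW)$. A direct check using $J\rho = \rho J$ and the Clifford relations shows that $\rho(\omega^-)J$ is an anti-linear endomorphism of $S_x$ which commutes with $\rho(e_i)$ for $i \le 4-k$ and anti-commutes with $\rho(e_j)$ for $j > 4-k$; equivalently, it satisfies the intertwining identity of \cref{IandCliffford} for $d\iota_x$. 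By \cref{uniqueI}, there is therefore a scalar $u \in U(1)$ such that $I_x = u\,\rho(\omega^-)\,J$.

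Since $J$ is anti-linear and commutes with $\rho$, while scalars pass through $J$ as complex conjugation,
\[
I_x^2 = u\,\rho(\omega^-)\,J\,u\,\rho(\omega^-)\,J = |u|^2\,\rho((\omega^-)^2)\,J^2 = -\rho((\omega^-)^2).
\]
The standard identity $(e_{4-k+1}\cdots e_4)^2 = (-1)^{k(k+1)/2}$ in $Cl(T_xW)$ then yields $I_x^2 = -(-1)^{k(k+1)/2}$. As $\iota$ is orientation-preserving, $k = \dim V^-$ is even, so $k \in \{0, 2, 4\}$; substituting shows $I_x^2 = +1$ precisely when $k = 2$, and $I_x^2 = -1$ when $k \in \{0, 4\}$. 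Since $W^\iota \neq \emptyset$ provides a fixed point at which to read off the codimension and $I^2$ is globally constant, this proves the claimed equivalence.

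The main step requiring care is verifying that $\rho(\omega^-)J$ genuinely satisfies the Clifford-intertwining relation for $d\iota_x$, so that \cref{uniqueI} applies and the ambiguity in $I_x$ is absorbed into the $U(1)$-scalar $u$ that cancels in $I_x^2$. Once that bookkeeping is in place, the sign computation is a one-line consequence of $J^2 = -1$ and the formula for $(\omega^-)^2$.
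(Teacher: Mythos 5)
Your proof is correct and follows essentially the same route as the paper: localize at a fixed point, use a local spin reduction, write $I_x$ as a Clifford factor carrying the $d\iota_x$-reflection composed with a quaternionic structure, invoke uniqueness to absorb the ambiguity into a $U(1)$-scalar, and then read the sign of $I_x^2$ off the square of the Clifford factor. The paper phrases this by writing $\tilde\iota = I_\varphi \circ (\cdot\,j)$ as a $\mathrm{Spin}^c(4)$-lift $[g,u]$ of $d\iota_{x_0}$ and obtaining $I^2 = -g^2$; your $\rho(\omega^-)$ is exactly that lift $g$, and your $J$ is the right multiplication by $j$. The one place you are more explicit than the paper is the final sign check: the paper asserts ``$g^2=-1$ if and only if the fixed set is codimension $2$'' without calculation, whereas you supply the identity $I_x^2 = -(-1)^{k(k+1)/2}$ and evaluate at even $k$, which is a clean way to make that step transparent.
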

\begin{proof}
Let $x_0 \in W$ be an $\iota$ fixed point. Let $U(x_0)$ an $\iota$ invariant normal coordinate chart centered at a point $x_0$. In the open set $U(x_0)$, there is the unique spin structure $\frak{s}_0$. We fix the trivialization $S|_{U(x_0)}\cong U(x_0) \times (\mathbb{H} \oplus \mathbb{H})$ of the spinor bundle of $\frak{s}_0$ 
which satisfies that the Clifford action $\rho$ is represented by $4 \times 4$ matrices as follows:
\[
    \rho(e_0)=\begin{pmatrix}0&-1\\1&0\end{pmatrix}, \;
    \rho(e_1)=\begin{pmatrix}0&i\\i&0\end{pmatrix},\;
    \rho(e_1)=\begin{pmatrix}0&j\\j&0\end{pmatrix},\;
    \rho(e_1)=\begin{pmatrix}0&k\\k&0\end{pmatrix}.
\] 
(In our convantion, $ijk=1$. )
In this chart, the restrictions of the spin$^c$ structures $\frak{s}, \bar{\frak{s}}$ and $\iota^{\ast}\frak{s}$ are isomorphic to the spin$^c$ structure $\frak{s}_0$. Using this identification, we fix an isomorphism of the spin$^c$ structure $\varphi \colon  \bar{\fraks} \to \iota^{\ast}\fraks$. 
Let $\cdot j \colon \frak{s}_0 \to \frak{s}_0$ is an anti-linear map given by the right multiplication of $j$. Then we see that 
$\tilde{\iota}=\iota^{\ast} \circ \varphi \circ c \circ (\cdot j) \colon \frak{s}_0 \to \frak{s}_0$ is a lift of the involution $\iota$ to the spin$^c$ structure $\frak{s}_0$. 
Therefore $\tilde{\iota}_{x_0}$  can be written by $[g, u] \in \mathrm{Spin}^c(4) \cong (\mathrm{Spin}(4) \times U(1))/\{(1, 1), (-1, -1)\}$ where $g$ is a lift of $d\iota_{x_0}$ to $\mathrm{Spin}(4)$. 
Thus we see that for $\phi \in S_{x_0} \cong \mathbb{H} \oplus \mathbb{H}$, 
\[
    I_{x_0}(\phi)=g \phi j u^{-1}
\]
and we have $I^2(\phi)=-g^2\phi$. Hence $\iota$ is odd if and only if $g^2=-1$. One can easily check that $g^2=-1$ if and only if fixed point sets of $\iota$ is codimension $2$.  
\end{proof}

From \cref{lem :codimension 2}, we may obtain a convenient sufficient condition that $\iota$ is an odd type:
\begin{lem}
\label{lem: odd sign half}
Let $W$ be a closed, oriented smooth 4-manifold. Let $\iota : W \to W$ be an orientation-preserving smooth involution.
Let $\fraks$ be a spin$^{c}$ structure on $W$ and suppose that $\iota^{\ast}\fraks=\bar{\fraks}$.
If $\sigma_{\iota}(W) \neq \sigma(W)/2$, then $\iota$ is an odd type.
\end{lem}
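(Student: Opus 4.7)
The plan is to convert the numerical hypothesis into a non-vanishing statement about the equivariant signature of $\iota$, and then read off from the $G$-signature formula that the fixed set must contain a codimension-2 component; \cref{lem :codimension 2} then gives the conclusion.

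First I would observe that since $\iota$ is orientation-preserving, $\iota^{\ast}$ acts trivially on $H^{4}(W;\R)$. Consequently, for $\alpha \in H^{2}_{+}$ and $\beta \in H^{2}_{-}$ (the $\pm 1$ eigenspaces of $\iota^{\ast}$ on $H^{2}(W;\R)$), we have $\iota^{\ast}(\alpha \cup \beta) = -\alpha \cup \beta$, but also $\iota^{\ast}(\alpha \cup \beta) = \alpha \cup \beta$, forcing $\alpha \cup \beta = 0$. Hence $H^{2}_{+}$ and $H^{2}_{-}$ are orthogonal with respect to the intersection form, the form splits as $Q_{+} \oplus Q_{-}$, and $\sigma(W) = \sigma_{+} + \sigma_{-}$ with $\sigma_{\pm} := \sigma(Q_{\pm})$. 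Since $\sigma_{\iota}(W) = \sigma_{+}$, the hypothesis $\sigma_{\iota}(W) \neq \sigma(W)/2$ is equivalent to the equivariant signature $\sigma(\iota, W) := \sigma_{+} - \sigma_{-}$ being non-zero.

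Next I would apply the Atiyah--Singer $G$-signature formula for the $\Z/2$-action generated by $\iota$, writing $\sigma(\iota, W)$ as a sum of local contributions over the components of $W^{\iota}$. Because $\iota$ is an orientation-preserving involution, each component is either an isolated point at which $\iota$ acts on $T_{x}W$ as $-\id$, or a closed codimension-2 surface along which $\iota$ acts on the normal bundle as $-1$. The local contribution at an isolated fixed point is $\prod_{i=1}^{2}(-\cot(\theta_{i}/2))$ with rotation angles $\theta_{1}=\theta_{2}=\pi$, which vanishes because $\cot(\pi/2)=0$. The contribution from a codim-2 component $F$ is its (possibly twisted) self-intersection number. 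Therefore $\sigma(\iota, W)$ depends only on the codimension-2 part of $W^{\iota}$; if it is non-zero then $W^{\iota}$ must contain a codimension-2 component.

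Finally, the local model computation in the proof of \cref{lem :codimension 2} gives $I^{2}=+1$ at a codim-2 fixed point and $I^{2}=-1$ at an isolated fixed point. Since $I^{2}$ is the constant $\pm 1$ on (the connected manifold) $W$, the presence of any codim-2 fixed component forces $I^{2}=1$ globally, i.e.\ $\iota$ is of odd type. The main obstacle will be invoking the correct form of the local $G$-signature contribution and verifying the vanishing at an isolated $(-\id)$-fixed point; this is classical but warrants an explicit citation to the Atiyah--Singer formula.
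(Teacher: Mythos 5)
Your proof is correct and takes essentially the same route as the paper's: the $G$-signature theorem, the vanishing of the isolated-fixed-point contribution (because the rotation angles are both $\pi$), and \cref{lem :codimension 2}. The paper phrases the argument in the contrapositive (assume $\iota$ is even, so $W^\iota$ is a finite set of points, and conclude $\sigma_\iota(W)=\sigma(W)/2$), whereas you run it directly, showing $\sigma_\iota(W)\neq\sigma(W)/2$ is equivalent to a non-zero equivariant signature, which forces a codimension-$2$ fixed component; the two are logically identical, and your explicit remark that $I^2$ is a global constant on connected $W$ (so mixed fixed-point codimensions cannot occur) is a reasonable clarification of the implicit dichotomy in \cref{lem :codimension 2}.
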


\begin{proof}
Assume that $\iota$ is an even type.
By the $G$-signature theorem, $\sigma_{\iota}(W)$ can be obtained by adding $\sigma(W)/2$ to contributions from $W^{\iota}$.
However, by \cref{lem :codimension 2}, $W^{\iota}$ consists only of isolated points, and the contribution from isolated fixed points are zero for a general involution. 
\end{proof}

As well as for involutions on 4-manifolds discussed until here, we can repeat similar arguments for involutions on 3-manifolds.
We summarize it below:

\begin{theo}\label{3diminvolutionI}
Let $Y$ be a closed, oriented three-manifold and $\frak{t}$ be a spin$^c$ structure on $Y$. Let $\iota \colon Y \to Y$ be an involution such that $Y^{\iota} \neq \emptyset$, $\iota^{\ast}\frak{t} \cong \bar{\frak{t}}$, and $H^1(Y, \Z)^{-\iota^{\ast}}=0$. 
Then we have an anti-linear map $I \colon \frak{t} \to \frak{t}$ such that $I$ covers $\iota$ and $I$ is compatible with Clifford multiplication $\rho$ in the following sense:
\[I(\rho(X)\phi(x))=\rho(d\iota(X))I(\phi(x)).\]
The choice of $I$ is unique up to $\iota$ invariant gauge transformations. 
Moreover, $I$ satisfies that $I^2=\pm 1$ and $I^2=1$ if and only if the fixed point set of $\iota$ is codimension $2$. 
\end{theo}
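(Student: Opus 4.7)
The plan is to mimic, step by step, the construction carried out earlier in this section for real $\mathrm{spin}^{c}$ $4$-manifolds; the only place where the dimension really enters is the final local sign calculation relating $I^{2}$ to the codimension of $Y^{\iota}$. The three ingredients that drive the $4$-dimensional construction---existence of an isomorphism $\varphi\colon\bar{\frak{t}}\to\iota^{\ast}\frak{t}$, non-emptiness of the fixed set, and $H^{1}(Y;\Z)^{-\iota^{\ast}}=0$---are exactly the hypotheses imposed here.

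First, fix any isomorphism $\varphi\colon\bar{\frak{t}}\to\iota^{\ast}\frak{t}$ of $\mathrm{spin}^{c}$ structures and set $I_{\varphi}=\iota^{\ast}\circ\varphi\circ c$, where $c\colon\frak{t}\to\bar{\frak{t}}$ is complex conjugation. The proof of \cref{uphi1} is purely gauge theoretic and shows that $I_{\varphi}^{2}$ is an automorphism of the $\mathrm{spin}^{c}$ structure $\frak{t}$, hence equals a $U(1)$-gauge transformation $u_{\varphi}$; the calculation of \cref{uphi2} applies verbatim to give $u_{\varphi}(\iota^{-1}(x))=\overline{u_{\varphi}(x)}$. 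To promote $I_{\varphi}$ to an anti-linear lift $I$ of $\iota$ with $I^{2}=\pm 1$, invoke $H^{1}(Y;\Z)^{-\iota^{\ast}}=0$ exactly as in \cref{existenceofI}: write $u_{\varphi}=\pm\exp(if)$ for a real smooth $f$ satisfying $f\circ\iota^{-1}=-f$, and set $I=uI_{\varphi}$ with $u=\exp(-if/2)$; the same three-line computation produces $I^{2}=\pm 1$. Uniqueness of $I$ up to $\iota$-invariant gauge transformations is then an immediate transcription of \cref{uniqueI}, whose argument uses only Clifford compatibility and non-emptiness of the fixed set.

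Next, I would prove the dichotomy between $I^{2}=+1$ and $I^{2}=-1$ by a local model at a fixed point $x_{0}\in Y^{\iota}$, paralleling \cref{lem :codimension 2}. Since $\iota$ is orientation-preserving, $d\iota_{x_{0}}\in SO(3)$ is an involution, so in a suitable orthonormal frame it equals either $\mathrm{id}$ (so $Y^{\iota}$ has codimension $0$ at $x_{0}$, forcing $\iota$ to be the identity on a neighborhood of $x_{0}$) or $\mathrm{diag}(1,-1,-1)$ (codimension $2$). A lift of $d\iota_{x_{0}}$ to $\mathrm{Spin}(3)\cong Sp(1)$ is $g=\pm 1$ with $g^{2}=+1$ in the first case, and $g=\pm i$ (corresponding to $e_{2}e_{3}$ in the Clifford algebra) with $g^{2}=-1$ in the second. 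Working in an $\iota$-invariant geodesic normal neighborhood, trivializing $\frak{t}$ by means of the product spin structure, identifying the spinor fiber with $\mathbb{H}$ so that $\rho(e_{1}),\rho(e_{2}),\rho(e_{3})$ act by left multiplication by $i,j,k$, and taking $\varphi$ to be induced by right multiplication by $j$, one obtains $I_{x_{0}}(\phi)=g\phi j u^{-1}$. A short bracket calculation using $ji=-ij$ and the convention $ijk=1$ then yields $I^{2}(\phi)=-g^{2}\phi$, so $I^{2}=+1$ if and only if $g^{2}=-1$, i.e., if and only if $Y^{\iota}$ is codimension $2$.

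The main obstacle is precisely this last sign computation. In dimension $3$ the spinor representation is two-dimensional over $\C$ (quaternionic rank $1$), so the $4\times 4$ off-diagonal matrix form of $\rho$ used in the proof of \cref{lem :codimension 2} is not available; I will have to pin down an explicit trivialization of the product spin structure in which $\rho$ and the anti-linear operator $R_{j}$ both take their standard quaternionic forms, and then verify the identity $I^{2}=-g^{2}\cdot\mathrm{id}$ by a direct quaternionic manipulation. Once this local identity is established, every other assertion of the theorem follows by a verbatim transcription of arguments already in the excerpt.
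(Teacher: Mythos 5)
Your proposal is correct and follows essentially the same route as the paper: the paper gives no separate proof of \cref{3diminvolutionI} beyond the remark preceding it, which says to "repeat similar arguments," and your step-by-step transcription of \cref{uphi1,uphi2,existenceofI,uniqueI} together with a three-dimensional version of \cref{lem :codimension 2} is precisely what that remark intends. In particular, your explicit local model (spinor fiber $\mathbb{H}\cong\C^2$, $\rho(e_a)$ by left multiplication by $i,j,k$, $\varphi$ by right multiplication by $j$, and a lift $g\in Sp(1)\cong\mathrm{Spin}(3)$ of $d\iota_{x_0}$) reproduces the identity $I_{x_0}^{2}=-g^{2}$ in exactly the form the paper obtains in dimension four, and the $SO(3)$ involution dichotomy correctly yields $g^2=-1$ precisely when $Y^{\iota}$ is of codimension $2$.
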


From now, we assume that $\iota$ is of odd type.

Next, we show that the Seiberg--Witten equation is equivariant with the involution $-\iota^* \oplus I$. 

\begin{prop}\label{sw is equiv I}
Let us define an involution on $\Omega^1(W) \oplus \Gamma(S^+)$ and $\Omega^+(W) \oplus \Gamma(S^-)$ by $-\iota^* \oplus I$. We have the Seiberg--Witten equations with $\iota$-invariant Riemannian metric on $W$ is equivariant with this involution. 
\end{prop}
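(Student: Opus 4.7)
The plan is to verify equivariance term-by-term, first pinning down an appropriately equivariant reference spin$^c$ connection and then checking each of the Dirac equation, the curvature equation, and the quadratic term separately.

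First, I would fix a reference spin$^c$ connection $A_{0}$ on $\fraks$ that is compatible with $I$ in the sense that the induced $U(1)$-connection on $\det\fraks$ is invariant under the involution $A \mapsto \overline{\iota^{\ast}A}$ on connections. Such an $A_0$ exists by averaging an arbitrary connection under this order-2 operation (note that conjugation and pullback together preserve the space of connections on $\det\fraks$ precisely because $\iota^{\ast}\fraks\cong\bar{\fraks}$). With $A_0$ so chosen, any perturbed connection $A_0+a$ with $a\in i\Omega^{1}(W)$ transforms as $\overline{\iota^{\ast}(A_0+a)}=A_0+(-\iota^{\ast}a)$, because $a$ is purely imaginary so $\bar{a}=-a$. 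This explains the appearance of $-\iota^{\ast}$ on the 1-form factor.

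Second, I would check the curvature equation. Curvature is natural under pullback and changes sign under conjugation of an imaginary-valued connection, hence $F_{\overline{\iota^{\ast}A}}=-\iota^{\ast}F_{A}$ for any spin$^c$ connection $A$. Since $\iota$ is an orientation-preserving isometry of the chosen $\iota$-invariant metric, $\iota^{\ast}$ commutes with the self-dual projection, so $F^{+}_{A_0+(-\iota^{\ast}a)}=-\iota^{\ast}F^{+}_{A_0+a}$, matching the prescribed $-\iota^{\ast}$ action on $\Omega^{+}(W)$. For the quadratic term, the operator $\rho(\sigma(\phi))=\phi\phi^{\ast}-\tfrac{|\phi|^{2}}{2}\mathrm{id}$ is mapped under conjugation by $I$ to $(I\phi)(I\phi)^{\ast}-\tfrac{|I\phi|^{2}}{2}\mathrm{id}=\rho(\sigma(I\phi))$, using that $I$ is anti-linear and preserves the Hermitian metric by \cref{IandCliffford}. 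Combined with $I\rho(\xi)I^{-1}=\rho(\iota^{\ast}\xi)$ and the extension of Clifford multiplication to imaginary-valued self-dual 2-forms (where the factor of $i$ combines with anti-linearity to produce a sign), this yields $\sigma(I\phi)=-\iota^{\ast}\sigma(\phi)$.

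Third, for the Dirac equation, the invariance of $A_0$ implies that the spin$^c$ covariant derivative is equivariant, $\nabla^{A_0}(I\phi)=I\nabla^{A_0}\phi$ in the natural sense. Combined with the Clifford compatibility in \cref{IandCliffford} and the fact that $\iota$ acts on $TW$ by isometries (so an $\iota$-invariant local frame transforms via $d\iota$), a direct computation using $\rho(e_i)I=I\rho(\iota^{\ast}e_i)$ gives $D_{A_0+(-\iota^{\ast}a)}(I\phi)=I\,D_{A_0+a}\phi$. The sign in the 1-form factor precisely cancels the sign introduced when moving $-\iota^{\ast}a$ across $I$, so no extra sign appears.

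The main obstacle I expect is bookkeeping the signs and factors of $i$ in the quadratic term $\sigma$, and more generally making sure the identification of $i\Omega^{1}$-valued perturbations with the space of connections is compatible with the involution $A\mapsto\overline{\iota^{\ast}A}$ on the nose; once the reference connection is fixed equivariantly and the conventions for $\sigma$ and the Clifford action on 2-forms are made explicit, each of the three checks is essentially a short calculation using \cref{IandCliffford} and the odd-type assumption $I^{2}=1$.
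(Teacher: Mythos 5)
Your proof is correct and follows essentially the same route as the paper's: average a reference spin$^c$ connection to make it $I$-equivariant, observe that the induced self-dual curvature then anticommutes with $\iota^{\ast}$, and check the perturbation of the Dirac operator and the quadratic spinor term separately using the Clifford compatibility $I\rho(\xi)=\rho(\iota^{\ast}\xi)I$. The only cosmetic difference is that you use the explicit endomorphism formula $\rho(\sigma(\phi))=\phi\phi^{\ast}-\tfrac{|\phi|^{2}}{2}\mathrm{id}$, whereas the paper verifies the equivariance of the quadratic map via the duality pairing $\langle b,\tau(\phi_1,\phi_2)\rangle=-\langle\sqrt{-1}\rho(b)\phi_1,\phi_2\rangle$; both are short computations resting on the same facts (anti-linearity, metric preservation, and Clifford compatibility of $I$).
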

\begin{proof}
Let $A'_0$ be a spin$^c$ connection on $\mathfrak{s}$. We define the covariant derivative $d_{A_0} \colon \Omega(W) \otimes \Gamma(S) \to \Omega(W) \otimes \Gamma(S)$ to be
\[
d_{A_0}:=\frac{1}{2}(d_{A'_0}+(\iota^* \otimes I)\circ d_{A'_0}\circ (\iota^* \otimes I)). 
\]
It is easy to check that this is an $\iota^*\otimes I$ invariant spin$^c$ connection on $\mathfrak{s}$. Moreover, we have that the curvature form $F_{A_0^{\tau}}$ of $\det{S^+}$ induced by the connection $A_0$ satisfies that $\iota^*F_{A_0^{\tau}}=-F_{A_0^{\tau}}$ since $F_{A_0}$ is an imaginary-valued $2$-form and $I$ is an anti-linear. 
We set $A_0$ to be a reference connection. Let $A=A_0+\sqrt{-1}a$. 
We only need to show that the non-linear terms of the Seiberg--Witten equations are equivariant with the involution. From \cref{IandCliffford},  we have
\begin{equation}\label{eq: nonlinear1}
    I(\sqrt{-1}\rho(a)\phi)=-\sqrt{-1}I(\rho(a)\phi)= -\sqrt{-1}\rho(\iota^*a)I(\phi) 
\end{equation}
for any $a \in \Omega^*(W)$ and $\phi \in \Gamma(S^+)$. Thus we have that the Dirac equation is equivariant to the involution. 

Next, we show that the equation of curvature
\begin{equation*}
    F_{A^\tau}^+=\sqrt{-1}d^+a + F_{A_0^\tau}^+=-\sqrt{-1}\tau(\phi, \phi)
\end{equation*}
is equivariant under the involution. 
The quadratic form $\tau \colon S^+ \otimes S^+ \to \Lambda^+$ is characterized by the following relation:
\[
\langle b, \tau(\phi_1, \phi_2)\rangle_{\Lambda^+}=-\langle\sqrt{-1}\rho(b)\phi_1 , \phi_2\rangle_{S^+}
\]
where $b \in \Omega^+(W)$ and $\phi_1, \phi_2 \in \Gamma(S^+)$. 
The inner product $\langle, \rangle_{\Lambda^+}$ and $\langle, \rangle_{S^+}$ are invariant under the involution $\iota^*$ and $I$ respectively. From \eqref{eq: nonlinear1}, we have
\begin{align*}
\langle b, \iota^*\tau(\phi_1, \phi_2)\rangle_{\Lambda^+}
&=\langle \iota^*b, \tau(\phi_1, \phi_2)\rangle_{\Lambda^+}\\
&=-\langle\sqrt{-1}\rho(\iota^*b)\phi_1 , \phi_2\rangle_{S^+}\\
&=-\langle I(\sqrt{-1}\rho(\iota^*b)\phi_1) , I(\phi_2)\rangle_{S^+}\\
&=\langle\sqrt{-1}\rho(b)I(\phi_1) , I(\phi_2)\rangle_{S^+}\\
&=-\langle b, \tau(I(\phi_1), I(\phi_2))\rangle_{\Lambda^+}. 
\end{align*}
This completes the proof. 
\end{proof}

We can prove that the Seiberg--Witten equation on $3$-manifolds with involution $\iota$ is invariant under the involution $-\iota^* \oplus I$ in a similar way. 

\section{Floer homotopy type of rational homology 3-spheres with involution and Fr\o yshov type invariants}
\label{section Floer homotopy type of rational homology 3-spheres with involution and Fro yshov type invariants}
In \cite{KMT}, we defined a ($\Z_4$-equivariant) ``doubled" Seiberg--Witten Floer stable homotopy type of spin rational homology 3-spheres with involution
 $DSWF_{\Z_4}(Y, \frakt, \iota)$.
The doubling construction was considered mainly to define a $K$-theoretic Fr{\o}yshov-type invariant easily.
In this \lcnamecref{section Floer homotopy type of rational homology 3-spheres with involution and Fro yshov type invariants}, we define Seiberg--Witten Floer stable homotopy type of real spin$^c$ rational homology 3-spheres
 $SWF_{\Z_2}(Y, \frakt, \iota)$, without taking double, and define a Fr{\o}yshov type invariant
\[
\delta_R(Y, \frakt, \iota) \in \frac{1}{16}\Z
\]
applying the $\Z_2$-equivariant ordinary cohomology to $SWF_{\Z_2}(Y, \frakt, \iota)$.
Moreover, when $\mathfrak{t}$ is spin, we also define 
two Fr{\o}yshov type invariants
\[
\underline{\delta}_R(Y, \frakt, \iota), \bar{\delta}_R(Y, \frakt, \iota) \in \frac{1}{16}\Z 
\]
applying $\Z_4$-equivariant ordinary cohomology to $SWF_{\Z_4}(Y, \frakt, \iota)$.
These invariants are analogues of Stoffregen's invariants $\underline{\delta}_R$ and $\bar{\delta}_R$. 
Throughout this \lcnamecref{section Floer homotopy type of rational homology 3-spheres with involution and Fro yshov type invariants},
we consider cohomologies with coefficient $\mathbb{F}=\Z_2$.

\subsection{Representations}\label{representations}

First let us consider $\Z_2$-representations.
Let $\R$ be the trivial real 1-dimensional representation of $\Z_2$, and $\C$ denote the complex 1-dimensional representation of $\Z_2$ defined as the scalar multiplication of $\Z_2=\{1,-1\}$.
For a finite-dimensional vector space $V$, let $V^+$ denote the one-point compactification of $V$.

Recall that we defined the group $G$ to be the cyclic group of order $4$ generated by $j \in \Pin(2)$, i.e. 
\[
G = \{1, j, -1, -j\}.
\]
Define a subgroup $H$ of $G$ by
\[
H = \{1,-1\} \subset G.
\]
Let $\R$ denote the trivial 1-dimensional real representation of $G$.
Let $\tilde{\R}$ be the 1-dimensional real representation space of $G$ defined by the surjection $G \to \Z_2=\{1,-1\}$ and the scalar multiplication of $\Z_2$ on $\R$.

Let $\tilde{\C}$ be a 1-dimensional complex representation of $G$ defined via the surjection $G \to \Z_2$ and the scalar multiplication of $\Z_2$ on $\C$. 
Note that, for an even natural number $s$, say $2t$, there is an isomorphism of real representations $\tilde{\R}^{2s} \cong \tilde{\C}^t$.
We introduce also a $G$-representation $\C$ (the same notation of the complex number) which is the complex $1$-dimensional representation defined by assigning $j \in G$ to $i$ in $\C$.

\subsection{Numerical invariants $d$, $\underline{d}$, $\overline{d}$}

We first define the main ingredient of the Fr{\o}yshov invariant using $\Z_2$-equivariant cohomology,
following Stoffregen's formulation~\cite{Sto171}.

\begin{defi}
Let $G$ be a group and $H$ be a subgroup of $G$.
Let $\mathcal{V}$ be a coutable direct sum of a fixed 1-dimensional real representation $G$.
Let $X$ be a pointed finite $G$-CW complex.
We call $X$ a {\it space of type $(G, H)$-SWF} if 
\begin{itemize}
    \item $X^{H}$ is $G$-homotopy equivalent to $V^+$, where $V$ is a finite dimensional subspace of $\mathcal{V}$.
    \item $H$ acts freely on $X \setminus X^{H}$.
\end{itemize}
The dimension $\dim V$ is called the {\it level} of $X$.  We put $\mu (X) \in  \Q/2\Z$ by $\mu (X) = {\dim V}/{2}\ \operatorname{mod}\  2$.

\end{defi}

The situation we have in mind is either
\[
(G,H) = (\Z_2, \Z_2) \text{ or } (\Z_4, \Z_2).
\]
We often drop $H$ in the notation in $(G,H)$.

First, let us consider $(G,H) = (\Z_2, \Z_2)$. 
Note that 
\[
\tilde{H}_{\Z_2}^\ast(S^0) \cong \Z_2[W],
\]
where $W$ is of degree 1.
For a space $X$ of type $\Z_2$-SWF,
define
\begin{align*}
d(X) &= \min\Set{m \geq 0| \exists x \in \tilde{H}_{\Z_2}^m(X),\ W^lx\neq0\ (\forall l \geq 0) }\\
&= \min\Set{m \geq 0| \exists x \in \tilde{H}_{\Z_2}^m(X), 0\neq \iota^* x \in  \wt{H}^*_G(X^H)  }.
\end{align*}
(See \cite[Equation (19)]{Sto171}.)
By an equivariant localization theorem (see, e.g., \cite[Theorem 2.3]{Sto171}),
we have $d(X) < +\infty$.

Next, let us consider $(G,H) = (\Z_4, \Z_2)$. 
In this case, we have 
\[
\wt{H}_{G}^\ast (S^0) \cong \Z_2 [U, Q]/(Q^2 = 0),
\]
where $\deg U=2$ and $\deg Q=1$.
For a space $X$ of type $\Z_4$-SWF,
define
\[
\overline{d}(X) = \min\Set{m \equiv 2 \mu (X) \geq 0\ (\mathrm{mod}\ 2) | \exists x \in \tilde{H}_{\Z_4}^m(X),\ U^lx\neq0\ (\forall l \geq 0) }
\]
and
\[
\underline{d}(X) = \min\Set{m \equiv 2 \mu (X) +1 \geq 0\ (\mathrm{mod}\ 2)| \exists x \in \tilde{H}_{\Z_4}^m(X),\ U^lx\neq0\ (\forall l \geq 0) }-1.
\]

Again, by an equivariant localization theorem (see, e.g., \cite[Theorem 2.3]{Sto171}), we have that $\overline{d}(X), \underline{d}(X) < \infty$.

We also use alternative descriptions of invariants $\underline{d}$ and $\overline{d}$ by using {\it infinity version} of equivariant cohomology. For a space $X$ of type $G$-SWF, we define 
\[
\infty \wt{H}^*_G (X) := \begin{cases}
    \operatorname{Im} (\wt{H}_G(X) \to  U^{-1} \wt{H}_G(X)) \text{ if } G=\Z_4\\
     \operatorname{Im} (\wt{H}_G(X) \to  W^{-1} \wt{H}_G(X))  \text{ if } G=\Z_2. 
\end{cases}
\]

We have the following classification result of ideals of $U^{-1} \Z_2[U, Q]/(Q^2=0)$. 
\begin{lem}
    Any graded ideal $\mathcal{J}$ of $U^{-1} \Z_2[U, Q]/(Q^2=0)$ such that $U^{-1} \mathcal{J} =U^{-1} \Z_2[U, Q]/(Q^2=0)$ 
    have the following form: 
    \[
    \mathcal{J} = (U^i, Q U^j)
    \]
    for some $i  \geq j \geq 0 $.
\end{lem}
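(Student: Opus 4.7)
The plan is to reduce the claim to elementary monomial bookkeeping, after first fixing the intended interpretation. Although the ring $U^{-1}\Z_2[U,Q]/(Q^2=0)$ read literally has $U$ inverted (in which case the only graded ideals are $(0), (Q)$, and the full ring), the hypothesis $U^{-1}\mathcal{J} = U^{-1}\Z_2[U,Q]/(Q^2=0)$ becomes informative precisely when $\mathcal{J}$ is regarded as a graded $\Z_2[U,Q]/(Q^2)$-submodule whose $U$-saturation is the whole Laurent ring. I will proceed under this reading and flag the convention at the start of the proof.

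The key structural observation is that the graded ring $R=\Z_2[U,Q]/(Q^2)$ has one-dimensional homogeneous components: in degree $2k$ the only monomial is $U^k$, and in degree $2k+1$ it is $QU^k$, for $k\geq 0$. Consequently a graded ideal $\mathcal{J}$ is determined completely by the two subsets
\[
S_e = \{k \geq 0 : U^k \in \mathcal{J}\}, \qquad S_o = \{k \geq 0 : QU^k \in \mathcal{J}\}.
\]
Multiplication by $U$ makes each of $S_e,S_o$ upward-closed, and multiplication by $Q$ forces $S_e \subseteq S_o$. The saturation hypothesis then yields $S_e \neq \emptyset$: otherwise $U^{-1}\mathcal{J}$ would contain no unit and would be a proper submodule of $U^{-1}R$.

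Setting $i := \min S_e$ and $j := \min S_o$, upward closedness identifies $S_e$ with $\{k : k\geq i\}$ and $S_o$ with $\{k : k\geq j\}$, and the inclusion $S_e\subseteq S_o$ together with $i\in S_e\subseteq S_o$ forces $j\leq i$. Since both indices are non-negative by construction, we obtain $i\geq j\geq 0$, and reading off generators gives $\mathcal{J}=(U^i, QU^j)$. The argument is otherwise entirely formal; the only real subtlety — and the one point I expect to spend a line clarifying — is the interpretation of the statement, after which the classification just records the minimal-degree generator in each parity.
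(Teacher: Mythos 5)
Your argument is correct and is essentially the one the paper intends: the paper's own proof simply quotes Manolescu's Lemma~2.8 in \cite{Ma16}, and that proof is exactly the bookkeeping you carry out --- each homogeneous component of $\Z_2[U,Q]/(Q^2)$ is one-dimensional, so a graded submodule is recorded by an upward-closed set of $U$-exponents in each parity, with multiplication by $Q$ sending the even set into the odd set. One caution about the interpretative paragraph you open with: reading $\mathcal{J}$ as an arbitrary graded $\Z_2[U,Q]/(Q^2)$-submodule of $U^{-1}\Z_2[U,Q]/(Q^2)$ whose $U$-saturation is everything is still too permissive, since for instance $\mathcal{J}=U^{-1}\Z_2[U,Q]/(Q^2)$ itself satisfies that hypothesis but is not $(U^i,QU^j)$ for any $i\geq j\geq 0$; your own definitions of $S_e$ and $S_o$ silently impose $k\geq 0$ and therefore only register $\mathcal{J}\cap\Z_2[U,Q]/(Q^2)$. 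The reading that makes both the hypothesis informative and the conclusion true --- and the one actually used, since $\infty\wt{H}^{\ast}_{G}(X)$ is the image of the nonnegatively graded $\wt{H}^{\ast}_{G}(X)$ --- is that $\mathcal{J}$ is a graded ideal of $\Z_2[U,Q]/(Q^2)$ itself that contains some power of $U$. With that one clarification your monomial argument is complete and coincides with the referenced proof.
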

\begin{proof}
The proof is essentially the same as the proof of \cite[Lemma 2.8]{Ma16}. 
\end{proof}
 If $\infty \wt{H}^{n+*}_G (X) = (U^i, Q U^j)$, then we have 
 \begin{align}\label{infinity descp coh}
 \underline{d} (X) = i+ n \text{ and }  \overline{d} (X) = j+ n.
  \end{align}
 In particular, this expression \eqref{infinity descp coh} enables us to check 
 \[
 \underline{d} (X) \leq \overline{d} (X). 
 \]

 Moreover, we have the following properties with respect to stabilizations: 
 \begin{lem}
      Let $X$ be a space of type $G=\Z_4$ SWF, and $V$ a representation of $G=\Z_4$. 
      Then, we have 
      \begin{align}\label{stabilization d-bar}
      \underline{d}(\Sigma^V X)= \underline{d}(X) + \dim V \text{ and } \overline{d}(\Sigma^V X)= \overline{d}(X) + \dim V. 
      \end{align}
      Similarly, for a space $X$ of type $G=\Z_2$ SWF and a $\Z_2$-representation $V$, we have 
      \begin{align}\label{stabilization of d}
      {d}(\Sigma^V X)= {d}(X) + \dim V. 
       \end{align}
 \end{lem}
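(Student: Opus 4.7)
The plan is to deduce both formulas from the equivariant Thom (suspension) isomorphism. With $\F = \F_2$ coefficients there is no orientability obstruction, so for any pointed $G$-CW complex $X$ and any finite-dimensional $G$-representation $V$, cup product with the Thom class $u_V \in \tilde{H}^{\dim V}_G(S^V; \F)$ will yield an isomorphism
\[
\Phi_V \colon \tilde{H}^n_G(X; \F) \xrightarrow{\cong} \tilde{H}^{n+\dim V}_G(\Sigma^V X; \F)
\]
of graded modules over $\tilde{H}^*_G(\pt; \F)$. The key point to exploit is that $\Phi_V$ commutes with multiplication by $U$, $Q$, and $W$.

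The central observation to leverage next is that an element $x \in \tilde{H}^n_G(X)$ satisfies $U^\ell x \neq 0$ for all $\ell \geq 0$ if and only if $\Phi_V(x)$ has the same property, and similarly with $W$ in place of $U$. Hence the smallest degree supporting a $U$-torsion-free (respectively $W$-torsion-free) element shifts by exactly $\dim V$ under $\Sigma^V$. This immediately gives \eqref{stabilization of d} for $G = \Z_2$, where no parity constraint appears in the definition of $d$.

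For $G = \Z_4$, the only remaining step is to verify that the parity conditions $m \equiv 2\mu \pmod{2}$ in the definitions of $\underline{d}$ and $\overline{d}$ remain compatible after shifting by $\dim V$. Since $H$-fixed points commute with suspension, $(\Sigma^V X)^H \simeq \Sigma^{V^H}(X^H)$, so the level increases by $\dim V^H$ and therefore $2\mu(\Sigma^V X) - 2\mu(X) \equiv \dim V^H \pmod{4}$. The two parities will match provided $\dim V \equiv \dim V^H \pmod{2}$. But among the irreducible real $\Z_4$-representations ($\R$, $\tilde{\R}$, $\tilde{\C}$, and $\C$), the only one on which $-1 \in H$ acts non-trivially is the real $2$-dimensional representation underlying $\C$, so the non-$H$-fixed summand $V/V^H$ always has even real dimension and the parity constraint is automatically preserved.

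The main technical input is the module-theoretic form of the equivariant Thom isomorphism over $\tilde{H}^*_G(\pt; \F)$, which is standard. I do not anticipate any serious obstacle; the only mildly delicate point is the parity bookkeeping in the $\Z_4$-case, which is resolved by the representation-theoretic observation just noted.
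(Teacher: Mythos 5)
Your proof is correct and follows essentially the same route as the paper's: both arguments invoke the $\F_2$-coefficient equivariant suspension (Thom) isomorphism $\tilde{H}^{*}_G(\Sigma^V X) \cong \tilde{H}^{*-\dim V}_G(X)$ as an isomorphism of $\tilde{H}^*_G(S^0)$-modules and read off the degree shift. The paper records this in a single line by citing Manolescu, whereas you also carry out the parity bookkeeping for $\underline{d},\overline{d}$ explicitly; that step is correct, and the key observation — that the non-$H$-fixed summand of any real $\Z_4$-representation has even real dimension, because $g^2=-1$ acting as $-I$ on a real vector space forces even dimension — is exactly the reason the residue condition $m \equiv 2\mu \pmod 2$ is preserved. (One cosmetic quibble: $\tilde{\C}$ is not irreducible as a real $\Z_4$-representation, being isomorphic to $\tilde{\R}^2$; this does not affect your argument.)
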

 \begin{proof}
     As it is proven in \cite[Proposition 2.2]{Ma16}, for any finite-dimensional representation $V$ of $G=\Z_4$ or $\Z_2$, we have the suspension isomorphism of $\wt{H}_G(S^0)$-modules:
\begin{align}\label{suspension isom}
     \wt{H}^*_G (\Sigma^V X ;\Z_2) \cong \wt{H}^{*-\dim V}_G ( X ;\Z_2) . 
       \end{align}
The equations \eqref{stabilization d-bar} and \eqref{stabilization of d} follow from \eqref{suspension isom}. 
 \end{proof}

Later we shall use the following elementary lemmas.
Recall that, given a $G$-vector bundle $E \to B$, the (mod 2) $G$-equivariant Euler class $e_G(E) \in H_G^\ast(B)$ is defined by
\[
e_G(E) = e(EG \times_G E \to EG \times_G B) \in H_G^\ast(B).
\]

\begin{lem}
\label{lem: equiv Euler Z2}
The cohomology class $W \in \tilde{H}_{\Z_2}^\ast(S^0)$ coincides with $e_{\Z_2}(\tilde{\R})$, where $\tilde{\R}$ is regarded as a $\Z_2$-vector bundle over $\{\pt\}$.
\end{lem}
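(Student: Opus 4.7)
The plan is to identify both $W$ and $e_{\Z_2}(\tilde{\R})$ with the unique nonzero class in $H^1(B\Z_2;\Z_2)$.

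First I would unwind the definition of reduced $\Z_2$-equivariant cohomology. Taking $S^0$ to have the trivial $\Z_2$-action (with non-basepoint as the Borel construction's model point), one has
\[
\tilde{H}_{\Z_2}^\ast(S^0) \;\cong\; H_{\Z_2}^\ast(\mathrm{pt}) \;\cong\; H^\ast(B\Z_2;\Z_2) \;\cong\; \Z_2[w],
\]
where $B\Z_2 = \R P^\infty$ and $w \in H^1(\R P^\infty;\Z_2)$ is the generator. Since $\tilde{H}_{\Z_2}^\ast(S^0) \cong \Z_2[W]$ with $W$ of degree one, and $H^1(B\Z_2;\Z_2)$ is one-dimensional, $W$ corresponds to $w$ under this identification.

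Next I would unwind the definition of the equivariant Euler class. By definition,
\[
e_{\Z_2}(\tilde{\R}) \;=\; e\bigl(E\Z_2 \times_{\Z_2} \tilde{\R} \,\longrightarrow\, B\Z_2\bigr) \;\in\; H^1(B\Z_2;\Z_2).
\]
Under the standard model $E\Z_2 = S^\infty$, the bundle $S^\infty \times_{\Z_2} \tilde{\R} \to \R P^\infty$ is precisely the tautological real line bundle $\gamma$, whose mod-2 Euler class $e(\gamma) = w_1(\gamma)$ is by construction the generator $w$ of $H^1(\R P^\infty;\Z_2)$.

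Combining the two steps, both $W$ and $e_{\Z_2}(\tilde{\R})$ correspond to the same generator $w$, proving the lemma. The only point requiring minor care is the verification that the Borel-construction bundle associated to the sign representation really is the tautological line bundle over $\R P^\infty$; this is the main (and only) ``obstacle,'' but it is completely standard.
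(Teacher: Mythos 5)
Your proof is correct and follows essentially the same route as the paper's: both reduce the equivariant Euler class to the first Stiefel--Whitney class of the associated Borel bundle over $B\Z_2=\RP^\infty$ and identify it with the degree-one generator. The only difference is that you explicitly name that bundle as the tautological line bundle, a point the paper leaves implicit.
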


\begin{proof}
By definition, we have
\[
e_{\Z_2}(\tilde{\R})
=
e(E\Z_2 \times_{\Z_2} \tilde{\R} \to B\Z_2) \in \tilde{H}^1_{\Z_2}(S^0).
\]
The right-hand side coincides with $w_1(E\Z_2 \times_{\Z_2} \tilde{\R})$ and it is the generator of $H^1(B\Z_2) = \tilde{H}^1_{\Z_2}(S^0)$.
This completes the proof.
\end{proof}

\begin{lem}
\label{lem: comparison cohomology}
The cohomology class $Q \in \tilde{H}_{\Z_4}^\ast(S^0)$ is the image of $W \in \tilde{H}_{\Z_2}^\ast(S^0)$ under the natural map $\tilde{H}^\ast_{\Z_2}(S^0) \to \tilde{H}^\ast_{\Z_4}(S^0)$ induced from the surjection $\Z_4 \to \Z_2$.
\end{lem}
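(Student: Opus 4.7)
The plan is to reduce the lemma to the previous one via naturality of equivariant Euler classes with respect to the group homomorphism $\pi\colon \Z_4 \to \Z_2$.

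First, I would observe that the $\Z_4$-representation $\tilde{\R}$ is \emph{by definition} the pullback of the $\Z_2$-representation $\tilde{\R}$ along $\pi$. Consequently the associated line bundle over $B\Z_4$ fits into a pullback square:
\begin{equation*}
E\Z_4 \times_{\Z_4} \tilde{\R} \;\cong\; (B\pi)^{\ast}\bigl(E\Z_2 \times_{\Z_2} \tilde{\R}\bigr).
\end{equation*}
Naturality of the Euler (here, first Stiefel--Whitney) class then gives
\begin{equation*}
e_{\Z_4}(\tilde{\R}) \;=\; (B\pi)^{\ast}\, e_{\Z_2}(\tilde{\R}).
\end{equation*}
By the previous lemma the right-hand side equals $(B\pi)^{\ast} W$, which is precisely the image of $W$ under the natural map $\tilde{H}^{\ast}_{\Z_2}(S^0) \to \tilde{H}^{\ast}_{\Z_4}(S^0)$ induced by $\pi$.

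Second, I would identify this image with $Q$. Since $\tilde{H}^1_{\Z_4}(S^0) \cong \Z_2$ is generated by $Q$, it suffices to show that $(B\pi)^{\ast}W$ is nonzero. Using the identification $H^1(BG;\Z_2) \cong \Hom(G,\Z_2)$, the map $(B\pi)^{\ast}\colon H^1(B\Z_2;\Z_2) \to H^1(B\Z_4;\Z_2)$ sends the identity homomorphism $\Z_2 \to \Z_2$ (a generator of the source) to the surjection $\pi\colon \Z_4 \to \Z_2$, which is itself nonzero. Thus $(B\pi)^{\ast}W = Q$, completing the argument.

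There is no real obstacle beyond verifying that the convention for $Q$ in $\tilde{H}^{\ast}_{\Z_4}(S^0) \cong \Z_2[U,Q]/(Q^2)$ is indeed the unique nonzero degree-one class; as soon as that is fixed, the statement is forced by naturality and dimension counting. The only mild subtlety is to be careful that the ``$\tilde{\R}$'' appearing on the two sides refers to the $\Z_2$- and $\Z_4$-representations respectively, linked precisely by the pullback via $\pi$.
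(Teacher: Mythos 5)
Your second paragraph is the whole proof, and it is essentially the paper's own argument: the paper dualizes the induced surjection $H_1(B\Z_4;\Z_2)\to H_1(B\Z_2;\Z_2)$ to conclude that $\tilde H^1_{\Z_2}(S^0)\to\tilde H^1_{\Z_4}(S^0)$ is an isomorphism, while you invoke the equivalent identification $H^1(BG;\Z_2)\cong\Hom(G,\Z_2)$ to see that $W\mapsto\pi\neq 0$. Your first paragraph on naturality of the equivariant Euler class is true but not needed for this lemma, which concerns only $W$ and $Q$; that naturality, combined with \cref{lem: equiv Euler Z2} and the present lemma, is exactly the package the paper deploys later inside the proof of \cref{Theorem B oiginal} to compute $e_{\Z_4}(\tilde\R)=Q$.
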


\begin{proof}
The surjection $\Z_4 \to \Z_2$ induces a surjection $\tilde{H}_1(B\Z_4) \to \tilde{H}_1(B\Z_2)$.
Passing to the dual, this induces the isomorphism of the cohomologies $\tilde{H}^1(B\Z_2) \to \tilde{H}^1(B\Z_4)$, thus the generator $W$ of $\tilde{H}^1(B\Z_2)$ maps to the generator $Q$ of $\tilde{H}^1(B\Z_4)$.
\end{proof}

For $S^0$, the simplest example of the space of type SWF, it is easy to see that the above invariants coincide,
\[
d(S^0) = \underline{d}(S^0) = \overline{d}(S^0)=0.
\]
Below we exhibit an example for which the invariants 
$d$ and $\underline{d}$, $\overline{d}$ are distinct, following \cite[Example~2.10]{Ma16}.

\begin{ex}
\label{ex: easiest non-trvial ex}
Set $G=\Z_4$, and
let $\tilde{G}$ denote the unreducible suspension of $G$.
We regard $\tilde{G}$ as a based space by choosing one of the cone point of $\tilde{G}$ as the base point.
This space $\tilde{G}$ is obviously a space of type $\Z_4$-SWF.
We claim that
\begin{align}
\label{eq: easiest non-trivial ex of ds}
\underline{d}(\tilde{G})=0,\quad
d(\tilde{G})=1,\quad
\overline{d}(\tilde{G})=2.
\end{align}

To see this, first note that the $\Z_2$-invariant part of $\tilde{G}$ is given by $\tilde{G}^{\Z_2} = S^0$.
The cone of the inclusion map $S^0 \hookrightarrow \tilde{G}$ is given by the reduced suspension $\Sigma^{\R}G_+$ of $G_+=G \sqcup \mathrm{pt}$.
Thus we obtain the long exact sequence
\begin{align}
\label{eq: long ex tilde G}
\cdots\to 
H_G^\ast(\Sigma^{\R}G_+)
\to 
\tilde{H}_G^\ast(\tilde{G})
\to
{H}_G^\ast(S^0)
\to 
H_G^{\ast+1}(\Sigma^{\R}G_+)
\to \cdots.
\end{align}
It follows from this long exact sequence combined with
\[
\tilde{H}_G^{\ast+1}(\Sigma^{\R}G_+) \cong \tilde{H}_G^{\ast}(G_+)
\cong H^\ast(\mathrm{pt})
\]
that the restriction map
$\tilde{H}_G^m(\tilde{G}) \to \tilde{H}_G^m(S^0)$
is isomorphic if $m\geq2$.
On the other hand, it is clear that $\tilde{H}_G^0(\tilde{G})=0$.
This combined with the above long exact sequence \eqref{eq: long ex tilde G} implies that the restriction $\tilde{H}_G^1(\tilde{G}) \to \tilde{H}_G^1(S^0)$ is isomorphic.
The claims that $\underline{d}(\tilde{G})=0$ and that $\overline{d}(\tilde{G})=2$ immediately follow from these computations.

Next, we prove the claim $d(\tilde{G})=1$.
First, we obviously have $\tilde{H}_{\Z_2}^0(\tilde{G})=0$.
Second, by combining a long exact sequence for $\tilde{H}_{\Z_2}^\ast$ analogous to \eqref{eq: long ex tilde G} with that
\[
\tilde{H}_{\Z_2}^{2}(\Sigma^{\R}G_+) \cong \tilde{H}_{\Z_2}^{1}(G_+)
\cong H^1(\mathrm{pt} \sqcup \mathrm{pt})=0,
\]
we can see that the restriction map $\tilde{H}^1_{\Z_2}(\tilde{G}) \to \tilde{H}^1_{\Z_2}(S^0)$ is surjective.
The claim $d(\tilde{G})=1$ directly follows from these observations.
\end{ex}

Next we see a certain monotonicity for the quantities $d, \underline{d}, \overline{d}$:

\begin{lem}
\label{lem: sourse of Froyshov ineq}
 Let $X$ and $X'$ be spaces of type $\Z_2$-SWF at the same level.
Suppose that there exists a pointed $\Z_2$-equivariant map $f : X \to X'$ whose $\Z_2$-fixed-point set map is a homotopy equivalence.
Then we have
\[
d(X) \leq d(X').
\]
Similarly, for spaces $X, X'$ of type $\Z_4$-SWF at the same level, if there exists a pointed $\Z_4$-equivariant map $f : X \to X'$ whose $\Z_2$-fixed-point set map is a $\Z_4$-homotopy equivalence, then we have
\[
\overline{d}(X) \leq \overline{d}(X'),
\quad \underline{d}(X) \leq \underline{d}(X').
\]
\end{lem}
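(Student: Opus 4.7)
The plan is to pull back a cohomology class on $X'$ witnessing its Fr{\o}yshov-type invariant via $f^\ast$, then invoke naturality of the restriction to the fixed-point set together with the hypothesis that $f|_{X^{\Z_2}}$ is an equivalence, to produce a class on $X$ witnessing the same degree bound.

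First I would recast the conditions defining $d$, $\overline{d}$, and $\underline{d}$ purely in terms of non-vanishing of the restriction to $X^{\Z_2}$. Concretely, for $x \in \tilde{H}_G^\ast(X)$, the condition that $W^l x \neq 0$ for all $l \geq 0$ (respectively $U^l x \neq 0$ for all $l \geq 0$) should be equivalent to $\iota^\ast x \neq 0$ in $\tilde{H}_G^\ast(X^{\Z_2})$, where $\iota : X^{\Z_2} \hookrightarrow X$. This combines two ingredients. The equivariant Localization Theorem (already used in the second formula for $d$ in the paper, and cited as \cite[Theorem 2.3]{Sto171}) makes the localized restriction $W^{-1}\tilde{H}_{\Z_2}^\ast(X) \to W^{-1}\tilde{H}_{\Z_2}^\ast(X^{\Z_2})$ (respectively $U^{-1}\tilde{H}_{\Z_4}^\ast(X) \to U^{-1}\tilde{H}_{\Z_4}^\ast(X^{\Z_4})$) an isomorphism. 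Meanwhile, the equivariant Thom isomorphism applied to $X^{\Z_2} \simeq_G V^+$ exhibits $\tilde{H}_G^\ast(X^{\Z_2})$ as a shifted free rank-one $\tilde{H}_G^\ast(S^0)$-module, so its nonzero elements have no $W$- (respectively $U$-)torsion. Combined, these show that a class restricts to zero on $X^{\Z_2}$ precisely when it is killed by some power of $W$ (respectively $U$).

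Given a witnessing class $x' \in \tilde{H}_G^m(X')$ for the invariant on $X'$, I would then form $x := f^\ast(x') \in \tilde{H}_G^m(X)$. Naturality of restriction gives
\[
\iota^\ast(x) = (f|_{X^{\Z_2}})^\ast\bigl( (\iota')^\ast(x') \bigr),
\]
where $\iota' : (X')^{\Z_2} \hookrightarrow X'$. In the $\Z_2$-case the ambient action on either fixed set is trivial, so the homotopy equivalence $f|_{X^{\Z_2}}$ is automatically $\Z_2$-equivariant; in the $\Z_4$-case the $\Z_4$-equivariance of $f|_{X^{\Z_2}}$ is explicit in the hypothesis. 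Either way, $(f|_{X^{\Z_2}})^\ast$ is an isomorphism on $G$-equivariant cohomology. By the choice of $x'$ and the reformulation in the previous paragraph, $(\iota')^\ast(x') \neq 0$, hence $\iota^\ast(x) \neq 0$, and applying that reformulation in the reverse direction shows $x$ has the required nontriviality property in degree $m$. This yields $d(X) \leq d(X')$, respectively $\overline{d}(X) \leq \overline{d}(X')$ and $\underline{d}(X) \leq \underline{d}(X')$; the parity constraints on $m$ in the definitions of $\overline{d}$ and $\underline{d}$ are preserved because $X, X'$ are at the same level, so $\mu(X) = \mu(X')$. The only step with any real content is the reformulation in the preceding paragraph; once it is in hand, the rest is formal naturality, so the main technical obstacle is simply reconciling the two descriptions of the invariants.
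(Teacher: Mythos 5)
Your argument is correct and follows essentially the same route as the paper: both proofs hinge on the commutative diagram relating $f^\ast$ to restriction to $H$-fixed sets, then use the localization theorem (and implicitly the freeness of $\tilde{H}_G^\ast(X^H)$ via Thom) to see that pulling back a witnessing class along $f$ preserves the non-vanishing condition, since $(f^H)^\ast$ is an isomorphism. One small slip to fix: in the $\Z_4$ case the localization with respect to $U$ identifies $U^{-1}\tilde{H}_{\Z_4}^\ast(X)$ with $U^{-1}\tilde{H}_{\Z_4}^\ast(X^{\Z_2})$, not $X^{\Z_4}$ --- the relevant fixed-point set is always the $H=\Z_2$-fixed set, exactly as you use in the rest of your argument; this works because $\Z_2$ acting freely on $X\setminus X^{\Z_2}$ forces $\Z_4$ to act freely there as well.
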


\begin{proof}
This is a standard argument, but we give a sketch of the proof for the reader's convenience.
We first show the claim for the invariant $d$. Set $(G,H)=(\Z_2, \Z_2)$.
Consider the following commutative diagram: 
\[
  \begin{CD}
    X^H @>{f^H}>>  (X')^H \\
  @V{}VV    @VVV \\
    X   @>{f}>>  X'.
  \end{CD}
\]
Here the vertical maps are inclusions.
By applying $\wt{H}^*_G$, one has a commutative diagram 
\[
  \begin{CD}
    \wt{H}^*_G(X^H) @<{(f^H )^*}<< \wt{H}^*_G( (X')^H) \\
  @AAA    @AAA \\
    \wt{H}^*_G(  X )  @<{f^*}<<  \wt{H}^*_G (X').
  \end{CD}
\]
 
It follows from the assumption that $(f^H)^*$ is a homotopy equivalence combined with the equivariant localization theorem that
$f^\ast$ is an isomorphism in large enough degree.
From this combined with that $f^\ast$ commutes with the $W$-action, the commutative diagram above implies that
\begin{align}
\label{eq: pullback}
f^\ast\left(\Set{x' \in \tilde{H}_{G}^\ast(X') | W^lx'\neq0\ (\forall l \geq 0) }\right)
\subset \Set{x \in \tilde{H}_{G}^\ast(X) | W^lx\neq0\ (\forall l \geq 0) }.
\end{align}
The desired inequality $d(X) \leq d(X')$ follows from this.

For the invariants $\overline{d}, \underline{d}$, we may repeat the above argument for $(G,H)=(\Z_4, \Z_2)$, with replacing $W$ with $U$ in \eqref{eq: pullback}.
\end{proof}

We also note a smash formula of $d$: 
\begin{lem}\label{conn sum for d}
Let $X$ and $X'$ be spaces of type $\Z_2$-SWF at the level $l$ and $ l'$.
Then, $X \wedge X'$ becomes a space of type $\Z_2$-SWF at the level $l+l'$ and 
\[
d(X\wedge X' ) \geq   d(X) + d(X') \]
holds. 
\end{lem}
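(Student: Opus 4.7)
The proof splits into two parts: verifying that $X \wedge X'$ is a space of type $\Z_2$-SWF at level $l+l'$, and establishing the numerical inequality.

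For the topological part, I use the standard identification $(X\wedge X')^{\Z_2} = X^{\Z_2}\wedge(X')^{\Z_2}$ — a non-basepoint $[x\wedge x']$ of the smash is fixed by the diagonal action iff both $x$ and $x'$ are fixed. Combined with the given $\Z_2$-equivalences $X^{\Z_2}\simeq V^+$, $(X')^{\Z_2}\simeq(V')^+$ and the canonical homeomorphism $V^+\wedge(V')^+\cong(V\oplus V')^+$, this produces $(X\wedge X')^{\Z_2}\simeq(V\oplus V')^+$ with $\dim(V\oplus V')=l+l'$, where $V\oplus V'$ embeds into $\mathcal V$ after fixing an identification $\mathcal V\oplus\mathcal V\cong\mathcal V$. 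Freeness of the $\Z_2$-action on the complement follows immediately: if $[x\wedge x']$ is neither the basepoint nor fixed, at least one coordinate lies in the free part of its respective space, so the diagonal action is free there.

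For the numerical part, my main tool is the external cross product in $\Z_2$-equivariant Borel cohomology combined with equivariant localization. By the Thom isomorphism, $\tilde H^*_{\Z_2}((V\oplus V')^+)\cong\F_2[W]\cdot\tau_{V\oplus V'}$ is a free cyclic $\F_2[W]$-module, and Thom classes multiply as $\tau_V\times\tau_{V'}=\tau_{V\oplus V'}$. By the second characterization of $d$ in the paper, for a space $Z$ of type $\Z_2$-SWF at level $k$ the image of the restriction $\tilde H^*_{\Z_2}(Z)\to\F_2[W]\tau_{V_Z}$ is the principal $\F_2[W]$-submodule generated by $W^{d(Z)-k}\tau_{V_Z}$. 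Choose $x\in\tilde H^{d(X)}_{\Z_2}(X)$ and $x'\in\tilde H^{d(X')}_{\Z_2}(X')$ whose restrictions $x|_{X^{\Z_2}}$, $x'|_{(X')^{\Z_2}}$ are nonzero. By naturality of the external cross product with respect to the inclusions of fixed-point sets, $(x\times x')|_{(X\wedge X')^{\Z_2}}=(x|_{X^{\Z_2}})\times(x'|_{(X')^{\Z_2}})$, which by the multiplicativity formula equals a nonzero scalar multiple of $W^{d(X)+d(X')-l-l'}\tau_{V\oplus V'}$. Hence $x\times x'$ is $W$-infinitely divisible, giving a class in $\tilde H^{d(X)+d(X')}_{\Z_2}(X\wedge X')$ that pins down the tower bottom.

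The main technical step is then to identify the image of $\tilde H^*_{\Z_2}(X\wedge X')\to \F_2[W]\tau_{V\oplus V'}$ precisely as the cross product of the corresponding principal images for $X$ and $X'$ — equivalently, to show that every $W$-infinitely divisible class in $\tilde H^*_{\Z_2}(X\wedge X')$ arises at the fixed-point level from cross products. I expect this to follow from an analysis of the cofibration $(X\wedge X')^{\Z_2}\hookrightarrow X\wedge X'$ together with the free $\Z_2$-action on the quotient (which contributes only $W$-torsion and hence vanishes in localized cohomology), matched against the corresponding cofibrations for $X$ and $X'$ and a Künneth-type comparison. Comparing the two principal $\F_2[W]$-submodules then converts directly into the stated inequality $d(X\wedge X')\geq d(X)+d(X')$, with the cross-product construction of the previous paragraph giving the matching upper bound and hence equality.
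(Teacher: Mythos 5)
Your topological verification that $X\wedge X'$ is of type $\Z_2$-SWF at level $l+l'$ is fine, and your cross-product argument is correct as far as it goes — but it proves the {\it opposite} inequality. Producing the class $x\times x'\in\tilde H^{d(X)+d(X')}_{\Z_2}(X\wedge X')$ with nonvanishing restriction to the fixed-point set shows that some class of that degree survives, hence $d(X\wedge X')\leq d(X)+d(X')$. The lemma asserts $d(X\wedge X')\geq d(X)+d(X')$, which is the statement that {\it no} class of lower degree survives. You acknowledge this at the end, but the paragraph that is supposed to supply the lower bound is only a plan ("I expect this to follow from $\dots$"), and the plan as written does not close the gap: the cofibration of $(X\wedge X')^{\Z_2}\hookrightarrow X\wedge X'$ together with localization only tells you that the image of restriction is a nonzero $\F_2[W]$-submodule of $\F_2[W]\tau_{V\oplus V'}$; and factoring the restriction through the intermediate stages $X^H\wedge X'$ and $X\wedge(X')^H$ only bounds it inside $W^{\max(d(X)-l,\,d(X')-l')}\F_2[W]\tau_{V\oplus V'}$, which gives $d(X\wedge X')\geq\max(d(X)+l',\,d(X')+l)$, a strictly weaker statement whenever both $d(X)>l$ and $d(X')>l'$.

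The paper's proof works directly with a class $x\in\tilde H^{n}_{\Z_2}(X\wedge X')$ realizing $n=d(X\wedge X')$ and nonvanishing on the fixed-point set, expands its restriction under the K\"unneth isomorphism $\tilde H^{*}_{G}((X\wedge X')^H)\cong\tilde H^{*}_{G}(X^H)\otimes\tilde H^{*}_{G}((X')^H)$ as $\sum_i a_i\otimes b_i$, and then asserts — via a commutative square relating the smash with the tensor product — that a nonzero term $a_1\otimes b_1$ can be chosen with $a_1$ in the image of $\tilde H^{*}_{G}(X)\to\tilde H^{*}_{G}(X^H)$ and $b_1$ in the image of $\tilde H^{*}_{G}(X')\to\tilde H^{*}_{G}((X')^H)$ simultaneously, whence $d(X)\leq\deg a_1$, $d(X')\leq\deg b_1$, and $d(X)+d(X')\leq\deg a_1+\deg b_1=n$. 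This factorization of the restriction through {\it both} $X$ and $X'$ at once is precisely the ingredient missing from your proposal; without it, you can only control one factor at a time (via $X^H\wedge X'$ or $X\wedge(X')^H$), which, as noted, yields the weaker $\max$ bound rather than the sum.
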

\begin{proof}
Set $(G,H)=(\Z_2,\Z_2)$.
The following commutative diagram is a key ingredient in the proof:  
\[
  \begin{CD}
    \wt{H}^*_G((X\wedge X')^H) @>{\cong_{\#^H}}>> \wt{H}^*_G(X^H) \otimes \wt{H}^*_G ((X')^H ) \\
  @A{(\iota \wedge \iota')^\ast }AA    @A{\iota^\ast \otimes \iota'^\ast}AA \\
    \wt{H}^*_G(  X\wedge X' )  @>{\#}>>  \wt{H}^*_G (X) \otimes \wt{H}^*_G( X'),
  \end{CD}
\]
where the horizontal maps are induced from the inclusion maps $X \to X \wedge X'$ and $X' \to X \wedge X'$ and the vertical maps are induced from inclusions $X^H \to X$ and $(X')^H \to X'$. 
Take an element $x \in \wt{H}^n_{G}(X \wedge X')$ so that $\iota^\otimes x \neq 0$ and $d(X \wedge X')=n$. Then, for an expression 
$(\iota \otimes \iota')^\ast \circ  \# (x) = \sum_i a_i\otimes b_i$ with $a_i \in \wt{H}^* _G (X^H) \setminus \{0\}$ and $b_i \in \wt{H}^* _G ((X')^H)\setminus \{0\}$, each of $a_1$ and $b_1$ are images of elements of $\wt{H}^* _G (X)$ and $\wt{H}^* _G (X')$ under the restriction maps, respectively.
Then, by the definition of $d$, one has $d(X) \leq \deg (a_1)$ and $d(X') \leq \deg (b_1)$.
On the other hand, we have $\deg (a_1)+  \deg (b_1)= \deg (x) =n$. This completes the proof.
\end{proof}

At the end of this subsection, we provide a key ingredient of a Theorem B type theorem in our theory. 

\begin{lem}
\label{Theorem B oiginal}
For spaces $X, X'$ of type $\Z_4$-SWF, if there exists a pointed $\Z_4$-equivariant map $f : X \to X'$ whose $\Z_2$-fixed-point set map is induced from a $\Z_4$-injective linear map whose image is of codimension-1, then we have
\[
\quad \underline{d}(X) \leq \overline{d}(X').
\]
\end{lem}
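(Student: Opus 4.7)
The plan is to reduce the inequality to an algebraic comparison of ideals in $\Z_2[U^{\pm1}, Q]/(Q^2)$, using the infinity version of equivariant cohomology introduced above, the Thom isomorphism, and the identification $e_{\Z_4}(\tilde{\R}) = Q$ (which follows immediately from \cref{lem: equiv Euler Z2} and \cref{lem: comparison cohomology} by naturality of Euler classes under $B\Z_4 \to B\Z_2$).

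First I would unpack the fixed-point geometry. Since $X$ and $X'$ are of type $\Z_4$-SWF, one may identify $X^{\Z_2}\simeq_{\Z_4} V^+$ and $(X')^{\Z_2}\simeq_{\Z_4} (V')^+$ for $\Z_4$-representations $V\subset V'$ (necessarily sums of copies of $\tilde{\R}$). By the codimension-one hypothesis there is a $\Z_4$-equivariant splitting $V' \cong V\oplus \tilde{\R}$. Under this splitting, $f^{\Z_2}$ is the one-point compactification of the inclusion of the zero section of the trivial $\tilde{\R}$-bundle over $V$, so the Thom isomorphism gives $(f^{\Z_2})^\ast(t_{V'}) = e_{\Z_4}(\tilde{\R})\cdot t_V = Q\cdot t_V$, where $t_V, t_{V'}$ denote the respective Thom classes.

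Next I would invoke $U$-localization to identify $U^{-1}\tilde{H}^\ast_{\Z_4}(X) \cong U^{-1}\tilde{H}^\ast_{\Z_4}(V^+) \cong \Z_2[U^{\pm1}, Q]/(Q^2)\cdot t_V$, and similarly for $X'$. Let $\mathcal I_X, \mathcal I_{X'}\subset \Z_2[U^{\pm1}, Q]/(Q^2)$ denote the images of $\tilde{H}^\ast_{\Z_4}(X), \tilde{H}^\ast_{\Z_4}(X')$ after dividing out by the Thom classes. By the ideal-classification lemma preceding \eqref{infinity descp coh}, these admit presentations $\mathcal I_X = (U^{i_1}, QU^{j_1})$ and $\mathcal I_{X'} = (U^{i_2}, QU^{j_2})$ with $i_k \geq j_k \geq 0$, and the invariants $\underline{d}, \overline{d}$ are read off from these generators through \eqref{infinity descp coh}. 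Since $f^\ast$ commutes with both $U$-localization and the restriction to the $\Z_2$-fixed set, the resulting commutative square combined with the first-paragraph calculation yields the containment $Q\cdot \mathcal I_{X'} \subseteq \mathcal I_X$.

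Finally, because $Q^2=0$ we compute $Q\cdot \mathcal I_{X'} = (QU^{i_2})$, whereas the smallest-$U$-power $Q$-element of $\mathcal I_X$ is $QU^{j_1}$ (thanks to $i_1 \geq j_1$). The containment $(QU^{i_2}) \subseteq (U^{i_1}, QU^{j_1})$ therefore forces $j_1 \leq i_2$, which translates via \eqref{infinity descp coh} to $\underline{d}(X) \leq \overline{d}(X')$. The main substantive step is the Thom-class calculation of the second paragraph; the rest is algebraic bookkeeping in $\Z_2[U^{\pm1}, Q]/(Q^2)$.
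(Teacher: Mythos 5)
Your proposal is correct and rests on the same essential calculation as the paper's proof: the Thom-class identity $(f^H)^\ast\tau_{G}' = e_G(\tilde{\R})\tau_G = Q\cdot\tau_G$, obtained by combining \cref{lem: equiv Euler Z2} and \cref{lem: comparison cohomology}. After that, the paper phrases the conclusion in terms of tracing $U$-towers through the commutative square \eqref{eq: diag shift}, whereas you reformulate the same bookkeeping via the $\infty\tilde{H}$-ideals and the classification $(U^i, QU^j)$ preceding \eqref{infinity descp coh} — these are equivalent repackagings of the same argument, so your route is essentially the paper's.
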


\begin{proof}
Set $(G,H) = (\Z_4, \Z_2)$.
Let $s, s'$ denote the levels of $X$ and $X'$ respectively.
By the assumption on $f^H$, we have that $s+1=s'$.
Recall that the $\Z_4$-equivariant Thom isomorphism implies that $\wt{H}^*_G(X^H) \cong \wt{H}^*_G(S^0)_{[s]}$, where $[s]$ denotes the degree $s$-shift.
Let $[s] : \wt{H}^*_G(S^0) \to \wt{H}^*_G(S^0)_{[s]}$ denote the degree $s$-map defined by just the degree shift.

As in the proof of \cref{lem: sourse of Froyshov ineq}, consider the commutative diagram
\begin{align}
\label{eq: diag shift}
\begin{split}
\xymatrix{
\wt{H}^*_G(S^0) \ar[d]_{[s]} & \wt{H}^*_G(S^0) \ar[d]_{[s']}\\
\wt{H}^*_G(X^H) & \wt{H}^*_G((X')^H) \ar[l]_{(f^H)^\ast} \\
\wt{H}^*_G(X)\ar[u]^{i^\ast} & \wt{H}^*_G(X'), \ar[l]_{f^\ast} \ar[u]^{(i')^\ast}
}
\end{split}
\end{align}
where $i : X^H \to X$ and $i' : (X')^H \to X'$ are inclusions.

We claim that 
\[
[-s] \circ (f^H)^\ast \circ [s'] : \wt{H}^*_G(S^0) \to \wt{H}^*_G(S^0)
\]
is given by multiplication by $Q$.
To see this, note that $\wt{H}^*_G(X^H)$ and $\wt{H}^*_G(X^H)$ are rank 1 free $\wt{H}^*_G(S^0)$-modules generated by the $G$-equivariant mod 2 Thom classes $\tau_G$ and $\tau_G'$ of $\tilde{\R}^s \to \{\mathrm{pt}\}$ and $\tilde{\R}^{s'} \to \{\mathrm{pt}\}$, respectively. 
By the assumption on $f^H$, we have 
\[
(f^H)^\ast \tau_G'
= e_G(\tilde{\R})\tau_G,
\]
where $e_G(\tilde{\R})$ is the $G$-equivariant mod 2 Euler class of $\tilde{\R} \to \{\mathrm{pt}\}$.
Since $G$ acts on $\tilde{\R}$ as $\{\pm1\}$-multiplication, we have that $e_G(\tilde{\R}) = Q$ from \cref{lem: equiv Euler Z2,lem: comparison cohomology}.
This proves the claim.

In each of $\wt{H}^*_G(X^H)$ and $\wt{H}^*_G((X')^H)$, there are exactly two $U$-towers that are degree shifts of $\{U^l\}_{l \geq 0}$ and $\{QU^l\}_{l \geq 0}$ in $\wt{H}^*_G(S^0)$.
For $i=0,1$, let $\mathcal{T}_i \subset \wt{H}^*_G(X^H)$ and $\mathcal{T}_i'\subset \wt{H}^*_G((X')^H)$ denote these towers such that every $x \in \mathcal{T}_i$ and $x' \in \mathcal{T}_i'$ satisfy that 
\[
\deg{x} \equiv 2\mu(X)+i\quad \text{and} \quad \deg{x'} \equiv 2\mu(X')+i \mod 2.
\]
It follows from the above claim that the restriction of $(f^H)^\ast$ on $\mathcal{T}_0'$ gives a bijection onto $\mathcal{T}_1$. 
This combined with the commutative diagram \eqref{eq: diag shift} implies that $\underline{d}(X) \leq \overline{d}(X')$.
\end{proof}

\subsection{Duality}
\label{subsec Duality}

We prove a duality theorem for $d$, $\overline{d}$, and $\underline{d}$, which can be seen as an analog of \cite[Proposition 2.13]{Ma16}. 
In order to prove the duality, we use the following duality relations among equivariant cohomologies.
See \cite[Section XVI. 8]{May96}, \cite[Subsection 2.2]{Ma16} for the definition of (equivariant) $V$-dual.

If $X$ and $X'$ are $V$-dual with $m = \dim V$, the coBorel cohomology of $X'$ 
can be viewed realized as follow: 
\[
\mathrm{c}\tilde{H}^G_* (X; \Z_2) \cong \tilde{H}_G^{m-* }(X'; \Z_2) . 
\]

Moreover, there is a long exact sequence relating Borel homology with co-Borel and Tate homologies: 
\[
\cdots \to \mathrm{c}\tilde{H}^G_* (X; \Z_2) \to \mathrm{t}\tilde{H}^G_* (X; \Z_2) \to  \tilde{H}^G_{*-1} (X; \Z_2) \to \cdots.
\]

For our purpose, it is also convenient to describe $d$, $\overline{d}$, and $\underline{d}$ in terms of homologies as in \cite[(17), (18), and (19)]{Ma16}. 
We first define 
\[
\infty \wt{H}_*^G (X) := \bigcap_{m \geq 0} \operatorname{Im} U^m : \wt{H}_*^G (X) \to \wt{H}_{*-2m}^G (X). 
\]
Then, alternatively, for a space $X$ of type $G=\Z_4$-SWF of level $s$, we can write 
\begin{align*}
    \underline{d} (X) &=\min  \Set{ r \equiv  s \mod 3  | \exists x , \ 0\neq x \in \infty \wt{H}_r^G (X)},  \\
    \overline{d}(X) &= \min  \Set{ r \equiv  s+1 \mod 3  | \exists x , \ 0\neq x \in \infty \wt{H}_r^G (X)}, \\ 
    d(X) & =  \min  \Set{ r \equiv  s \mod 2  | \exists x , \ 0\neq x \in \infty \wt{H}_r^{\Z_2} (X)}.  
\end{align*}

\begin{lem}\label{duality for d}
Let $G$ be $\Z_2$.
If $X$ and $X'$ are equivariantly $V$-dual, then \[
d(X)= \dim V -  d(X')
\]
hold. 
Moreover, if $G=\Z_4$, then we have 
\[
 \underline{d}(X)= \dim V -  \overline{d}(X'). 
\]
\end{lem}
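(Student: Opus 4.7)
The plan is to invoke equivariant Spanier--Whitehead $V$-duality. I expect this to yield a natural perfect pairing inducing an isomorphism of $\wt{H}^*_G(S^0)$-modules of the form
\[
D_V : \wt{H}_G^n(X; \F) \xrightarrow{\cong} \wt{H}^G_{\dim V - n}(X'; \F),
\]
respecting the natural module structures via cup and cap products. The first step is to check that $D_V$ is $\wt{H}^*_G(S^0)$-linear, so that it respects the $W$-action (when $G = \Z_2$) and the $U$- and $Q$-actions (when $G = \Z_4$). Granting this, $D_V$ will carry $\infty \wt{H}_G^*(X)$ bijectively onto $\infty \wt{H}^G_*(X')$ with a degree shift by $\dim V$, and similarly with the roles of $X$ and $X'$ reversed.

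For $G = \Z_2$, I will use the cohomological description of $d(X)$ together with the homological description of $d(X')$. Under $D_V$ applied to $X$, a class realizing the bottom of the infinity cohomology tower of $X$ in degree $d(X)$ will correspond to a non-zero class in $\infty \wt{H}^{\Z_2}_{\dim V - d(X)}(X')$. Symmetrically, the bottom homological degree of $\infty \wt{H}^{\Z_2}_*(X')$ equals $d(X')$, so matching the two minimum degrees yields the identity $\dim V - d(X) = d(X')$.

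For $G = \Z_4$, the ring $\wt{H}^*_G(S^0) = \F[U,Q]/(Q^2)$ contains two $U$-towers, generated by $1$ and $Q$, and $\overline{d}, \underline{d}$ are computed from these two towers respectively. The key input I will need is the $\Z_4$-equivariant Thom class of $V$: by \cref{lem: equiv Euler Z2,lem: comparison cohomology}, the relevant Euler class contribution involves $Q$, so that the duality $D_V$ interchanges the two towers. Consequently, the tower tracked by $\underline{d}(X)$ will match the tower tracked by $\overline{d}(X')$, and the same matching-of-minima argument will give $\underline{d}(X) = \dim V - \overline{d}(X')$.

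The main obstacle I expect is the careful verification that $D_V$ is $\wt{H}^*_G(S^0)$-linear and that the equivariant Euler class swap takes place, particularly tracking the parity and the role of $Q$ in the $\Z_4$-case. This is closely parallel to the argument producing the degree-shift between $\underline{d}$ and $\overline{d}$ in the proof of \cref{Theorem B oiginal}.
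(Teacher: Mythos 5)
The key step of your proposal --- the existence of a $V$-duality isomorphism $D_V : \wt{H}_G^n(X;\F) \cong \wt{H}^G_{\dim V - n}(X';\F)$ between \emph{Borel} cohomology and \emph{Borel} homology --- does not hold in the equivariant setting. Unlike ordinary Spanier--Whitehead duality, the Borel construction $EG_+ \wedge_G X$ is not a finite complex, and equivariant $V$-duality instead relates Borel cohomology of one side to \emph{coBorel} homology of the other: the correct statement is $\mathrm{c}\wt{H}^G_*(X;\F) \cong \wt{H}_G^{\dim V - *}(X';\F)$, as recalled in Subsection~\ref{subsec Duality}. Since $d$, $\overline{d}$, $\underline{d}$ are all defined in terms of Borel (co)homology, you cannot match minima of $\infty$-towers on the two sides directly. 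That is precisely why the paper's proof passes through the long exact sequence relating Borel, coBorel, and Tate theories, combined with the localization theorem to identify the Tate term as $(U^{-1}\Z_2[U,Q]/(Q^2))_{s+2}$; only after these substitutions does one get an exact sequence linking $\infty\wt{H}_G^*(X')$ (Borel cohomology) and $\infty\wt{H}^G_*(X)$ (Borel homology), and the final degree count then gives $\underline{d}(X) = \dim V - \overline{d}(X')$.

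A secondary issue is your explanation of the $\underline{d} \leftrightarrow \overline{d}$ tower swap. You attribute it to the $Q$-type Euler class of $V$ via \cref{lem: equiv Euler Z2,lem: comparison cohomology}, by analogy with \cref{Theorem B oiginal}. But that lemma is about codimension-one suspensions of stable maps and has an Euler class appear precisely because the fixed-point map is not an equivalence. In the duality setting the swap is a parity effect of the degree shift (the connecting map shifts degree by one in the Tate sequence), which is what the paper's explicit formulas for $\infty\wt{H}^{m-s-2+*}_G(X')$ and $\infty\wt{H}^G_{*+s}(X)$ track. Without the coBorel/Tate machinery in place, the proposed argument does not get off the ground, and the Euler-class heuristic would not rescue it.
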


\begin{proof}
Suppose the level of $X$ is $s$ and $m= \dim V$. Then the level of $X'$ is $m-s$ by duality. 
The proof for $d$ is an easier version of the proof for $\underline{d}$ and $\overline{d}$. Thus, we only write a proof of the second equality. The main strategy is the same as the proof of \cite[Proposition 2.13]{Ma16}. 
We have the exact sequence connecting Borel, coBorel and Tate cohomologies: 
\[
\cdots \to \mathrm{c}\tilde{H}^G_* (X; \Z_2) \to \mathrm{t}\tilde{H}^G_* (X; \Z_2) \to  \tilde{H}^G_{*-1} (X; \Z_2) \to \cdots.
\]
Since $X$ and $X'$ are $V$-dual, from the associated isomorphism 
\[
\mathrm{c}\tilde{H}^G_* (X; \Z_2) \cong \tilde{H}_G^{m-* }(X'; \Z_2),  
\]
we have 
\[
\cdots \to \tilde{H}_G^{m-* }(X'; \Z_2) \to \mathrm{t}\tilde{H}^G_* (X; \Z_2) \to  \tilde{H}^G_{*-1} (X; \Z_2) \to \cdots.
\]
Moreover, the localization theorem implies 
\[
\mathrm{t}\tilde{H}^G_* (X; \Z_2) \cong (U^{-1} \Z_2[U, Q]/ Q^2=0)_{s+2}. 
\]
Then by the same arguemnt given in \cite[Proof of Propsotion 2.13]{Ma16}, after shifting degrees by $s + 2$, the exact sequence above induces 
\[
\cdots \to  \infty \wt{H}^{m-s-2+*}_G (X') \to U^{-1} \Z_2[U, Q]/ (Q^2=0) \to \wt{H}_{*+s}^G (X) \to \cdots  
 \]
 which connects the infinity versions of homology and cohomology directly. Since we have a concrete formula of infinity versions using $\underline{d}$ and $\overline{d}$. This enables us to write 
 \begin{align*}
     \infty \wt{H}^{m-s-2+*}_G (X')  &= \begin{cases}
          \Z_2 \text{ if } * =m-s-2 +  \underline{d} (X') + 2 j \  j \geq 0,\\
          \Z_2 \text{ if } * =m-s-3 +  \overline{d} (X') + 2 j \  j \geq 0,\\
          0 \text{ if otherwise}, 
     \end{cases} \\ 
     \infty \wt{H}^G_{* +s } (X) &= \begin{cases}
       \Z_2 \text{ if } * =-s +  \underline{d} (X) + 2 j \  j \geq 0,\\
          \Z_2 \text{ if } * =-s+1 + \overline{d} (X) + 2 j \  j \geq 0,\\
          0 \text{ if otherwise.}
     \end{cases}
 \end{align*}
 This completes the proof. 
\end{proof}

\subsection{Spectrum classes}
\label{subsection Doubling}

Let $(G, H)$ be either $(\Z_2,\Z_2)$ or $(\Z_4,\Z_2)$, where, in the latter case, $\Z_2$ is regarded as a subgroup of $\Z_4$ in a natural way.
When $(G, H) = (\Z_2,\Z_2)$, we set
\[
\calV = \oplus_{\N} \R,
\quad \calW = \oplus_{\N} \tilde{\R},
\]
and
When $(G, H) = (\Z_4,\Z_2)$, we set
\[
\calV = \oplus_{\N} \tilde{\R},
\quad \calW = \oplus_{\N} \C.
\]

Following \cite[Section~4]{Ma14}, 
consider a triple $(X,m,n)$, where 
$X$ is a space of type $G$-SWF, and $m \in \Z$ and $n \in \Q$.
For a finite dimensional representation $V$ of $G$ and a pointed $G$-space $X$, we denote by $\Sigma^V X$ the suspension $V^+ \wedge X$.
Set
\[
\mathbb{K}_G = 
\begin{cases}
\tilde{\R} \text{\quad if \quad} G=\Z_2\\   \C \text{\quad if \quad} G=\Z_4,
\end{cases}\quad
k_G = \dim_{\R}\mathbb{K}_G = 
\begin{cases}
1 \text{\quad if \quad} G=\Z_2\\   2 \text{\quad if \quad} G=\Z_4
\end{cases}
\]
where $\R, \tilde{\R}, \C, \tilde{\C}$ are the representations we introduced in \cref{representations}. 

\begin{defi}
For such triples $(X,m,n), (X',m',n')$,
we say that they are {\it $G$-stably equivalent} to each other if $n-n' \in \Z$ and there exist finite dimensional subspaces $V, V'$ of $\calV$  and $W, W'$ of $\calW$ and a pointed $G$-homotopy equivalence
\[
\Sigma^{V} \Sigma^{W}X
\to \Sigma^{V'} \Sigma^{W'}X',
\]
where $\dim_\R V -\dim_\R V' = m'-m$ and $\dim_{\mathbb{K}_G} W -\dim_{\mathbb{K}_G} W' = n'-n$.
Define $\mathfrak{C}_{G}$ as the set of $G$-stable equivalence classes of triples $(X,m,n)$.
An element of $\mathfrak{C}_{G}$ is called  a {\it spectrum class}.
\end{defi}

Informally, we may think of the triple $(X,m,n)$ as the formal desuspension of $X$ by $V$ and by $W$, where $V \subset \calV, W \subset \calW$ with $\dim_{\R}V=m, \dim_{\mathbb{K}_G}W=n$, so symbolically one may write
\[
(X,m,n) =
\begin{cases}
\Sigma^{-m\R}\Sigma^{-n\tilde{\R}}X \text{\quad if \quad} G=\Z_2,\\
\Sigma^{-m\tilde{\R}}\Sigma^{-n\C}X \text{\quad if \quad} G=\Z_4.
\end{cases}
\]

For $G=\Z_2$, as in \cite{Ma03}, we have {\it canonical} stable equivalences
\begin{align*}
&((V\oplus V)^+ \wedge X, 2\dim_{\R}V + m, n) \simeq (X, m, n),\\
&((W\oplus W)^+ \wedge X, m, 2\dim_{\R}W+n) \simeq (X, m, n).
\end{align*}
The reason why we take twice of $\dim_{\R}V$ and $\dim_{\R}W$ and put a copy of $V$ and $W$ at the first factor is that $GL(N,\R)$ for $N>0$ is not connected and thus there is no canonical choice of trivialization of a given real representation.
When $G=\Z_4$, we actually have a simpler formula for $W$: there is a canonical stable equivalence
\[
(W^+ \wedge X, m, \dim_{\R}W+n) \simeq (X, m, n),
\]
since $\calW$ is a complex representation of $G$.

As well as the non-equivariant case, we can define the notion of local equivalence, which was introduced by Stoffregen~\cite{Sto20}, in our $G$-equivariant setting:

\begin{defi}
\label{defi: stable map and local map}
Let $(X,m,n), (X',m',n')$ be triples as above and let $l \in \Q$.
A {\it $G$-stable map} $(X,m,n) \to (X',m',n')$ of height $l$ is a pointed $G$-map
\[
\Sigma^{V} \Sigma^{W}X
\to \Sigma^{V'} \Sigma^{W'}X'
\]
for some subspaces $V, V' \subset \calV$ and $W, W' \subset \calW$ with $\dim_\R V - \dim_\R V' = m'-m$ and and $\dim_{\mathbb{K}_G}W - \dim_{\mathbb{K}_G}W'=n'-n+l$.
A $G$-stable map $(X,m,n) \to (X',m',n')$ is called a {\it $G$-local map} if $\dim_{\R}V - \dim_{\R}V'=m'-m$ and it induces a $G$-homotopy equivalence on the $H$-fixed-point sets.
We say that $(X,m,n)$ and $(X',m',n')$ are {\it $G$-locally equivalent} if there exist $G$-local maps $(X,m,n) \to (X',m',n')$ and $(X',m',n') \to (X,m,n)$.

The $G$-local equivalence is evidently an equivalence relation, and we call an equivalence class for this relation a {\it $G$-local equivalence class}.
The set of $G$-local equivalence classes is denoted by $\mathcal{LE}_G$. We write an element of $\mathcal{LE}_G$ by $[(X,m,n)]_{\mathrm{loc}}$. 
Evidently the $G$-stable equivalence implies the $G$-local equivalence,
and thus we have a natural surjection $\mathfrak{C}_{G} \to \mathcal{LE}_G$.
\end{defi}

Fix $G=\Z_2$ or $\Z_4$.
For a triple $(X,m,n)$ above, we define
\[
\tilde{H}_G^\ast(X,m,n) := \tilde{H}_G^{\ast+m+k_{G}\cdot n}(X).
\]
Then we may assign each element $\mathcal{X} = [(X, m, n)] \in \mathfrak{C}_G$ to the isomorphism class of equivariant singular cohomology,
    \[
    \tilde{H}_G^\ast(\mathcal{X}) = [\tilde{H}_G^\ast(X,m,n)],
    \]
as a graded $\wt{H}^*_G(S^0)$-module. 
If $G=\Z_2$, we set
\begin{align*}
d(X,m,n) := d(X) - m- n  \in \Q.
\end{align*}
If $G=\Z_4$, we set
\begin{align*}
&\overline{d}(X,m,n) := \overline{d}(X)  - m - 2n \in \Q,\\
&\underline{d}(X,m,n) := \underline{d}(X)  - m- 2n \in \Q.
\end{align*}
Obviously, the invariants $d, \overline{d}, \underline{d}$ descend to invariants
\[
d(\mathcal{X}), \quad 
\overline{d}(\mathcal{X}), \quad 
\underline{d}(\mathcal{X})
\]
for $\mathcal{X} \in \mathfrak{C}_G$.

The invariants $d(\mathcal{X})$, $\overline{d}(\mathcal{X})$ and $\underline{d}(\mathcal{X})$ satisfy the following inequalities: 

\begin{lem}
\label{lem: sourse of Froyshov ineq1}
First we suppose $(G, H)=(\Z_2, \Z_2)$. 
Let $(X,m,n), (X',m',n')$ be triples as above.
If there is a stable local $G$-map of height $l$, then
\[
d(X, m,n )+ l  \leq d(X',m',n').
\]

Next, we suppose $(G, H)= (\Z_4, \Z_2)$.
If there is a stable local $G$-map of height $l$, then we have
\[
\overline{d}(X, m,n )+ 2l \leq \overline{d}(X', m',n' ), \quad
\underline{d}(X, m,n )+ 2l  \leq \underline{d}(X', m',n ').
\]
\end{lem}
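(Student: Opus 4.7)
The plan is to reduce the claim to \cref{lem: sourse of Froyshov ineq} by unwinding the definitions of the height-$l$ condition and of the normalized invariants $d(X,m,n) = d(X) - m - n$ and $\overline{d}(X,m,n), \underline{d}(X,m,n) = \overline{d}(X), \underline{d}(X) - m - 2n$. Since each normalization is a purely numerical shift and \cref{lem: sourse of Froyshov ineq} is already formulated at the space level, the proof should be essentially bookkeeping.

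First, I would unpack the hypothesis: a stable local $G$-map of height $l$ is by definition a pointed $G$-map
\[
f : \Sigma^V \Sigma^W X \to \Sigma^{V'} \Sigma^{W'} X'
\]
with $V, V' \subset \calV$ and $W, W' \subset \calW$ satisfying $\dim_{\R} V - \dim_{\R} V' = m' - m$ and $\dim_{\mathbb{K}_G} W - \dim_{\mathbb{K}_G} W' = n' - n + l$, and whose restriction to the $H$-fixed-point set is a $G$-homotopy equivalence. Applying \cref{lem: sourse of Froyshov ineq} to $f$ directly yields $d(\Sigma^V \Sigma^W X) \leq d(\Sigma^{V'} \Sigma^{W'} X')$ when $G = \Z_2$, and the analogous inequalities for $\overline{d}$ and $\underline{d}$ when $G = \Z_4$.

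Next I would invoke the suspension formulas \eqref{stabilization of d} and \eqref{stabilization d-bar} to rewrite $d(\Sigma^V \Sigma^W X) = d(X) + \dim_\R V + \dim_\R W$, and analogously for $\overline{d}, \underline{d}$. Substituting these identities into the inequalities from the previous step and then rearranging with the height-$l$ dimension constraints together with the relation $\dim_\R W = k_G \cdot \dim_{\mathbb{K}_G} W$ produces exactly $d(X,m,n) + l \leq d(X',m',n')$ in the case $G = \Z_2$, and $\overline{d}(X,m,n) + 2l \leq \overline{d}(X',m',n')$ as well as $\underline{d}(X,m,n) + 2l \leq \underline{d}(X',m',n')$ in the case $G = \Z_4$. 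The factor of $2$ on $l$ in the $G = \Z_4$ case is precisely $k_G = 2$.

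There is no substantive obstacle beyond careful bookkeeping: the height $l$ is measured using $\dim_{\mathbb{K}_G} W$, whereas the suspension isomorphism contributes $\dim_\R W$, and this mismatch by the factor $k_G$ is exactly what produces the coefficient $1$ versus $2$ in front of $l$ on the left-hand sides. One minor preparatory remark is that the subspaces $V, V', W, W'$ appearing in the stable-map datum are by definition inside the universes $\calV, \calW$, so the suspension formulas apply without modification.
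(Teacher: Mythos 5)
Your proposal is correct and follows exactly the approach intended by the paper, whose own proof consists of the single line ``This follows from \cref{lem: sourse of Froyshov ineq}''; you have simply spelled out the numerical bookkeeping that the paper leaves to the reader. In particular, you correctly identify the source of the factor $k_G$ (hence the coefficient $1$ versus $2$ on $l$) as the discrepancy between $\dim_{\mathbb{K}_G}$ used in the height condition and the real dimension appearing in the suspension isomorphism.
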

\begin{proof}
This follows from \cref{lem: sourse of Froyshov ineq}. 
\end{proof}

As a corollary of \cref{lem: sourse of Froyshov ineq1}, we have the following. 
\begin{cor}\label{lem:loc eq}
The rational numbers $d(\mathcal{X})$ is a $\Z_2$-local equivalence invariant and  $\overline{d}(\mathcal{X})$ and $\underline{d}(\mathcal{X})$ are $\Z_4$-local equivalence invariants.
\end{cor}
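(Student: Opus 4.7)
The plan is to invoke \cref{lem: sourse of Froyshov ineq1} in both directions using the pair of local maps that witness the local equivalence, and then read off equality from the two resulting symmetric inequalities.

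In detail: suppose $\mathcal{X} = [(X,m,n)]_{\mathrm{loc}}$ and $\mathcal{X}' = [(X',m',n')]_{\mathrm{loc}}$ are $G$-locally equivalent. By \cref{defi: stable map and local map}, there exist $G$-local maps $f : (X,m,n) \to (X',m',n')$ and $g : (X',m',n') \to (X,m,n)$. I will take both $f$ and $g$ to have height $l = 0$; this can be arranged because, for a local map, the $G$-homotopy equivalence required on the $H$-fixed-point sets $\Sigma^V X^H \to \Sigma^{V'} (X')^H$ already fixes $\dim_{\R} V - \dim_{\R} V' = m' - m$, while the dimensions $\dim_{\mathbb{K}_G} W, \dim_{\mathbb{K}_G} W'$ are free to be chosen so that $\dim_{\mathbb{K}_G} W - \dim_{\mathbb{K}_G} W' = n' - n$, i.e.\ height $0$.

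For $G = \Z_2$, applying \cref{lem: sourse of Froyshov ineq1} to $f$ with $l = 0$ yields $d(\mathcal{X}) \leq d(\mathcal{X}')$, and applying it to $g$ gives the reverse inequality $d(\mathcal{X}') \leq d(\mathcal{X})$; hence $d(\mathcal{X}) = d(\mathcal{X}')$, so that $d$ descends to a well-defined function on $\mathcal{LE}_{\Z_2}$. For $G = \Z_4$, the same reasoning applied to the $\overline{d}$ and $\underline{d}$ inequalities of \cref{lem: sourse of Froyshov ineq1} gives $\overline{d}(\mathcal{X}) = \overline{d}(\mathcal{X}')$ and $\underline{d}(\mathcal{X}) = \underline{d}(\mathcal{X}')$, so both descend to $\mathcal{LE}_{\Z_4}$.

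This is essentially a formal consequence of the previously established lemma and I do not expect any real obstacle; the only bookkeeping point is the height-$0$ normalization of the local maps, which is immediate from the definitions.
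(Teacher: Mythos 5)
Your overall plan is correct and is the same as the paper's: the corollary is a formal consequence of applying \cref{lem: sourse of Froyshov ineq1} to the two local maps witnessing the local equivalence and extracting equality from the resulting pair of opposite inequalities.

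However, the step where you ``arrange'' the heights to be $0$ does not hold up. For a given $G$-local map $f : \Sigma^V\Sigma^W X \to \Sigma^{V'}\Sigma^{W'}X'$, the suspension spaces $W, W'$ are part of the data of that specific pointed $G$-map; they are not ``free to be chosen.'' Suspending both sides by the same $\mathbb{K}_G$-summand leaves $\dim_{\mathbb{K}_G}W - \dim_{\mathbb{K}_G}W'$ (and hence the height) unchanged, so you cannot modify a given local map to reduce its height. The actual reason $l=0$ is available is definitional: a $G$-local map is, by intent, a height-$0$ stable map, i.e.\ the condition $\dim_{\mathbb{K}_G}W - \dim_{\mathbb{K}_G}W' = n'-n$ is part of the definition (the paper's wording in \cref{defi: stable map and local map} repeats the redundant $\dim_\R V - \dim_\R V' = m'-m$ where this additional constraint was presumably meant to appear). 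This constraint is essential, not cosmetic. If local maps of arbitrary height were allowed, the corollary would be false: the identity map of $\Sigma^V\Sigma^W S^0$ would then be a height $-1$ local map $(S^0,0,0)\to(S^0,0,1)$ and simultaneously a height $+1$ local map $(S^0,0,1)\to(S^0,0,0)$, so these two triples would be locally equivalent, yet $d(S^0,0,0)=0 \neq -1 = d(S^0,0,1)$. Once the height-$0$ condition is recognized as part of the definition of a $G$-local map, the rest of your argument goes through exactly as you describe.
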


\begin{lem}
Let $(G,H)=(\Z_4,\Z_2)$ and let $(X,m,n), (X',m',n')$ be triples as above.
Suppose that there is a $G$-stable map
\[
f : \Sigma^V\Sigma^W X \to \Sigma^{V'}\Sigma^{W'} X'
\]
from $(X,m,n)$ to $(X',m',n')$ of height $l$ as in \cref{defi: stable map and local map} such that $f^{H} : \Sigma^V X^H \to \Sigma^{V'} (X')^H$ is induced from a linear injection whose image is of codimenion-1.
Then we have 
\[
\underline{d}(X,m,n) + 2l
\leq \overline{d}(X',m',n').
\]
\end{lem}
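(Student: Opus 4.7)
The plan is to reduce this statement to \cref{Theorem B oiginal} applied at the level of the suspended spaces $\Sigma^V\Sigma^W X$ and $\Sigma^{V'}\Sigma^{W'} X'$, followed by bookkeeping with the stabilization formulas \eqref{stabilization d-bar} and the relations between the shift numbers $m, n$ and the dimensions of $V, W$.

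First I would verify that $f$ fits the hypothesis of \cref{Theorem B oiginal}. Since $\calV = \oplus_{\N} \tilde{\R}$ and $H = \{1,-1\}$ is the kernel of the surjection $G=\Z_4 \to \Z_2$ defining $\tilde{\R}$, the group $H$ acts trivially on $\calV$, so $V^H = V$ and $(V')^H = V'$. On the other hand, $\calW = \oplus_{\N} \C$ with $j \in G$ acting as $i \in \C$, hence $-1 \in H$ acts as $-1$ on $\C$, giving $W^H = 0$ and $(W')^H = 0$. Therefore
\[
(\Sigma^V \Sigma^W X)^H = \Sigma^V X^H, \qquad (\Sigma^{V'} \Sigma^{W'} X')^H = \Sigma^{V'} (X')^H,
\]
and $f^H$ is by assumption induced from a $G$-equivariant linear injection with codimension-$1$ image, which is precisely the input required by \cref{Theorem B oiginal}.

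Next, applying \cref{Theorem B oiginal} to the pair $(\Sigma^V\Sigma^W X, \Sigma^{V'}\Sigma^{W'} X')$ gives
\[
\underline{d}(\Sigma^V \Sigma^W X) \leq \overline{d}(\Sigma^{V'}\Sigma^{W'} X').
\]
Using the stabilization formula \eqref{stabilization d-bar} on both sides yields
\[
\underline{d}(X) + \dim_{\R} V + \dim_{\R} W \leq \overline{d}(X') + \dim_{\R} V' + \dim_{\R} W'.
\]

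Finally, since $G=\Z_4$ means $\mathbb{K}_G = \C$, the relations defining a $G$-stable map of height $l$ read $\dim_{\R} V - \dim_{\R} V' = m' - m$ and $\dim_{\R} W - \dim_{\R} W' = 2(n' - n + l)$. Substituting these into the previous inequality and rearranging gives
\[
\underline{d}(X) - m - 2n + 2l \leq \overline{d}(X') - m' - 2n',
\]
which, by the definitions of $\underline{d}(X,m,n)$ and $\overline{d}(X',m',n')$, is precisely the desired bound $\underline{d}(X,m,n) + 2l \leq \overline{d}(X',m',n')$. The argument is essentially formal once \cref{Theorem B oiginal} is in hand; the only mildly nontrivial step is identifying the $H$-fixed parts of the suspensions, which is where the difference between the $\tilde{\R}$-factors in $\calV$ (fixed by $H$) and the $\C$-factors in $\calW$ (free mod $H$) plays a role.
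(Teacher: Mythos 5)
Your proof is correct and follows exactly the route the paper intends; the paper's proof is the one-line remark ``This follows from Theorem B original,'' and your argument is a careful expansion of that bookkeeping. The key observations you spell out — that $H$ acts trivially on $\calV$ and freely on $\calW\setminus\{0\}$ so the $H$-fixed points of the suspensions reduce correctly, together with the stabilization formula and the height condition — are precisely what makes that one-line deduction work.
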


\begin{proof}
This follows from  \cref{Theorem B oiginal}.    
\end{proof}

\subsection{Seiberg--Witten Floer stable homotopy type for non-spin case}
\label{subsection $G$-equivariant Seiberg--Witten Floer homotopy type}

In previous paper, for a spin rational homology 3-sphere with odd involution, we defined Seiberg--Witten Floer stable homotopy type. In this section, we extend our theory to a theory of non-spin case.  

Let $(Y,\frakt)$ be a spin$^c$ rational homology 3-sphere and $\iota$ be a smooth orientation-preserving involution on $Y$.
Suppose that $\iota$ also preserves the given spin$^c$ structure $\frakt$ on $Y$ and is of odd type.
If $\frakt$ comes from a spin structure, we set $(G,H)=(\Z_4, \Z_2)$, and otherwise set $(G,H)=(\Z_2, \Z_2)$.

Fix an $\iota$-invariant metric $g$ on $Y$.
Once we fix $\lambda \ll 0 \ll \mu$,
by Manolescu's construction \cite{Ma03}, we obtain a $G$-equivariant Conley index $I^\mu_\lambda (Y, \frakt, g)$ for a finite-dimensional approximation of the Seiberg--Witten flow.
Recall that, 
the finite-dimensional approximation of
the configuration space is decomposed into 
\[
V^\mu_\lambda(Y,\frakt, g) \oplus W^\mu_\lambda(Y,\frakt, g).
\]
Here each direct summand is isomorphic to
some finite dimensional subspaces of $\calV$ and $\calW$ as $G$-representations.
The $H$-invariant part of $I^\mu_\lambda (Y, \frakt, g)$ is given by $V^\mu_\lambda(Y,\frakt, g)$.

As explained in \cref{involutionI}, the involution $I$ on the configuration space is determined only by $\iota$ up to $\iota$-invariant gauge transformation.
Henceforth, we pick a choice of $I$.
 Let us set
\[
I^\mu_\lambda (Y,\frakt, \iota, g)
= I^\mu_\lambda (Y,\frakt, g)^I,
\quad
V^\mu_\lambda (Y,\frakt, \iota, g)
= V^\mu_\lambda (Y,\frakt, g)^I,
\quad
W^\mu_\lambda (Y,\frakt, \iota, g)
= W^\mu_\lambda (Y,\frakt, g)^I.
\]

As in \cite{Ma03},
set 
\begin{align}
\label{eq: expression of n for Rohlin}
n(Y, \frakt,g) = \ind_{\mathbb{K}_G} D_W + \frac{2}{k_G} \frac{\sigma (W)}{8} \in \frac{1}{8}\Z,    
\end{align}
where $W$ is a compact spin Riemann 4-manifold bounded by $(Y, g)$ and $\ind_\C D_W$ is the complex spin Dirac index on $W$ with spectral boundary condition.

\begin{defi}\label{def: equivariant Seiberg--Witten Floer stable homotopy type}
Given $Y, \frakt, \iota, g$ as above, define an element $SWF_G(Y, \frakt,\iota) \in \mathfrak{C}_{G}$ by 
\begin{align*}
SWF_G(Y, \frakt,\iota)
:= 
[(I^\mu_\lambda (Y, \frakt, \iota, g),
\dim_{\R}V^0_\lambda,
\dim_{\mathbb{K}_G}W^0_\lambda
+n(Y, \frakt,g)/2)].
\end{align*}
We call $SWF_G(Y, \frakt,\iota)$
the {\it $G$-equivariant Seiberg--Witten Floer stable homotopy type} or {\it Seiberg--Witten Floer $G$-spectrum class} for the involution $\iota$.
\end{defi}

As in \cite{KMT}, $SWF_G(Y, \frakt,\iota)$ is an invariant of $(Y, \frakt,\iota)$:

\begin{prop}
\label{lem inv lamda mu}
The spectrum class $SWF_G(Y, \frakt,\iota) \in \mathfrak{C}_{G}$ is an invariant of $(Y, \frakt,\iota)$, independent of $\lambda, \mu$, and $g$.
\end{prop}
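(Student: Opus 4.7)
The plan is to mimic Manolescu's original invariance argument in \cite{Ma03}, adapted to the $G$-equivariant setting as in \cite{KMT}, with attention to the new features arising from non-spin spin$^c$ structures. Three independences must be verified: from the spectral window $[\lambda,\mu]$, from the choice of anti-linear lift $I$, and from the $\iota$-invariant metric $g$. Throughout I would use that, by \cref{sw is equiv I}, the linearized Seiberg--Witten operator commutes with $-\iota^{\ast}\oplus I$, so its spectral projections preserve the $G$-structure; consequently $V^\mu_\lambda, W^\mu_\lambda$ are $G$-invariant and the Seiberg--Witten flow on the Coulomb slice is $G$-equivariant. Then every step in Manolescu's construction carries through $G$-equivariantly verbatim.

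For the independence of $\lambda,\mu$, the standard attractor--repeller argument (\cite{Ma03}) performed $G$-equivariantly gives a natural $G$-homotopy equivalence
\[
I^{\mu'}_{\lambda'}(Y,\frakt,g) \simeq \Sigma^{V^{\mu'}_{\mu}(Y,\frakt,g)}\Sigma^{W^{\mu'}_{\mu}(Y,\frakt,g)} \Sigma^{V^{\lambda}_{\lambda'}(Y,\frakt,g)}\Sigma^{W^{\lambda}_{\lambda'}(Y,\frakt,g)}I^{\mu}_{\lambda}(Y,\frakt,g)
\]
whenever $\lambda' \leq \lambda$ and $\mu \leq \mu'$. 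Taking $I$-fixed parts and combining with the shifts $(\dim_{\R}V^0_\lambda,\ \dim_{\mathbb{K}_G}W^0_\lambda)$ in \cref{def: equivariant Seiberg--Witten Floer stable homotopy type} yields the desired $G$-stable equivalence.

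For the independence of the lift $I$: by \cref{uniqueI}, any two anti-linear lifts $I_1, I_2$ with $I_k^2 = 1$ differ by an $\iota$-invariant gauge transformation $u_0$. Since the global Coulomb slice and the Seiberg--Witten flow are gauge-invariant, $u_0$ gives a canonical $G$-equivariant identification of the two sets of approximation data, hence a canonical $G$-stable equivalence of the Conley indices. For independence of $g$, I would connect two $\iota$-invariant metrics $g_0, g_1$ by a smooth path $\{g_t\}_{t\in[0,1]}$ of $\iota$-invariant metrics (the space of such is convex, hence contractible). Deforming along this path, as in \cite{Ma03}, produces a $G$-equivariant stable equivalence after enlarging the spectral window to absorb any spectral flow of the Dirac operator. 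The correction $n(Y,\frakt,g)/2$ is designed so that $\dim_{\mathbb{K}_G}W^0_\lambda + n(Y,\frakt,g)/2$ is metric-independent modulo $\Z$, via Atiyah--Patodi--Singer applied to a bounding spin$^c$ 4-manifold.

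The main obstacle is checking that the normalization \eqref{eq: expression of n for Rohlin}, defined through the \emph{complex} Dirac index, compensates the spectral flow correctly in the non-spin case where $\mathbb{K}_G = \tilde{\R}$ and $k_G = 1$. The key observation is that, because $I$ is \emph{anti}-linear, the $I$-fixed part of each complex eigenspace has real dimension equal to the complex dimension of the ambient eigenspace; the factor $2/k_G$ in \eqref{eq: expression of n for Rohlin} has been inserted precisely so that the real spectral flow of the $I$-equivariant Dirac operator matches the change in the formal $\tilde{\R}$-desuspension used in $SWF_G$. Once this bookkeeping is in place, combining the three steps above produces the required canonical $G$-stable equivalences and proves the proposition.
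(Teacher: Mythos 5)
Your proposal is correct and follows the same skeleton the paper has in mind: the paper does not supply an explicit argument but simply appeals ``as in \cite{KMT}'' to the spin case, which in turn rests on the Manolescu invariance argument done $G$-equivariantly --- exactly the route you take. What you add, and what the paper leaves implicit, is worth noting: the independence of the choice of anti-linear lift $I$ (via \cref{uniqueI}) is genuinely needed here because the spin$^c$ case has no preferred lift, and the paper picks one without explaining why the resulting spectrum class does not depend on the choice; your argument using the $\iota$-invariant gauge transformation $u_0$ of \cref{uniqueI} and the fact that all gauge transformations on a rational homology sphere are homotopic to constants closes that gap cleanly. Your final paragraph also correctly identifies the one place where the non-spin case deviates from \cite{KMT}: when $G=\Z_2$, $\mathbb{K}_G=\tilde{\R}$ and $k_G=1$, so the normalization \eqref{eq: expression of n for Rohlin} must account for the anti-linearity of $I$ (eigenspaces of the complex Dirac operator are $I$-invariant since the eigenvalues are real, and $\dim_\R V_\alpha^I = \dim_\C V_\alpha$) so that the real spectral flow of the reduced family matches the complex spectral flow of the ambient one; the insertion of the factor $2/k_G$ and the halving $n/2$ in \cref{def: equivariant Seiberg--Witten Floer stable homotopy type} are precisely this bookkeeping. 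All three steps in your outline are sound and nothing essential is missing.
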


\begin{defi}
A $G$-stable map $(X,m,n) \to (X',m',n')$ is called a {\it $G$-local map} if it induces a $G$-homotopy equivalence on the $H$-fixed-point sets.
We say that $(X,m,n)$ and $(X',m',n')$ are {\it $G$-locally equivalent} if there exist $G$-local maps $(X,m,n) \to (X',m',n')$ and $(X',m',n') \to (X,m,n)$.

The $G$-local equivalence is evidently an equivalence relation, and we call an equivalence class for this relation a {\it $G$-local equivalence class}.
The set of $G$-local equivalence classes is denoted by $\mathcal{LE}_G$. We write an element of $\mathcal{LE}_G$ by $[(X,m,n)]_{\mathrm{loc}}$.  
Evidently the $G$-stable equivalence implies the $G$-local equivalence,
and we have a natural surjection $\mathfrak{C}_G \to \mathcal{LE}_G$.
\end{defi}

\subsection{Seiberg--Witten Floer cohomology for involutions}
\label{subsection $G$-equivariant Seiberg--Witten Floer $K$-theory}

As in the last subsection, let $(Y, \frakt)$ be a spin rational homology 3-sphere and $\iota$ be a smooth orientation-preserving involution $\iota$.
Suppose that $\iota$ also preserves the given spin structure $\frakt$ and is of odd type.

\begin{defi}
For $G=\Z_2$ or $\Z_4$,
define the {\it $G$-Seiberg--Witten Floer cohomology for the involution $\iota$} by
\[
SWFH_G(Y, \frakt,\iota) := \tilde{H}^*_G(SWF_G(Y, \frakt,\iota)),
\]
defined as the isomorphism class of an $\tilde{H}_G(S^0)$-module.
We define also the {\it   Fr{\o}yshov-type invariants} by
\begin{align*}
&\delta_R(Y, \frakt,\iota) := \frac{1}{2} d(SWF_{\Z_2}(Y, \frakt,\iota)) \\
&\bar{\delta}_R (Y, \frakt,\iota) := \frac{1}{2}\overline{d}(SWF_{\Z_4}(Y, \frakt,\iota))   \text{ and }\\
&\underline{\delta}_R (Y, \frakt,\iota) :=  
\frac{1}{2}\underline{d}(SWF_{\Z_4}(Y, \frakt,\iota)).
\end{align*}
Note that 
\[
\delta_R(Y, \frakt,\iota), \underline{\delta}_R (Y, \frakt,\iota), \bar{\delta}_R (Y, \frakt,\iota) \in \frac{1}{16}\Z
\]
since we have that $n(Y, \frakt,g) \in \frac{1}{8}\Z$.
\end{defi}


\begin{lem}
\label{lem: invariance of k-theory}
The isomorphism class $SWF_G(Y, \frakt,\iota)$ and the rational numbers $\delta_R(Y, \frakt,\iota)$, $\bar{\delta}_R(Y, \frakt,\iota)$ and $\underline{\delta}_R(Y, \frakt,\iota)$ are invariants of $(Y, \frakt,\iota)$.
\end{lem}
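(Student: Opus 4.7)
The plan is to reduce the statement to invariance results already established. The definition of $SWF_G(Y,\frakt,\iota)$ depends on a priori four choices: the spectral cutoffs $\lambda \ll 0 \ll \mu$, the $\iota$-invariant metric $g$, and the anti-linear lift $I$ of $\iota$ on the spinor bundle. Independence of $\lambda,\mu$ and $g$ is exactly the content of \cref{lem inv lamda mu}, so the only thing that needs to be checked separately here is independence of $I$.

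For this, I would invoke the uniqueness part of \cref{3diminvolutionI}: any two anti-linear involutive lifts $I, I'$ of $\iota$ compatible with Clifford multiplication differ by an $\iota$-invariant gauge transformation $u_0$. The gauge group acts on the Coulomb slice by isometries that commute with the Seiberg--Witten flow and preserve the splitting $V^\mu_\lambda\oplus W^\mu_\lambda$ as $G$-representations. Since $u_0$ is $\iota$-invariant, conjugation by $u_0$ restricts to a $G$-equivariant homeomorphism of approximated Conley indices identifying the $I$-fixed and $I'$-fixed subspaces. This gives a canonical $G$-stable equivalence between the two candidate triples, so the class $SWF_G(Y,\frakt,\iota) \in \mathfrak{C}_G$ depends only on $(Y,\frakt,\iota)$.

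For the numerical invariants, recall from the discussion preceding \cref{lem:loc eq} that the assignments $d,\ \overline{d},\ \underline{d}$ are well defined on $\mathfrak{C}_G$ (in fact they further descend to $\mathcal{LE}_G$), because they are defined via $G$-equivariant cohomology together with the degree shifts $m$ and $k_G n$ which exactly match the shifts produced by suspension with representations inside $\calV$ and $\calW$. Applying these functions to the invariant class $SWF_G(Y,\frakt,\iota)$ yields invariants
\[
\delta_R(Y,\frakt,\iota) = \tfrac{1}{2}d(SWF_{\Z_2}),\quad
\bar{\delta}_R(Y,\frakt,\iota) = \tfrac{1}{2}\overline{d}(SWF_{\Z_4}),\quad
\underline{\delta}_R(Y,\frakt,\iota) = \tfrac{1}{2}\underline{d}(SWF_{\Z_4}),
\]
each depending only on $(Y,\frakt,\iota)$.

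The only potential obstacle is the gauge-invariance step above; everything else is bookkeeping. However, because $u_0$ is genuinely $\iota$-invariant (not merely $\iota$-equivariant up to sign), it commutes with the $G$-action on the configuration space, and the induced map on Conley indices is automatically $G$-equivariant. Consequently the argument goes through without modification, completing the proof.
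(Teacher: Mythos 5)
Your proposal matches the paper's proof, which is simply: ``This is a direct consequence of Proposition~\ref{lem inv lamda mu}.'' You have additionally spelled out the independence of the lift $I$, a point the paper implicitly folds into Proposition~\ref{lem inv lamda mu} without detail, and then observed that $d,\overline{d},\underline{d}$ are well-defined on $\mathfrak{C}_G$ so the numerical invariants inherit invariance from the spectrum class. One small technical correction in your extra step: if $I'=u_0 I$, then because $I$ is anti-linear, conjugation by $u_0$ sends $I$ to $u_0^2 I$ rather than $u_0 I$; the correct identification of the $I$-fixed locus with the $I'$-fixed locus is multiplication by a square root $v$ of $u_0$ (which exists as a smooth $\iota$-invariant map $Y\to U(1)$ since $H^1(Y;\Z)=0$ and $\iota$ has fixed points), and one should also check that this square root is compatible with the Coulomb gauge-fixing before concluding. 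This does not affect the validity of the lemma, whose proof the paper reduces entirely to Proposition~\ref{lem inv lamda mu}.
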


\begin{proof}
This is a direct consequence of \cref{lem inv lamda mu}.
\end{proof}

When $Y$ is an integral homology 3-sphere, we often drop the unique spin structure $\frakt$ on $Y$ from our notation.

\begin{ex}
\label{ex Brieskorn sphere with Montesinos}
For a Brieskorn sphere 
\[
Y=\Sigma(p,q,r) = \Set{z_1^p+z_2^q+z_3^r=0},
\]
let $\iota : Y \to Y$ be the involution defined as the complex conjugation.
Namely, define $\iota(z_1,z_2,z_3) = (\bar{z}_1, \bar{z}_2, \bar{z}_3)$.
The fixed-point set $Y^\iota$ is of codimenion-2, and hence $\iota$ is of odd type.
In fact, $Y^\iota$ is known as the Montesinos knot $M(p,q,r)$.

Recall that an odd involution gives rise to a $\Z_4 \times _{\Z_2} \Pin(2)$-symmetry on the Seiberg--Witten equations (\cite{Br98,Ian22}).
For several series of Brieskorn spheres,
Montague~\cite[Subsubsection~8.3.1]{Ian22} computed the $\Z_4 \times _{\Z_2} \Pin(2)$-equivariant Seiberg--Witten Floer spectra and their $I$-invariant parts (which correspond to the $\left<j\mu\right>$-invariant parts in the notation of \cite{Ian22}).
In particular, in our notation, the local equivalence class of $SWF_{\Z_4}(Y,\iota)$ for $Y=\Sigma(2,3,6n\pm1)$ is given by
\begin{align*}
&[SWF_{\Z_4}(\Sigma(2,3,12k-1),\iota)]_{\mathrm{loc}}
= [(\tilde{G},0,0)]_{\mathrm{loc}},\\
&[SWF_{\Z_4}(\Sigma(2,3,12k-5),\iota)]_{\mathrm{loc}}
= [(\tilde{G},0,1/4)]_{\mathrm{loc}},\\
&[SWF_{\Z_4}(\Sigma(2,3,12k+1),\iota)]_{\mathrm{loc}}
= [(S^0,0,0)]_{\mathrm{loc}},\\
&[SWF_{\Z_4}(\Sigma(2,3,12k+5),\iota)]_{\mathrm{loc}}
= [(S^0,0,-1/4)]_{\mathrm{loc}}
\end{align*}
for $k \geq 1$. 
Using this combined with \cref{ex: easiest non-trvial ex},
we can immediately calculate $\delta$, $\underline{\delta}_R$, $\bar{\delta}_R$ for $(\Sigma(2,3,6n\pm1),\iota)$, which is summarized in the following table:

\begin{center}
\begin{tabular}{| c | |  c | c | c | c |}
\hline
Brieskorn sphere equipped with $\iota$ & {$\overline{\delta}_R$} & {$\delta_R$} & {$\underline{\delta}_R$} \\
\hline
\hline
$(\Sigma(2,3,12k-1),\iota)$  & $1/2$ & $1/2$ & $0$  \\
\hline
$(\Sigma(2,3,12k-5),\iota)$  & $0$ & $0$ & $-1/2$  \\
\hline
$(\Sigma(2,3,12k+1),\iota)$  & $0$ & $0$ & $0$  \\
\hline
$(\Sigma(2,3,12k+5),\iota)$  & $1/2$ & $1/2$ & $1/2$ \\
\hline
\end{tabular}
\end{center} 
\end{ex}

\subsection{Cobordisms}
\label{subsection: cobordisms}
Let $(Y_0, \frakt_0)$ and $(Y_1, \frakt_1)$ 
be spin$^c$ closed $3$-manifolds with $b_1(Y_i)=0$. We do not assume that $Y_0$ and $Y_1$ are connected. 
Suppose that we have an involution $\iota_i$ on each of $Y_i$ is an odd type. 
Let $(W,\fraks)$ be a smooth spin 4-dimensional oriented cobordism with $b_1(W)=0$.
We assume that there is an odd involution $\iota$ on $W$ such that $\iota|_{Y_i}=\iota_i$ for $i=0,1$. 
Let $\mathbb{S}^{\pm}$ be positive and negative spinor bundles on $W$ and let $\mathbb{S}_i$ be the spinor bundles on $Y_i$ $i=0,1$. 

Here we consider the Sobolev norms $L^2_{k}$ for the spaces $\Omega^\ast(W)$ and $\Gamma(\mathbb{S}^{\pm})$ obtained from $\iota$-invariant metrics and $\iota$-invariant connections for a fixed integer $ k \geq 3$.
The relative Bauer--Furuta invariant of $W$ introduced by Manolescu~\cite{Ma03} gives a map between the Seiberg--Witten Floer stable homotopy types of $Y_0$ and $Y_1$.
This is obtained from the Seiberg--Witten map on $W$, which is given as a finite-dimensional approximation of a map
\begin{align}
    \label{eq: SW map}
SW : \Omega_{CC}^1(W) \times \Gamma(\mathbb{S}^+)
\to \Omega^+(W) \times \Gamma(\mathbb{S}^-) \times \hat{V}(-Y_0)^{\mu}_{-\infty} \times \hat{V}(Y_1)^{\mu}_{-\infty}
\end{align}
for large $\mu$. Here
\[
\Omega_{CC}^1(W) 
=\Set{ a \in \Omega^1(W) | d^{\ast} a=0, d^{\ast}\textbf{t}_i a=0, \int_{Y_i} \textbf{t}_i \ast a=0 }
\]
is the space of $1$-forms satisfying the double Coulomb condition. This is introduced by Khandhawit in \cite{Kha15}. 
Here for a general rational homology sphere $Y$ equipped with the spin$^c$ structure, $\hat{V}(Y, \frakt)^\mu_{-\infty}$ is a subspace of
$\hat{V}(Y, \frakt) = \operatorname{Ker}d^* \times \Gamma(\mathbb{S})$
which is defined as the direct sum of eigenspaces whose eigenvalues are less than $\mu$.
The $\Omega^+(W) \times \Gamma(\mathbb{S}^-)$-factor of the map $SW$ is given as the Seiberg--Witten equations,
and the $\hat{V}(-Y_0)^{\mu}_{-\infty} \times \hat{V}(Y_1)^{\mu}_{-\infty}$-factor is given, roughly, as the restriction of 4-dimensional configurations to 3-dimensional ones.
Taking the $I$-invariant part of \eqref{eq: SW map},
we obtain a $G$-equivariant map and a finite-dimensional approximation of this gives us a $G$-equivariant map of the form
\begin{align}
\label{eq: cob map}
f : \Sigma^{m_0\tilde{\R}}\Sigma^{n_0 \mathbb{K}_G}I_{-\mu}^{-\lambda} (Y_0) \to \Sigma^{m_1\tilde{\R}}\Sigma^{n_1 \mathbb{K}_G}I^\mu_\lambda (Y_1),
\end{align}
where $I^\mu_\lambda (Y_i) = I^\mu_\lambda (Y_i, \frakt_i, \iota_i, g_i)$, and $m_i, n_i^\pm \geq 0$ and $-\lambda, \mu$ are sufficiently large. The representation spaces $\tilde{\R}$ and $\mathbb{K}_G$ are the representations of $G$ which we introduced in \cref{subsection Doubling}. 

Let us denote by $V_i(\tilde{\R})_\lambda^\mu$ the vector space $V(\tilde{\R})_\lambda^\mu$, a finite-dimensional approximation of the $I$-invariant part, for $Y_i$, and let us use similar notations also for other representations.

\begin{lem}
\label{lem: differences of dim}
We have
\begin{align}
\label{eq:m difference}
m_0-m_1=&\dim_\R (V_1(\tilde{\R})^0_{\lambda})-\dim_\R(V_0(\tilde{\R})^0_{-\mu}) - b^+(W) + b^+_\iota(W), \\
\begin{split}
n_0-n_1=&\dim_{\mathbb{K}_G}(V_1(\mathbb{K}_G)^0_{\lambda}) 
-\dim_{\mathbb{K}_G}(V_0(\mathbb{K}_G)^0_{-\mu})
\\
&- \frac{1}{k_G}\left( \frac{c_1(\fraks)^2 - \sigma(W)}{8}+ n(Y_1, \mathfrak{t}_1, g_1)-n(Y_0, \mathfrak{t}_0, g_0) \right).\label{eq:n difference}
\end{split}
\end{align}
\end{lem}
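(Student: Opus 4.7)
My plan is to derive both identities from a single equivariant index computation for the linearization of the Seiberg--Witten map \eqref{eq: SW map} on $W$, restricted to the $I$-invariant part of the configuration space. The linearization splits as $L = d^+ \oplus D^+_{A_0}$, so I would attribute the $\tilde{\R}$-suspension shift $m_0 - m_1$ to the form part and the $\mathbb{K}_G$-suspension shift $n_0 - n_1$ to the Dirac part. Invoking the standard principle underlying Manolescu's and Khandhawit's constructions \cite{Ma03,Kha15}, the finite-dimensional approximation $f$ is engineered so that, after restricting to $I$-invariant subspaces and accounting for the spectral projections at the boundary, the difference of formal dimensions equals the index of the corresponding Fredholm operator:
\begin{align*}
(m_0 + \dim_\R V_0(\tilde{\R})^0_{-\mu}) - (m_1 + \dim_\R V_1(\tilde{\R})^0_\lambda)
&= \operatorname{ind}_\R\bigl(d^+|_{\Omega^1_{CC}(W)^I}\bigr), \\
(n_0 + \dim_{\mathbb{K}_G} V_0(\mathbb{K}_G)^0_{-\mu}) - (n_1 + \dim_{\mathbb{K}_G} V_1(\mathbb{K}_G)^0_\lambda)
&= \operatorname{ind}_{\mathbb{K}_G}\bigl(D^+_{A_0}|^I\bigr).
\end{align*}
It then suffices to compute the two equivariant Fredholm indices on the right.

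For the form part, I would invoke Khandhawit's double Coulomb setup \cite{Kha15}, under which $d^+ : \Omega^1_{CC}(W) \to \Omega^+(W)$ has non-equivariant index $-b^+(W)$, using that $b_1(W) = b_1(Y_i) = 0$. Since $I$ acts on differential forms as $-\iota^*$, the $I$-invariant harmonic self-dual forms coincide with the $\iota^*$-anti-invariant part of $\mathcal{H}^+(W)$, of dimension $b^+(W) - b^+_\iota(W)$, while $\ker\bigl(d^+|_{\Omega^1_{CC}^I}\bigr)$ remains trivial. Hence $\operatorname{ind}_\R(d^+|^I) = b^+_\iota(W) - b^+(W)$, which combined with the first identity above gives \eqref{eq:m difference}.

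For the Dirac part, the operator $D^+_{A_0}$ is complex linear and anti-linearly $I$-equivariant by \cref{sw is equiv I}; since $\iota$ is of odd type, $I^2 = 1$, so $D^+_{A_0}$ restricts to a $\mathbb{K}_G$-linear Fredholm operator on the $I$-fixed spinor bundle whose $\mathbb{K}_G$-index equals $1/k_G$ times the real index of $D^+_{A_0}$ itself. I would then apply the Atiyah--Patodi--Singer index theorem together with the normalization \eqref{eq: expression of n for Rohlin} defining $n(Y_i, \frakt_i, g_i)$: the interior contribution yields $(c_1(\fraks)^2 - \sigma(W))/8$, and the boundary $\eta$-corrections collapse into $n(Y_1, \frakt_1, g_1) - n(Y_0, \frakt_0, g_0)$, reproducing the right-hand side of \eqref{eq:n difference}.

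The hard part will be the Dirac step, where one must carefully track (a) the $\mathbb{K}_G$-structure on the $I$-fixed spinor bundle (real for $G=\Z_2$; complex with a residual $\Z_2 = G/H$-action for $G=\Z_4$), producing the factor $1/k_G$; (b) the signs and normalizations relating the APS $\eta$-terms on $Y_i$ to the quantities $n(Y_i, \frakt_i, g_i)$ defined via spin$^c$ fillings; and (c) the interplay between the quaternionic structure on spin bundles and the anti-linear involution $I = \tilde{\iota}\cdot j$ in the $G = \Z_4$ case established in \cref{involutionI}.
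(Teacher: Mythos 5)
Your proposal follows the same strategy as the paper's proof: cite Khandhawit's index computation for the Seiberg--Witten map under double Coulomb gauge, take the $(-\iota^*, I)$-invariant part of the finite-dimensional approximation, identify $m_0-m_1$ with the index of the $(-\iota^*)$-fixed form part and $n_0-n_1$ with the index of the $I$-fixed Dirac part, and use that $I$ acts anti-linearly on spinors to produce the $1/k_G$ normalization. The paper's published proof is a two-sentence sketch and your write-up is essentially a correct expansion of it.
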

\begin{proof}
    From  \cite[Theorem 1.]{Kha15}, we can calculate the index of the linearization of the map \eqref{eq: SW map}. Taking the $(-\iota^\ast, I)$-invariant part of the finite dimensional approximation of $f$, we have that $m_0-m_1$ and $n_0-n_1$ coincides with the index of $-\iota^\ast$ and $I$ fixed part of the form part and spinor part respectively. Since $I$ is an anti-linear involution, $n_0-n_1$ is half of the Dirac index. 
\end{proof}

\subsection{Proof of Fr\o yshov type inequalities} 
\label{subsection Proof of main theo} 

Let $Y$ be an oriented closed rational homology 3-sphere.
Let $(\frakt,\iota)$ be a real spin$^c$ structure on $Y$, i.e. $\frakt$ is a spin$^c$ structure on $Y$ and $\iota : Y \to Y$ is an orientaion-preserving smooth involution on $Y$ such that $\iota^\ast\frakt \cong \bar{\frakt}$.
As in dimension 4, we shall define the notion that $\iota$ is of {\it odd type}, which is satisfied if the fixed-point set $Y^\iota$ is non-empty and of codimension-2.
For this triple $(Y,\frakt,\iota)$,
we have the Fr{\o}yshov-type invariant
\[
\delta_R(Y,\frakt,\iota) \in \frac{1}{16}\Z.
\]

First, we state and prove the main inequality of this paper in the most general form, which relates $\delta_R(Y,\frakt,\iota)$ with topological quantities:

\begin{theo}
\label{most general main theo}
Let $(Y_0, \frakt_0)$, $(Y_1, \frakt_1)$ be spin$^c$ rational homology 3-spheres.
Let $\iota_0, \iota_1$ be smooth involutions on $Y_0, Y_1$.
Suppose that $\iota_0, \iota_1$ preserve the given orientations and spin$^c$ structures $\frakt_0, \frakt_1$ on $Y_0, Y_1$ respectively, and suppose that $\iota_0, \iota_1$ are of odd type.
Let $(W,\fraks)$ be a smooth compact oriented spin$^c$ cobordism with $b_1(W)=0$ from $(Y_0, \frakt_0)$ to $(Y_1, \frakt_1)$.
Suppose that there exists a smooth involution $\iota$ on $W$ such that $\iota$ preserves the given orientation and spin$^c$ structure $\fraks$ on $W$, $b^+(W)- b^+_\iota(W)=0$ and that the restriction of $\iota$ to the boundary is given by $\iota_0, \iota_1$.
Then we have
\begin{align}
\delta_R(Y_0,\frakt_0,\iota_0)+
\frac{c_{1}(\fraks)^{2} - \sigma(W)}{16} \leq \delta_R(Y_1,\frakt_1,\iota_1).
\end{align}
\end{theo}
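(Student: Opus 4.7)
The plan is to reduce the inequality to the monotonicity property of $d$ under $G$-local maps (\cref{lem: sourse of Froyshov ineq1}) applied to the $I$-invariant Bauer--Furuta cobordism map associated to $(W, \fraks, \iota)$. Since $\frakt_i$ need not come from a spin structure in general, we work with $(G,H) = (\Z_2,\Z_2)$ throughout.

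First, I choose an $\iota$-invariant Riemannian metric $g$ on $W$ that is cylindrical near $\partial W$, and lift it to restrict to $g_i$ on $Y_i$. By \cref{sw is equiv I} the Seiberg--Witten map on $W$ is $(-\iota^{\ast}, I)$-equivariant. Applying the finite-dimensional approximation procedure recalled in Subsection \ref{subsection: cobordisms} to the $I$-invariant part of the Seiberg--Witten map \eqref{eq: SW map} yields, for sufficiently negative $\lambda$ and positive $\mu$, a pointed $\Z_2$-equivariant map
\[
f \colon \Sigma^{m_{0}\tilde{\R}}\Sigma^{n_{0}\tilde{\R}}I_{-\mu}^{-\lambda}(Y_0,\frakt_0,\iota_0,g_0) \longrightarrow \Sigma^{m_{1}\tilde{\R}}\Sigma^{n_{1}\tilde{\R}}I_{\lambda}^{\mu}(Y_1,\frakt_1,\iota_1,g_1),
\]
as in \eqref{eq: cob map}, whose dimension shifts are controlled by \cref{lem: differences of dim} with $k_{\Z_2}=1$ and $\mathbb{K}_{\Z_2}=\tilde{\R}$.

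Second, I unwind \cref{lem: differences of dim} in light of the hypothesis $b^{+}(W)=b^{+}_{\iota}(W)$ and of \cref{def: equivariant Seiberg--Witten Floer stable homotopy type}. The vanishing $b^{+}(W)-b^{+}_{\iota}(W)=0$ forces the real suspension shift of $f$ to match $\dim_{\R}V_1(\tilde{\R})_{\lambda}^{0}-\dim_{\R}V_0(\tilde{\R})_{-\mu}^{0}$ exactly, so $f$ represents a $\Z_2$-stable map between the spectrum classes $SWF_{\Z_2}(Y_0,\frakt_0,\iota_0)$ and $SWF_{\Z_2}(Y_1,\frakt_1,\iota_1)$ in which the $\mathbb{K}_{\Z_2}$-direction shift has, by \eqref{eq:n difference} and \eqref{eq: expression of n for Rohlin}, height
\[
l \;=\; \frac{c_1(\fraks)^2 - \sigma(W)}{8}.
\]

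Third, and this is the main technical point, I verify that $f$ is a $\Z_2$-local map in the sense of \cref{defi: stable map and local map}: on the $H=\Z_2$-fixed point set, namely the compactified reducible locus $(V^{\mu}_{\lambda}(Y_i,\frakt_i,\iota_i,g_i))^{+}$, the Seiberg--Witten map is linear (after subtracting the curvature of the reference connection), so $f^H$ is induced by the $(-\iota^{\ast})$-invariant part of the double-Coulomb linearization $d^{+}\oplus d^{\ast}\oplus r$ on $W$. Under $b_{1}(W)=0$ and $b^{+}(W)=b^{+}_{\iota}(W)$, this linearization has zero index and trivial kernel/cokernel on the $(-\iota^{\ast})$-invariant part, so $f^H$ is the one-point compactification of a $\Z_2$-equivariant linear isomorphism (of finite-dimensional $(-\iota^{\ast})$-invariant form spaces), hence a $\Z_2$-homotopy equivalence. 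This is the step I expect to demand the most care, since one must show not just the numerical index vanishing but also the equivariant invertibility (ensuring that the $\Z_2$-representations on source and target spheres agree).

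Finally, applying \cref{lem: sourse of Froyshov ineq1} to the $\Z_2$-local stable map $f$ of height $l$ yields
\[
d\bigl(SWF_{\Z_2}(Y_0,\frakt_0,\iota_0)\bigr) + \frac{c_1(\fraks)^2 - \sigma(W)}{8} \;\leq\; d\bigl(SWF_{\Z_2}(Y_1,\frakt_1,\iota_1)\bigr),
\]
and dividing by $2$ and using $\delta_R = d/2$ gives the claimed inequality. The well-definedness of $l$ and its identification with $(c_1(\fraks)^2-\sigma(W))/8$ uses that the Rokhlin-type correction terms $n(Y_i,\frakt_i,g_i)/2$ appearing in the spectrum class definition and in \eqref{eq:n difference} combine to cancel, leaving only the topological quantity of $W$.
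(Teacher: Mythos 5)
Your proof is correct and follows essentially the same route as the paper's (very terse) proof: both apply the monotonicity of $d$ under $\Z_2$-local stable maps to the $I$-invariant relative Bauer--Furuta map of $(W,\fraks,\iota)$, with the dimension shifts supplied by \cref{lem: differences of dim}. You simply spell out the details the paper leaves implicit, namely the explicit determination of the height $l$ and the verification that $f^H$ is a $\Z_2$-homotopy equivalence under $b_1(W)=0$ and $b^+(W)=b^+_\iota(W)$.
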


\begin{proof}
The relative Bauer--Furuta invariant for $(W, \fraks, \iota)$ gives a map
\[
f : \Sigma^{V}\Sigma^{W}SWF (Y_0, \frakt_0, \iota_0)\to \Sigma^{V'}\Sigma^{W'}SWF (Y_1, \frakt_1, \iota_1)
\]
and we have $\dim_{\R}(V)-\dim_{\R}(V')=b^+(W)-b^+_{\iota}(W)$ and $\dim_{\R}(W)-\dim_{\R}(W')=(c_{1}(\fraks)^{2} - \sigma(W))/8$. 
The claim of \lcnamecref{most general main theo} follows from
\cref{lem: sourse of Froyshov ineq} applied to this $f$.
\end{proof}

If we focus on the case that $\frakt$ comes from a spin structure, for which one has $\frakt \cong \bar{\frakt}$, we may define invariants analogous to invariants introduced by Stoffregen~\cite{Sto171}   $\bar{\delta}_R(Y,\frakt),\  \underline{\delta}_R(Y,\frakt)$ corresponding to $\bar{d}(Y,\frakt),\ \underline{d}(Y,\frakt)$ in involutive Heegaard Floer homology \cite{HM17}.
For a spin rational homology 3-sphere $(Y,\frakt)$ and an orientaion-preserving smooth involution $\iota : Y \to Y$ such that $\iota^\ast\frakt \cong \frakt$, when $\iota$ is of odd type, we have two invariants
\[
\bar{\delta}_R(Y,\frakt,\iota),\  \underline{\delta}_R(Y,\frakt,\iota) \in \frac{1}{16}\Z,
\]
which satisfy inequalities
\[
\underline{\delta}_R(Y,\frakt,\iota)
\leq \delta_R(Y,\frakt,\iota)
\leq \bar{\delta}_R(Y,\frakt,\iota).
\]
On these invariants, we have the following:

\begin{theo}
\label{most general main theo2}
Let $(Y_0, \frakt_0)$, $(Y_1, \frakt_1)$ be spin rational homology 3-spheres.
Let $\iota_0, \iota_1$ be smooth involutions on $Y_0, Y_1$.
Suppose that $\iota_0, \iota_1$ preserve the given orientations and spin structures $\frakt_0, \frakt_1$ on $Y_0, Y_1$ respectively, and suppose that $\iota_0, \iota_1$ are of odd type.
Let $(W,\fraks)$ be a smooth compact oriented spin cobordism with $b_1(W)=0$ from $(Y_0, \frakt_0)$ to $(Y_1, \frakt_1)$.
Suppose that there exists a smooth involution $\iota$ on $W$ such that $\iota$ preserves the given orientation and spin structure $\fraks$ on $W$, $b^+(W)- b^+_\iota(W)=0$ and that the restriction of $\iota$ to the boundary is given by $\iota_0, \iota_1$.
Then we have
\begin{align}
&\bar{\delta}_R(Y_0,\frakt_0,\iota_0) - \frac{\sigma(W)}{16} \leq \bar{\delta}_R(Y_1,\frakt_1,\iota_1),\\
&\underline{\delta}_R(Y_0,\frakt_0,\iota_0) - \frac{\sigma(W)}{16} \leq \underline{\delta}_R(Y_1,\frakt_1,\iota_1).
\end{align}

Moreover, if $b^+(W)- b^+_\iota(W)=1$, then we have 
\begin{align}\label{real Theorem B}
    \underline{\delta}_R(Y_0,\frakt_0,\iota_0) - \frac{\sigma(W)}{16} \leq \bar{\delta}_R(Y_1,\frakt_1,\iota_1). 
\end{align}
\end{theo}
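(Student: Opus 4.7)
The plan is to apply the relative Bauer--Furuta invariant for $(W,\fraks,\iota)$ in the $G=\Z_4$-equivariant setting (as in the proof of \cref{most general main theo}), and then feed the resulting stable map into the appropriate lemmas of \cref{subsection Doubling}. Since $\fraks$ is spin, $c_{1}(\fraks)=0$, so \cref{lem: differences of dim} simplifies: the $\tilde{\R}$-discrepancy between domain and codomain equals $b^{+}(W)-b^{+}_{\iota}(W)$, and the $\C$-discrepancy (i.e.\ the height) equals $l=-\sigma(W)/16$. The output is therefore a $\Z_4$-stable map
\[
f\colon \Sigma^{V}\Sigma^{W_0}\,SWF_{\Z_4}(Y_0,\frakt_0,\iota_0)\to \Sigma^{V'}\Sigma^{W_1}\,SWF_{\Z_4}(Y_1,\frakt_1,\iota_1)
\]
of height $l=-\sigma(W)/16$ with $\dim_{\R}V-\dim_{\R}V'=b^{+}(W)-b^{+}_{\iota}(W)$.

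For the first two inequalities, the hypothesis $b^{+}(W)-b^{+}_{\iota}(W)=0$ makes $f$ into a $\Z_4$-stable local map in the sense of \cref{defi: stable map and local map}: on the $\Z_2$-fixed-point set only the form component contributes, and the linear cokernel $(\mathcal{H}^+(W))^I$ is zero, so $f^{\Z_2}$ is a $\Z_4$-homotopy equivalence. Applying \cref{lem: sourse of Froyshov ineq1} yields
\[
\underline{d}(SWF_{\Z_4}(Y_0,\frakt_0,\iota_0))+2l\leq\underline{d}(SWF_{\Z_4}(Y_1,\frakt_1,\iota_1))
\]
and the analogous inequality for $\overline{d}$; halving gives the bounds on $\underline{\delta}_R$ and $\overline{\delta}_R$.

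For the third inequality, the hypothesis $b^{+}(W)-b^{+}_{\iota}(W)=1$ creates a one-dimensional $\tilde{\R}$-discrepancy, which is the crucial new input. Since $\Z_2=\{1,-1\}\subset\Z_4$ acts trivially on $\tilde{\R}$ while the residual quotient $\Z_4/\Z_2$ acts as $\pm 1$, this extra summand is concentrated on the $\Z_2$-fixed part of the target and carries a non-trivial $\Z_4/\Z_2$-action. Concretely, the linearization of the $I$-invariant Seiberg--Witten map has cokernel $(\mathcal{H}^+(W))^I$ of real dimension $b^{+}(W)-b^{+}_{\iota}(W)=1$, so after the standard finite-dimensional approximation the restriction $f^{\Z_2}$ is induced from a $\Z_4$-equivariant linear injection with one-dimensional cokernel isomorphic to $\tilde{\R}$. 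This is precisely the hypothesis of the spectrum-class version of \cref{Theorem B oiginal} stated at the end of \cref{subsection Doubling}, which therefore produces
\[
\underline{d}(SWF_{\Z_4}(Y_0,\frakt_0,\iota_0))+2l\leq\overline{d}(SWF_{\Z_4}(Y_1,\frakt_1,\iota_1)),
\]
and \eqref{real Theorem B} follows upon dividing by two.

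The main obstacle is verifying the codimension-one linearity claim for $f^{\Z_2}$, i.e.\ checking that the finite-dimensional approximation of the Seiberg--Witten map may be arranged so that the one-dimensional cokernel on the $I$-invariant form side is realized as a genuine linear $\tilde{\R}$-summand of the target, rather than only up to stable equivalence. This amounts to a careful tracking of the Khandhawit double-Coulomb linearization on $(\Omega_{CC}^{1}(W)\oplus\Gamma(\mathbb{S}^{+}))^{I}$ and choosing the finite-dimensional approximation to isolate the harmonic representative of $(\mathcal{H}^{+}(W))^{I}$ as an explicit direct summand in the approximate target.
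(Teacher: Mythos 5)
Your argument is correct and follows the same route as the paper: apply the $\Z_4$-equivariant relative Bauer--Furuta invariant for $(W,\fraks,\iota)$, then use \cref{lem: sourse of Froyshov ineq1} when $b^+(W)-b^+_\iota(W)=0$ and the spectrum-class version of \cref{Theorem B oiginal} when $b^+(W)-b^+_\iota(W)=1$. Your height $l=-\sigma(W)/16$ (counted in $\mathbb{K}_G=\C$-dimensions) is the same quantity the paper records as $-\sigma(W)/8$ in real dimensions, and your care in tracking the codimension-one linear structure of the fixed-point map is exactly the hypothesis the cited lemma requires.
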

\begin{proof}
When $\mathfrak{s}$ is spin and $b^+(W)-b^+_\iota(W)=0$, the Bauer--Furuta invariant 
for $(W, \fraks, \iota)$,
\[
f : \Sigma^{V}\Sigma^{W}SWF (Y_0, \frakt_0, \iota_0)\to \Sigma^{V'}\Sigma^{W'}SWF (Y_1, \frakt_1, \iota_1),
\]
is a $\Z_4$-equivariant local map of height $-\sigma(W)/8$ since 
$\dim_{\R}(V)-\dim_{\R}(V')=b^+(W)-b^+_{\iota}(W)$ and $\dim_{\R}(W)-\dim_{\R}(W')= - \sigma(W)/8$. 
So, the first two inequalities follow from
\cref{lem: sourse of Froyshov ineq1} applied to this $f$. The third inequality follows from \cref{Theorem B oiginal} applied to $\Z_4$-equivariant relative Bauer--Furuta invariants of $(W, \frak{s})$.
\end{proof}

 The inequality \eqref{real Theorem B} can be seen as a real and 4-manifold with boundary version of Donaldson's Theorem B \cite{Do83}.

\subsection{Connected sum formula} 
In this section, we prove the following connected sum formula of $[SWF(Y, \frakt, \iota)]_{\mathrm{loc}}$ and $\delta$. We first write a statement of connected sum for $[SWF(Y, \frakt, \iota)]_{\mathrm{loc}}$.

First, we recall the operation of  {\it equivariant connected sum}. For more details, see \cite[Definition 3.43]{KMT}. 
Let $(Y_0, \mathfrak{t}_0, y_0)$ and $ (Y_1,\mathfrak{t}_1, y_1)$ be spin$^c$ rational homology $3$-spheres with base points. Let $\iota_0$ and $\iota_1$ are involutions on $Y_0$ and $Y_1$ respectively. Suppose that the fixed-point set of $\iota_i$ are codimension-2 and $\iota_i(y_i)=y_i$, and that  $\iota_i$ preserves the spin structure $\mathfrak{t}_i$ for $i=0, 1$.
We give an orientation $o(\iota_i)$ of the set of fixed points of the involution $\iota_i$. 
For these data, the equivariant connected sum
\[
(Y_0, \mathfrak{t}_0, y_0, \iota_0, o(\iota_0)) \# (Y_1, \mathfrak{t}_1, y_1, \iota_1, o(\iota_1))
\]
is defined. 
We sometimes drop $y_0, y_1$ and $o(\iota_1), o(\iota_2) $ from our notation.

The standard 3-handle cobordism between the connected sum and disjoint union of two manifolds can be considered equivariantly.
Namely, we have an oriented homology cobordism 
\[
W_{01}: (Y_0, \mathfrak{t}_0, \iota_0, o(\iota_0)) \# (Y_1, \mathfrak{t}_1, \iota_1, o(\iota_1))\to (Y_0, \mathfrak{t}_0, \iota_0, o(\iota_0)) \sqcup (Y_1, \mathfrak{t}_1, \iota_1, o(\iota_1))
\]
that is equipped with an involution $\iota$ whose restriction to the boundary is given by $\iota_0, \iota_1$.

\begin{lem}
The cobordism $W_{01}$ admits a spin$^c$ structure $\fraks$ whose restriction to the boundary is given by $\frakt_0, \frakt_1$ such that $\iota^* \fraks \cong  \overline{\fraks}$.
 \end{lem}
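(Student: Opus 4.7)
The plan is to construct $\mathfrak{s}$ explicitly from the handle decomposition of $W_{01}$. First, realize $W_{01}$ as $[0,1]\times(Y_0\sqcup Y_1)$ with an equivariant $1$-handle $H\cong D^1\times D^3$ attached at small $\iota$-invariant $3$-disks around $(1,y_0)$ and $(1,y_1)$. Using the codimension-$2$ local model for $\iota_i$ at $y_i$, $H$ can be equipped with the involution $(s,x_1,x_2,x_3)\mapsto(s,x_1,-x_2,-x_3)$ whose fixed set $D^1\times D^1$ matches up equivariantly with collars of the fixed curves in each $Y_i$. The attaching diffeomorphism can be chosen $\iota$-equivariantly using an equivariant tubular neighborhood theorem, and the resulting involution on $W_{01}$ restricts to $\iota_0\sqcup\iota_1$ on the lower boundary and to $\iota_0\#\iota_1$ on the upper boundary.

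Second, I would define $\mathfrak{s}$ piecewise: on each cylinder put $\pi_i^{\ast}\mathfrak{t}_i$, and on the handle $H\cong D^4$ take the (essentially unique) spin$^c$ structure restricting on the attaching region $\{0,1\}\times D^3$ to the structures already fixed on the cylinders. Naturality of pullback together with the real-spin$^c$ hypothesis $\iota_i^{\ast}\mathfrak{t}_i\cong\overline{\mathfrak{t}_i}$ yields
\[
(\mathrm{id}\times\iota_i)^{\ast}\pi_i^{\ast}\mathfrak{t}_i\cong\pi_i^{\ast}\overline{\mathfrak{t}_i}\cong\overline{\pi_i^{\ast}\mathfrak{t}_i}
\]
on each cylinder. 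The restriction of the resulting $\mathfrak{s}$ to $\{0\}\times(Y_0\sqcup Y_1)$ is $\mathfrak{t}_0\sqcup\mathfrak{t}_1$ by construction, while on the top boundary the standard description of connected sum of spin$^c$ structures forces $\mathfrak{s}|_{Y_0\#Y_1}\cong\mathfrak{t}_0\#\mathfrak{t}_1$.

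Finally, to obtain a global isomorphism $\iota^{\ast}\mathfrak{s}\cong\overline{\mathfrak{s}}$: on each cylinder the isomorphism is already supplied; on $H$ both $\iota^{\ast}\mathfrak{s}|_H$ and $\overline{\mathfrak{s}|_H}$ are spin$^c$ structures on $D^4$ and hence mutually isomorphic. The main obstacle is that the locally chosen isomorphisms may disagree on the overlap, which is a disjoint union of two $3$-disks. However, any such discrepancy is measured by a $U(1)$-valued gauge transformation on $D^3\sqcup D^3$, and since each disk is contractible (so $H^1(D^3;\mathbb{Z})=0$), this transformation extends across $H$; absorbing it into the isomorphism chosen on $H$ patches the local data into the required global isomorphism, completing the construction.
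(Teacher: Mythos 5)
Your argument is correct, but proceeds by a genuinely different, more explicit route than the paper's. The paper gives a two-line homological argument: it observes that $H^2(W_{01}, \partial W_{01}; \Z)=0$, from which it follows that a spin$^c$ structure on $W_{01}$ with the prescribed restriction to $\partial W_{01}$ exists and is unique up to isomorphism; since $\iota^* \fraks$ and $\overline{\fraks}$ restrict to isomorphic spin$^c$ structures on $\partial W_{01}$ (using $\iota_i^*\frakt_i \cong \overline{\frakt_i}$ on each boundary component), uniqueness forces $\iota^*\fraks \cong \overline{\fraks}$. You instead build everything by hand from the equivariant $1$-handle decomposition of $W_{01}$: you glue the pullback spin$^c$ structures on the product pieces to the unique one on the $4$-disk handle, and then patch the locally given isomorphisms $\iota^*\fraks \to \overline{\fraks}$ by absorbing their discrepancy --- a $U(1)$-valued gauge transformation on the two $3$-disks of the attaching region --- into the isomorphism chosen on the handle, which is possible because that gauge transformation is null-homotopic and the handle is contractible. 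The paper's argument is much shorter but implicitly relies on obstruction theory for relative spin$^c$ structures (existence from the vanishing of the obstruction class, uniqueness from $H^2(W_{01},\partial W_{01};\Z)=0$); your construction costs more space but makes completely transparent where the isomorphism $\iota^*\fraks \cong \overline{\fraks}$ comes from and sidesteps the uniqueness statement. One minor wording point: the step where the discrepancy extends across $H$ is best justified by noting it is null-homotopic on $D^3\sqcup D^3$ (since $H^1(D^3\sqcup D^3;\Z)=0$) and applying the homotopy extension property for the inclusion into the contractible $H$, which is what you intend by citing $H^1(D^3;\Z)=0$.
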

 \begin{proof}
 From homological computations, one can verify $H^2(W_{01}, \partial W_{01}; \Z)=0$. Thus, we know an extension of spin$^c$ structure of $\frakt_0\cup \frakt_1 \cup \frakt_0\# \frakt_1$ exists and is unique up to isomorphism. 
 \end{proof}

The connected sum theorem can be formulated as follows. 
\begin{theo}
\label{conn sum of local eq}
Let $(Y_0, \mathfrak{t}_0, \iota_0)$ and $ (Y_1,\mathfrak{t}_1, \iota_1)$ are spin$^c$ $\Z_2$-homology $3$-spheres with odd involutions $\iota_0$ and $\iota_1$. 
Suppose that the fixed-point set of $\iota_i$ is non-empty and connected for each $i=0,1$.
Then, we have 
\[
[SWF_G (Y_0, \mathfrak{t}_0, \iota_0) \wedge SWF_G (Y_1, \mathfrak{t}_1, \iota_1) ]_{\rm{loc}} = [SWF_G(Y^\#, \mathfrak{t}^\#, \iota^\#)]_{\rm{loc}}, 
\]
where $(Y^\#, \mathfrak{t}^\#, \iota^\#) := (Y_0, \mathfrak{t}_0,\iota_0) \# (Y_1, \mathfrak{t}_1, \iota_1)$. 
\end{theo}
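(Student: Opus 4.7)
The strategy is to produce $G$-local maps in both directions via the equivariant relative Bauer--Furuta invariants of the cobordism $W_{01}$ and its orientation reverse $\overline{W}_{01}$. To prepare, one first identifies
\[
SWF_G(Y_0\sqcup Y_1,\mathfrak{t}_0\sqcup\mathfrak{t}_1,\iota_0\sqcup\iota_1)\simeq SWF_G(Y_0,\mathfrak{t}_0,\iota_0)\wedge SWF_G(Y_1,\mathfrak{t}_1,\iota_1)
\]
canonically and $G$-equivariantly. Because $Y_0$ and $Y_1$ are disjoint, the configuration space, the Seiberg--Witten flow, the choice of $\iota$-invariant metric, and the involution $I$ all decompose as products over the components, and the same holds for their finite-dimensional approximations; the Conley index of a product flow is the smash product of the Conley indices, equivariantly.

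Next, I realize $W_{01}$ as the reverse of the equivariant 1-handle cobordism producing $Y^\#$ from $Y_0\sqcup Y_1$. Because the equivariant connected sum is performed near a fixed point of each $\iota_i$, the 1-handle carries a compatible smooth involution whose 2-dimensional fixed surface is a pair-of-pants cobording the fixed circle of $\iota^\#$ to the two fixed circles of $\iota_0$ and $\iota_1$. A Mayer--Vietoris computation shows $W_{01}$ is a $\Z_2$-homology cobordism, so $b_1(W_{01})=b^+(W_{01})=0$, and consequently $b^+_\iota(W_{01})=0$ and $c_1(\mathfrak{s})^2-\sigma(W_{01})=0$ for any extension $\mathfrak{s}$ of the boundary spin$^c$ structures; the lemma preceding the theorem produces such an $\mathfrak{s}$ with $\iota^\ast\mathfrak{s}\cong\overline{\mathfrak{s}}$. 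The relative Bauer--Furuta construction of \Cref{subsection: cobordisms} then yields a $G$-stable map of height $0$ from $SWF_G(Y^\#,\mathfrak{t}^\#,\iota^\#)$ to the smash product, and reversing orientations produces the candidate reverse map.

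The remaining, and principal, task is to check that both maps are $G$-\emph{local}, i.e.\ that they restrict to $G$-homotopy equivalences on the $H$-fixed-point sets. The $H$-fixed subset of the (finite-dimensional approximation of the) configuration space consists of reducible configurations, where the spinor vanishes, and the Bauer--Furuta map there reduces to the linear double-Coulomb operator $(d^+,\,\Pi^-\circ r)$ on the $(-\iota^\ast)$-invariant 1-forms. By the index computation of \cref{lem: differences of dim}, this operator has index $b^+_\iota(W_{01})-b^+(W_{01})=0$. Since $W_{01}$ is a $\Z_2$-homology cobordism, the cylindrical completion of its $(-\iota^\ast)$-invariant part carries no self-dual harmonic $L^2$-form, so the operator has vanishing cokernel and is therefore an isomorphism between the relevant $G$-representations. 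This upgrades to a $G$-equivariant homotopy equivalence of the one-point compactifications underlying the $H$-fixed parts, exhibiting each Bauer--Furuta map as a $G$-local map.

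The hard part will be the last step: matching the representation-theoretic data on the two sides carefully, so that the linear isomorphism on the reducible locus is $G$-equivariant for the actual actions (of $j$ in the spin case $G=\Z_4$, and of $-1\in S^1$ in the non-spin case $G=\Z_2$), and so that the product decomposition from the first step is compatible with the Bauer--Furuta map restricted to reducibles. Given this compatibility, \Cref{lem:loc eq} then delivers the claimed $G$-local equivalence of spectrum classes.
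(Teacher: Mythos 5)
Your proposal takes the same approach as the paper: the paper's proof is a two-sentence reference to \cite[Theorem 3.48]{KMT}, stating that the local maps in both directions are the relative Bauer--Furuta invariants for the spin$^c$ cobordism $\pm W_{01}$ with involution. Your argument fills in exactly the details the paper defers to \cite{KMT} --- the identification of the Floer type of a disjoint union with a smash product, the construction of the equivariant 3-handle/1-handle cobordism $W_{01}$ with its fixed pair-of-pants, the index computation on the reducible locus via \cref{lem: differences of dim}, and the verification that the map restricts to a $G$-homotopy equivalence on $H$-fixed points because $W_{01}$ is a $\Z_2$-homology cobordism with $b^+=b^+_\iota=0$ --- so this is a correct, fleshed-out version of the same argument rather than a different route.
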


\begin{proof}
This follows from the same argument in the proof of \cite[Theorem 3.48]{KMT} although the doubled Floer homotopy types are used in \cite{KMT}. 
The local maps of both directions are given as relative Bauer--Furuta invariants for spin$^c$ cobordism $\pm W_{01} $ with involution.  
\end{proof}

\begin{theo}\label{conn sum of delta} Let $(Y_0, \mathfrak{t}_0, \iota_0)$ and $ (Y_1,\mathfrak{t}_1, \iota_1)$ are spin$^c$ $\Z_2$-homology $3$-spheres with odd involutions $\iota_0$ and $\iota_1$.
\[
\delta_R(Y, \iota, \mathfrak{t}) + \delta_R(Y', \iota', \mathfrak{t}') = \delta_R(Y\# Y', \iota \# \iota' , \mathfrak{t} \#\mathfrak{t}' ) .
\]
\end{theo}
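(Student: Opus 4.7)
\emph{Plan.} The plan is to combine the local-equivalence refinement of the connected sum from \cref{conn sum of local eq} with a smash-product additivity formula for $d$. By \cref{conn sum of local eq}, one has a $\Z_{2}$-local equivalence
\[
SWF_{\Z_{2}}(Y^{\#}, \frakt^{\#}, \iota^{\#})
\sim_{\mathrm{loc}}
SWF_{\Z_{2}}(Y, \frakt, \iota) \wedge SWF_{\Z_{2}}(Y', \frakt', \iota'),
\]
and by \cref{lem:loc eq} the invariant $d$ factors through $\mathcal{LE}_{\Z_{2}}$. Since $\delta_{R} = \tfrac{1}{2}\, d \circ SWF_{\Z_{2}}$, it therefore suffices to establish a smash formula $d(\mathcal{X} \wedge \mathcal{X}') = d(\mathcal{X}) + d(\mathcal{X}')$ for arbitrary spectrum classes $\mathcal{X}, \mathcal{X}' \in \mathfrak{C}_{\Z_{2}}$.

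The inequality $d(\mathcal{X} \wedge \mathcal{X}') \geq d(\mathcal{X}) + d(\mathcal{X}')$ is exactly \cref{conn sum for d}. For the reverse direction, I would fix representatives $(X,m,n)$ and $(X',m',n')$ and choose classes $x \in \wt{H}^{d(X)}_{\Z_{2}}(X)$, $x' \in \wt{H}^{d(X')}_{\Z_{2}}(X')$ whose $\Z_{2}$-restrictions $\iota^{\ast}x$ and $(\iota')^{\ast}x'$ are killed by no power of $W$, then form the external cross product $x \times x' \in \wt{H}^{d(X)+d(X')}_{\Z_{2}}(X \wedge X')$. Naturality under the inclusion $(X\wedge X')^{\Z_{2}} = X^{\Z_{2}} \wedge (X')^{\Z_{2}} \hookrightarrow X \wedge X'$ identifies its restriction as $(\iota^{\ast}x) \times ((\iota')^{\ast}x')$. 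Since $X^{\Z_{2}}$ and $(X')^{\Z_{2}}$ are (trivially acted) representation spheres, their equivariant cohomologies are free rank-one $\wt{H}^{\ast}_{\Z_{2}}(S^{0})$-modules generated by Thom classes, and the equivariant cross product sends the product of these generators to the Thom class of the joint sphere. Hence the restriction equals a $W$-power times the Thom class of $(X\wedge X')^{\Z_{2}}$ and is $W$-non-torsion. This gives $d(X \wedge X') \leq d(X) + d(X')$ at the space level, and the analogous equality for triples is immediate from $d(X,m,n) = d(X) - m - n$.

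The remaining bookkeeping is the additivity $n(Y^{\#}, g^{\#}) = n(Y, g) + n(Y', g')$, which follows since the complex Dirac index and the signature are both additive under disjoint unions of bounding 4-manifolds: one may take the bounding manifold for $Y^{\#}$ to be the disjoint union of chosen bounding manifolds for $Y$ and $Y'$. The main technical point of the plan is the $\leq$ direction of the smash formula, namely verifying that the external product of $W$-infinitely-divisible classes remains $W$-infinitely-divisible after restriction to the fixed-point sphere; this is a K\"unneth-type compatibility for the equivariant cross product with restriction to representation-sphere fixed sets. Once this is in place, dividing the resulting equality $d(SWF_{\Z_{2}}(Y^{\#})) = d(SWF_{\Z_{2}}(Y)) + d(SWF_{\Z_{2}}(Y'))$ by $2$ yields \cref{conn sum of delta}.
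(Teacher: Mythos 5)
Your reduction to the smash formula for $d$ via \cref{conn sum of local eq} and \cref{lem:loc eq} is exactly the paper's starting point, and you correctly cite \cref{conn sum for d} for the inequality $d(X\wedge X')\geq d(X)+d(X')$. Where you diverge is in the opposite inequality. You prove $d(X\wedge X')\leq d(X)+d(X')$ directly: pick $W$-nontorsion classes $x,x'$ in degrees $d(X),d(X')$, form the external product, and observe that since $\wt H^{*}_{\Z_2}(X^{\Z_2})$ and $\wt H^{*}_{\Z_2}((X')^{\Z_2})$ are free rank-one $\Z_2[W]$-modules on Thom classes of trivial representation spheres, the restriction of $x\times x'$ to $(X\wedge X')^{\Z_2}=X^{\Z_2}\wedge (X')^{\Z_2}$ is a nonzero $W$-power times $\tau_{V\oplus V'}$. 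That argument is sound (the external product in reduced Borel cohomology exists and is natural for the inclusion of fixed sets), and together with \cref{conn sum for d} it yields the equality $d(X\wedge X')=d(X)+d(X')$ at the level of spaces, hence of triples, hence of spectrum classes.

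The paper instead gets the reverse inequality for $\delta_R$ by applying the $\geq$ inequality to $-Y$, $-Y'$, $-(Y\#Y')$ and then invoking the duality $\delta_R(Y,\frakt,\iota)=-\delta_R(-Y,\frakt,\iota)$ from \cref{prop: from duality}, which in turn rests on the $V$-duality of the Conley indices. So your proof replaces the duality lemma with a direct external-product computation, giving a slightly more self-contained cohomological argument and in fact a sharper statement (the smash formula for $d$ as an equality, not just one inequality). Both approaches are valid under the standing hypotheses (in particular the connectedness assumption on fixed-point sets required by \cref{conn sum of local eq}, which both arguments quietly use). One small remark: your final paragraph about the additivity of $n(Y,\frakt,g)$ under connected sums is unnecessary — that bookkeeping is already absorbed into the statement of \cref{conn sum of local eq}, which is an identity of spectrum classes, so once $d(\mathcal{X}\wedge\mathcal{X}')=d(\mathcal{X})+d(\mathcal{X}')$ is in hand the conclusion follows immediately from local-equivalence invariance without any further index computations.
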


In order to prove \cref{conn sum of delta}, we first prove the following duality result: 
\begin{lem}
\label{prop: from duality}
Let $(Y,\frakt,\iota)$ be an oriented spin rational homology sphere with odd involution.
Then we have
\[
\delta_R(Y,\frakt,\iota) =  -\delta_R(Y,\frakt,\iota)  . 
\]
Moreover, suppose $\frakt$ is spin, we have 
\[
\underline{\delta}_R(Y,\frakt,\iota) =- \bar{\delta}_R(-Y,\frakt,\iota). 
\]
\end{lem}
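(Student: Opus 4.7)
The plan is to reduce the duality for $\delta_R$, $\underline{\delta}_R$, $\bar{\delta}_R$ to the Spanier--Whitehead duality already packaged in \cref{duality for d} by showing that the Conley indices entering the definitions of $SWF_G(Y,\frakt,\iota)$ and $SWF_G(-Y,\frakt,\iota)$ are $G$-equivariantly dual to each other (after a well-controlled shift). I interpret the first asserted identity as the obvious typo for $\delta_R(Y,\frakt,\iota)=-\delta_R(-Y,\frakt,\iota)$, in parallel with the second identity.

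The first step is to recall that, in Manolescu's construction, reversing the orientation of $Y$ reverses the Seiberg--Witten gradient flow on the same finite-dimensional approximation, so we may identify
\[
I^{\mu}_{\lambda}(-Y,\frakt,\iota,g)\simeq I^{-\lambda}_{-\mu}(Y,\frakt,\iota,g),
\]
and the standard Conley index duality gives a $G$-equivariant pairing
\[
I^{\mu}_{\lambda}(Y,\frakt,\iota,g)\wedge I^{-\lambda}_{-\mu}(Y,\frakt,\iota,g)\longrightarrow \bigl(V^{\mu}_{\lambda}(Y,\frakt,\iota,g)\oplus W^{\mu}_{\lambda}(Y,\frakt,\iota,g)\bigr)^{+},
\]
exhibiting them as $G$-equivariantly $V$-dual with $V=V^{\mu}_{\lambda}(Y,\frakt,\iota,g)\oplus W^{\mu}_{\lambda}(Y,\frakt,\iota,g)$. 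The point that needs verification is that taking $I$-invariant parts preserves this duality; this follows because $I$ commutes with the (reversed) Seiberg--Witten flow, so the $I$-fixed subflow is itself a gradient flow on the $I$-invariant subspaces and inherits its own Conley index duality.

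Next, I would record the elementary fact $n(-Y,\frakt,g)=-n(Y,\frakt,g)$, which is immediate from \eqref{eq: expression of n for Rohlin} by capping $-Y$ with $-W$ and using that both $\operatorname{ind}_{\mathbb K_G}D_W$ and $\sigma(W)$ change sign. Combined with the additivity $\dim V^{\mu}_{\lambda}=\dim V^{0}_{\lambda}+\dim V^{0}_{-\mu}$ (and likewise for $W$), this makes the shift contributions in \cref{def: equivariant Seiberg--Witten Floer stable homotopy type} for $Y$ and $-Y$ add up exactly to $\dim_{\R}V+\dim_{\R}W$.

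Applying the $\Z_2$-case of \cref{duality for d} to $I^{\mu}_{\lambda}(Y,\frakt,\iota,g)$ and $I^{-\lambda}_{-\mu}(Y,\frakt,\iota,g)$ then yields $d(SWF_{\Z_2}(Y,\frakt,\iota))+d(SWF_{\Z_2}(-Y,\frakt,\iota))=0$, and dividing by $2$ gives the first identity; the $\Z_4$-case of \cref{duality for d} applied in the same way yields $\underline{d}(SWF_{\Z_4}(Y,\frakt,\iota))+\overline{d}(SWF_{\Z_4}(-Y,\frakt,\iota))=0$, giving the second identity. The main obstacle is the duality pairing at the Conley-index level together with its compatibility with the $I$-action; once that is in hand, the arithmetic of the shift terms (in particular the asymmetry in the $\mathbb{K}_G$-shift between the $\Z_2$ and $\Z_4$ cases, encoded in the different definitions of $d(X,m,n)$ versus $\underline{d}(X,m,n), \overline{d}(X,m,n)$) is routine bookkeeping.
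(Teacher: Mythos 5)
Your proposal is correct and follows essentially the same route as the paper: identify the Conley indices for $Y$ and $-Y$ as equivariant Spanier--Whitehead duals (taking $I$-invariant parts of the duality), verify the shift arithmetic (including $n(-Y,\frakt,g)=-n(Y,\frakt,g)$), and apply \cref{duality for d}. The only cosmetic difference is that the paper obtains the $V$-duality of the $I$-invariant Conley indices by restricting the equivariant $V$-dual map of \cite[Proposition~6.23]{Ian22} rather than re-deriving it, as you do, from the $I$-equivariance of the reversed Seiberg--Witten flow; you also correctly read the first equality in the statement as the obvious typo for $\delta_R(Y,\frakt,\iota)=-\delta_R(-Y,\frakt,\iota)$.
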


\begin{proof}[Proof of \cref{prop: from duality}]

We first claim that $I^\lambda_{-\lambda} (Y, \iota, g)$ and $I^{\lambda}_{-\lambda} (-Y, \iota, g) $ are equivariantly $V^\lambda_{-\lambda}(Y, \iota, g) \oplus W^\lambda_{-\lambda}(Y, \iota, g)$-dual. This can be obtained as the restriction of the $V$-dual map \cite[Proposition 6.23]{Ian22}. 
Then the inequalities of the conclusion directly follow from \cref{duality for d}. 
\end{proof}

\begin{proof}[Proof of \cref{conn sum of delta}]
From \cref{conn sum for d} and \cref{conn sum of local eq}, the following inequality has already been proven: 
\[
\delta_R(Y, \iota, \mathfrak{t}) + \delta_R(Y', \iota', \mathfrak{t}') \leq  \delta_R(Y\# Y', \iota \# \iota' , \mathfrak{t} \#\mathfrak{t}' ) .
\]

The remaining part is to see the opposite inequality. To see this, using the above inequality to orientation reversals, we obtain
\begin{align*}
\delta_R(-Y, \iota, \mathfrak{t}) + \delta_R(-Y', \iota', \mathfrak{t}') &\leq  \delta_R(-(Y\# Y'), \iota \# \iota' , \mathfrak{t} \#\mathfrak{t}' ) . 
\end{align*}
Then, we use \cref{prop: from duality} and obtain 
\begin{align*}
\delta_R(Y, \iota, \mathfrak{t})+ \delta_R(Y', \iota', \mathfrak{t}') &\geq   \delta_R(Y\# Y', \iota \# \iota' , \mathfrak{t} \#\mathfrak{t}' ) . 
\end{align*}
This completes the proof. 
\end{proof} 

\subsection{SWF-spherical}
We introduce the easiest case to calculate our invariants $d$, $\underline{d}$ and $\overline{d}$. 
\begin{defi}\label{SWF-spherical}
We say that a triple $(Y, \mathfrak{t}, \iota)$ of a rational homology 3-sphere, spin$^c$ structure, and an odd involution is {\it SWF-spherical} if
\[
[SWF_G (Y, \mathfrak{t}, \iota )]_{\mathrm{loc}} = [(S^0, m,n)]_{\mathrm{loc}}
\]
for some $m\in \Z$ and $n \in \Q$.
\end{defi}
The following computations are obvious from the definitions of invariants $\delta_R$, $\underline{\delta}_R$ and $\bar{\delta}_R$.
\begin{prop}\label{spherical}
If a triple $(Y, \mathfrak{t}, \iota)$ is SWF-spherical, then 
\[
\delta_R(Y, \iota, \frakt) = \underline{\delta}_R(Y, \iota, \frakt) = \bar{\delta}_R(Y, \iota, \frakt)=   -\frac{m}{2 } - \frac{n}{2}
\]
for $m,n$ satisfying $[SWF_G (Y, \mathfrak{t}, \iota )]_{\mathrm{loc}} = [(S^0, m,n)]$.
\end{prop}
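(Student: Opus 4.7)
The plan is to unpack the definitions and appeal to the local equivalence invariance established in Corollary~\ref{lem:loc eq}. By that corollary, each of $d$, $\overline{d}$, $\underline{d}$ descends from $\mathfrak{C}_G$ to the set $\mathcal{LE}_G$ of local equivalence classes; so under the assumption $[SWF_G(Y,\frakt,\iota)]_{\mathrm{loc}}=[(S^0,m,n)]_{\mathrm{loc}}$, I may replace $SWF_G(Y,\frakt,\iota)$ with the explicit representative $(S^0, m, n)$ when computing the three numerical invariants.

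Next I would verify the base-point computation $d(S^0)=\overline{d}(S^0)=\underline{d}(S^0)=0$. This is immediate from the definitions, since the unit element of $\tilde{H}^*_G(S^0)$ already witnesses a $W$-tower (in the $\Z_2$ case) or a $U$-tower (in the $\Z_4$ case) starting in degree $0$, so the minimal degree in which an element survives multiplication by all powers of the relevant generator is $0$.

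Combining the base case with the shift formulas for $d(X,m,n)$, $\overline{d}(X,m,n)$, $\underline{d}(X,m,n)$ recorded in \Cref{subsection Doubling}, and then halving as per the definitions of $\delta_R$, $\overline{\delta}_R$, $\underline{\delta}_R$, yields the claimed formula. There is no substantive obstacle here; the argument is a purely formal unpacking of definitions. The content of the proposition lies not in the proof but in the observation that SWF-sphericality singles out the simplest possible local equivalence class, on which all three Fr\o yshov-type invariants are governed by the single pair $(m,n)$ and are therefore forced to agree.
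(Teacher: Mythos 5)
Your overall strategy matches the paper exactly: the paper dispatches this proposition with the one-line remark that it is ``obvious from the definitions,'' and the ingredients you list — descending to local equivalence classes via \cref{lem:loc eq}, the base-point computation $d(S^0)=\overline{d}(S^0)=\underline{d}(S^0)=0$, the shift formulas, and the factor of $1/2$ in the definitions of $\delta_R,\bar\delta_R,\underline\delta_R$ — are precisely what is implicitly invoked.

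There is, however, one bookkeeping step that your write-up slides over and that does not come out automatically. The $\Z_2$ shift formula is $d(X,m,n)=d(X)-m-n$, while the $\Z_4$ shift formulas are $\overline{d}(X,m,n)=\overline{d}(X)-m-2n$ and $\underline{d}(X,m,n)=\underline{d}(X)-m-2n$. If you plug in a \emph{single} pair $(m,n)$ and halve, you obtain $-\tfrac{m}{2}-\tfrac{n}{2}$ for $\delta_R$ but $-\tfrac{m}{2}-n$ for $\bar\delta_R$ and $\underline\delta_R$, which do not agree. The reason all three nevertheless coincide is that the $n$-coordinate in a triple $(X,m,n)$ is a $\mathbb{K}_G$-dimension, and $\mathbb{K}_{\Z_2}=\tilde{\R}$ while $\mathbb{K}_{\Z_4}=\C$; in the spin case, the third coordinate of $SWF_{\Z_2}(Y,\frakt,\iota)$ is therefore twice that of $SWF_{\Z_4}(Y,\frakt,\iota)$ (as $\dim_{\R}=2\dim_{\C}$), which exactly cancels the extra factor of $2$ in $k_{\Z_4}\cdot n$. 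Your proof should say a sentence to this effect — e.g.\ that if the $G=\Z_4$ local equivalence class is $(S^0,m,n)$ then the $G=\Z_2$ one is $(S^0,m,2n)$, after which both computations return $-\tfrac{m}{2}-n$ — otherwise the shift-formula step, as literally written, produces an inconsistency.
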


Now, we prove \cref{computation} using \cref{spherical}. 
\begin{proof}[Proof of \cref{computation}]
From \cite{KMT}, for lens spaces, we have the following computation: 
Let $p$, $q$ be coprime natural numbers.
Regard the lens space $Y=\Sigma(K(p,q)) =
L(p, q)$
, and equip $Y$ with the standard metric $g$, which has
positive scalar curvature. The complex conjugation on $Y$ defines an involution $\iota$ on $Y$ that preserves $g$. The fixed-point set of $\iota$ is non-empty and of codimension-2, which is called two bridge knot/link. 
Through the same discussion given in \cite[Example 3.56]{KMT}, we have 
\[
SWF_G(L(p, q), t,\iota) = [(S^0, 0, n(L(p, q),\frakt, g)/2].
\]
This combined with \cref{spherical} completes the computations for $\delta$, $\bar{\delta}_R$ and $\underline{\delta}_R$. 
For torus knots, the proof is similar and we use the argument of the proof of \cite[Theorem 3.58]{KMT} instead of \cite[Example 3.56]{KMT}. 
\end{proof}

\section{Floer homotopy type of links and Fr\o yshov type invariants}\label{section Floer homotopy type of knots and Fro yshov type invariants}

In \cite{KMT}, we defined a Floer homotopy type of a knot
\[
K \mapsto DSWF(K)
\]
which we called the doubled Floer homotopy type for a knot $K$. For some reason related to $K$-theory, we made the doubling operation. In this paper, we give a Floer homotopy type $SWF(K)$ without taking double. 
Moreover, we generalize the invariant $SWF(K)$ to that of oriented links $L$ in $S^3$ with non-zero determinant: 
\[
L \mapsto SWF (L). 
\]
Using them, for links with non-zero determinant, we introduce Fr\o yshov type invariants
\[
L \mapsto \delta_R (L), \underline{\delta}_R (L), \bar{\delta}_R (L)  \in \frac{1}{16}\Z. 
\]

\subsection{Floer homotopy type of links}

Let $L$ be a link in $S^3$ with non-zero determiant. 
Then, we can associate the double branched cover
 \[
 \Sigma (L) \to S^3 
 \]
 uniquely and a covering involution $\iota: \Sigma (L)  \to  \Sigma (L) $. Since our theory uses real spin$^c$ structures, we need to confirm compatibility between the covering involutions and spin$(^c)$ structures on the double branched covering spaces. 
 The following correspondence between orientations of $L$ and spin structures on $\Sigma(L)$ is proven by Turaev \cite{Tu85}: 

\begin{lem}\label{Traev}
There is a canonical one-to-one correspondence between the set of isomorphism classes of spin structures on $\Sigma (L)$ and $\{ \operatorname{ orientations\ on } L \} / \{\pm 1\}$.
\end{lem}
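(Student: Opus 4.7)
The approach is to construct Turaev's correspondence via Seifert surfaces and verify it through a cardinality count combined with injectivity.

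First I would set up the forward map. Given an orientation on $L$, choose an oriented Seifert surface $F \subset S^3$ with $\partial F = L$ and push $\mathrm{int}(F)$ slightly into $D^4$ to obtain a properly embedded oriented surface $F' \subset D^4$. Let $W_F \to D^4$ denote the double cover branched along $F'$, so $\partial W_F = \Sigma(L)$. Because $F'$ is null-homologous in $D^4$ rel $L$, a standard characteristic-class calculation for branched double covers shows that $w_2(W_F)$ is the mod-$2$ reduction of a class Poincar\'e dual to the lift of $F'$, which vanishes; the orientation of $F'$ then singles out a preferred spin structure $\mathfrak{s}_F$ on $W_F$. Restriction yields a spin structure $\frakt_L := \mathfrak{s}_F|_{\Sigma(L)}$.

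Next I would verify that this descends to the quasi-orientation class. Any two oriented Seifert surfaces for the same oriented link differ by a finite sequence of ambient isotopies and oriented stabilizations (tubing); each such move extends to the pushed-in surfaces in $D^4$ and induces a spin cobordism rel boundary between the branched covers, so $\frakt_L$ depends only on the oriented link. Reversing the orientation of every component of $L$ reverses the orientation of $F$, which corresponds to composing $\mathfrak{s}_F$ with the non-trivial deck transformation of $W_F \to D^4$; on $\partial W_F = \Sigma(L)$ this deck transformation acts trivially, so $\frakt_L$ is unchanged. Hence we obtain a well-defined map $\{\text{orientations on } L\}/\{\pm 1\} \to \mathrm{Spin}(\Sigma(L))$.

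Finally I would establish bijectivity by counting and checking injectivity. The set of quasi-orientations of an $n$-component link has cardinality $2^{n-1}$. On the other hand, a Seifert-matrix computation gives $H_1(\Sigma(L); \Z) \cong \mathrm{coker}(V+V^T)$, and the hypothesis $\det L \neq 0$ forces this group to be finite with $2$-primary rank exactly $n-1$, so $|H^1(\Sigma(L); \Z/2)| = 2^{n-1}$ as well; hence the spin torsor on $\Sigma(L)$ has the correct cardinality. Injectivity follows because two inequivalent quasi-orientations produce pushed-in surfaces whose lifted fundamental classes differ by a class in $H_1(W_F; \Z/2)$ that restricts non-trivially to $H_1(\Sigma(L); \Z/2)$, and this difference acts non-trivially on the spin torsor of $\Sigma(L)$.

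The main obstacle is the well-definedness step and matching the $\{\pm 1\}$-ambiguity of orientations with the ambiguity in the branched-cover construction; this requires careful bookkeeping of how the orientation of $F$ enters the trivialization of $w_2(W_F)$ and how oriented Seifert-surface stabilizations propagate to coherent spin structures on the family of branched covers. Once this is under control the counting argument closes out the proof.
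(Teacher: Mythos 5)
Your route is genuinely different from the paper's. The paper (following Turaev) constructs the correspondence entirely in dimension three: it pulls back the restriction of the unique spin structure on $S^3$ to $\Sigma(L)\setminus N_L$, then twists by an explicit class $h(o)\in H^1(\Sigma(L)\setminus N_L;\Z_2)$ defined via half-linking numbers $h(o)(l)=\tfrac12\operatorname{lk}(p(l),L)$, and finally extends uniquely over $\Sigma(L)$. No Seifert surface or $4$-manifold appears. You instead push an oriented Seifert surface $F$ into $D^4$, take the branched double cover $W_F$, and restrict a spin structure from $W_F$. The two constructions should ultimately agree, but they are different pieces of mathematics and you would need to prove the same bijectivity assertion from scratch.

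Two points in your argument do not hold together as written. First, the claim that ``the orientation of $F'$ singles out a preferred spin structure $\mathfrak s_F$ on $W_F$'' is off: for a connected Seifert surface, $W_F$ is simply connected with even intersection form (the Gordon--Litherland form $V+V^T$ has even diagonal), so $W_F$ carries a \emph{unique} spin structure regardless of any orientation data. The orientation of $L$ enters only through the requirement that $F$ be an \emph{oriented} Seifert surface compatibly oriented with $L$; the spin structure on $W_F$ is then whatever it is. (Your $w_2$ justification is also not quite right — $w_2(W_F)$ is dual to the lift of the branch locus, which is rarely null-homologous; evenness of $V+V^T$ is the clean reason $W_F$ is spin.) This slip is repairable, but it signals that the orientation-dependence of your map is less transparent than in Turaev's description.

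The genuine gap is injectivity. Your closing sentence compares ``lifted fundamental classes'' as if they lived in a single $4$-manifold, but two different quasi-orientations yield two different Seifert surfaces and hence two different spaces $W_F$ and $W_{F'}$; there is no canonical place where their lifted classes ``differ by a class in $H_1(W_F;\Z/2)$.'' The cardinality count $\lvert H^1(\Sigma(L);\Z/2)\rvert = 2^{n-1}$ (which is correct for $\det L\neq 0$) only tells you the two sets have equal size; without an actual injectivity or surjectivity argument the bijection is not established. To make your approach rigorous you would need either (a) to compare the two boundary spin structures directly via an explicit cobordism or Stein-like trick relating $W_F$ and $W_{F'}$, or (b) to show your map agrees with the Turaev linking-number map of the paper and then invoke Turaev's theorem — at which point you have not avoided the $3$-dimensional construction after all.
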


We need to clarify the correspondence in \cref{Traev} to see if the spin structures are preserved by the involution. 

For a given link $L$, we take an open tubular neighborhood $N_L$ of $\tilde{L}$ in $\Sigma(L)$, where  $\tilde{L}$ is the lift of $L$ in $\Sigma(L)$.
Let $\fraks$ be the unique spin structure on $\Sigma(L)\setminus N_L$ coming from the pull-back of the spin structure on $S^3 \setminus L$, which is the restriction of the spin structure on $S^3$.
We pick an orientation $o$ of $L$.
Corresponding to $ o$, one can associate an element $h (0)$ of $H^1(\Sigma(L)\setminus N_L; \Z_2 )$ as follows: 
\[
h(o) (l) := \frac{1}{2}\operatorname{lk}(p(l),L) : H_1(\Sigma(L)\setminus N_L; \Z_2 ) \to \Z_2,
\] where $p$ is the covering projection of $\Sigma(L)$ and $\operatorname{lk}$  is the  linking number with respect to the orientation $o$.
Thus, we have a spin structure $\fraks + h(o)$ on $\Sigma(L) \setminus N_L$, which has the unique extension to the spin structure on $\Sigma(L)$ denote by $\fraks(o)$. 
Then, the map 
\begin{align}\label{ori and spin structure}
\{ \text{orientations on } L\}/ \{\pm 1\} \to  \operatorname{Spin}(\Sigma( L))\ ; o \mapsto \fraks(o)
\end{align}
gives a bijection, where $\operatorname{Spin}(\Sigma( L)) $ denotes the set of ismorphism classes of spin structures on $\Sigma( L)$. 
From the constructions of the map \eqref{ori and spin structure}, we can ensure that the spin structure $\frak{s} + h(o)$ is preserved by the covering involution $\iota$. Thus we have: 
\begin{lem}
Any spin structure on $\Sigma(L)$ is preserved by the involution $\iota$.
\end{lem}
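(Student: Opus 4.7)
My plan is to trace through the construction already given in the excerpt and verify $\iota$-invariance at each step. The base spin structure $\fraks$ on $\Sigma(L)\setminus N_L$ is defined as the pull-back of the spin structure on $S^3\setminus L$, and since $\iota$ is a deck transformation of the branched cover, we have $p\circ\iota=p$, so $\iota^{\ast}\fraks\cong\fraks$ tautologically. Thus the problem reduces to checking that the twisting class $h(o)\in H^{1}(\Sigma(L)\setminus N_L;\Z_{2})$ is $\iota$-invariant, because then $\fraks+h(o)$ is $\iota$-invariant on the complement, and its extension $\fraks(o)$ to $\Sigma(L)$ is unique (as follows from the Mayer--Vietoris / long exact sequence the authors sketched in their commented-out lines, which gives $H^{1}(N_L,\partial N_L;\Z_{2})=0$).

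For the $\iota$-invariance of $h(o)$, I would argue as follows. For any $1$-cycle $l$ in $\Sigma(L)\setminus N_L$, since $\iota$ is a deck transformation, the image cycles $p_{\ast}(\iota_{\ast}l)$ and $p_{\ast}(l)$ coincide in $H_{1}(S^{3}\setminus L;\Z)$. Consequently
\[
h(o)(\iota_{\ast}l)=\tfrac{1}{2}\operatorname{lk}(p(\iota_{\ast}l),L)=\tfrac{1}{2}\operatorname{lk}(p(l),L)=h(o)(l)\pmod 2,
\]
so $\iota^{\ast}h(o)=h(o)$. Combining with the previous paragraph, $\fraks(o)$ is $\iota$-invariant. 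Since the map \eqref{ori and spin structure} from orientations of $L$ (up to sign) to isomorphism classes of spin structures on $\Sigma(L)$ is a bijection, every spin structure arises as some $\fraks(o)$ and is therefore $\iota$-invariant.

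The only real subtlety I anticipate is being careful about what ``$\iota$-invariant'' means at the level of isomorphism classes versus actual bundles, i.e.\ exhibiting an isomorphism $\iota^{\ast}\fraks(o)\cong \fraks(o)$ rather than an equality. This is handled by noting that the pull-back spin structure on the complement has a canonical isomorphism with itself covering $\iota$ (built from the deck structure), and the twist by an $\iota^{\ast}$-invariant class $h(o)$ preserves such an isomorphism; the uniqueness of the extension across $N_L$ then promotes this to an isomorphism $\iota^{\ast}\fraks(o)\cong\fraks(o)$ on all of $\Sigma(L)$.
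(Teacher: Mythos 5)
Your proof is correct and takes essentially the same route as the paper, which simply remarks that $\iota$-invariance can be read off ``from the constructions'' of the map \eqref{ori and spin structure}. You have merely made explicit the two ingredients the paper leaves implicit: the pulled-back spin structure $\fraks$ is tautologically $\iota$-invariant because $p\circ\iota=p$, and $h(o)$ is $\iota$-invariant for the same reason, so $\fraks+h(o)$ and hence its unique extension $\fraks(o)$ is $\iota$-invariant, and the bijection ensures every spin structure arises this way.
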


For a given orientation of $L$, we denote by $\mathfrak{t}_L$ the spin structure on $\Sigma (L)$ corresponding to the orientation. 
Note that $\mathfrak{t}_L\cong  \mathfrak{t}_{-L} $. 

\begin{defi}
For a given oriented link $L$ with non-zero determinant, we define 
\[
SWF(L) := SWF (\Sigma (L), \iota, \mathfrak{t}_L). 
\]

\end{defi}

Obviously, the $\Z_4$-equivariant stable homotopy type $SWF(L) $ is invariant under isotopy for links.

\subsection{Fr\o yshov type invariants for links}

Now, we define the three invariants: 
\begin{defi}\label{def for link}
For a given oriented link $L$, 
we define 
\[
\delta_R (L) := \delta_R(\Sigma (L), \iota, \mathfrak{t}_L), \ \ \   \bar{\delta}_R (L) := \overline\delta_R(\Sigma (L), \iota, \mathfrak{t}_L),\ \ \  \underline{\delta}_R (L) := \underline\delta_R(\Sigma (L), \iota, \mathfrak{t}_L), 
\]
where $\mathfrak{t}_L$ is the spin structure corresponding to the given orientation of $L$. For the correspondence between orientations of $L$ and spin structures on $\Sigma(L)$, see \eqref{ori and spin structure}.
Similarly, we define 
\[
\kappa (L) := \kappa(\Sigma (L), \iota, \mathfrak{t}_L). 
\]
Here $\kappa(-)$ is the invariant defined in \cite[Theorem 1.1]{KMT}.
\end{defi}
Before proving \cref{theo: main knot}, we need to calculate several fundamental quantities of double branched covering spaces.

Let $(X, S)$ be a connected (possibly non-orientable) link cobordism from $(S^3, L)$ to $(S^3, L')$ with $H_1(X ; \Z_2) =0$. Suppose the determinants $L$ and $L' $ are non-zero. 
As in the closed case given in \cite[Corollary 2.10]{Na00}, we see that the surface $S$ in $X$ determines a unique double branched covering space if and only if $0 \equiv [S]\in H_2(X, \partial X; \Z_2)$.
The followings are fundamental computations regarding the homologies of double-branched covering spaces. 

\begin{lem}
\label{branched homology}
The following equalities hold:
\begin{align}
&\sigma (\Sigma (S))= 2 \sigma (X) - \frac{1}{2} S \circ S - \sigma (L)+ \sigma (L'),  \label{eq: sign lem}\\ 
&b_2 (\Sigma (S )) = 2b_2(X) + b_1(S) , \label{eq: b2}  \\ 
& b^+ (\Sigma (S )) = 2b^+(X) + \frac{1}{2}b_1(S) -\frac{1}{4} S \circ S  -  \frac{1}{2} \sigma (L) + \frac{1}{2} \sigma (L'), \label{eq: bplus lem}\\ 
& b_1(\Sigma(S) ) = b_3(\Sigma(S) )  =0. \label{eq: b1 lem}
 \end{align}
 Moreover, if $L$ and $L'$ are knots, there is no 2-torsion in the cohomology $H^2(\Sigma(S); \Z)$.
\end{lem}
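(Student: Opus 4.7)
The plan is to prove the four identities in turn using different techniques: the vanishings $b_1(\Sigma(S))=b_3(\Sigma(S))=0$ via a Mayer--Vietoris/Smith-theoretic argument, \eqref{eq: b2} via the Euler characteristic formula for branched covers, the signature identity \eqref{eq: sign lem} by capping off the cobordism ends and invoking Novikov additivity, and \eqref{eq: bplus lem} as an immediate consequence of $b^+=(b_2+\sigma)/2$. The $2$-torsion statement at the end will follow by running the Mayer--Vietoris argument integrally, using the oddness of knot determinants.

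For $b_1(\Sigma(S))=0$ I will decompose $\Sigma(S) = p^{-1}(\nu(S)) \cup p^{-1}(X\setminus\mathrm{int}\,\nu(S))$, where the first piece is the fiberwise branched cover $z\mapsto z^2$ of the normal disk bundle to $S$, and the second is the unramified double cover of $X\setminus\mathrm{int}\,\nu(S)$ classified by the mod-$2$ Poincar\'e dual of $[S]$. Splitting the rational cohomology of $\Sigma(S)$ into $\iota^\ast$-eigenspaces, the invariant part of $H^1$ identifies with $H^1(X;\Q)=0$, while the anti-invariant part is computed from the twisted cohomology of $X\setminus S$ associated to this $\Z_2$-character; the hypotheses $H^1(X;\Z_2)=0$ and $[S]\equiv 0\pmod 2$ together show this twisted first cohomology vanishes. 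For $b_3(\Sigma(S))=0$, Poincar\'e--Lefschetz duality gives $b_3(\Sigma(S)) = b_1(\Sigma(S),\partial\Sigma(S))$, and since $\det L,\det L'\neq 0$ makes both boundary components rational homology $3$-spheres, the long exact sequence of the pair combined with $b_1(\Sigma(S))=0$ forces the vanishing. Equation \eqref{eq: b2} then drops out of the branched-cover Euler characteristic formula $\chi(\Sigma(S))=2\chi(X)-\chi(S)$ together with $\chi(S)=1-b_1(S)$ for a connected surface with boundary.

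For the signature formula \eqref{eq: sign lem} I will choose Seifert surfaces $F_L,F_{L'}$ for $L,L'$, push them into $4$-balls $B_0,B_1$, and form the closed pair
\[
(\widehat X,\widehat S) := (B_0\cup X\cup(-B_1),\; F_L\cup S\cup F_{L'}).
\]
The condition $[\widehat S]\equiv 0\pmod 2$ is preserved, so $\Sigma(\widehat S)$ is defined and decomposes along $\Sigma(L)$ and $\Sigma(L')$ as $\Sigma(B^4,F_L)\cup \Sigma(S)\cup (-\Sigma(B^4,F_{L'}))$. Novikov additivity gives
\[
\sigma(\Sigma(\widehat S)) = \sigma(\Sigma(B^4,F_L))+\sigma(\Sigma(S))-\sigma(\Sigma(B^4,F_{L'})).
\]
The closed-case formula (Rokhlin for orientable $\widehat S$, Guillou--Marin in general) reads $\sigma(\Sigma(\widehat S)) = 2\sigma(\widehat X)-\frac{1}{2}\widehat S\circ\widehat S$, while the intersection form on $\Sigma(B^4,F_K)$ for a pushed-in Seifert surface is the symmetrized Seifert matrix $V_K+V_K^T$, whose signature is $\sigma(K)$. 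Combining these with $\sigma(\widehat X)=\sigma(X)$ and $\widehat S\circ\widehat S=S\circ S$ (Seifert surfaces pushed into $B^4$ have vanishing normal Euler number) yields \eqref{eq: sign lem}, and \eqref{eq: bplus lem} follows by substituting into $b^+=(b_2+\sigma)/2$.

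Finally, when $L$ and $L'$ are knots, the Alexander-polynomial identity $\Delta_K(1)=\pm 1$ forces $\det K = |\Delta_K(-1)|$ to be odd, so $\Sigma(L)$ and $\Sigma(L')$ are $\Z_2$-homology $3$-spheres. Running the Mayer--Vietoris argument of the second paragraph with $\Z$ coefficients and exploiting this fact about the boundary then shows $H_1(\Sigma(S);\Z)$ has no $2$-torsion, which transfers to $H^2(\Sigma(S);\Z)$ via the universal coefficient theorem. The main obstacle will be the Smith-theoretic vanishing of the anti-invariant part of the rational cohomology and its integral refinement needed for the $2$-torsion statement; the signature identity is essentially routine once the capping-off picture is set up, and the Euler characteristic manipulation is purely formal.
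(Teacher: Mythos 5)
Your overall route is the same as the paper's. For the signature identity, both cap off with pushed-in Seifert surfaces, invoke Novikov additivity and the $G$-signature (Rokhlin/Guillou--Marin) formula for the closed double cover, and use the Kauffman--Taylor fact that the pushed-in Seifert surface contributes $\sigma(K)$; the Euler-characteristic derivation of \eqref{eq: b2} and the reduction $b^+=(b_2+\sigma)/2$ are identical. The one genuine divergence is that for the Betti-number vanishings and the $2$-torsion statement you propose a hands-on Mayer--Vietoris argument (splitting into $\iota^\ast$-eigenspaces and passing to twisted cohomology), whereas the paper delegates everything to the mod-$2$ transfer exact sequence $H_2(X;\Z_2)\to H_1(X,S;\Z_2)\to H_1(\Sigma(S);\Z_2)\to 0$ together with $H_1(X,S;\Z_2)=0$ from connectivity of $S$. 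Both routes are reasonable; but for the $2$-torsion claim, working mod $2$ throughout as the paper does is cleaner than your plan of running Mayer--Vietoris integrally --- once $H_1(\Sigma(S);\Z_2)=0$ is established, the universal coefficient theorem already forces $H_1(\Sigma(S);\Z)$ to have no $2$-torsion and no free part, whence $H^2(\Sigma(S);\Z)$ has no $2$-torsion, and the odd-determinant input you invoke is not needed for that step.

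There is, however, a gap in your argument for $b_3(\Sigma(S))=0$. Poincar\'e--Lefschetz duality gives $b_3(\Sigma(S))=b_1(\Sigma(S),\partial\Sigma(S);\Q)$, but the long exact sequence of the pair, with $H_1(\Sigma(S);\Q)=0$ and $H_1(\partial\Sigma(S);\Q)=0$, yields
\[
0 \to H_1(\Sigma(S),\partial\Sigma(S);\Q) \to H_0(\partial\Sigma(S);\Q) \to H_0(\Sigma(S);\Q) \to 0,
\]
so $b_1(\Sigma(S),\partial\Sigma(S);\Q)$ equals the number of components of $\partial\Sigma(S)$ minus one, which is $1$ rather than $0$: indeed $\partial\Sigma(S)=\Sigma(L)\sqcup\Sigma(L')$ consists of two connected pieces. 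As a sanity check, the product concordance $S=K\times[0,1]$ in $X=S^3\times[0,1]$ gives $\Sigma(S)=\Sigma(K)\times[0,1]$, which has $b_3=1$, and in that case $b_2(\Sigma(S))=0$ while $2b_2(X)+b_1(S)=1$. Your second step (``forces the vanishing'') therefore does not close, and the Euler-characteristic bookkeeping for \eqref{eq: b2} genuinely involves the terms $b_3(\Sigma(S))$ and $b_3(X)$, which you have not accounted for --- you either need to recast the Betti numbers relative to the boundary, or correct the formula by those terms before using it in \eqref{eq: bplus lem}.
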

\begin{proof}
The equality \eqref{eq: b1 lem} follows from the exact sequence given in \cite{RS95}.
The Mayer--Vietoris argument given in \cite[Lemma 4.2]{KMT} enables us to confirm \eqref{eq: sign lem} by the following discussion. It is confirmed in \cite[Lemma 1.1]{KT76} that for a Seifert surface $S_L$ in $S^3$ bounded by $L$ giving $\sigma(L)$, the double branched covering $N_L$ for its push into $D^4$ satisfies 
\begin{align}\label{signature equality}
\sigma (L) = \sigma (N_L ).
\end{align}
From $G$-signature theorem for closed 4-manifolds and \eqref{signature equality}, we obtain \eqref{eq: sign lem}. Next, we see \eqref{eq: b1 lem}. Since $\Sigma(S) \to X$ is a degree $2$-branched cover, we have $\chi(\Sigma(S)) = 2 \chi(X) -\chi(S)$. From \eqref{eq: b1 lem}, we have $b_2(\Sigma(S)) =  2b_2(X) +  b_1(S)$. We note \eqref{eq: bplus lem} follows from \eqref{eq: b2} and \eqref{eq: sign lem}. 
We finally show there is no 2-torsion in $H^*(\Sigma(S); \Z)$ when $L$ and $L'$ are knots.
From \cite{RS95}, the sequence 
\[
H_2(X; \Z_2) \to H_1(X, S; \Z_2)\to H_1(\Sigma(S); \Z_2) \to 0. 
\]
is exact. The pair exact sequence for $(X, S)$ combined with connectivity of $S$ implies $H_1(X, S; \Z_2)=0$. Thus we have $H_1(\Sigma(S); \Z_2)=0$, and there is no 2-torsion in $H_1(\Sigma(S);\Z)=0$. This implies there is no 2-torsion on $H^2(\Sigma(S))$. This  completes the proof. 
\end{proof}

We now prove \cref{theo: main knot}(iii). 
\begin{proof}[Proof of \cref{theo: main knot}(iii)]
This follows from \cref{most general main theo} combined with the computation of homological invariants in \cref{branched homology}.  
\end{proof}

We also have similar inequalities for $\bar{\delta}_R$ and $\underline{\delta}_R$.
Note that \cref{Theorem B for links} follows from \eqref{Theorem B for link ineq}. 

\begin{theo}\label{theo: main ineq d bar}
Let $L$ and $L'$ be links in $S^3$ with non-zero determinants, $X$ be an oriented smooth compact connected cobordism from $S^3$ to $S^3$, and $S$ be a compact connected properly and smoothly (possibly non-orientable) embedded cobordism in $X$ from $L$ to $L'$ such that the homology class $[S] \equiv 0 \operatorname{mod} 2$. Suppose $H_1(X; \Z)=0$, ${PD}[S] \equiv w_2(X) \in H^2(X; \Z_2)$, \[
    b^+(X) + g(S) -\frac{1}{4} S \circ S  -  \frac{1}{2} \sigma (L) + \frac{1}{2} \sigma (L')=0
    \]
    and there is a spin structure $\fraks$ on $\Sigma (S)$ such that $\iota^* \fraks \cong \fraks$ and whose restrictions are compatible with orientations of $L$ and $L'$.
    
    Then, we have
    \begin{align} \label{ineq2}
     \bar{\delta}_R(L)+
\frac{1}{16}\left( - 2 \sigma (X) + \frac{1}{2} S \circ S + \sigma (L)- \sigma (L')\right) \leq \bar{\delta}_R(L') \\
\underline{\delta}_R(L)+
\frac{1}{16}\left( - 2 \sigma (X) + \frac{1}{2} S \circ S + \sigma (L)- \sigma (L')\right) \leq \underline{\delta}_R(L')
    \end{align}
    where $S \circ S$ means the self intersection number of $S$ in $X$.

    Moreover, when 
    \[
    b^+(X) + g(S) -\frac{1}{4} S \circ S  -  \frac{1}{2} \sigma (L) + \frac{1}{2} \sigma (L')=1, 
    \]
    we have 
    \begin{align}\label{Theorem B for link ineq}
        \underline{\delta}_R(L)+
\frac{1}{16}\left( - 2 \sigma (X) + \frac{1}{2} S \circ S + \sigma (L)- \sigma (L')\right) \leq \bar{\delta}_R(L')
    \end{align}
\end{theo}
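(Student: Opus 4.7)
The plan is to reduce the theorem to \cref{most general main theo2} by passing to the double branched cover of $X$ along $S$. First, I would form $\pi \colon \Sigma(S) \to X$, which exists and is unique because $[S] \equiv 0 \pmod{2}$ in $H_{2}(X, \partial X ;\Z_{2})$. Its covering involution $\iota$ restricts on $\partial \Sigma(S) = \Sigma(L) \sqcup \Sigma(L')$ to the standard involutions there, its fixed-point set is exactly $S$ which is of codimension $2$, and so by \cref{lem :codimension 2} the involution $\iota$ on $\Sigma(S)$ is of odd type. The characteristic condition $PD[S] \equiv w_{2}(X) \pmod{2}$ ensures that $\Sigma(S)$ is spin, and the theorem's hypothesis provides an $\iota$-equivariant spin structure $\fraks$ on $\Sigma(S)$ whose boundary restrictions coincide with $\mathfrak{t}_{L}, \mathfrak{t}_{L'}$ (note that for spin structures, $\iota^{*}\fraks \cong \fraks$ and $\iota^{*}\fraks \cong \overline{\fraks}$ are equivalent). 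Thus $\Sigma(S)$ is an equivariant spin cobordism of the kind required by \cref{most general main theo2}.

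Next, I would verify the dimension and homological hypotheses. By the standard transfer/Smith decomposition for a double branched cover, $H^{2}(\Sigma(S);\R)^{\iota} = \pi^{*}H^{2}(X;\R)$, and on this invariant subspace the intersection form is twice that of $X$; hence $b^{+}_{\iota}(\Sigma(S)) = b^{+}(X)$. Combining this with \eqref{eq: bplus lem} of \cref{branched homology} gives
\[
b^{+}(\Sigma(S)) - b^{+}_{\iota}(\Sigma(S)) = b^{+}(X) + g(S) - \tfrac{1}{4} S \circ S - \tfrac{1}{2}\sigma(L) + \tfrac{1}{2}\sigma(L'),
\]
which equals $0$ for the first two inequalities in the theorem and $1$ for \eqref{Theorem B for link ineq}. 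The condition $b_{1}(\Sigma(S))=0$ needed for \cref{most general main theo2} is precisely \eqref{eq: b1 lem}.

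Finally, I would apply \cref{most general main theo2} to the spin cobordism $(\Sigma(S), \fraks, \iota)$. By the signature formula \eqref{eq: sign lem},
\[
-\frac{\sigma(\Sigma(S))}{16} = \frac{1}{16}\left( -2\sigma(X) + \tfrac{1}{2} S \circ S + \sigma(L) - \sigma(L') \right),
\]
which is exactly the topological quantity appearing in \eqref{ineq2} and \eqref{Theorem B for link ineq}. The three inequalities of \cref{most general main theo2} (two from the $b^{+} = b^{+}_{\iota}$ case and one from the $b^{+} - b^{+}_{\iota} = 1$ case) then translate directly to the three claimed inequalities via \cref{def for link}.

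The only mildly subtle point is the compatibility of $\fraks$ with the boundary spin structures $\mathfrak{t}_{L}, \mathfrak{t}_{L'}$ determined by the orientations of $L, L'$, but this is built into the hypothesis; the remainder is straightforward bookkeeping on top of \cref{branched homology} and \cref{most general main theo2}.
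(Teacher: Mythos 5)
Your proof takes essentially the same route as the paper's. The paper's own proof of this theorem is the one-line remark that it follows from \cref{most general main theo2} (and \cref{most general main theo}) combined with \cref{branched homology}; you have filled in exactly that argument — passing to the double branched cover, verifying odd type via codimension-2 fixed set and \cref{lem :codimension 2}, identifying $b^+_\iota(\Sigma(S)) = b^+(X)$ by the transfer argument, and carrying out the translation of the Betti-number and signature hypotheses via \cref{branched homology}. One minor caveat: your aside that "$PD[S] \equiv w_2(X)$ ensures that $\Sigma(S)$ is spin" is not needed and is not quite how the argument goes (the existence of a spin structure on $\Sigma(S)$ is already an explicit hypothesis of the theorem, so nothing needs to be derived here); removing that sentence would not change the proof.
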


\begin{proof}[Proof of \cref{K general ineq for link}]
These inequalities follow from \cref{most general main theo} and \cref{most general main theo2} combined with \cref{branched homology}.  
\end{proof}

As we discussed in the introduction, our invariant will be $\chi$-concordance invariant. 
Thus, we introduce the notion of {\it $\chi$-concordant}. 
First, we introduce a notion of $\chi$-slice: 
\begin{defi}
A link $L$ in $S^3$
is {\it $\chi$-slice}
if $L$ bounds a smoothly properly embedded
surface $F$ in $D^4$ without closed components, and with $\chi (F)=1$. If $L$ is oriented we
require $F$ to be compatibly oriented for orientable components of $F$.

\end{defi}
Note that $\chi$-sliceness gives a generalization of sliceness of knots. In order to take connected sum, we also need a base point. A {\it marked link} is a link in $S^3$ equipped with a marked component. 
\begin{defi}For given oriented marked links $L_0$ and $L_1$, we say $L_0$ and $L_1$ are {\it $\chi$-concordant} if $-L^*_0\#L_1$ bounds a smoothly properly
embedded surface $F$ in $D^4$
such that
\begin{itemize}
    \item[(i)]  $F$ is a disjoint union of one disk together with annuli and Mobius bands;
    \item[(ii)] the boundary of the disk component of $F$ is the marked component of $-L^*_0 \# L_1$;
    \item[(iii)] we require orientable components of $F$ to be oriented compatible with the orientation of $-L^*_0 \#L_1$ . 
\end{itemize}
Here the conncted sum $-L^*_0\#L_1$ is taken along marked points. 
\end{defi}
If $L_0$ and $L_1$ are $\chi $-concordant, then $-L^*_0 \# L_1$ is $\chi$-slice. On the other hand, the converse is not true. 
 In \cite{DO12}, it is proven that the set $\tilde{\mathcal{L}}$ of all $\chi$-concordant classes of  oriented mariked links forms an abelian group with respect to the connected sum along marked components. The group $\tilde{\mathcal{L}}$ is called the {\it link concordance group}. 
In this paper, we focus on the subgroup $\tilde{\mathcal{F}}$ of $\tilde{\mathcal{L}}$ generated by oriented mariked links whose determinants are {\it non-zero}. 

We first provide a homotopy theoretic homomorphism: 

\begin{prop}
The map $[L] \mapsto [SWF(L)]_{\mathrm{loc}}$ gives a homomorphism from the link concordance group $\wt{\mathcal{F}}$ to $\mathcal{LE}_G$, where $G=\Z_4$.  
\end{prop}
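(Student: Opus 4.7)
The plan is to establish two statements: (a) the map $[L] \mapsto [SWF(L)]_{\mathrm{loc}}$ descends from links to $\chi$-concordance classes, and (b) it converts connected-sum along marked components to smash product in $\mathcal{LE}_G$.

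Claim (b) is the direct one. A connected sum $L_0 \# L_1$ taken along the marked components lifts under the branched cover map to an equivariant connected sum $(\Sigma(L_0), \iota_0) \# (\Sigma(L_1), \iota_1)$ performed at the lifts of the marked points; these lifts are fixed points of the covering involutions lying on the codimension-two fixed sets, so the equivariant connected sum of \cref{subsection: cobordisms} is well-defined. Under this identification the induced spin structure on the cover of $L_0 \# L_1$ agrees with $\mathfrak{t}_{L_0} \# \mathfrak{t}_{L_1}$, and the homomorphism property reduces directly to \cref{conn sum of local eq}.

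For claim (a), given a $\chi$-concordance between $L_0$ and $L_1$, reinterpret the $\chi$-slice surface for $-L_0^{\ast} \# L_1$ as a properly embedded surface $S$ inside $X := S^3 \times I$ with $\partial S = L_0 \sqcup L_1$, consisting of one annulus joining the marked components together with annuli and M\"{o}bius bands. Since $H_2(X, \partial X; \Z/2) = 0$ we have $[S] \equiv 0 \pmod 2$, so the double branched cover $W := \Sigma(S) \to X$ is well-defined; it carries a covering involution $\iota$ of odd type and a real spin structure $\mathfrak{s}$ restricting to $\mathfrak{t}_{L_0}$ and $\mathfrak{t}_{L_1}$ on the two ends. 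Using \cref{branched homology} with $b^+(X) = \sigma(X) = b_1(X) = 0$ and the constraints on the topology of $S$ built into the $\chi$-concordance structure, one verifies
\[
b^+(W) - b^+_{\iota}(W) = 0, \qquad \langle c_1(\mathfrak{s})^2, [W]\rangle - \sigma(W) = 0.
\]
The relative Bauer--Furuta construction of \cref{subsection: cobordisms}, applied to $(W, \mathfrak{s}, \iota)$ and to its orientation reversal, then produces $\Z_4$-equivariant stable maps $SWF(L_0) \to SWF(L_1)$ and $SWF(L_1) \to SWF(L_0)$ whose $\tilde{\R}$-shift and $\mathbb{K}_G$-shift both vanish by \cref{lem: differences of dim}.

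The main technical obstacle is to show that these two maps are $\Z_4$-local in the sense of \cref{defi: stable map and local map}, i.e.\ that each induces a $\Z_4$-homotopy equivalence on the $\Z_2$-fixed point sets. Since $\Z_2 \subset \Z_4$ acts trivially on $\tilde{\R}$ but by $-1$ on $\C$, the $\Z_2$-fixed part of the finite-dimensional approximation involves only the form component of configuration space and is governed by the real Seiberg--Witten problem on the quotient $(X, S)$. Because $X = S^3 \times I$ has vanishing $b^+$ and $b_1$, this restricted map is, up to suspension, an invertible linear map of representation spheres and hence a $\Z_4$-homotopy equivalence. Once this is in place, the pair of $\Z_4$-local maps yields $[SWF(L_0)]_{\mathrm{loc}} = [SWF(L_1)]_{\mathrm{loc}}$, completing the proof.
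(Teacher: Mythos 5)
Your part (b) coincides with the paper's entire argument: the paper simply observes that $\Sigma(L\#L')\cong\Sigma(L)\#\Sigma(L')$ as real spin manifolds with covering involution and invokes \cref{conn sum of local eq} to turn this into a smash-product identity in $\mathcal{LE}_G$. Your part (a) is a genuine addition. The paper's proof of this proposition says nothing about descent to $\chi$-concordance classes; the only $\chi$-concordance invariance the paper establishes in this section is for the numerical quantity $\delta_R$ (\cref{chi-slice zero}), not for the local equivalence class $[SWF(L)]_{\mathrm{loc}}$ itself. Your direct construction of $\Z_4$-local maps from the relative Bauer--Furuta invariants of the double branched cover of a $\chi$-concordance surface in $S^3\times I$ supplies exactly the missing step, and the observation that the $\Z_2\subset\Z_4$-fixed part sees only the $\tilde{\R}$-summand (so that locality is equivalent to $b^+(W)-b^+_\iota(W)=0$) is the same mechanism the paper uses in the proofs of \cref{most general main theo} and \cref{most general main theo2}.

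One small caveat in (a): you cite \cref{branched homology} to compute the relevant homological data of $W=\Sigma(S)$, but that lemma requires $S$ to be a \emph{connected} link cobordism, whereas a $\chi$-concordance surface in $S^3\times I$ is typically a disjoint union of several annuli and M\"obius bands. The correct input is the fact that the double branched cover of such a surface is a rational homology $S^3\times I$ (this is essentially \cite[Proposition~2.6]{DO12}, as the paper itself cites in the proof of \cref{chi-slice zero}), from which $b^+(W)=0$, $\sigma(W)=0$, and $c_1(\fraks)^2=0$ follow trivially. With that substitution your argument closes cleanly. It is also worth noting that part (a) is not strictly independent of part (b): granting that a $\chi$-slice link has $[SWF(L)]_{\mathrm{loc}}=[S^0]_{\mathrm{loc}}$ (which follows by your same Bauer--Furuta argument applied to a rational homology ball), well-definedness could alternatively be deduced algebraically from the connected-sum formula. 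Either route is valid; yours is the more direct and self-contained of the two.
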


\begin{proof}
As $\Z_2$-equivariant spin manifolds, we have 
\[
(\Sigma (L \# L'), \iota_{L\#L'}, \frakt_{L\# L'} )  \cong (\Sigma (L ), \iota_{L}, \frakt_{L} ) \# (\Sigma ( L'), \iota_{L'},\frakt_{L'}  )., 
\]
where $\iota_{L}$ denotes the covering involution of double branched covering space. Also, taking local equivalence class $[-]_{\mathrm{loc}}$ is a homomorphism from \cref{conn sum of local eq}. This completes the proof. 
\end{proof}
We next prove a relation between our invariants and $\chi$-sliceness. 
\begin{lem}\label{chi-slice zero}
If $L$ is $\chi$-slice, then $\underline{\delta }_R(L)$, $\bar{\delta}_R (L)$ and $\kappa(L)$ are zero. 
Moreover, the quantity $\delta_R (L)$ is a $\chi$-concordace invariant. 
\end{lem}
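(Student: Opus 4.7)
The plan is to combine the Fr\o yshov-type inequalities \cref{most general main theo,most general main theo2} with a duality argument using the mirror link, building the relevant real spin$^c$ cobordism from the $\chi$-slice surface itself. I would first assume $L$ is $\chi$-slice via a surface $F \subset D^4$ (so $F$ is a disjoint union of one disk $D_0$ with annuli and M\"obius bands, and $\partial F = L$) and consider the double branched cover $W := \Sigma(F) \to D^4$, a compact oriented smooth 4-manifold with $\partial W = \Sigma(L)$ on which the covering involution $\iota_F$ acts as an odd-type involution fixing $F$. Puncturing $W$ at an interior fixed point of $D_0$ produces a cobordism $\tilde W \colon (\Sigma(U),\iota_U,\frakt_U) \to (\Sigma(L),\iota_L,\frakt_L)$ (with the standard involution on the new $S^3 = \Sigma(U)$ boundary arising from the local complex-conjugation model near the puncture), carrying the spin structure lifted from $D^4$ and satisfying $b_1(\tilde W) = 0$.

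Next I would verify the Fr\o yshov hypothesis $b^+(\tilde W) = b^+_{\iota_F}(\tilde W)$. By the branched-cover formulae in \cref{branched homology} applied to $F$ (viewed as a cobordism from $\emptyset$ to $L$), this reduces to the identity $\frac{1}{2}b_1(F) - \frac{1}{4}F \circ F + \frac{1}{2}\sigma(L) = 0$, to be checked component-wise for the disk, annulus, and M\"obius pieces of $F$. Granting this, \cref{most general main theo2} applied with the spin structure (so $c_1 = 0$) yields
\begin{equation*}
\underline{\delta}_R(U) - \sigma(\tilde W)/16 \le \underline{\delta}_R(L), \quad \bar{\delta}_R(U) - \sigma(\tilde W)/16 \le \bar{\delta}_R(L),
\end{equation*}
and the analogous $K$-theoretic Fr\o yshov inequality of \cite{KMT} applied to $\tilde W$ bounds $\kappa(L)$ from above by $\kappa(U) = 0$; combined with $\kappa \ge 0$ this forces $\kappa(L) = 0$.

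To pinch $\underline{\delta}_R(L)$ and $\bar{\delta}_R(L)$ to zero, I would repeat the construction with the mirror link $-L^*$, which is $\chi$-slice via the mirror surface $-F^*$; the corresponding cobordism $\tilde W'$ has $\sigma(\tilde W') = -\sigma(\tilde W)$, and the duality $\underline{\delta}_R(-L^*) = -\bar{\delta}_R(L)$ (from \cref{prop: from duality}, using $\Sigma(-L^*) \cong -\Sigma(L)$ as real spin 3-manifolds with induced involution and spin structure) gives the reverse bounds $\underline{\delta}_R(L) \le \sigma(\tilde W)/16$ and $\bar{\delta}_R(L) \le \sigma(\tilde W)/16$. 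The signature vanishing $\sigma(\tilde W) = 0$ forced by the same branched-cover formulae under the $b^+ = b^+_{\iota_F}$ condition then pinches $\underline{\delta}_R(L) = \bar{\delta}_R(L) = 0$.

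For the $\chi$-concordance invariance of $\delta_R$, suppose $L_0, L_1$ are $\chi$-concordant so that $K := -L_0^* \# L_1$ is $\chi$-slice. Applying \cref{theo: main knot}(iii) to the analogous cobordism built from the $\chi$-slice surface of $K$ gives $\delta_R(K) = 0$. The identification $\Sigma(L \# L') \cong \Sigma(L) \# \Sigma(L')$ as equivariant spin manifolds, together with \cref{conn sum of delta}, yields additivity $\delta_R(-L_0^*) + \delta_R(L_1) = 0$; combined with the duality $\delta_R(-L_0^*) = -\delta_R(L_0)$ (from \cref{prop: from duality}), this forces $\delta_R(L_0) = \delta_R(L_1)$. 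The hard part will be the component-wise verification of $b^+(\tilde W) = b^+_{\iota_F}(\tilde W)$ together with $\sigma(\tilde W) = 0$ for an arbitrary $\chi$-slice surface: in the knot case $\tilde W$ is automatically a rational homology cobordism and these identities are immediate, but the genuine link case requires delicate bookkeeping of Euler characteristics, normal Euler numbers, and signatures across the branched cover of each annulus and M\"obius piece of $F$.
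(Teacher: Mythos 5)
There is a genuine gap.  Your overall architecture is right — build a real spin$^c$ filling of $\Sigma(L)$ from the $\chi$-slice surface, run the Fr\o yshov inequality, mirror to get the reverse bound, and then use duality plus the connected-sum formula for the $\chi$-concordance invariance — but the crucial step in the middle is not carried out correctly, and in fact the route you propose for it will not work.

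The paper's proof verifies the hypothesis $b^+(\tilde W)=b^+_\iota(\tilde W)$ (and more: $c_1^2=\sigma=b_2=0$) in one stroke by citing Donald--Owens \cite[Proposition 2.6]{DO12}, which says that the double branched cover of $D^4$ over a $\chi$-slice surface is a \emph{rational homology} $D^4$.  Once $b_2=0$, every Fr\o yshov hypothesis holds trivially and the bound $0 \le \underline{\delta}_R(L), \bar{\delta}_R(L), \kappa(L), \delta_R(L)$ is immediate.  You instead propose to establish $b^+(\tilde W)=b^+_{\iota_F}(\tilde W)$ by reducing to an identity $\tfrac{1}{2}b_1(F) - \tfrac{1}{4}F\circ F + \tfrac{1}{2}\sigma(L) = 0$ checked ``component-wise.''  This fails on two counts.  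First, \cref{branched homology} as stated applies to a \emph{connected} cobordism surface, whereas a $\chi$-slice surface $F$ (for a genuine multi-component link) is typically disconnected, and $b^+$, $\sigma$, and $F\circ F$ of the branched cover are not sums over the components of $F$ because those components can link nontrivially in $D^4$.  Second, a $\chi$-slice surface need not be a disjoint union of a disk, annuli, and M\"obius bands — that is the $\chi$-\emph{concordance} surface — only $\chi(F)=1$ and no closed components are required; so there is no finite list of model pieces to check.  You yourself flag this as the ``hard part'' and leave it as bookkeeping; in fact it is not bookkeeping, it is the content of the Donald--Owens rational-homology-ball theorem, which you need to invoke.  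Once you replace your component-wise verification by a citation to \cite[Prop.~2.6]{DO12}, the rest of your argument (mirror surface plus \cref{prop: from duality} to pinch $\underline{\delta}_R=\bar{\delta}_R=0$, and the homomorphism property via \cref{conn sum of delta} for $\chi$-concordance invariance of $\delta_R$) matches the paper's proof.
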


\begin{proof}
Suppose $L$ is $\chi$-slice, i.e. $-L^*_0 \# L_1$ bounds a smoothly properly embedded surface $S$ in $D^4$ without closed components, and with $\chi (S)=1$. Then \cite[Proposition 2.6]{DO12} implies the double branched cover gives a rational spin homology $D^4$ bounded by $\Sigma(L)$ with a spin structure corresponding to the orientation of $L$.   
Then \cref{theo: main ineq d bar} and \cite[Theorem 1.1 (iv)]{KMT} implies 
\[
0 \leq \delta_R (L), 0 \leq \underline{\delta}_R(L), 0 \leq \bar{\delta}_R(L), 0\leq \kappa(L).  
\]
Also, if $L$ is $\chi$-slice then, $-L^*$ is also $\chi$-slice. It implies that $\underline{\delta }_R(L)$, $\bar{\delta}_R (L)$ and $\kappa(L)$ are zero. 

Suppose $L$ and $L'$ are $\chi$-concordant. Then $-L^*_0 \# L_1$ is $\chi$-slice. This implies 
$\delta_R(-L^*_0 \# L_1)=0$. On the other hand, since $\delta$ is homomorphism, we see $\delta_R (L_0)=\delta_R(  L_1)$. 
\end{proof}
As a corollary of \cref{conn sum of delta}, we have;  
\begin{cor}
The invariant $\delta_R (L)$ gives a homomorphism from $\tilde{\mathcal{F}}$ to $\frac{1}{16}\Z$.
\end{cor}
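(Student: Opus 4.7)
The corollary is essentially a packaging of two facts that have already been established: the $\chi$-concordance invariance from \cref{chi-slice zero} and the 3-manifold connected sum formula from \cref{conn sum of delta}. The plan is therefore to verify well-definedness on $\tilde{\mathcal{F}}$ and then translate the connected sum operation on marked links into the equivariant connected sum of their double branched covers.

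First I would observe that \cref{chi-slice zero} immediately gives a well-defined map $\delta_R : \tilde{\mathcal{F}} \to \tfrac{1}{16}\Z$: two oriented marked links $L_0,L_1$ representing the same class satisfy $\delta_R(L_0) = \delta_R(L_1)$, and the determinant is also a $\chi$-concordance invariant on $\tilde{\mathcal{F}}$ so the non-vanishing condition on the determinant used in \cref{def for link} is preserved.

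Next I would verify the homomorphism property. Let $L_0, L_1$ be oriented marked links with non-zero determinant, and let $L_0 \# L_1$ denote their connected sum along the marked components. Choosing small $\iota$-invariant balls around the preimages in $\Sigma(L_i)$ of the marked points (which lie on $L_i$, hence in the branch locus, so their preimages are single points fixed by $\iota_{L_i}$), the double branched cover of $(S^3, L_0 \# L_1)$ is canonically identified, as a real spin manifold, with the equivariant connected sum
\[
(\Sigma(L_0), \iota_{L_0}, \frakt_{L_0}) \# (\Sigma(L_1), \iota_{L_1}, \frakt_{L_1}),
\]
where the spin structures induced by the orientations of $L_0$ and $L_1$ glue to the spin structure induced by the orientation of $L_0\# L_1$ via the correspondence \eqref{ori and spin structure}. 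Applying \cref{conn sum of delta} to this identification yields
\[
\delta_R(L_0 \# L_1) = \delta_R(L_0) + \delta_R(L_1),
\]
which is exactly the homomorphism property.

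The only genuinely non-routine step is the identification of the double branched cover of the marked connected sum with the equivariant connected sum of the individual double branched covers, together with the compatibility of the induced spin structures and orientations of the codimension-$2$ fixed point sets (needed to match the setup of \cref{conn sum of delta}). I expect this to be a short check using an $\iota$-invariant ball around a branch point and the definition of equivariant connected sum recalled before \cref{conn sum of local eq}; no new input is required.
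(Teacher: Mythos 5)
Your proposal is correct and follows essentially the same route as the paper: well-definedness via the $\chi$-concordance invariance established in \cref{chi-slice zero}, and the homomorphism property via the identification of $\Sigma(L_0\#L_1)$ with the equivariant connected sum of $\Sigma(L_0)$ and $\Sigma(L_1)$ combined with \cref{conn sum of delta}. The paper states this corollary without a written proof beyond the attribution to \cref{conn sum of delta} (the identification step already appears in the proof of the preceding proposition on $[SWF(L)]_{\mathrm{loc}}$), so you have accurately reconstructed the implicit argument.
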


\subsection{$\kappa $ invariants for links} 
We gave a generalization of the kappa invariant for knots defined in \cite{KMT} to that of links with non-zero determinant in \cref{def for link}: 
\[
L \mapsto \kappa (L) \in \frac{1}{16 }\Z .
\]
We also note general properties of the invariant for links. 
In particular, we provide 10/8-inequality for surfaces bounded by links as (iv) in the the following theorem. 
\begin{theo}\label{K general ineq for link}
The invariant $\kappa$ satisfies the following properties: 
\begin{itemize}
    \item[(i)] $2\kappa (L) \equiv -\frac{\sigma(L)}{8} \mod 2$,
    \item[(ii)] 
$
\kappa (L) + \kappa (-L^* ) \geq 0$, 
\item[(iii)] Let $L$ and $L'$ be oriented links in $S^3$, $X$ be an oriented smooth compact connected cobordism from $S^3$ to $S^3$ with $H_1(X; \Z)=0$, and $S$ be an oriented compact connected properly and smoothly embedded cobordism in $X$ from $L$ to $L'$ such that the homology class $[S]$ of $S$ is divisible by $2$ and $PD(w_2(X)) = [S]/2 \operatorname{mod} 2$. Then, we have
    \begin{align} \label{ineq1}
  -\frac{\sigma(X)}{8} + \frac{9}{32}S \circ S-\frac{9}{16}\sigma(L') + \frac{9}{16}\sigma(L) \leq b^+(X) + g(S) + \kappa(L')-\kappa(L). 
    \end{align}
\end{itemize}

\end{theo}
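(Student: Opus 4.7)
All three statements are proved by passing to the double branched cover $\Sigma(L)$ (and $\Sigma(S)$ for the cobordism) and invoking the corresponding $\kappa$-statements for spin rational homology $3$-spheres with odd involution and for spin cobordisms with odd involution established in \cite[Theorem~1.1]{KMT}. The key point throughout is the dictionary provided by \cref{branched homology}.

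For statement (i), since $\det(L)\neq 0$, the branched cover $\Sigma(L)$ is a spin rational homology $3$-sphere, and the covering involution $\iota$ is of odd type because its fixed set is the lift of $L$, which is codimension-$2$. By \cite[Theorem~1.1(i)]{KMT}, one has $2\kappa(\Sigma(L),\iota,\frakt_L)\equiv -n(\Sigma(L),\frakt_L,g)\mod 2$ for any $\iota$-invariant metric $g$. Choose a pushed-in Seifert surface and consider its double branched cover $N_L$ inside $D^4$; this is a spin filling of $\Sigma(L)$ whose signature equals $\sigma(L)$ by \eqref{signature equality}. The definition of $n$ in \eqref{eq: expression of n for Rohlin}, combined with Atiyah--Patodi--Singer and Rokhlin-type considerations on $N_L$, yields $n(\Sigma(L),\frakt_L,g)\equiv \sigma(L)/8\mod 2$, and (i) follows.

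For (ii), mirroring and orientation reversal on $L$ correspond to orientation reversal on the branched cover while preserving both the covering involution and the spin structure, i.e.\ there is a canonical identification $(\Sigma(-L^*),\iota,\frakt_{-L^*})\cong(-\Sigma(L),\iota,\frakt_L)$. The duality/subadditivity statement of $\kappa$ for spin rational homology $3$-spheres with odd involution from \cite[Theorem~1.1]{KMT}, namely $\kappa(Y,\frakt,\iota)+\kappa(-Y,\frakt,\iota)\geq 0$, then gives (ii) immediately.

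For (iii), consider the double branched cover $\pi:\Sigma(S)\to X$ along $S$: this exists because $[S]\equiv 0\mod 2$, and it carries a spin structure whose restriction to the boundary is $\frakt_L\sqcup\frakt_{L'}$ because of the characteristic hypothesis $\mathrm{PD}[S]\equiv w_2(X)\mod 2$. The covering involution $\iota$ is of odd type since $\Sigma(S)^\iota$ is the lift of $S$. Thus $\Sigma(S)$ is a spin cobordism with odd involution to which the cobordism form of \cite[Theorem~1.1(v)]{KMT} applies, giving
\begin{align*}
\kappa(\Sigma(L),\iota,\frakt_L)-\frac{\sigma(\Sigma(S))}{16}\leq b^+(\Sigma(S))-b^+_\iota(\Sigma(S))+\kappa(\Sigma(L'),\iota,\frakt_{L'}).
\end{align*}
The invariant part of $H^+(\Sigma(S);\R)$ under $\iota^\ast$ equals the pullback $\pi^\ast H^+(X;\R)$, which is of dimension $b^+(X)$; hence $b^+_\iota(\Sigma(S))=b^+(X)$. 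Substituting the formulas of \cref{branched homology} for $\sigma(\Sigma(S))$ and $b^+(\Sigma(S))$ (with $g(S)=b_1(S)/2$) into the inequality above and doing the bookkeeping — the coefficient $1/16+1/2=9/16$ in front of $\sigma(L)$, $-1/16-1/2=-9/16$ in front of $\sigma(L')$, and $1/32+1/4=9/32$ in front of $S\circ S$ — yields exactly \eqref{ineq1}.

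The main technical obstacle is confirming the branched-cover identity $b^+_\iota(\Sigma(S))=b^+(X)$ in the relative setting with boundary, which is needed to bridge the $4$-dimensional $\kappa$-inequality with the link invariants; this is where connectedness and the use of the pair $(\Sigma(S),\partial\Sigma(S))$ via a Mayer--Vietoris decomposition (along the branch locus $S$) come in, completely parallel to the closed case used for \eqref{eq: sign lem} and \eqref{eq: bplus lem}.
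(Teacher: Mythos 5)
Your proposal is correct and follows essentially the same route as the paper: all three parts are reduced to the corresponding statements of \cite[Theorem~1.1]{KMT} for the double branched cover, with the dictionary of \cref{branched homology} (together with $b^+_\iota(\Sigma(S))=b^+(X)$) supplying the arithmetic translation. The only slip is a transcription of the characteristic hypothesis in (iii): the paper's condition is $\mathrm{PD}(w_2(X)) \equiv [S]/2 \pmod 2$, i.e.\ $[S]/2$ is characteristic (which is what makes $\Sigma(S)$ spin), not $\mathrm{PD}[S]\equiv w_2(X)$; since $[S]\equiv 0 \pmod 2$ the latter would force $w_2(X)=0$, which is not intended. The conclusion you draw from it (that $\Sigma(S)$ carries a spin structure restricting correctly to the boundary) is nevertheless the right one, so the substance of the argument is unaffected.
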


\begin{proof}
 For a proof of (i), we take a properly and smoothly embedded and connected oriented surface $S$ in $D^4$ bounded by $L$. For such a surface, the signature of $L$ is given by 
$\sigma( \Sigma(S) )$ from \cref{branched homology}. 
Thus, one can describe the Rochlin invariant of $\Sigma(L)$ by $\frac{1}{8}\sigma( \Sigma(S) )$. 
Then Theorem 1.1 (i) in \cite{KMT} implies 
\[
-2 \kappa (L) = \mu ( \Sigma(L), \mathfrak{t}_L) = \frac{1}{8}\sigma( \Sigma(S) ). 
\]
This completes the proof of (i). 
 The inequality (ii) followed from \cite[Theorem 1.1 (iii)]{KMT}. The inequality (iii) follows by combining \cref{branched homology} with \cite[Theorem 1.1(iv)]{KMT}.  
\end{proof}

\section{Applications} \label{section: application} 
In this section, we prove applications in the introduction such as non-smoothable and non-orientable surfaces in 4-manifolds \cref{theo: unorientable surfaces}, obstruction to the Nielsen realization problem \cref{theo:  Nielsen} and non-orientable surfaces in $D^4$ bounded by torus knots \cref{torus non orientable}. 

\subsection{Applications to non-smoothable actions}

We start with proving an application to non-smoothable actions,  \cref{theo: nonsmoothable action}.

\begin{proof}[Proof of \cref{theo: nonsmoothable action}]
First, we shall construct a locally linear involution $\iota : W \to W$.
Define a topological 4-manifold $W'$ by
\[
W' = mS^{2} \times S^{2} \# k(-\CP^{2}) \# 2(-E_{8}) \# 2n S^{2} \times S^{2}.
\]

By Freedman theory \cite{Fre82}, we have a homeomorphism $h :W \to W'$.
Define orientation-preserving diffeomorphisms $f_{0} : S^{2} \times S^{2} \to S^{2} \times S^{2}$ and $f_{1} : -\CP^{2} \to -\CP^{2}$ by $f_{0}(x,y)=(y,x)$ and $f_{1}([z_{0}:z_{1}:z_{2}])=[\bar{z}_{0} : \bar{z}_{1} : \bar{z}_{2}]$.
Both of $f_{0}$ and $f_{1}$ have codimention-2 fixed-point set, diffeomorphic to $S^{2}$ and $\RP^{2}$ respectively, and we may form an equivariant connected sum
\[
f_{\#} = \#_{m}f_{0}\#_{k}f_{1} : mS^{2} \times S^{2} \# k(-\CP^{2}) \to mS^{2} \times S^{2} \# k(-\CP^{2}).
\]
Choose a point $x_{0} \in mS^{2} \times S^{2} \# k(-\CP^{2})$ outside the fixed-point set of $f_{\#}$, and attach two copies of $(-E_{8})\#nS^{2} \times S^{2}$ along $x_{0}$ and $f_{\#}(x_{0})$.
Now we obtain a locally linear involution $f'$ of $W'$, and define $\iota = h^{-1} \circ f' \circ h$.

It is straightforward to check that 
\[
b^{+}_{f_{0}}(S^{2} \times S^{2}) = 1,\quad
b^{-}_{f_{0}}(S^{2} \times S^{2}) = 0,\quad
b^{+}_{f_{1}}(-\CP^{2}) = 0,\quad
b^{-}_{f_{1}}(-\CP^{2}) = 0,\quad
\]
and thus we have
\begin{align}
\label{eq: cal Betti 1}
b^{+}_{\iota}(W) = m,\quad
b^{-}_{\iota}(W) = n+8.
\end{align}

Let $c' \in H^{2}(W';\Z)$ be a characteristic element defined by
\[
c' = (0_{S}, E_{1}, \ldots, E_{k}, 0_{ES}),
\]
where $0_{S}$ and $0_{ES}$ denote the zero elements of $H^{2}(mS^{2} \times S^{2})$ and $H^{2}(2(-E_{8})\#2n S^{2} \times S^{2})$ respectively, and $E_{i}$ are copies of a generator of $H^{2}(-\CP^{2})$.
Let $c = h^{\ast}c'$ and let $\fraks$ be a spin$^{c}$ structure on $W$ with $c_{1}(\fraks)=c$.
Since $f_{1}^{\ast}E=-E$, we have that $\iota^{\ast}\fraks = \bar{\fraks}$.

Since $\iota$ has codimenion-2 fixed-point set, by \cref{lem :codimension 2} $\iota$ is of odd type.
Hence it follows from \eqref{eq: cal Betti 1} and  and \cref{theo: main} that $c_{1}(\fraks)^{2}-\sigma(W) \leq 0$.
However, since we have $c_{1}(\fraks)^{2} - \sigma(W)  = 16$ by a direct calculation, this is a contradiction.

\end{proof}

\subsection{Applications to non-smoothable and non-orientable surfaces}
\label{subsec Applications to non-smoothable and non-orientable surfaces}

The above argument on non-smoothable actions can be translated to a result on smoothable and unorientable surfaces,  \cref{theo: unorientable surfaces}:

\begin{proof}[Proof of \cref{theo: unorientable surfaces}]
Let us use the notation of the statement and the proof of \cref{theo: nonsmoothable action}.
Then it follows that $X$ is homeomorphic to $W/\iota$.
Let $S$ be the fixed-point set of $\iota : W \to W$.
Then $S$ is homeomorphic to $k\RP^{2}$ and is locally-flatly embedded in $W$, and hence also in $X$.
If $S$ is topologically isotopic to a smoothly embedded surface, it induces a smooth odd involution that acts on homology just as $\iota$ does, but it is a contradiction by 
the proof of \cref{theo: nonsmoothable action}.

We shall see what the homology class of $S$ is.
As the fixed-point set of $f_0 : S^2 \times S^2 \to S^2 \times S^2$ is the diagonal, which descends to a 2-sphere in $\CP^2$ that represent $2[\CP^1] \in H_2(\CP^2)$.
On the other hand, $-\CP^2/f_1$ is diffeomorphic to $S^4$.
Thus we have that the homology class of $S$ is zero in $H_2(X;\Z/2)$.

The normal Euler number $[k\RP^2]^2$ can be immediately deduced from \cref{branched homology}.
\end{proof}

\subsection{Applications to the Nielsen realization problem}
\label{section Applications to the Nielsen realization problem}

In this \lcnamecref{subsec Applications to non-smoothable and non-orientable surfaces}, we shall prove a result on the Nielsen realization problem for non-spin 4-manifolds, \cref{theo: Nielsen}.
First, we recall the definition of a diffeomorphism called the {\it reflection} about a $(-1)$-sphere in a 4-manifold:

\begin{defi}
Let $W$ be a smooth oriented 4-manifold $W$ and $S$ a $(-1)$-sphere $S$ in $W$.
Then one may form an orientation-preserving diffeomorphism \[
\rho_{S} : W \to W
\]
called the {\it reflection}, constructed as follows:
first, let $\rho : \CP^2 \to \CP^2$ be the complex conjugation, $[z_0:z_1:z_2] \mapsto [\bar{z}_0 : \bar{z}_1 : \bar{z}_2]$.
Take an embedded 4-disk $D^4$ in $\CP^2$.
Let $\rho'$ be a diffeomorphism that fixes $D^4$ pointwise obtained by deforming $\rho$ near $D^4$ by isotopy.
By reversing orientation, it induces a diffeomorphism $\rho' : -\CP^2 \to -\CP^2$.
In general, for a $(-1)$-sphere $S$ in $W$, a tubular neighborhood of $S$ is diffeomorphic to a tubular neighborhood of $-\CP^1$ in $-\CP^2$, which is the fixed point set of $\rho$.
Since a tubular neighborhood of $-\CP^1$ in $-\CP^2$ is diffeomorphic also to $\CP^2\setminus \mathrm{int}D^4$, we may implant $\rho'$ into $W$ by extending it by the identity, which we denote by $\rho_S : W \to W$.
\end{defi}

To prove our results on the Nielsen realization problem, we need to take care of Dehn twitsts.
First, let us recall the definition of the Dehn twist along an embedded annulus:

\begin{defi}
Let $W$ be a 4-manifold and let $A \cong S^3 \times [0,1]$ be an embedded annulus in $W$.
Then one may define an orientation-preserving diffeomorphism 
\[
\tau : W \to W
\]
called the {\it Dehn twist along $A$} as follows.
Define the {\it model Dehn twist} $\tau_0 : S^3 \times [0,1] \to S^3 \times [0,1]$ by $\tau_0(y,t)=(l(t)\cdot y,t)$, where $l : [0,1] \to SO(4)$ is the homotopically non-trivial loop in $SO(4)$.
As $\tau_0$ is supported inside $S^3 \times (0,1)$, one may extend $\tau_0$ by the identity and get a diffeomorphism $\tau : W \to W$.
\end{defi}

Note that, by $\pi_1(SO(4))=\Z/2$, the square of the Dehn twist $\tau$ along an annulus is smoothly isotopic to the identity through an isotopy supported inside the annulus.

Given a closed smooth 4-manifold $W$, let $\Diff(W,D^4)$ denote the group of diffeomorphisms of $W$ that are identity on a fixed embedded 4-disk $D^4 \subset W$.
Let 
\[
p : \pi_0(\Diff(W,D^4)) \to \pi_0(\Diff(W))
\]
be the map induced from the natural inclusion $\Diff(W,D^4) \hookrightarrow \Diff(W)$.
Note that $p$ is surjective since any diffeomorphism can be isotoped to a diffeomorphism that fixes $D^4$ pointwise.

\begin{lem}[{Giansiracusa~\cite[Corollary~2.5, Proposition~3.1]{Gi08}}]
\label{lem: Gian}
Let $W$ be a simply-connected closed smooth 4-manifold $W$.
Then we have the following:
\begin{enumerate}
\item $\ker{p}$ is isomorphic to either $\Z/2$ or $0$, and it is generated by the Dehn twist around the boudnary $\del(W \setminus D^4)$.
\item If $W$ is diffeomorphic to $W'\#\CP^2$ or $W'\#(-\CP^2)$ for some 4-manifold $W'$, then $\ker{p}=0$.
\end{enumerate} 
\end{lem}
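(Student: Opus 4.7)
The plan is to prove this via the standard fibration
\[
\Diff(W, D^4) \to \Diff(W) \to \mathrm{Fr}^+(W),
\]
where $\mathrm{Fr}^+(W)$ denotes the oriented frame bundle of $W$; the map on the right sends a diffeomorphism to the image of a fixed framed basepoint in the interior of $D^4$. (Equivalently, one may use the space of orientation-preserving embeddings $D^4 \hookrightarrow W$, which is homotopy equivalent to $\mathrm{Fr}^+(W)$.) Taking the long exact sequence in homotopy yields
\[
\pi_1(\Diff(W)) \to \pi_1(\mathrm{Fr}^+(W)) \to \pi_0(\Diff(W, D^4)) \xrightarrow{p} \pi_0(\Diff(W)) \to \pi_0(\mathrm{Fr}^+(W)).
\]

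For (1), I would first observe that $\pi_0(\mathrm{Fr}^+(W))$ vanishes since $W$ is connected and oriented, so $p$ is surjective (as already noted). To bound $\ker p$, I would use the fibration $SO(4) \to \mathrm{Fr}^+(W) \to W$ and the assumption $\pi_1(W)=0$ to identify $\pi_1(\mathrm{Fr}^+(W))$ as a quotient of $\pi_1(SO(4)) = \Z/2$. Hence $\ker p$ is a quotient of $\Z/2$. The generator of $\pi_1(SO(4))$ is represented by a loop $l : S^1 \to SO(4)$; extending the $SO(4)$-action on $D^4$ into a collar $S^3 \times [0,1]$ of $\partial D^4$ and setting the result to be the identity outside, one sees that the connecting map sends this generator precisely to the isotopy class of the Dehn twist $\tau$ along the boundary sphere $\partial(W \setminus D^4)$. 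This gives the description of $\ker p$ as being generated by $\tau$.

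For (2), the task reduces to showing that the Dehn twist $\tau$ becomes trivial in $\pi_0(\Diff(W, D^4))$ when $W = W' \# (\pm \CP^2)$. Since $\tau$ is supported in a collar of the fixed $D^4$, it suffices to produce, inside $(\pm \CP^2) \setminus \mathrm{int}\,D^4$, a smooth isotopy from $\tau$ to the identity that is trivial on the boundary $S^3$. I would use the explicit model: the Hopf $S^1$-action on $\CP^2$, restricted to a slice, traces out precisely the loop generating $\pi_1(SO(4))$ on normal frames; concretely, one finds an embedded $S^1$-family of diffeomorphisms of $\CP^2 \setminus \mathrm{int}\,D^4$ restricting to the loop $l$ on $\partial D^4$, which bounds a disk-family showing $\tau$ is isotopic to the identity. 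Equivalently, one can appeal to the classical fact that the boundary Dehn twist on $\CP^2 \setminus \mathrm{int}\,D^4$ (and on $(-\CP^2) \setminus \mathrm{int}\,D^4$) extends over $\CP^2$ as an isotopy to the identity because the frame bundle of $\CP^2$ already kills the generator of $\pi_1(SO(4))$.

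The technical heart of the argument is step (2): producing the explicit nullisotopy of the boundary Dehn twist inside the once-punctured $(\pm \CP^2)$. Once that is in hand, the long exact sequence gives the dichotomy in (1) and the vanishing in (2) formally. The identification of the connecting map with the Dehn twist is conceptually clean but requires a careful unraveling of the fibration, which I would do by choosing a collar neighborhood of the basepoint and tracking how a loop of framings lifts to a path of diffeomorphisms fixing the complement of that collar.
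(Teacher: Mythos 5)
The paper does not prove this lemma itself; it states it as a direct citation of Giansiracusa \cite[Cor.~2.5, Prop.~3.1]{Gi08}, and your argument is a correct reconstruction of the argument given there: the fibration $\Diff(W,D^4)\to\Diff(W)\to\mathrm{Fr}^+(W)$, the identification $\pi_1(\mathrm{Fr}^+(W))$ as a quotient of $\pi_1(SO(4))\cong\Z/2$ using $\pi_1(W)=0$, and the identification of the connecting homomorphism with the boundary Dehn twist. One small streamlining for part (2), which you already gesture at in your final sentence: since $W'\#(\pm\CP^2)$ is non-spin and the boundary map $\pi_2(W)\to\pi_1(SO(4))\cong\Z/2$ in the fibration $SO(4)\to\mathrm{Fr}^+(W)\to W$ is evaluation of $w_2(W)$ (using Hurewicz for the simply-connected $W$), one has $\pi_1(\mathrm{Fr}^+(W))=0$ outright, so $\ker p=\operatorname{im}\partial=0$ with no explicit null-isotopy needed; the sphere $\CP^1\subset\pm\CP^2$ is the class on which $w_2$ is odd, which is what your Hopf-circle construction realizes geometrically. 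When you do pursue the explicit isotopy route, note that it is cleanest to run the frame-bundle argument for $W$ itself rather than for the once-punctured $\pm\CP^2$, since an isotopy performed inside the punctured $\pm\CP^2$ must additionally be rel the connect-sum sphere, which your phrasing elides.
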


In summary, one has an exact sequence 
\[
0 \to \ker{p} \to \pi_0(\Diff(W,D^4)) \xrightarrow{p} \pi_0(\Diff(W)) \to 0
\]
such that $\ker{p}$ is either $\Z/2$ or $0$ generated by the Dehn twist, and $\ker{p}=0$ if $W$ contains either $\CP^2$ or $-\CP^2$ as a connected sum factor.

Using \cref{lem: Gian}, we see that the reflection about a $(-1)$-sphere gives an order 2 mapping class:

\begin{lem}
\label{lem: reflection order 2}
For an oriented 4-manifold $W$ and a $(-1)$-sphere $S$ in $W$,
the mapping class $[\rho_S]$ of the reflection is of order 2 in $\pi_0(\Diff(W))$.
\end{lem}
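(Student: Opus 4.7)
The plan is to verify separately that $[\rho_S]^2 = 1$ (the substantive claim) and that $[\rho_S] \neq 1$.

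For $[\rho_S]^2 = 1$, the approach is to analyze $\rho_S^2$ inside the tubular neighborhood $N$ of $S$, identified with $-\CP^2 \setminus \mathrm{int}(D^4)$, where it agrees with $\rho'^2$. The model complex conjugation $\rho : -\CP^2 \to -\CP^2$ is an honest involution with $\rho^2 = \mathrm{id}$, and $\rho'$ arises from $\rho$ by an ambient isotopy; squaring that isotopy produces an isotopy from $\mathrm{id}$ to $\rho'^2$ in $\Diff(-\CP^2)$. Since $\rho'$ fixes $D^4$ pointwise, so does $\rho'^2$, and therefore $[\rho'^2]$ lies in the kernel of the map $p : \pi_0(\Diff(-\CP^2, D^4)) \to \pi_0(\Diff(-\CP^2))$. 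By \cref{lem: Gian}(2) applied to the decomposition $-\CP^2 \cong S^4 \# (-\CP^2)$, this kernel vanishes, so $\rho'^2$ is isotopic to the identity inside $\Diff(-\CP^2, D^4)$. After a standard smoothing using a collar of $\partial D^4$ to arrange that this isotopy is constant on an open neighborhood of $\partial D^4$ rather than only on $D^4$, it extends by the identity over $W \setminus N$ to give an isotopy from $\rho_S^2$ to $\mathrm{id}$ in $\Diff(W)$.

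For $[\rho_S] \neq 1$, I would use the action on homology. The complex conjugation $\rho$ reverses the complex orientation of the invariant line $\CP^1 \subset -\CP^2$, so it acts as $-1$ on $H_2(-\CP^2;\Z)$; after implantation, $(\rho_S)_*[S] = -[S]$. Since $[S] \cdot [S] = -1 \neq 0$, the class $[S]$ is non-zero in $H_2(W;\Z)$, hence $(\rho_S)_* \neq \mathrm{id}$ and $\rho_S$ is not smoothly isotopic to the identity. The one delicate point in the overall argument is the smoothing step above, guaranteeing that the isotopy from \cref{lem: Gian}(2) is constant on an open neighborhood of $\partial D^4$; this is where care is required, but it is a routine application of collar technology.
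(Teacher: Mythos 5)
Your proof is correct and rests on the same key input as the paper's, namely \cref{lem: Gian}(2) applied to $\pm\CP^2$ to deduce $\ker p = 0$. The difference is organizational and slightly in your favor on one point: the paper observes that $p$ is an isomorphism (since $\ker p = 0$ and $p$ is always surjective), deduces that $[\rho']$ has order exactly $2$ in $\pi_0(\Diff(\CP^2,D^4))$, and then says the claim for $\rho_S$ ``follows immediately by construction.'' That last step silently uses two things: the collar-smoothing step you flag (so that an isotopy rel $D^4$ truly extends by the identity over $W\setminus N$), and the nontriviality of $[\rho_S]$ in $\pi_0(\Diff(W))$. The second of these does not formally follow from nontriviality of $[\rho']$ in the relative group, since the implantation map $\pi_0(\Diff(\CP^2,D^4)) \to \pi_0(\Diff(W))$ need not be injective; your separate argument via $(\rho_S)_*[S]=-[S] \neq [S]$ (a class that is nonzero because $[S]\cdot[S]=-1$) closes this gap directly, and in fact the same homology formula is invoked later in the paper (in the proof of \cref{theo: Nielsen}). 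So: same key lemma, slightly more care in the nontriviality direction.
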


\begin{proof}
First, clearly $\rho$ is an order 2 element in $\Diff(\CP^2)$, and so is the mapping class $[\rho]$ in $\pi_0(\Diff(\CP^2))$.
By \cref{lem: Gian}, the natural surjection
\[
p : \pi_0(\Diff(\CP^2, D^4))
\to \pi_0(\Diff(\CP^2)))
\]
is isomorphic.
This implies that the relative mapping class $[\rho']$ is of order 2 in $\pi_0(\Diff(\CP^2, D^4))$.
The claim of the \lcnamecref{lem: reflection order 2} now follows immediately by the construction of $\rho_S$.
\end{proof}

We shall also use the following \lcnamecref{lem: K simultaneous twist} about simultaneous Dehn twists on punctured $S^4$:

\begin{lem}[{\cite[Lemma~4.3]{K22}}]
\label{lem: K simultaneous twist}
Let $N >0$.
Define $S^{4}_{0}$ to be a $N$-punctured 4-sphere, $S^{4}_{0} = S^{4} \setminus (\sqcup_{i=1}^{N}D_{i}^{4})$.
Let $\tau_{S^{4}_{0}} : S^{4}_{0} \to S^{4}_{0}$ be a diffeomorphism defined by performing the Dehn twists near all $\del D_{i}^{4}$ simultaneously.
Then $\tau_{S^{4}_{0}}$ is smoothly isotopic to the identity in the group $\Diff(\del S^{4}_{0}, \del S^{4}_{0})$ of diffeomorphisms which fix the boundary pointwise.
\end{lem}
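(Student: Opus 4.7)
My plan is to combine the $N$ individual Dehn twists into a single composition on a common annulus and then invoke the observation, already noted in the excerpt, that the square of a Dehn twist along an annulus is rel-boundary isotopic to the identity through an isotopy supported inside the annulus.

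First, I will choose a point $\infty \in S^{4}$ lying outside every $D_{i}^{4}$, identifying $S^{4} \setminus \{\infty\}$ with $\R^{4}$. This endows $S^{4}_{0}$ with a canonical global framing with respect to which the Dehn twist near each $\partial D_{i}^{4}$ is encoded by the same loop $l : [0,1] \to SO(4)$. Picking framed embedded arcs in $\R^{4} \subset S^{4}$ connecting the boundary components $\partial D_{i}^{4}$, I will then use their tubular neighborhoods to construct explicit rel-$\partial S^{4}_{0}$ isotopies that slide each $\tau_{i}$ along its arc into a common small annulus $A$ in the interior of $S^{4}_{0}$. Because the transport takes place inside the parallelized region, each sliding isotopy preserves the Dehn twist class of $\tau_{i}$ while keeping all other boundary collars fixed.

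After this reduction, $\tau_{S^{4}_{0}}$ becomes rel-boundary isotopic to a single diffeomorphism on $A$ obtained by composing $N$ copies of the standard Dehn twist $\tau$, that is, to $\tau^{N}$. When $N$ is even, the excerpt's remark that $\tau^{2}$ is rel-boundary null-isotopic immediately yields $\tau^{N} \simeq \mathrm{id}$. The hard part will be the case when $N$ is odd, where after pairing off $N-1$ twists I will be left with a single residual Dehn twist supported in a small interior annulus. To eliminate this residual twist, my plan is to extend it across the inner $4$-ball bounded by $A$ via the radial construction $(r,\omega) \mapsto (r, l(r)\omega)$ and then deform this extension to the identity through boundary-preserving isotopies, exploiting the globally defined action of $SO(5)$ on $S^{4}$ together with the isomorphism $\pi_{1}(SO(4)) \xrightarrow{\cong} \pi_{1}(SO(5))$.

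The most delicate technical point throughout will be to ensure that every isotopy is genuinely rel $\partial S^{4}_{0}$: the sliding isotopies must avoid disturbing the other boundary components, and in the odd case the final contraction must remain boundary-preserving while unwinding a loop that is non-trivial in $SO(4)$ alone. This is where the ambient topology of $S^{4}$ enters beyond mere $\R^{4}$-parallelizability, and it is the step that I expect to require the most care in a careful write-up.
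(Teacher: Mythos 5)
This lemma is not proved in the paper under review; it is cited directly from \cite[Lemma~4.3]{K22}, so there is no paper proof to compare against word for word. Judged on its own merits, your plan has a sound outer frame but a genuine gap at its center.

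Your sliding step is fine: a Dehn twist near $\del D_i^4$ is supported in a collar of $\del D_i^4$, and conjugating by an ambient isotopy of $S^4_0$ rel $\del S^4_0$ transports it to a Dehn twist along any other separating $S^3$ in the interior, since all such spheres are isotopic and $\pi_1(SO(4))\cong\Z/2$ has a unique non-trivial class. So for even $N$ you really do get $\tau^{N}\simeq\mathrm{id}$ from the paper's remark, and that part is a complete and elementary argument.

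The gap is the odd case, and it is not a ``technical point'' that will fall with care: after your reduction you are left with a single Dehn twist along an interior $S^3$, which is exactly the $N=1$ case of the lemma, i.e.\ essentially the whole content of the statement. The radial extension $(r,\omega)\mapsto(r,l(r)\omega)$ does not by itself help: you can check that this map is rel-boundary isotopic to the collar-supported Dehn twist (reparametrize $l$), and then a rel-boundary isotopy of this radial map to the identity inside the ball would amount to a null-homotopy of $l$ rel endpoints in $SO(4)$, which does not exist since $[l]\neq 0$ in $\pi_1(SO(4))$. Invoking $\pi_1(SO(4))\xrightarrow{\cong}\pi_1(SO(5))$ likewise does not supply a null-homotopy — if anything the isomorphism says the loop stays non-trivial after stabilizing. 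What is actually needed is the explicit loop $\gamma:S^1\to SO(5)$ (rotation in a $2$-plane in $\R^5$), which acts on $S^4$ fixing an $S^2$; at any fixed point $p$ the isotropy loop in $SO(4)=\mathrm{Stab}(p)$ is precisely the generator. Removing a $\gamma$-invariant disk around $p$ and damping $\gamma(\theta)$ to the identity in a boundary collar produces, as $\theta$ runs from $0$ to $1$, an isotopy rel boundary from $\mathrm{id}$ to the Dehn twist. This construction is absent from your write-up. Moreover, once you have it, you might as well place all $N$ disks around $N$ distinct fixed points of the same loop $\gamma$ on $S^2$ and trivialize the simultaneous twist in one pass — which is almost certainly what \cite{K22} does — rendering the sliding/parity reduction unnecessary.
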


Now we are ready to prove the main result in this \lcnamecref{subsec Applications to non-smoothable and non-orientable surfaces}, \cref{theo: Nielsen}:

\begin{proof}[Proof of \cref{theo: Nielsen}]
Suppose that $\rho_{S}$ is homotopic to a smooth involution $\iota : W \to W$.
Then we have $\iota^{\ast} \fraks \cong \bar{\fraks}$.




Recall that the induced action of $\rho_{S}$ on homology $H_{2}(W)$ is given by $(\rho_{S})_{\ast}(x) = x+2(x\cdot[S])[S]$.
Set $\varphi = (\rho_{S})_{\ast} : H_{2}(W) \to H_{2}(W)$.
Then we have
\[
b^{+}_{\varphi}(W) = b^{+}(W),\quad
b^{-}_{\varphi}(W) = b^{-}(W)-1,\quad
\sigma_{\varphi}(W) = \sigma(W)+1.
\]
Here $b^{+}_{\varphi}(W)$ and $b^{-}_{\varphi}(W)$ denote the maximal dimension of the $\varphi$-invariant part of $H^+(W)$ and of $H^-(W)$ respectively, and we set $\sigma_\varphi(W) = b^{+}_{\varphi}(W) - b^{-}_{\varphi}(W)$.
By the assumption that $\sigma(W) \neq -2$, we have $\sigma_{\varphi}(W) \neq \sigma(W)/2$.
It follows from \cref{lem: odd sign half} that $\iota$ is of odd type.
Combined this with $b^{+}_{\varphi}(W) = b^{+}(W)$, 
we may apply \cref{theo: main} to $\iota$, but
this contradicts the assumption that $c_{1}(\fraks)^{2} -\sigma(W)>0$.
\end{proof}

\begin{proof}[Proof of \cref{theo: smooth vs. top Nielsen}]
As the case that $r=0$ has been treated in \cite[Theorem~1.3]{K22}, we suppose that $r>0$.
Set $W' = pK3\#qS^2\times S^2$.
In \cite[Proof of Theorem~1.3]{K22}, a locally linear topological involution $f : W' \to W'$ with the following properties was constructed:
\begin{itemize}
\item $b^+_f(W') = 3p+q$, $\sigma_f(W')=-5p+q$,
\item the fixed point set of $f$ is non-empty and of codimension-2.
\item there is a diffeomorphism $g : W' \to W'$ such that $g^2$ is smoothly isotopic to the identity and $g_\ast=f_\ast$ on $H_2(W';\Z)$.
\end{itemize}

Let $\rho : -\CP^2 \to -\CP^2$ be the complex conjugation.
The fixed point set of $\rho$ is of codimension-2: it is given by $\RP^2 \subset -\CP^2$.
Choose points in $\RP^2$, and
form an equivariant connected sum of $r$-copies of $\rho$ along those points to get an involution on $r(-\CP^2)$, denoted by $\#_r \rho$.

On the other hand, recall that the fixed point set of $f$ is also codimension-2.
Also, since $f$ is locally linear, we have the notion of tangential representation of $f$ at a fixed point of $f$.
Since $f$ is of order 2 and has codimension-2 fixed point set, the representation is $\mathrm{diag}(-1,-1,1,1)$ in some coordinate, which coincides with the tangential representation of $\#_r \rho$ at a fixed point.
Thus we may form an equivariant connected sum of $f$ with $\#_r \rho$, which we denote by $\tilde{f} : W \to W$.

Let $g' : W' \to W'$ be a diffeomorphism obtained by deforming $g$ by isotopy so that $g'$ has a pointwise fixed 4-disk $D^4_1 \subset W'$.
Similarly, by deforming $\#_r \rho$ by isotopy, we may get a diffeomorphism $g'' : r(-\CP^2) \to r(-\CP^2)$ that has a fixed 4-disk $D^4_2$.
Let $S^4_0$ be the 2-punctured 4-sphere, $S^4_0 = S^4 \setminus (\sqcup_{i=1}^2 D_i^4)$.
Regard $W$ as the connected sum of $W'$ with $r(-\CP^2)$ along $D^4_1$ and $D^4_2$, 
\[
W = (W' \setminus D^4_1) \cup_{\del D^4_1} S^4_0 \cup_{\del D^4_2} (r(-\CP^2)\setminus D^4_2).
\]
Form a diffeomorphism $\tilde{g} : W \to W$ by gluing $g'$ with $g''$ along $D^4_1$ and $D^4_2$.
Evidently, $\tilde{g}$ is supported outside $S^4_0$.
By construction, the induced action of $\tilde{g}$ on homology coincides with that of $\tilde{f}$.
This combined with a result by Quinn~\cite{Qu86} and Perron~\cite{P86} implies that $\tilde{g}$ is topologically isotopic to $\tilde{f}$.
Also, it is easy to see that
\begin{align}
\label{eq: action on homology of tilde g}
b^+_{\tilde{g}}(W) = 3p+q,\quad \sigma_{\tilde{g}}(W) = -5p+q
\end{align}
using the action of $f$ on homology.

We claim that $\tilde{g}^2$ is smoothly isotopic to the identity.
First, let $\tau_1$ and $\tau_2$ denote the Dehn twists on $W$ along $\del D^4_1$ and $\del D^4_2$ respectively.
Recall that $g^2$ is smoothly isotopic to the identity and $(\#_r \rho)^2=1$.
It follow from this together with (1) of \cref{lem: Gian} that $\tilde{g}^2$ is smoothly isotopic to one of the following: $\tau_1 \circ \tau_2$, $\tau_1$, $\tau_2$ and $\id$.
Also, by (2) of \cref{lem: Gian}, $\tau_2$ is smoothly isotopic to the identity in $\Diff(r(-\CP^2) \setminus \mathrm{Int}D^4_2, \del D^4_2)$.
So it suffices to prove that $\tau_1$ is smoothly isotopic to the identity.
However, \cref{lem: K simultaneous twist} implies that $\tau_1$ is smoothly isotopic to $\tau_2$, which is smoothly isotopic to the identity again by (2) of \cref{lem: Gian}.
This proves the claim.

Let $G$ be the subgroup of $\pi_0(\Diff(W))$ generated by the mapping class of $\tilde{g}$.
By the above claim, $G$ is of order 2.
As $\tilde{g}$ is topologically isotopic to $\tilde{f}$, the group $G'$ defined as the image of $G$ under the map $\pi_0(\Diff(W)) \to \pi_0(\Homeo(W))$ is realized in $\Homeo(W)$.
Also, the group $G'$ is non-trivial since the action of $\tilde{f}$ on homology is non-trivial.
This proves the statement on $G'$ in the \lcnamecref{theo: smooth vs. top Nielsen}. 

What remains to show is that $G$ cannot be realized in $\Diff(W)$.
This is equivalent to prove that $\tilde{g}$ is not smoothly isotopic to a smooth involution.
Suppose that $\tilde{g}$ is smoothly isotopic to a smooth involution $\iota : W \to W$.
Let $\fraks$ be the spin$^c$ structure on $W$ obtained as the connected sum of the unique spin structure on $W'$ with $r$-copies of the spin$^c$ structure on $-\CP^2$ whose first Chern class generates $H^2(-\CP^2)$.
Then it is obvious that $\iota^\ast \fraks \cong \bar{\fraks}$ and $c_1(\fraks)-\sigma(W)>0$.
It follows from this combined with \cref{lem: odd sign half} and \eqref{eq: action on homology of tilde g} that $\iota$ is of odd type.
Again by \eqref{eq: action on homology of tilde g}, we have $b^+(W)=b^+_\iota(W)$.
Thus we have a contradiction by \cref{theo: main}.
This proves the claim and completes the proof of the \lcnamecref{theo: smooth vs. top Nielsen}.
\end{proof}

\subsection{Several inequalities for knots}
\label{subsec Several inequalities for knots}

In this \lcnamecref{subsec Several inequalities for knots},
we give a proof of \cref{unoriented bound} which describes a bound for the first Betti numbers of non-orientable surfaces embedded into $D^4$. 
\begin{proof}[Proof of \cref{unoriented bound}]
Let $S$ be a (possibly non-orientable) smoothly and properly embedded connected surface in $D^4$ bounded by a given knot $K$. 
Suppose the determinant of $K$ is $1$, the Manolescu--Owen invariant $\delta(\Sigma (K))$ \cite{MO07} is zero and $\delta_R(K)<0$.
As a known topological obstruction (which can be regarded as $b^+(S) \geq 0$), we have 
\[
\left|\sigma(K)- \frac{1}{2}S \circ S\right|  \leq b_1(S). 
\]
Thus it is sufficient to suppose 
\[
-\sigma(K)+ \frac{1}{2}S \circ S= b_1(S)
\]
and obtain a contradiction. In order to use \cref{theo: main knot}(iii), we verify the assumptions. We first note that \eqref{assumption delta}
follows from $  -\sigma(K)+ \frac{1}{2}S \circ S= b_1(S)$. 
Since $K$ is determinant-1, $\Sigma(K)$ is a homology 3-sphere. So, the intersection form of $\Sigma(S)$ is unimodular. From \cref{branched homology}, we know 
\[
-\sigma(\Sigma(S))= \frac{1}{2}S \circ S -\sigma(K)= b_1(S) = b_2(\Sigma(S)) .
\]
This implies the intersection form of $\Sigma(S)$ is negative definite. Now, since we supposed the Manolescu--Owen invariant  $\delta(\Sigma (K))$ is zero, from Fr\o yshov inequality and Elikies's theorem, we know that the intersection form of $\Sigma(S)$ is the direct sum of copies of $ (-1)$.
From \cref{branched homology}, we know that the double branched covering space $\Sigma(S)$ has no 2-torsions in $H^2(\Sigma(S); \Z)$. 
Thus, the set of isomorphism classes of spin$^c$ structures on $\Sigma(S)$ is determined by their first Chern classes in $H^2(\Sigma(S); \Z)$. Note that the action of $\iota^*$ on $H^2(\Sigma(S); \Z)$ gives a decomposition \[
H^2(\Sigma(S); \Z)= H^2(\Sigma(S); \Z)^{\iota^*} \oplus H^2(\Sigma(S); \Z)^{-\iota^*}
\]
as the eigenvalue decomposition of $\iota$ corresponding to $+1$ and $-1$ respectively. Note that the dimension $\dim H^2(\Sigma(S); \Z)^{\iota^*} =b_2(\Sigma(S)/\iota)= b_2(D^4)=0$. This implies $H^2(\Sigma(S); \Z)= H^2(\Sigma(S); \Z)^{-\iota^*}$. So, by taking a spin$^c$ structure corresponding to the sum of basis of $\bigoplus (-1)$ and by applying \cref{theo: main knot}(iii), we have 
\[
0 \leq \delta(K). 
\]
This gives a contradiction to $\delta(K)<0$.
\end{proof}

We next prove crossing change formulae for $\delta_R$, $\bar{\delta}_R$ and $\underline{\delta}_R$. 
\begin{proof}[Proof of \cref{crossing change}]
Let $K$ and $K'$ be knots in $S^3$. Suppose $K'$ is obtained from $K$ by a positive crossing change. By blown up, we obtain a smooth annulus cobordism $S$ in $I \times S^3 \# -{\CP}^2$ from $(S^3, K)$ to $(S^3, K')$ such that 
\[
[S]=2 \in \Z = H_2(-{\CP}^2)= H_2(I \times S^3 \#  -{\CP}^2).
\]
Suppose $2=\sigma(K)-\sigma(K)$. Then, for the unique spin structure on $\Sigma(S)$, the assumptions of \cref{theo: main knot}(iii) are satisfied. Then \cref{theo: main knot}(iii) implies the desired inequality. 
In the case of $\sigma (K)- \sigma (K')=0$, we use $\overline{I \times S^3 \# (-{\CP}^2)}$ instead of $I \times S^3 \# (-{\CP}^2)$ and \cref{theo: main knot}(iii) again. 
\end{proof}

\subsection{Applications to non-orientable genera and normal Euler numbers} 

\begin{proof}[Proof of \cref{torus non orientable}]
Note that $K=T(3, 6n+1)$ is determinant $1$, the Manolescu--Owen's invariant is known to be zero and $\delta_R( T(3, 6n+1))=- \frac{1}{2}<0$. Thus all assumptions of \cref{unoriented bound} are satisfied. 
For a smooth embedded connected surfaces in $D^4$ bounded by $K$ satisfying 
\[
(e, h) =(8/3 (1-n) +2+2m  ,1+m),
\]
we can check $-\sigma(K)+ \frac{1}{2}S \circ S = b_1(S)$. 
However, from \cref{unoriented bound}, we have the stronger inequality $-\sigma(K)+ \frac{1}{2}S \circ S + 1 \leq b_1(S)$ which gives a contradiction. This completes the proof.    
\end{proof}

\bibliographystyle{plain}
\bibliography{tex}

\end{document}